\documentclass[11pt,reqno]{amsart}
\usepackage[shortlabels]{enumitem}

\usepackage[T1]{fontenc}
\usepackage[utf8]{inputenc}
\usepackage{lmodern}

\usepackage{amsfonts}
\usepackage{amsmath, amsthm, amssymb}

\usepackage{graphicx}
\usepackage{amsthm}
\usepackage{framed}
\usepackage{multicol,multienum}
\usepackage{booktabs} % Allows the use of \toprule, \midrule and \bottomrule in tables
\usepackage{mathrsfs}
\usepackage{tcolorbox}
\usepackage{bm}
\usepackage{subfigure}
\usepackage{epstopdf}
\usepackage{float}   % float package
\usepackage{array}
\usepackage[colorlinks=true,linkcolor=blue,citecolor=blue,urlcolor=red]{hyperref}

\newcommand\R{{\mathbb{R}}}

\newcommand\CC{{\mathbb{C}}}

\newcommand\Z{{\mathbb{Z}}}

\DeclareMathOperator\supp{\text{supp}}
\newtheorem{theorem}{Theorem}[section]
\newtheorem{proposition}[theorem]{Proposition}
\newtheorem{lemma}[theorem]{Lemma}
\newtheorem{corollary}[theorem]{Corollary}

\theoremstyle{definition}
\newtheorem{definition}[theorem]{Definition}

\newcommand{\ip}[2]{\langle #1, #2 \rangle}
\usepackage{mathtools}

\newtheorem{remark}[theorem]{Remark}

\numberwithin{equation}{section}

\begin{document}

\title [Defocusing energy-critical inhomogeneous NLS]{Global well-posedness and scattering for defocusing energy-critical inhomogeneous NLS in dimensions $d\ge 3$}\thanks{$^*$Corresponding authors.}

\author{Bo Yang}
\address{School of Mathematics and Statistics, Hubei Key Laboratory of Engineering Modeling and Scientific Computing, Huazhong University of Science and Technology, Wuhan 430074, Hubei, P.R. China.}
\email{boyang@hust.edu.cn (B. Yang)}

\author{Lei Zhang$^*$}
\address{School of Mathematics and Statistics, Hubei Key Laboratory of Engineering Modeling and Scientific Computing, Huazhong University of Science and Technology, Wuhan 430074, Hubei, P.R. China.}
\email{lei\_zhang@hust.edu.cn (L. Zhang)}

\author{Bin Liu$^*$}
\address{School of Mathematics and Statistics, Hubei Key Laboratory of Engineering Modeling and Scientific Computing, Huazhong University of Science and Technology, Wuhan 430074, Hubei, P.R. China.}
\email{binliu@mail.hust.edu.cn (B. Liu)}

\keywords{Inhomogeneous nonlinear Schr\"odinger equation; Energy-critical; Global well-posedness; Scattering; Concentration-compactness.}

\date{}

\raggedbottom
\setlength{\textfloatsep}{8pt plus 2pt minus 2pt}
\setlength{\intextsep}{8pt plus 2pt minus 2pt}
\setlength{\abovedisplayskip}{8pt plus 2pt minus 3pt}
\setlength{\belowdisplayskip}{8pt plus 2pt minus 3pt}
\setlength{\abovedisplayshortskip}{6pt plus 2pt minus 2pt}
\setlength{\belowdisplayshortskip}{6pt plus 2pt minus 2pt}
\setlength{\floatsep}{8pt plus 2pt minus 2pt}
\allowdisplaybreaks[3]
\newcolumntype{L}[1]{>{\raggedright\arraybackslash}p{#1}}
\emergencystretch=2em

\begin{abstract}
We study the defocusing energy-critical inhomogeneous nonlinear Schr\"odinger equation
\[
  i\partial_tu+\Delta u=|x|^{-b}|u|^{\frac{4-2b}{d-2}}u,
  \qquad (t,x)\in\R\times\R^d,
\]
with initial data $u_0\in\dot H_x^1(\R^d)$, where $d\ge 3$ and $0<b<\min\{2,\frac d2\}$. We prove global well-posedness and scattering for arbitrary non-radial data. The main difficulties are that, when $d\ge 6$, the derivative of the critical nonlinearity is only H\"older continuous, so the short-time perturbation argument cannot be closed in $\dot S^1$, and that the singular coefficient $|x|^{-b}$ breaks translation symmetry. To handle these issues,  we exploit the weak-space structure $|x|^{-b}\in L^{\frac{d}{b},\infty}(\R^d)$, introduce exotic Strichartz norms, and prove a long-time stability theorem for the general energy-critical inhomogeneous nonlinear Schr\"odinger equation. We also show that profiles escaping to spatial infinity are asymptotically linear because of the decay of $|x|^{-b}$. Consequently, almost periodic solutions are compact modulo scaling only, with neither spatial nor frequency center parameters. Combined with the concentration--compactness argument of Kenig--Merle [\emph{Invent. Math.} \textbf{166} (2006), 645--675], this yields the main theorem.
\end{abstract}

\maketitle
%\tableofcontents
%----------------------------------------------------

 \section{Introduction}
% ================================================================

We consider the Cauchy problem for the inhomogeneous nonlinear Schr\"odinger equation (INLS) in $\mathbb{R}^d$ with $d \geq 3$:
\begin{equation}\label{eq:INLS}
\begin{cases}
i\partial_t u + \Delta u = |x|^{-b} |u|^{\alpha} u,\\
u(0, x) = u_0(x) ,
\end{cases}
\end{equation}
under the following assumptions:
\begin{equation}\label{eq:assumptions}
u_0 \in \dot{H}_x^1(\mathbb{R}^d),\quad \alpha=\frac{4-2b}{d-2},\quad 0 < b < \min\left\{2, \frac{d}{2}\right\}.
\end{equation}
This model appears in nonlinear optical wave propagation, where the weight $|x|^{-b}$ describes the spatial inhomogeneity of the medium; see \cite{Gill-2000, Liu-Tripathi-1994}. The singular coefficient also arises as the asymptotic limit of polynomially decaying potentials at spatial infinity; see Genoud--Stuart \cite{Genoud-Stuart-2008}.

Equation \eqref{eq:INLS} is invariant under the scaling symmetry
\begin{equation}\label{eq:scaling}
u(t, x) \mapsto  \lambda^{-\frac{2-b}{\alpha}} u\left(\frac{t}{\lambda^2}, \frac{x}{\lambda}\right), \quad u_{0}(x) \mapsto  \lambda^{-\frac{2-b}{\alpha}} u_0\left(\frac{x}{\lambda}\right),
\end{equation}
which implies that \eqref{eq:INLS} is $\dot{H}^{s_c}_x(\mathbb{R}^d)$-critical with the critical Sobolev index
\begin{equation}\label{eq:critical}
s_c=\frac{d}{2}-\frac{2-b}{\alpha}.
\end{equation}
A standard limiting argument shows that the energy
\begin{equation}\label{eq:energy}
E(u(t)) := \frac12\int_{\mathbb{R}^d} |\nabla u(t,x)|^2 \, dx
+ \frac{1}{\alpha+2}\int_{\mathbb{R}^d} |x|^{-b} |u(t,x)|^{\alpha+2} \, dx
\end{equation}
is conserved.

For $s_c=0$, equivalently $\alpha = \frac{4-2b}{d}$, the scaling \eqref{eq:scaling} preserves the mass:
\begin{equation}\label{eq:mass}
M(u(t)) := \int_{\mathbb{R}^d} | u(t,x)|^2 \, dx .
\end{equation}
This is the mass-critical case. For $d\ge 3$, the corresponding large-data theory in $L_x^2(\R^d)$ was recently established by Liu, Miao, and Zheng \cite{Liu-Miao-Zheng2025}.

For $s_c=1$, equivalently $\alpha = \frac{4-2b}{d-2}$, the scaling preserves the homogeneous Sobolev norm $\|u\|_{\dot H_x^1(\R^d)}:=\|\nabla u\|_{L_x^2(\R^d)}$. Since this is exactly the kinetic part of \eqref{eq:energy}, \eqref{eq:INLS} is called energy-critical.

The main goal of this paper is to establish global well-posedness and scattering for the energy-critical INLS \eqref{eq:INLS} in all dimensions $d\ge 3$. As discussed in Subsection~\ref{relate}, the existing non-radial theory (see, for instance, \cite{Guzman-Murphy-2021,Guzman-Keraani-Xu-2025}) is essentially restricted to dimensions $d\le 5$. In particular, the large-data problem in higher dimensions has remained open. Theorem~\ref{thm:main} resolves this problem.

\subsection{Notations and main results}
We first recall Lorentz spaces, then define solutions to \eqref{eq:INLS}, and finally state the main results.

\subsubsection{Sobolev-Lorentz spaces}
For an interval \(I\subset \mathbb R\), we write \(L_t^pL_x^q(I\times \mathbb R^d)\) for the mixed-norm space of measurable functions \(u:I\times \mathbb R^d\to \mathbb C\) such that
\[
\|u\|_{L_t^pL_x^q(I\times \mathbb R^d)}
:=\left(\int_I\left(\int_{\mathbb R^d}|u(t,x)|^q\,dx\right)^{\frac{p}{q}}dt\right)^{\frac{1}{p}}<\infty,
\]
with the usual modification when \(p=\infty\) or \(q=\infty\). We denote by \(p'\) the H\"older conjugate of \(p\), i.e.
$
\frac1p+\frac1{p'}=1.
$
When \(p=q\), we simply write \(L_{t,x}^p(I\times \mathbb R^d)\).

For a measurable function \(u:\mathbb R^d\to \mathbb C\), define its distribution function and decreasing rearrangement by
\[
d_u(\lambda):=\bigl|\{x\in \mathbb R^d:|u(x)|>\lambda\}\bigr|,\qquad \lambda>0,
\]
and
\[
u^*(s):=\inf\{\lambda>0:d_u(\lambda)\le s\},\qquad s>0.
\]
Here \(|A|\) denotes the Lebesgue measure of a measurable set \(A\subset \mathbb R^d\).

\begin{definition}
Let \(0<p<\infty\) and \(0<q\le \infty\). The Lorentz space \(L^{p,q}(\mathbb R^d)\) consists of all measurable functions \(u:\mathbb R^d\to\mathbb C\) such that
$
\|u\|_{L^{p,q}(\mathbb R^d)}<\infty,
$
where
\[
\|u\|_{L^{p,q}(\mathbb R^d)}
:=
\begin{cases}
\left(\dfrac{q}{p}\displaystyle\int_0^\infty \bigl(s^{\frac{1}{p}}u^*(s)\bigr)^q\,\dfrac{ds}{s}\right)^{\frac{1}{q}},
& 0<q<\infty,\\[1.2ex]
\sup_{s>0}s^{\frac{1}{p}}u^*(s),
& q=\infty.
\end{cases}
\]
\end{definition}

For \(s\in\mathbb R\), define
\[
\langle\nabla\rangle^s u
:=\mathcal F^{-1}\!\bigl((1+|\xi|^2)^{\frac{s}{2}}\mathcal F u\bigr),
\qquad
|\nabla|^s u
:=\mathcal F^{-1}\!\bigl(|\xi|^s\mathcal F u\bigr),
\]
where \(\mathcal F\) and \(\mathcal F^{-1}\) denote the Fourier transform and its inverse, respectively.

\begin{definition}
Let \(s\in\mathbb R\), \(1<p<\infty\), and \(1\le q\le\infty\). We define the inhomogeneous and homogeneous Sobolev--Lorentz spaces by
\[
W^{s;p,q}(\mathbb R^d)
:=\{u\in\mathcal S'(\mathbb R^d):\langle\nabla\rangle^s u\in L^{p,q}(\mathbb R^d)\},
\]
and
\[
\dot W^{s;p,q}(\mathbb R^d)
:=\{u\in\mathcal S'(\mathbb R^d):|\nabla|^s u\in L^{p,q}(\mathbb R^d)\},
\]
equipped with the norms
\[
\|u\|_{W^{s;p,q}(\mathbb R^d)}
:=\|\langle\nabla\rangle^s u\|_{L^{p,q}(\mathbb R^d)},
\qquad
\|u\|_{\dot W^{s;p,q}(\mathbb R^d)}
:=\||\nabla|^s u\|_{L^{p,q}(\mathbb R^d)}.
\]
\end{definition}
\subsubsection{Main results}

We first specify the notion of strong solution to \eqref{eq:INLS}.

\begin{definition}\label{def:intro-strong-solution}
Assume that \eqref{eq:assumptions} holds.
For an interval $I\subset\mathbb{R}$, we define the scattering size space
$
S(I):=L_{t,x}^{\frac{2(d+2)}{d-2}}(I\times\mathbb{R}^d).
$
\begin{enumerate}[label=\upshape(\roman*), leftmargin=*, itemsep=4pt]
\item
A complex-valued function $u:I\times\mathbb{R}^d\to\mathbb{C}$ is called a \emph{strong solution} to \eqref{eq:INLS} if, for every compact interval $J\subset I$,
\[
u\in C_t\dot{H}_x^1(J\times\mathbb{R}^d)\cap S(J),
\]
and the Duhamel formula
\[
u(t)=e^{i(t-t_0)\Delta}u(t_0)-i\int_{t_0}^t e^{i(t-s)\Delta}\bigl(|x|^{-b}|u|^{\alpha}u\bigr)(s)\,ds
\]
holds for all $t,t_0\in I$. The interval $I$ is referred to as the lifespan of $u$.

\item
We say that $u$ \emph{blows up} forward in time if there exists $t_0\in I$ such that
\[
\|u\|_{S([t_0,\sup I))}=\infty.
\]
Blow-up backward in time is defined similarly on $(\inf I,t_0]$.
\end{enumerate}
\end{definition}

\begin{remark}
We take $S(I)$ as the critical spacetime norm, since it is the scaling-critical quantity for the energy-critical problem and the natural norm in the blowup/scattering theory and the concentration--compactness argument.
\end{remark}

\begin{definition}
%[\emph{Admissible pair}]
\label{def:admissible}
A pair $(q,r)$ is called $L^2$-admissible if $\displaystyle \frac{2}{q}=\frac{d}{2}-\frac{d}{r}$ with $2\le q,r\le \infty$ and $(q,r,d)\neq(2,\infty,2)$. In particular, we denote by $\Lambda$ the set of all $L^2$-admissible pairs.
\end{definition}

\begin{definition}
%[\emph{Strichartz norms}]
\label{strichartz-norm}
We define the Strichartz norms
\[
\|u\|_{\dot{S}^0(I)}
:=\sup_{(q,r)\in\Lambda}\|u\|_{L_t^q L_x^{r,2}(I\times\mathbb{R}^d)},
\qquad
\|u\|_{\dot{S}^1(I)}:=\|\nabla u\|_{\dot{S}^0(I)},
\]
and the corresponding dual norms
\[
\|u\|_{\dot{N}^0(I)}
:=\inf_{(q,r)\in\Lambda}\|u\|_{L_t^{q'} L_x^{r',2}(I\times\mathbb{R}^d)},
\qquad
\|u\|_{\dot{N}^1(I)}:=\|\nabla u\|_{\dot{N}^0(I)}.
\]
\end{definition}

Our first result is the local theory in the homogeneous energy space $\dot H_x^1(\R^d)$.

\begin{theorem}[\emph{Local well-posedness in $\dot{H}_x^1$}]
\label{thm:dotH1-local}

For every initial datum \(u_0 \in \dot{H}_x^1(\mathbb{R}^d)\), there exist \(0 < T_{\min}(u_0), T_{\max}(u_0) \le \infty\) and a unique maximal-lifespan strong solution \(u: I(u_0) \times \mathbb{R}^d \to \mathbb{C}\) to \eqref{eq:INLS}, where \(I(u_0) := \bigl(-T_{\min}(u_0), T_{\max}(u_0)\bigr)\).
Moreover, the following statements hold:

\begin{enumerate}[label=\upshape(\roman*), leftmargin=*, itemsep=4pt]
\item For every $L^2$-admissible pair $(q,r)$ and compact interval \(J \subset I(u_0)\), one has
\[
  u\in L_t^q {\dot W}_x^{1;r,2}(J\times \R^d).
\]

\item If \(T_{\max}(u_0) < \infty\), then the solution blows up forward in time; if \(T_{\min}(u_0) < \infty\), then it blows up backward in time.

\item  There exists \(\varepsilon_0 = \varepsilon_0(d,b) > 0\) such that if
\[
\|e^{it\Delta}u_0\|_{S(\mathbb{R})} \le \varepsilon_0,
\]
then \(I(u_0) = \mathbb{R}\), and the solution is global in time and scatters in \(\dot{H}_x^1(\mathbb{R}^d)\). In particular, by Strichartz and Sobolev embedding, this hypothesis is guaranteed when \(\|u_0\|_{\dot{H}_x^1(\mathbb{R}^d)}\) is sufficiently small.
\end{enumerate}

\end{theorem}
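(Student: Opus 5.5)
The plan is a contraction mapping argument for the Duhamel operator
\[
\Phi(u)(t)=e^{it\Delta}u_0-i\int_0^t e^{i(t-s)\Delta}\bigl(|x|^{-b}|u|^{\alpha}u\bigr)(s)\,ds ,
\]
carried out in Lorentz-based Strichartz spaces. The weight will be placed in $L^{d/b,\infty}$ and handled with the Lorentz-space H\"older inequality of O'Neil type. Since $\alpha$ may be less than $1$ (indeed when $d\ge 6$), I will not estimate differences at the level of $\dot S^1$. Instead the contraction is run in the weaker metric $\|u-v\|_{L_t^{q}L_x^{r,2}}$ for a suitable admissible pair. Boundedness, by contrast, is proved in $\dot S^1$. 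This is the standard Cazenave–Weissler / Killip–Visan scheme.

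\emph{Key nonlinear estimate.} I choose the exponent pair
\[
(q_0,r_0)=\Bigl(\tfrac{2(d+2)}{d-2},\ \tfrac{2d(d+2)}{d^2+4}\Bigr).
\]
Then Sobolev–Lorentz embedding gives $\|u\|_{S(I)}\lesssim \|\nabla u\|_{L_t^{q_0}L_x^{r_0,2}}$, and $\nabla u\in L_t^{q_0}L_x^{r_0,2}$ is controlled by the $\dot S^1$ norm. I compute
\[
\nabla\bigl(|x|^{-b}|u|^\alpha u\bigr)=O\bigl(|x|^{-b}|u|^\alpha|\nabla u|\bigr)+O\bigl(|x|^{-b-1}|u|^{\alpha+1}\bigr).
\]
For the first term I use $|x|^{-b}\in L^{d/b,\infty}$, put $|u|^\alpha$ in $L^{d/(2-b)}$-type Lorentz norms, and use Sobolev $\dot W^{1;r,2}\subset L^{r^*,2}$. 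For the second term I use $|x|^{-b-1}\in L^{d/(b+1),\infty}$ together with the same Sobolev embeddings; alternatively, Hardy's inequality in Lorentz form handles it. Because the space-time exponents must balance the $\alpha$ powers of $u$ in scattering-type norms, I will pick an admissible (possibly Lorentz-refined) dual pair for the output, with $u$ measured in some $L_t^{q}\dot W^{1;r,2}_x$. The target estimate is
\[
\|\nabla(|x|^{-b}|u|^\alpha u)\|_{\dot N^0(I)}\lesssim \|\nabla u\|_{L_t^{q}L_x^{r,2}}^{\alpha+1},
\]
together with its difference version
\[
\|\,|x|^{-b}(|u|^\alpha u-|v|^\alpha v)\|_{\dot N^0}\lesssim \bigl(\|\nabla u\|^{\alpha}+\|\nabla v\|^{\alpha}\bigr)\|u-v\|_{L_t^{q}L_x^{r,2}} .
\]
The difference version has no derivative falling on the difference, so it is valid for all $\alpha>0$. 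The condition $b<\min\{2,d/2\}$ is what should make the Lorentz exponents lie in $(1,\infty)$ and permit the Sobolev embeddings. I expect the main obstacle to be bookkeeping these exponents, in particular ensuring that the second-index H\"older inequality $L^{p_1,\infty}\cdot L^{p_2,2}\subset L^{p,2}$ closes for every $d\ge3$.

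\emph{Fixed point and local existence.} I choose $\eta$ small and an interval $I$ with $\|\nabla e^{it\Delta}u_0\|_{L_t^qL_x^{r,2}(I)}\le\eta$. Such an $I$ exists by monotone convergence, since this norm is finite by Strichartz in Lorentz spaces (Keel–Tao plus real interpolation). On the ball
\[
\bigl\{\|\nabla u\|_{L_t^qL_x^{r,2}}\le 2\eta,\ \|u\|_{\dot S^1}\le 2C\|u_0\|_{\dot H^1}\bigr\}
\]
I use the metric $\|u-v\|_{L_t^qL_x^{r,2}}$. This ball is complete, because the bounds are weakly lower semicontinuous, and $\Phi$ is a contraction on it. Membership $u\in C_t\dot H^1$, part (i) on compact subintervals, and $u\in S$ then follow from Strichartz. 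For unconditional uniqueness in the class of the Definition, I apply the same difference estimate on short intervals on which $\|u\|_{S}$ is small; then $\|\nabla u\|_{L^qL^{r,2}}$ is small there too, by the usual bootstrap through Strichartz. Finally, I glue local solutions to obtain the maximal lifespan.

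\emph{Blowup criterion and small data.} For (iii) I run the same argument on $I=\mathbb R$. I use an interpolated estimate bounding the nonlinearity by $\|u\|_{S}^{\theta\alpha}\|u\|_{\dot S^1}^{\cdots}$ so that smallness of $\|e^{it\Delta}u_0\|_{S(\mathbb R)}$ suffices; if needed, I first deduce smallness in $L^qL^{r,2}$-type norms via interpolation with the bounded $\dot S^1$ norm. Scattering then follows because $e^{-it\Delta}u(t)$ is Cauchy in $\dot H^1$: its increments are controlled by the $\dot N^1$ norm of the nonlinearity on tails, which tends to zero. For (ii), suppose $\|u\|_{S([t_0,T_{\max}))}<\infty$. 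I partition $[t_0,T_{\max})$ into finitely many intervals with small $S$ norm; the nonlinear estimate then bounds $\|u\|_{\dot S^1}$ on each, and so $e^{-it\Delta}u(t)$ converges in $\dot H^1$ as $t\to T_{\max}$. Applying local existence near $T_{\max}$, with the linear flow small in $S$ on a uniform interval, extends the solution beyond $T_{\max}$, a contradiction.
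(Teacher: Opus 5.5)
Your proposal has a genuine gap in part (iii) whenever $\|u_0\|_{\dot H^1}$ is not small.

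\emph{Why your ball does not work.} Your contraction ball requires $\|\nabla u\|_{L_t^qL_x^{r,2}}\le 2\eta$. Smallness of $\|e^{it\Delta}u_0\|_{S(\R)}$ does not make any full-derivative norm of the free evolution small. Interpolating between $S$ and $\dot S^1$ only gives smallness of norms carrying strictly fewer than one derivative.

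\emph{Why an $S$-based ball does not work either.} Suppose you rebuild the ball from $\|u\|_S\le 2\varepsilon$ and $\|u\|_{\dot S^1}\le 2C\|u_0\|_{\dot H^1}$. Admissible Strichartz forces a full derivative onto the nonlinearity. The term $|x|^{-b}|u|^\alpha\nabla u$ carries that derivative in $\dot S^1$, so the available bound is $\|u\|_S^\alpha\|u\|_{\dot S^1}$. The bootstrap then reads $\|\Phi(u)\|_S\le\varepsilon+C\varepsilon^\alpha\|u_0\|_{\dot H^1}$, which stays below $2\varepsilon$ only if $\|u_0\|_{\dot H^1}\lesssim\varepsilon^{1-\alpha}$. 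For $\alpha>1$ this can be arranged with $\varepsilon_0$ depending on the energy. For $\alpha\le 1$, i.e.\ $d\ge 6-2b$ (all $d\ge6$, and part of the range when $d=4,5$), it fails for large data however $\varepsilon_0$ is chosen.

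\emph{What the paper does instead.} This is exactly the obstruction the exotic machinery removes. Lemma~\ref{eq:exotic-Strichartz-estimate} places only $s<\frac{\alpha}{1+\alpha}$ derivatives on the nonlinearity. Then $\|G(u)\|_Y\lesssim\|u\|_{X^0}^\alpha\|u\|_X\lesssim\|u\|_X^{1+\alpha}$ closes on smallness of $X$ alone. $X$-smallness is extracted from $S$-smallness via \eqref{eq:interp-2}, with a dependence on the energy. The paper does not run a contraction at all. It approximates $u_0$ by $H^1$ data, uses the Aloui--Tayachi $H^1$ theory, and passes to the limit with Proposition~\ref{prop:stability}, which is built on these norms. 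It obtains (ii) and (iii) the same way. As written, your argument proves (iii) only in the small-$\dot H^1$ case, i.e.\ the ``in particular'' clause.

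For existence, uniqueness, (i) and (ii), your Cazenave--Weissler scheme is a legitimate and more elementary alternative, but two points need repair.

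\emph{Finiteness of the metric.} For pure $\dot H^1$ data the distance $\|u-v\|_{L_t^qL_x^{r,2}}$ can be infinite on your ball. Restrict to the affine class $e^{it\Delta}u_0+\dot S^0(I)$ with $I$ bounded. $\Phi$ preserves this class because $\||x|^{-b}|u|^\alpha u\|_{L_x^{\frac{2d}{d+2},2}}\lesssim\|\nabla u\|_{L_x^2}^{\alpha+1}$ puts $G(u)$ in $L_t^2L_x^{\frac{2d}{d+2},2}(I\times\R^d)$.

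\emph{Exponent bookkeeping.} The bookkeeping you flag closes with the pairs of Lemma~\ref{lem:nonlinear-estimateI}: $\||x|^{-b}(|u|^\alpha+|v|^\alpha)|u-v|\|_{L_t^2L_x^{\frac{2d}{d+2},2}}\lesssim(\|u\|_S^\alpha+\|v\|_S^\alpha)\|u-v\|_{L_t^\gamma L_x^{\rho,2}}$, with $\gamma=\frac{2(d+2)}{d-2+2b}$ and $\rho=\frac{2d(d+2)}{d^2+4-4b}$. Since only $\|u\|_S$ enters, this gives uniqueness in $C_t\dot H^1\cap S$ on short intervals directly. You do not need $\nabla u\in L_t^qL_x^{r,2}$ a priori, which the definition of strong solution does not supply.

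\emph{Part (ii).} The derivative-level smallness your local theory needs near $T_{\max}$ does hold. This is because $e^{-it\Delta}u(t)$ converges in $\dot H^1$, and free evolutions have small Strichartz norm on short intervals.
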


The available local theory in $H_x^1(\R^d)$ (cf.\ Aloui--Tayachi \cite{Aloui-Tayachi-2021}) does not cover the present homogeneous setting. We overcome this by proving a long-time stability theorem for the general energy-critical INLS, following Tao--Visan \cite{Tao-Visan-2005} in a Lorentz-space framework. This stability result should also be useful for related focusing and threshold problems.

Our second main result upgrades the local theory to global well-posedness and scattering in both time directions.

\begin{theorem}[\emph{Global well-posedness and scattering in $\dot{H}_x^1$}]
\label{thm:main}

Assume that the conditions in \eqref{eq:assumptions} hold. Then for every initial data \(u_0 \in \dot{H}_x^1(\mathbb{R}^d)\), the Cauchy problem \eqref{eq:INLS} admits a unique global (strong) solution $u \in C_t \dot{H}_x^1(\mathbb{R} \times \mathbb{R}^d)$. Moreover, this solution satisfies
\[
\int_{\mathbb{R}} \int_{\mathbb{R}^d} |u(t,x)|^{\frac{2(d+2)}{d-2}} \, dx \, dt
\le C\bigl(\|u_0\|_{\dot H_x^1}\bigr).
\]
In particular, the solution \(u\) scatters in \(\dot{H}_x^1(\mathbb{R}^d)\); that is, there exist \(u_\pm \in \dot{H}_x^1(\mathbb{R}^d)\) such that
\[
\|u(t) - e^{it\Delta}u_\pm\|_{\dot{H}_x^1(\mathbb{R}^d)} \to 0 \qquad \text{as } t \to \pm\infty.
\]

\end{theorem}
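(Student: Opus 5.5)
The plan is the Kenig--Merle concentration--compactness/rigidity scheme, as indicated in the abstract. First, Theorem~\ref{thm:dotH1-local} together with the long-time stability theorem (in the Lorentz/exotic Strichartz framework) reduces Theorem~\ref{thm:main} to an a priori bound on the scattering size. Define
\[
L(E):=\sup\{\|u\|_{S(I)}: u \text{ a maximal solution with } \sup_{t\in I}\|u(t)\|_{\dot H^1_x}^2\le E\},
\]
and $E_c:=\sup\{E: L(E)<\infty\}$. Part (iii) of Theorem~\ref{thm:dotH1-local} gives $E_c>0$, and finiteness of $L$ yields global existence, the stated bound, and scattering via the Duhamel formula and Strichartz estimates. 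We argue by contradiction, assuming $E_c<\infty$.

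Second, we would prove a Palais--Smale condition modulo symmetries. Apply the $\dot H^1$ linear profile decomposition (Keraani) to a sequence $u_n(t_n)$ of almost-critical solutions with $S$-norms tending to infinity on both sides of $t_n$. Each profile $\phi^j$ carries parameters $(\lambda_n^j,x_n^j,t_n^j)$. The key new step is to treat profiles with $|x_n^j|/\lambda_n^j\to\infty$. For these we show that the nonlinear term $|x|^{-b}|u|^\alpha u$ becomes negligible, since after rescaling the weight is $|x+x_n^j/\lambda_n^j|^{-b}\to 0$ on compact sets. Concretely, we approximate $\phi^j$ by $C_c^\infty$ data and show that the free evolution $e^{it\Delta}\phi^j$, truncated in time, is an approximate solution. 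We bound $\||x|^{-b}|v|^\alpha \nabla v\|_{\dot N^0}$ by splitting into $|x|\le R$, which is small by translation far away, and $|x|>R$, where the weight is at most $R^{-b}$; the large-time tails are handled by smallness of $\|e^{it\Delta}\phi\|_S$ on long time intervals. Long-time stability then gives global scattering nonlinear profiles for these. Profiles with bounded $x_n^j/\lambda_n^j$ are recentered to $x_n^j=0$, with the limiting compact translation absorbed into the profile; their nonlinear profiles come from the local theory, or from wave operators when $t_n^j\to\pm\infty$. Using decoupling of the $\dot H^1$ norms and the energy, combined with the definition of $E_c$, every profile has scattering size bounded unless there is exactly one profile, with $x_n=0$ and $t_n$ bounded, and the remainder vanishes in $\dot H^1$. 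Orthogonality of the nonlinear profiles in the exotic Strichartz norms, plus long-time stability, closes the argument. This produces a minimal blow-up solution $u_c$ with $\{\lambda(t)^{-(d-2)/2}u_c(t,x/\lambda(t))\}$ precompact in $\dot H^1$, so the only modulation is scaling.

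Third, rigidity. We split into the standard scenarios: finite-time blowup, soliton-like ($\lambda\equiv 1$), and low-to-high cascade. Because there is no spatial center, precompactness gives uniform localization near the origin at scale $\lambda(t)^{-1}$. In the finite-time case, localized mass conservation forces $u_c\equiv0$. In the global cases, we use the defocusing Morawetz estimate with weight $|x|$, truncated at radius $R$:
\[
\int_I\int_{|x|\le R}\frac{|x|^{-b}|u|^{\alpha+2}}{|x|}\,dx\,dt\lesssim R\,\sup_t\|u\|_{\dot H^1}^2 .
\]
The left side contains $|x|^{-b}$ with a good sign, so the inhomogeneous term only helps. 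Compactness bounds the left side below by roughly $|I|$ when $\inf\lambda>0$, which gives a contradiction as $|I|\to\infty$. The frequency cascade, where $\lambda(t)\to\infty$, is handled similarly, possibly after a reduction via additional decay or a modified weight as in Killip--Visan.

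The main obstacle I expect is the second step in dimensions $d\ge6$. The nonlinearity is only H\"older with exponent $\alpha<1$, so nonlinear-profile decoupling and stability must be done in exotic Strichartz norms, using $|x|^{-b}\in L^{d/b,\infty}$ and Lorentz H\"older inequalities. The hard part is showing that cross terms between distinct profiles vanish in these fractional-derivative norms. For this I would first try to approximate the profiles by compactly supported, frequency-localized functions and to exploit the asymptotic orthogonality of scales.
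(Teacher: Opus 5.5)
Your outline is correct and uses the same Kenig--Merle architecture as the paper: the critical kinetic threshold, the fact that profiles with $|x_n^j|/\lambda_n^j\to\infty$ are asymptotically linear and scatter, and hence a critical element that is almost periodic modulo scaling only. The differences are in the tools.

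\textbf{Far-away profiles.} You take the free evolution of $C_c^\infty$ approximants as the approximate solution and split space near and away from the singularity. The paper instead uses $g_n[\chi_nP_ne^{i(\cdot)\Delta}\phi]$. Here $\chi_n$ vanishes near $x=0$, so the singular weight never meets the profile, and the frequency annulus $P_n$ supplies the $L^\infty$ and low-frequency bounds. Your version is more direct. It does, however, need a Lorentz--H\"older/Hardy estimate for the small tail of the profile near $x=0$.

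\textbf{Finite-time blowup.} You use Kenig--Merle's localized mass. The paper uses the reduced Duhamel formula, Bernstein and $|x|^{-b}\in L^{\frac db,\infty}$ to get $\|u(t)\|_{L^2_x}\lesssim(T_{\max}-t)^{1/2}$. Both routes end with mass conservation.

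\textbf{Global case.} You use a truncated Lin--Strauss Morawetz estimate with weight $|x|$. All its terms have a good sign, including the one from $\nabla|x|^{-b}$, and you get the lower bound from compactness plus scale invariance and nonvanishing of $\int|x|^{-b}|u|^{\alpha+2}$. The paper uses the localized virial with $a_R=|x|^2$. There $\frac{d}{dt}M_R$ is bounded below by a positive multiple of $E(u)$ minus $C\eta$, while $|M_R|\lesssim R^2$. Both work for the same reason: with no spatial center and $\inf N(t)\ge1$, the orbit is uniformly tight at a fixed radius around the origin. The virial route gives a pointwise-in-time lower bound directly from $E(u)>0$ and tightness. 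Your Morawetz route gives a spacetime bound and needs compactness to bound the potential energy near the origin from below.

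Three minor corrections.

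\begin{enumerate}
\item Your displayed Morawetz inequality omits the truncation error, which is of size $O_u(|I|/R)$. This is harmless. The left side is $\gtrsim_u|I|/R_0$, with $R_0$ the tightness radius, so fixing $R\gg R_0$ (or taking $R\sim|I|^{1/2}$) still forces $|I|\lesssim_u 1$.
\item Your hedge about the cascade is unnecessary. In the three-enemies normalization the cascade also satisfies $\inf_t N(t)\ge1$, so your lower bound applies verbatim. This is exactly why the paper merges soliton and cascade into one global compact scenario.
\item The obstacle you anticipate, decoupling cross terms in the exotic fractional norms, does not arise. Proposition~\ref{prop:stability} only asks for $\|e\|_{\dot N^1}$ small and for the free evolution of the initial gap to be small in $S$, with the $\dot H^1$ gap merely bounded. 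In $\dot N^1$, Lemma~\ref{lem:G-definition} controls the cross terms pointwise by $|x|^{-b}|v^k|^\alpha|\nabla v^j|$ and $|x|^{-b-1}|v^k|^\alpha|v^j|$. So the H\"older continuity of $F'$ causes no trouble there; the norms $X$ and $Y$ appear only inside the proof of the stability theorem.
\end{enumerate}
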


Theorem \ref{thm:main} resolves the defocusing energy-critical scattering problem for the inhomogeneous nonlinear Schr\"odinger equation \eqref{eq:INLS} with large non-radial data in $\dot H_x^1(\R^d)$ throughout the natural range $d\ge 3$ and $0<b<\min\{2,\frac d2\}$. In particular, it fills the high-dimensional gap in the previous non-radial theory and extends earlier low-dimensional or radial results such as \cite{Guzman-Xu-2025,Park-Defocusing-2024}. The proof combines Theorem~\ref{thm:dotH1-local}, the long-time stability result in Proposition~\ref{prop:stability}, and the concentration--compactness argument of Kenig and Merle \cite{Kenig-Merle-2006}; see Subsection~\ref{subsec1.3}.

 \subsection{Related works}\label{relate}
We briefly review previous INLS results and the classical energy-critical NLS theory that guides our argument.

\textsc{The case of NLS.}
For $b=0$, \eqref{eq:INLS} reduces to the standard nonlinear Schr\"odinger equation (NLS),
a basic model for nonlinear pulse propagation and self-focusing in optical media
\cite{Hasegawa-Tappert-1973,Zakharov-Shabat-1972}.
The energy-critical case is given by
\begin{equation}\label{eq:classical-ec-nls}
	\begin{cases}
		i\partial_t v+\Delta v=\mu|v|^{\frac{4}{d-2}}v,\\[2mm]
		v(0)=v_0\in\dot H^1(\R^d),
	\end{cases}
\end{equation}
where $d\ge 3$ and $\mu=\pm1$ with $\mu=1$ corresponds to the defocusing case, while $\mu=-1$ corresponds to the focusing case. Let $W$ denote the ground state solving
$\Delta W+|W|^{\frac{4}{d-2}}W=0.
$. The large-data theory for the classical energy-critical NLS  \eqref{eq:classical-ec-nls} is formulated by
\begin{enumerate}[(i)]
	\item If $\mu=+1$, then every maximal-lifespan solution to \eqref{eq:classical-ec-nls} is global and scatters.
	\item If $\mu=-1$ and
	$
	E(v_0)<E(W)$, $
	\|\nabla v_0\|_{L_x^2}<\|\nabla W\|_{L_x^2},
	$
	then every maximal-lifespan solution to \eqref{eq:classical-ec-nls} is global and scatters.
\end{enumerate}
The local theory in the energy space goes back to Cazenave--Weissler; see also Cazenave's monograph \cite{Cazenave-Weissler-1988,Cazenave-2003}. In the radial defocusing case, Bourgain proved global well-posedness and scattering in three dimensions and introduced the induction-on-energy method together with a localized Morawetz estimate \cite{Bourgain-1999}. Grillakis later gave an alternative proof of global existence in the same setting \cite{Grillakis-2000}. Tao extended the radial argument to all dimensions $d\ge3$, developing the high-dimensional bootstrap needed when the nonlinearity becomes weak \cite{Tao-2005}. For general data, Colliander, Keel, Staffilani, Takaoka, and Tao solved the three-dimensional problem by combining induction on energy with the interaction Morawetz inequality \cite{CKSTT-2008}; Ryckman--Visan treated $d=4$ \cite{Ryckman-Visan-2007}, and Visan completed the non-radial defocusing theory for $d\ge5$ \cite{Visan-2007}. Tao--Visan later isolated the high-dimensional stability theory in the form of exotic Strichartz perturbation estimates \cite{Tao-Visan-2005}.

\begin{table}[H]
	\centering
	\small
	\renewcommand{\arraystretch}{1.15}
	\begin{tabular}{|c|c|c|c|c|}
		\hline
		& $d=3$ & $d=4$ & $d=5$ & $d\ge 6$ \\
		\hline
		radial & Bourgain \cite{Bourgain-1999} & Tao \cite{Tao-2005} & Tao \cite{Tao-2005} & Tao \cite{Tao-2005} \\
		\hline
		nonradial & Colliander et al. \cite{CKSTT-2008} & Ryckman--Visan \cite{Ryckman-Visan-2007} & Visan \cite{Visan-2007} & Visan \cite{Visan-2007} \\
		\hline
	\end{tabular}
	\caption{Defocusing energy-critical NLS.}
	\label{tab:defocusing-nls}
\end{table}

On the focusing side, Kenig--Merle introduced the concentration--compactness/rigidity method in the radial setting \cite{Kenig-Merle-2006}. Their work made the compactness method the standard route for critical dispersive equations. In dimensions $d\ge5$, Killip--Visan removed the radial assumption and proved subthreshold scattering for general data with kinetic energy below that of the ground state \cite{Killip-Visan-2010}. Dodson later settled the non-radial four-dimensional problem \cite{Dodson-2019}. In the focusing case, the non-radial large-data theory in dimension $d=3$ remains open.

\begin{table}[H]
	\centering
	\small
	\renewcommand{\arraystretch}{1.15}
	\begin{tabular}{|c|c|c|c|c|}
		\hline
		& $d=3$ & $d=4$ & $d=5$ & $d\ge 6$ \\
		\hline
		radial & Kenig--Merle \cite{Kenig-Merle-2006} & Kenig--Merle \cite{Kenig-Merle-2006} & Kenig--Merle \cite{Kenig-Merle-2006} & Killip--Visan \cite{Killip-Visan-2010} \\
		\hline
		nonradial & open & Dodson \cite{Dodson-2019} & Killip--Visan \cite{Killip-Visan-2010} & Killip--Visan \cite{Killip-Visan-2010} \\
		\hline
	\end{tabular}
	\caption{Focusing energy-critical NLS.}
	\label{tab:focusing-nls}
\end{table}

\textsc{The case of INLS.} The general Cauchy theory for INLS has been developed in several directions. In the energy space, Farah obtained global existence and blowup criteria \cite{Farah-2016}. Guzm\'an established local and global well-posedness in $H^s$ by classical Strichartz estimates \cite{Guzman-2017}. Kim, Lee, and Seo treated the critical case by weighted-space methods \cite{Kim-Lee-Seo-2021}. A decisive step toward the energy-critical problem was made by Aloui--Tayachi, who introduced a Lorentz-space framework, reached the critical case, and proved continuous dependence and unconditional uniqueness \cite{Aloui-Tayachi-2021}. Very recently, Campos, Correia, and Farah obtained sharp well-posedness and ill-posedness results for general INLS in Sobolev spaces \cite{Campos-Correia-Farah-2025}.

The scattering theory now covers several regimes. In the focusing problem, Farah--Guzm\'an proved radial scattering below the ground state for the three-dimensional cubic model \cite{Farah-Guzman-2017}, and Miao--Murphy--Zheng removed the radial assumption in the same model \cite{Miao-Murphy-Zheng-2021}. Cardoso, Farah, Guzm\'an, and Murphy treated the intercritical non-radial regime \cite{Cardoso-Campos-2022}. In the energy-critical case, Cho--Hong--Lee and Cho--Lee obtained radial three-dimensional results by concentration--compactness and weighted-space arguments \cite{Cho-Lee-2021, Cho-Hong-Lee-2020}. Guzm\'an--Murphy proved below-threshold scattering for the non-radial model case $(d,b)=(3,1)$ \cite{Guzman-Murphy-2021}, and Guzm\'an--Xu extended the non-radial theory to dimensions $3,4,5$ under additional restrictions on the parameter range \cite{Guzman-Xu-2025}. Threshold dynamics have also begun to be studied; see, for example, \cite{Campos-Farah-Murphy-2026, Liu-Yang-Zhang-2024}.

In the defocusing problem, Dinh proved energy scattering in an intercritical range \cite{Dinh-2019}. Aloui--Tayachi later obtained further global existence and scattering results for more general coefficients $K(x)$ with decay at infinity \cite{Aloui-Tayachi-2024}. In the energy-critical regime, Park proved radial global well-posedness and scattering in most of the natural parameter range \cite{Park-Defocusing-2024}, while Guzm\'an--Xu obtained non-radial scattering in dimensions $3,4,5$ under additional restrictions on $b$ \cite{Guzman-Xu-2025}. At mass-critical regularity, Liu--Miao--Zheng recently proved large-data global well-posedness and scattering by carrying out concentration compactness in the Lorentz-space setting \cite{Liu-Miao-Zheng2025}.

\begin{table}[H]
	\centering
	\small
	\renewcommand{\arraystretch}{1.15}
	\begin{tabular}{|c|L{0.45\textwidth}|L{0.25\textwidth}|}
		\hline
		& $d=3,4,5$ & $d\ge 6$ \\
		\hline
		radial & Park \cite{Park-Defocusing-2024} for $0<b< \min\{2,\frac d2\}$ in $d=3$, for $1\le b<2$ in $d=4$, and for $\frac12\le b\le \frac54$ in $d=5$ & Park \cite{Park-Defocusing-2024} for $0<b< \min\{2,\frac d2\}$ \\
		\hline
		nonradial & Guzm\'an--Xu \cite{Guzman-Xu-2025} for $0<b\le \min\{\frac{6-d}{2},\frac4d\}$ & open \\
		\hline
	\end{tabular}
	\caption{Defocusing energy-critical INLS.}
	\label{tab:defocusing-inls-sobolev}
\end{table}

Regarding the entries in Table~\ref{tab:defocusing-inls-sobolev}, from the well-posedness viewpoint, there are two main frameworks: the classical Sobolev setting used by Guzm\'an--Xu \cite{Guzman-Xu-2025}, and the Sobolev--Lorentz framework of Aloui--Tayachi \cite{Aloui-Tayachi-2021}, which is used in Park~\cite{Park-Defocusing-2024} and in the present paper.

\paragraph{\textbf{Our problem.}}
To our knowledge, the defocusing energy-critical problem for general non-radial $\dot H^1$ data in the full range
\[
  d\ge 3,
  \qquad
  0<b<\min\Bigl\{2,\frac d2\Bigr\},
\]
was open. Theorem~\ref{thm:main} resolves it.

\paragraph{\textbf{Comparison with previous results.}}
Our result extends the Lorentz-space local theory of Aloui--Tayachi~\cite{Aloui-Tayachi-2021} and the earlier radial or low-dimensional scattering results of Park, Guzm\'an--Murphy, and Guzm\'an--Xu~\cite{Park-Defocusing-2024,Guzman-Murphy-2021,Guzman-Xu-2025}. In particular, it is the first defocusing scattering result for general non-radial $\dot H^1$ data in all dimensions $d\ge 3$ and $0<b<\min\{2,\frac d2\}$. It closes the high-dimensional gap and removes the additional restrictions on $b$ from previous low-dimensional works.

\subsection{Difficulties and strategy of the proof}\label{subsec1.3}

The proof combines a Sobolev--Lorentz stability theory for the energy-critical INLS with the concentration--compactness/rigidity method of Kenig--Merle.

The first difficulty is perturbative. When $d\ge 6$,
\[
  \alpha=\frac{4-2b}{d-2}<1,
\]
so the derivative of the nonlinearity is only H\"older continuous. As a result, the short-time perturbation argument cannot be closed directly in $\dot S^1$.

To overcome this, we develop an exotic stability theory in Sobolev--Lorentz spaces. We introduce an exotic norm $X(I)$, which controls a scale-invariant fractional derivative of the solution, together with its dual space $Y(I)$ for the Duhamel term. As in Tao--Visan's high-dimensional theory for the classical energy-critical NLS \cite{Tao-Visan-2005}, $S(I)$ is the natural norm for subdividing time intervals, whereas $X(I)$ is the space in which the H\"older nonlinearity becomes perturbative. Interpolating between $S(I)$ and $\dot S^1(I)$ turns $S$-smallness into the $X$-smallness needed for the short-time argument. This leads to the following stability result.

\begin{proposition}[\emph{Long-time stability for the energy-critical INLS}]
\label{prop:stability}

Let \(I \subset \mathbb{R}\) be a compact time interval, and let \(\widetilde{u}\) solve the approximate equation
\begin{equation}\label{eq:approx-eqn-ltp}
(i\partial_t + \Delta)\widetilde{u} = G(\widetilde{u}) + e
\end{equation}
on \(I \times \mathbb{R}^d\) with initial data \(\widetilde{u}(t_0) \in \dot H_x^1(\mathbb R^d) \) , where $G$ is defined in Lemma~\ref{lem:G-definition}. Assume that
\begin{equation}\label{eq:ltp-h1}
\|\widetilde{u}\|_{L_t^\infty \dot{H}_x^1(I \times \mathbb{R}^d)} \lesssim E,
\end{equation}
\begin{equation}\label{eq:ltp-S}
\|\widetilde{u}\|_{S(I)} \lesssim L,
\end{equation}
for some positive constants \(E\) and \(L\). Let \(t_0 \in I\) and assume that \(u(t_0)\) satisfies
\begin{equation}\label{eq:ltp-gap}
\|u(t_0) - \widetilde{u}(t_0)\|_{\dot{H}_x^1(\mathbb{R}^d)} \le E',
\end{equation}
for some positive constant \(E'\). Assume further the smallness conditions
\begin{equation}\label{eq:ltp-free}
\bigl\|e^{i(t-t_0)\Delta}\bigl(u(t_0) - \widetilde{u}(t_0)\bigr)\bigr\|_{S(I)} \le \varepsilon,
\end{equation}
and
\begin{equation}\label{eq:ltp-error}
\|e\|_{\dot{N}^1(I)} \le \varepsilon,
\end{equation}
for some \(0 < \varepsilon < \varepsilon_1(E,E',L)\). Then there exists a unique strong solution \(u: I \times \mathbb{R}^d \to \mathbb{C}\) of \eqref{eq:INLS} with initial data \(u(t_0)\) at time \(t = t_0\), satisfying
\begin{equation}\label{eq:ltp-conc1}
\|u - \widetilde{u}\|_{S(I)} \le C(E,E',L)\varepsilon^c,
\end{equation}
\begin{equation}\label{eq:ltp-conc2}
\|u - \widetilde{u}\|_{\dot{S}^1(I)} \le C(E,L)E',
\end{equation}
and
\begin{equation}\label{eq:ltp-conc3}
\|u\|_{\dot{S}^1(I)} \le C(E,E',L),
\end{equation}
where \(c \in (0,1)\) depends only on \(d\) and \(b\).

\end{proposition}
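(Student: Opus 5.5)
The plan follows Tao--Visan \cite{Tao-Visan-2005}, adapted to the weighted nonlinearity through Lorentz-space H\"older inequalities with $|x|^{-b}\in L^{\frac db,\infty}$. There are two stages: a short-time perturbation result in which $\widetilde u$ is small in $S(I)$, followed by an iteration over a partition of $I$.

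\emph{Exotic norms and estimates.} First I would fix exponents and set
\[
X(I)=L_t^{q_0}\dot W_x^{s_0;r_0,2},\qquad s_0<\alpha<1 \ (d\ge 6).
\]
Here $s_0$ is a small fractional regularity, and the pair is chosen so that $X$ is scale-invariant (energy-critical scaling). The dual Duhamel space $Y(I)=L_t^{\tilde q'}\dot W_x^{s_0;\tilde r',2}$ is chosen so that the exotic inhomogeneous Strichartz estimate (Foschi type)
\[
\Bigl\|\int_{t_0}^t e^{i(t-s)\Delta}F(s)\,ds\Bigr\|_{X}\lesssim\|F\|_{Y}
\]
holds. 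By interpolation between $S$ and $\dot S^1$,
\[
\|e^{i(t-t_0)\Delta}w\|_{X}\lesssim \|e^{i(t-t_0)\Delta}w\|_S^{\theta}\,\|w\|_{\dot H^1}^{1-\theta}.
\]
The key nonlinear estimate is, with $w=u-\widetilde u$,
\[
\|G(\widetilde u+w)-G(\widetilde u)\|_{Y}\lesssim \bigl(\|\widetilde u\|_X+\|w\|_X\bigr)^{\alpha}\|w\|_X .
\]
I would prove it by writing the difference as $w\int_0^1\partial_zG(\widetilde u+\theta w)\,d\theta+\dots$. One then applies the fractional chain rule for H\"older continuous functions of order $\alpha$ (Visan's lemma), with $s_0<\alpha$ so that it applies. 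The factor $|x|^{-b}$ is handled by O'Neil's Lorentz H\"older inequality together with a fractional Leibniz rule (or Hardy-type bound $\||x|^{-b}f\|\lesssim\||\nabla|^{b}f\|$-style) for the weight.

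\emph{Short-time perturbation.} Assume $\|\widetilde u\|_{S(I)}\le\delta$. I would first upgrade this to $\|\widetilde u\|_{\dot S^1(I)}\lesssim E$ by Strichartz on the equation for $\widetilde u$, using $\|\nabla G(u)\|_{\dot N^0}\lesssim\|u\|_{S}^{\alpha}\|u\|_{\dot S^1}$ (Lorentz H\"older with the weight), then a bootstrap. Interpolation then gives $\|\widetilde u\|_X\lesssim\delta^{c}E^{1-c}$, which is small. Writing Duhamel for $w$ and bootstrapping in $X$ gives $\|w\|_X\lesssim\varepsilon^{c}$, since the forcing term $e$ is controlled in $Y$ via $\dot N^1\hookrightarrow$ (Sobolev) $Y$-type bounds. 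Next I would recover $\|w\|_{\dot S^1}\lesssim E'$ from Strichartz in $\dot S^1$, bounding the nonlinear difference crudely in $\dot N^1$ by $(\|\widetilde u\|_{\dot S^1}+\|w\|_{\dot S^1})$ times powers of $S$/$X$ norms; no H\"older difference of $\nabla G$ is needed there. Finally $\|w\|_S\lesssim\varepsilon^c$ follows from the $X$ bound by Sobolev embedding, or by interpolating the $X$ and $\dot S^1$ bounds.

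\emph{Long time.} Partition $I$ into $J_0\sim(L/\delta)^{\frac{2(d+2)}{d-2}}$ intervals on which $\|\widetilde u\|_S\le\delta$. Apply the short-time result inductively; the error at step $j$ satisfies $\varepsilon_j\le C^j\varepsilon^{c^j}$, and the free-evolution smallness at each new $t_j$ is checked via Duhamel plus the previous bounds. Choosing $\varepsilon_1(E,E',L)$ small then yields \eqref{eq:ltp-conc1}--\eqref{eq:ltp-conc3}. Existence and uniqueness of $u$ come from the local theory combined with these a priori bounds.

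The main obstacle is the nonlinear difference estimate in $Y$ for $d\ge 6$. It requires simultaneously choosing $s_0<\alpha$, Lorentz exponents absorbing $|x|^{-b}\in L^{\frac db,\infty}$, and an exotic Strichartz range valid in the relevant admissible range, which may force case distinctions on $b$. If fractional derivatives falling on the weight prove problematic, my first attempt would be to use $|\nabla|^{s_0}(|x|^{-b}f)$ bounds via commutator/Hardy estimates in Lorentz spaces.
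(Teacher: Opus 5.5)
\textbf{Main gap: the pure-$X$ difference estimate is false for $\alpha<1$.}
Your architecture is the paper's. You use Tao--Visan exotic norms and absorb the weight through $|x|^{-b}\in L^{\frac db,\infty}$ and a weighted fractional Leibniz bound. You prove a short-time lemma under $S$-smallness of $\widetilde u$ and then iterate. However, the nonlinear estimate everything rests on is false in the H\"older regime $\alpha<1$. You claim
\[
\|G(\widetilde u+w)-G(\widetilde u)\|_{Y}\lesssim\bigl(\|\widetilde u\|_X+\|w\|_X\bigr)^{\alpha}\|w\|_X ,
\]
with $X$ carrying only $s$ derivatives. The H\"older chain rule you cite bounds $\||\nabla|^{s}F_z(v)\|$ by $\|v\|^{\alpha-\frac{s}{\sigma}}\||\nabla|^{\sigma}v\|^{\frac{s}{\sigma}}$ only when $\sigma>\frac{s}{\alpha}>s$. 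No scale-invariant norm with $s$ derivatives controls $\sigma$ derivatives.

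A concrete counterexample: take $\widetilde u=\phi(x)\cos(Nx_1)$ and $w=\eta\psi(x)$ on a unit time interval, with $\phi,\psi$ fixed bumps away from the origin and $\eta$ small. The left side is dominated by $\eta\,|x|^{-b}\psi\,|\nabla|^{s}\bigl(|\phi|^{\alpha}|\cos(Nx_1)|^{\alpha}\bigr)$, which has size $\eta N^{s}$. The right side has size $\eta N^{s\alpha}+\eta^{1+\alpha}$.

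What is true is Lemma~\ref{lem:nonlinear-estimateII}. There the coefficient is $\|v\|_X^{\alpha-\frac{s}{\sigma}}\|v\|_{\dot S^1}^{\frac{s}{\sigma}}$ with $\sigma=1-s$. Keeping the $X$-power positive forces $s<\frac{\alpha}{1+\alpha}$, which is strictly stronger than your condition $s_0<\alpha$.

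\textbf{Consequence for the short-time bootstrap.}
This changes the structure of the argument, not just constants. Your order was to close the $X$ bootstrap for $w$ first and recover $\|w\|_{\dot S^1}\lesssim E'$ afterwards. That order no longer works, because the $X$ inequality now reads
\[
\|w\|_X\lesssim\varepsilon+\|\widetilde u\|_X^{\alpha-\frac{s}{\sigma}}E^{\frac{s}{\sigma}}\|w\|_X+\|w\|_X^{1+\alpha-\frac{s}{\sigma}}\|w\|_{\dot S^1}^{\frac{s}{\sigma}} .
\]
This contains the non-small quantity $\|w\|_{\dot S^1}$, so the two norms must be closed jointly, as in the paper:
\begin{enumerate}[(i)]
\item Start from the $\dot N^1$ difference bound. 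For $\alpha<1$ the bad term $|x|^{-b}|w|^{\alpha}|\nabla\widetilde u|$ contributes $E\|w\|_S^{\alpha}$.
\item Use the interpolation $\|w\|_S\lesssim\|w\|_X^{c_s}\|w\|_{\dot S^1}^{1-c_s}$ and Young's inequality. This gives $\|w\|_{\dot S^1}\lesssim E'+C(E)\|w\|_X^{\gamma}$ for some $\gamma>0$.
\item Insert this into the $X$ inequality and run a continuity argument. It closes precisely because $\alpha-\frac{s}{\sigma}>0$.
\end{enumerate}

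\textbf{Secondary points.}
\begin{enumerate}[(i)]
\item The forcing $e$ cannot be placed in $Y$ from $\dot N^1$. With the paper's exponents $Y$ has time exponent $\widetilde q_*>2$, so a time-concentrated $e$ shows $\dot N^1\not\hookrightarrow Y$. Instead use
\[
\Bigl\|\int_{t_0}^{t}e^{i(t-\tau)\Delta}e\,d\tau\Bigr\|_X\lesssim\Bigl\|\int_{t_0}^{t}e^{i(t-\tau)\Delta}e\,d\tau\Bigr\|_{\dot S^1}\lesssim\|e\|_{\dot N^1}.
\]
\item Your iteration $\varepsilon_j\le C^j\varepsilon^{c^j}$ gives a final exponent $c^{J_0}$ with $J_0=J_0(E,L)$, whereas the statement requires $c=c(d,b)$. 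To fix this, convert the $S$-hypothesis on the free evolution into $X$-smallness once, by interpolating with its $\dot S^1$ bound $\lesssim E'$. Then propagate the free-evolution smallness at each $t_j$ directly in $X$, applying the exotic Strichartz estimate to the accumulated terms $\sum_{k<j}\|G(u)-G(\widetilde u)\|_{Y(I_k)}$. Only factors $C^j$ then accumulate, and you return to $S$ once at the end.
\item $\alpha<1$ exactly when $b>\frac{6-d}{2}$, so the H\"older regime also occurs for $d=3,4,5$. The exotic argument is needed whenever $\alpha<1$, not only for $d\ge 6$.
\end{enumerate}
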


We write $A\lesssim B$ if $A\le CB$ for some constant $C>0$, and $A\lesssim_M B$ when $C$ depends on $M$. We write $A\sim B$ if both $A\lesssim B$ and $B\lesssim A$, and $A\ll B$ if $A\le cB$ for some sufficiently small $c>0$. For $N\in 2^{\Z}$, define the Littlewood--Paley operators
\[
  \widehat{P_{\le N}u}(\xi):=\varphi(\frac{\xi}{N})\hat u(\xi),
  \qquad
  P_N := P_{\le N}-P_{\le \frac N2},
  \qquad
  P_{>N}:=I-P_{\le N},
\]
 where $\varphi\in C_c^\infty(\R^d)$  be radial with
$
  \varphi(\xi)=1 \text{ for } |\xi|\le 1,
  \varphi(\xi)=0 \text{ for } |\xi|\ge 2.
$
\begin{remark}\label{rem:stability-whole-line}
The same conclusion holds on $I=\R$, $I=[0,\infty)$, or $I=(-\infty,0]$, provided the corresponding hypotheses hold on that interval. One simply partitions the interval into finitely many pieces with small $S$-size and iterates Proposition~\ref{prop:stability}.
\end{remark}

With the perturbative theory in hand, we turn to the Kenig--Merle compactness argument. Following \cite{Kenig-Merle-2006}, we define the critical scattering size in terms of the kinetic energy. For $E_0\ge0$ define
\begin{equation}\label{eq:def-LofE}
  L(E_0)
  :=
  \sup\Bigl\{\|u\|_{ S(I)}:
  \text{$u$ is a strong solution to \eqref{eq:INLS} on $I$ and }
  \sup_{t\in I}\|\nabla u(t)\|_{L_x^2}^2\le E_0\Bigr\}.
\end{equation}
The small-data theory gives $L(E_0)<\infty$ for $E_0$ sufficiently small. If Theorem~\ref{thm:main} were false, there would exist a critical kinetic threshold $E_c\in(0,\infty)$ such that
\begin{equation}\label{eq:def-Ec}
L(E_0)<\infty \quad \text{for } E_0<E_c,
\qquad
L(E_0)=\infty \quad \text{for } E_0>E_c.
\end{equation}
The next step is to extract a minimal kinetic-energy blowup solution. Although $|x|^{-b}$ breaks translation symmetry, it also yields an important simplification, inspired by Miao--Murphy--Zheng~\cite{Miao-Murphy-Zheng-2021}. The linear profile decomposition still produces spatial cores $x_n^j$. If $\frac{|x_n^j|}{\lambda_n^j}\to\infty$, then after rescaling the coefficient becomes $|y+y_n^j|^{-b}$ with $|y_n^j|\to\infty$, so on every fixed compact set in the normalized variables the nonlinearity is negligible and the profile is asymptotically linear. Section~3 makes this precise by constructing global scattering solutions for such far-away profiles. Hence these profiles do not enter the minimal blowup dynamics, and scaling is the only surviving noncompact symmetry. This leads to the following compactness notion.

\begin{definition}[Almost periodicity modulo scaling]\label{def:almost-periodic-scaling}
A maximal-lifespan solution $u:I\times\R^d\to\CC$ with uniformly bounded kinetic energy is said to be \emph{almost periodic modulo scaling} if there exist a frequency scale function $N:I\to(0,\infty)$ and a compactness modulus $C:(0,\infty)\to(0,\infty)$ such that for every $\eta>0$ and every $t\in I$,
\begin{equation}\label{eq:apms-physical}
  \int_{|x|\ge \frac{C(\eta)}{N(t)}} |\nabla u(t,x)|^2\,dx
  +
  \int_{|\xi|\ge C(\eta)N(t)} |\xi|^2|\widehat u(t,\xi)|^2\,d\xi
 \le \eta,
\end{equation}
Equivalently, the orbit
\begin{equation}\label{eq:apms-orbit}
  \Bigl\{N(t)^{-\frac{d-2}{2}}u\bigl(t,N(t)^{-1}x\bigr):t\in I\Bigr\}
\end{equation}
is precompact in $\dot H^1(\R^d)$.
\end{definition}

The concentration--compactness argument then yields a minimal blowup solution which is almost periodic modulo scaling.

\begin{theorem}[Existence of a minimal kinetic-energy blowup solution]\label{prop:critical-element}
Assume that Theorem~\ref{thm:main} fails. Then there exist a maximal-lifespan solution $u_c:I_c\times\R^d\to\CC$ and a number $E_c\in(0,\infty)$ such that
\begin{equation}\label{eq:critical-element-kinetic}
  \sup_{t\in I_c}\|\nabla u_c(t)\|_{L_x^2}^2 = E_c,
\end{equation}
\begin{equation}\label{eq:critical-element-blowup-both-sides}
  \|u_c\|_{ S ([t,\sup I_c))}
  =
  \|u_c\|_{ S ((\inf I_c,t])}
  = \infty
  \qquad \text{for every } t\in I_c,
\end{equation}
and $u_c$ is almost periodic modulo scaling in the sense of Definition~\ref{def:almost-periodic-scaling}.
\end{theorem}

To finish the argument one still has to classify the possible behaviors of this minimal solution. For the classical non-radial energy-critical NLS, one has to distinguish three enemies after normalization: finite-time blowup, a soliton-like solution, and a low-to-high frequency cascade.

For the INLS, the decay of \(|x|^{-b}\) at spatial infinity makes profiles escaping to infinity asymptotically linear, so no spatial center parameter survives. Therefore, all global noncompact behaviors are absorbed into a single global compact scenario, besides the finite-time blowup scenario.

\begin{theorem}[Rigidity scenarios]\label{prop:normalized-rigidity-scenarios}
Assume that Theorem~\ref{thm:main} fails and let $u_c$ be the minimal kinetic-energy blowup solution from Theorem~\ref{prop:critical-element}. Then, after replacing $u_c$ by another minimal kinetic-energy blowup solution if necessary, and after a single time translation and a single scaling, one may assume that exactly one of the following holds:
\begin{enumerate}[(i)]
  \item Finite-time blowup scenario: either $\sup I_c< \infty$ or $|\inf I_c|< \infty$.
  \item Global compact scenario: $I_c=\R$ and
  \begin{equation}\label{eq:normalized-global-lower-bound}
    \inf_{t\in\R} N(t)\ge 1.
  \end{equation}
\end{enumerate}
\end{theorem}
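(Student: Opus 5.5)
We follow the standard reduction of Killip--Visan type, adapted to the situation where scaling is the only noncompact symmetry. Start from $u_c$ given by Theorem~\ref{prop:critical-element}. If $\sup I_c<\infty$ or $\inf I_c>-\infty$, we are in scenario (i) and there is nothing to prove. So assume $I_c=\R$. The task is then to produce, possibly from a different minimal blowup solution, one whose frequency scale is bounded below after a single rescaling.

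\textbf{Step 1: local constancy of $N(t)$.} First show that $N(t)$ is locally constant. Almost periodicity gives a $\delta=\delta(u_c)>0$ such that for every $t_0\in I_c$ we have $[t_0-\delta N(t_0)^{-2},t_0+\delta N(t_0)^{-2}]\subset I_c$ and $N(t)\sim_{u_c}N(t_0)$ on this interval. The argument is by contradiction, using the compactness of the orbit \eqref{eq:apms-orbit} together with the local theory of Theorem~\ref{thm:dotH1-local}. Suppose a sequence of normalized data converges in $\dot H^1$ to some $\phi$. The solution with data $\phi$ exists on a time interval of uniform size, and Proposition~\ref{prop:stability} transfers this existence, with bounded $S$-norm, to the sequence. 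Because no translation parameter is present, the weight $|x|^{-b}$ is invariant under the normalization; this is exactly what makes the stability argument clean here.

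\textbf{Step 2: the case $\inf_{t\in\R}N(t)>0$.} If $\inf_t N(t)>0$, rescale once by \eqref{eq:scaling} with $\lambda=\inf N$, up to a factor of $2$. After this rescaling $\inf N\ge1$, which is scenario (ii).

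\textbf{Step 3: the case $\inf_{t\in\R}N(t)=0$.} Otherwise pick times $t_n$ with
\[
\frac{N(t_n)}{\sup_{t\in[t_n-T_n,t_n+T_n]}N(t)}\to \text{large},
\]
following the Killip--Tao--Visan construction. For each $T>0$, and $n$ large, consider
\[
\operatorname{osc}(T)=\inf_{t_0}\frac{\sup\{N(t):|t-t_0|\le T N(t_0)^{-2}\}}{\inf\{N(t):|t-t_0|\le TN(t_0)^{-2}\}} .
\]
If $\operatorname{osc}(T)$ stays bounded as $T\to\infty$, take $t_n$ realizing near-infima, normalize at $t_n$ by scaling $N(t_n)$, and extract a limit $v_0$ of $N(t_n)^{-\frac{d-2}{2}}u_c(t_n,N(t_n)^{-1}\cdot)$ using compactness. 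The corresponding solution $v$ is again a minimal blowup solution. This uses Proposition~\ref{prop:stability} to show that $v$ has infinite $S$-norm in both directions, and the kinetic energy bound passes to the limit by weak lower semicontinuity plus minimality. Moreover $v$ has $N_v(t)\sim1$ on larger and larger windows, hence on all of $\R$, so after one scaling $\inf N_v\ge1$. If instead $\operatorname{osc}(T)\to\infty$, use the quantity $a(t_0)=\frac{N(t_0)}{\sup_{t\le t_0}N(t)}+\frac{N(t_0)}{\sup_{t\ge t_0}N(t)}$. If $\inf a=0$, one extracts a limit in which $N$ is bounded below on a half-line; if $\inf a>0$, one obtains a limit solution with $N$ bounded from below globally. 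After time reversal and one scaling, the limit in either case is a global solution with $\inf N\ge1$, or it blows up in finite time and falls into scenario (i).

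\textbf{Main obstacle.} The delicate point is the limiting step: showing that the extracted limit $v$ is again a maximal-lifespan minimal kinetic-energy blowup solution that is almost periodic modulo scaling, with frequency scale comparable to the limit of the rescaled $N$. This requires applying Proposition~\ref{prop:stability} uniformly on long time windows. It also requires checking that the scaling-only compactness persists under the limit. This holds because the orbit closure of a precompact set is still precompact modulo scaling, so no spatial or frequency centers can appear.
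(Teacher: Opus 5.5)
\paragraph{Verdict.} There is a genuine gap, in Step~3 (the case $\operatorname{osc}(T)\to\infty$), which is where all the content of the theorem lies.

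\paragraph{What is fine.} Your Steps~1--2 and the bounded-oscillation case are correct. In that case you still need Corollary~\ref{cor:Nt-at-blowup-sec4} to conclude that a limit with $N_v\sim1$ on its lifespan is global. Your framework (local constancy, compactness, renormalization at almost-minimal frequency) is also the one the paper's omitted proof alludes to.

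\paragraph{Why Step~3 fails when $\inf a>0$.} Your sub-case conclusions there are not correct. Take $N(t)=(1+|t|)^{-1/2}$, which is compatible with Lemma~\ref{lem:local-constancy-sec4}. Then $\operatorname{osc}(T)\ge(1+T)^{1/2}$ for $T\ge1$, and $a(t_0)\ge1$ for every $t_0$, so you are in your case $\inf a>0$. Yet no renormalized limit has $N$ bounded below. Renormalizing at bounded times returns a rescaled time-translate of $u_c$, with $N\to0$ at both ends. Renormalizing at $t_n\to+\infty$ gives $N_v(s)=(1+s)^{-1/2}$ on the lifespan $(-1,\infty)$. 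So in this regime the only possible outcome is (i), and you must prove that a finite-time blowup limit can always be extracted. Your proposal gives no mechanism for this. The closing sentence (``in either case the limit is global with $\inf N\ge1$ or blows up in finite time'') just restates the theorem.

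\paragraph{The case $\inf a=0$.} A half-line lower bound would not give $\inf_{t\in\R}N\ge1$, and time reversal cannot turn a half-line bound into a full-line one. The correct extraction is: pick $t_n^-<t_n<t_n^+$ with $N(t_n^\pm)/N(t_n)\to\infty$, and renormalize at a near-minimum of $N$ on $[t_n^-,t_n^+]$. This gives $N_v\ge\frac12$ on the whole lifespan, because a bounded rescaled distance to $t_n^\pm$ forces the limit to blow up before reaching it. The real obstacle is therefore the choice of renormalization times, not the limiting step you flagged.

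\paragraph{A way to close the argument.} Since (ii) merges the soliton and cascade scenarios, a two-case dichotomy suffices.

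\emph{Case (A).} Suppose there are $t_n$ and $T_n\to\infty$ with $N(t)\ge\frac12N(t_n)$ whenever $|t-t_n|\le T_nN(t_n)^{-2}$. Renormalize at $t_n$. The limit is a minimal blowup solution with $N_v\gtrsim1$ on its entire lifespan. This gives (i), or, after one scaling, (ii). Case (A) also subsumes your bounded-oscillation case.

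\emph{Case (B).} Otherwise there is $T$ such that every $t_0$ admits some $t$ with $|t-t_0|\le TN(t_0)^{-2}$ and $N(t)<\frac12N(t_0)$.
\begin{itemize}
\item Iterating from a fixed $t_0$ yields times $t_k$ with $N(t_{k+1})<\frac12N(t_k)$ and $|t_{k+1}-t_k|\le TN(t_k)^{-2}$.
\item Summing the geometric series gives $N(t_k)^2|t_k-t_0|\le T/3$, while $N(t_0)/N(t_k)\to\infty$.
\item Renormalizing at $t_k$, the rescaled copy of $u_c(t_0)$ sits at a rescaled time $s_k\in[-T/3,T/3]$. It equals $g_{0,N(t_k)}u_c(t_0)$ with $N(t_k)\to0$, so it tends weakly to $0$ in $\dot H^1$.
\item Suppose $s_*=\lim s_k$ lay in the lifespan of the limit $v$. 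Then stability would give $v_k(s_k)\to v(s_*)$ strongly, forcing $v(s_*)=0$ and hence $v\equiv0$. This is impossible, since $\inf_t\|\nabla u_c(t)\|_{L_x^2}>0$ by the small-data theory.
\end{itemize}
So $v$ blows up in finite time, which is (i).
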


The remainder of the paper is devoted to excluding these two scenarios. Finite-time blowup is ruled out by a reduced Duhamel formula, while the global compact scenario is excluded by a localized virial argument centered at the origin. This completes the proof of Theorem~\ref{thm:main}.

\subsection{Organization of the paper}

The rest of the paper is organized as follows. Section~2 develops the notation, local theory, and stability estimates in the Sobolev--Lorentz framework. Section~3 carries out the concentration--compactness reduction, including the linear profile decomposition, the treatment of far-away profiles, the Palais--Smale condition, and the proof of Theorem~\ref{prop:critical-element}. Section~4 contains the rigidity argument and excludes the two remaining scenarios, thereby proving Theorem~\ref{thm:main}. Appendix~\ref{sec:appendix-lorentz} collects several Sobolev--Lorentz estimates used in the main text.

\section{Perturbation theory and local well-posedness}\label{sec2}

% ================================================================

In this section, we shall establish the stability  for the energy-critical INLS, i.e., Proposition \ref{prop:stability}, and then give the proof of Theorem \ref{thm:dotH1-local}.

Let us consider the general energy-critical INLS equation:
\begin{equation}\label{eq:general-ec-inls}
\begin{cases}
  i\partial_t u+\Delta u=\mu|x|^{-b}|u|^{\alpha}u,\\[2mm]
  u(0,x)=u_0(x),
\end{cases}
\end{equation}
where $\mu = \pm 1$ denotes the defocusing ($\mu=1$) and focusing ($\mu=-1$) cases, respectively, and the parameters $b$ and $\alpha$ satisfy the hypotheses stated in \eqref{eq:assumptions}.

\begin{lemma}\label{lem:G-definition}
Let $G(u):=|x|^{-b}F(u)$, where $F(u):=\mu|u|^{\alpha}u$. Then the following nonlinear estimates hold:
	\begin{equation}\label{eq:basic-pointwise-difference}
		|G(u)-G(v)|\lesssim |x|^{-b}\bigl(|u|^\alpha+|v|^\alpha\bigr)|u-v|,
	\end{equation}
	and\begin{equation}\label{D-basic-nonlinear-estimates}
		\bigl|\nabla\bigl(G(u)-G(v)\bigr)\bigr|
		\lesssim
		|x|^{-b-1}\bigl(|u|^\alpha+|v|^\alpha\bigr)|u-v|
		+
		|x|^{-b}|u|^\alpha|\nabla(u-v)|
		+ E,
	\end{equation}
	where
	\[
	E \lesssim
	\begin{cases}
		|x|^{-b}\bigl(|u|^{\alpha-1}+|v|^{\alpha-1}\bigr)|\nabla v|\,|u-v|,
		& \text{if } \alpha \ge 1, \\[6pt]
		|x|^{-b}|\nabla v|\,|u-v|^\alpha,
		& \text{if } 0<\alpha< 1.
	\end{cases}
	\]
\end{lemma}

\begin{proof}
The inequalities follow by direct computation, so we omit the details.
\end{proof}

For the high-dimensional case $d\ge 6$ one has $\alpha<1$, so the derivative of the energy-critical nonlinearity is only H\"older continuous. Following the Tao--Visan exotic perturbation method \cite{Tao-Visan-2005}, we therefore work with a fractional derivative of order $s<\frac{\alpha}{1+\alpha}$.

Let
\begin{equation}\label{eq:2.8}
0 < s < \frac{\alpha}{1+\alpha}, \quad q_* := \frac{\alpha+2}{s}, \quad \widetilde{q}_* := \frac{\alpha+2}{(1+\alpha)s},
\end{equation}
and we introduce the exotic norms
\begin{align}
\|u\|_{X^0(I)} &:= \|u\|_{L_t^{q_*} L_x^{r_0,2}(I\times\mathbb{R}^d)},\!
&&\frac{2}{q_*} + \frac{d}{r_0} = \frac{d}{2} - 1, \notag \\
\|u\|_{X(I)} &:= \bigl\||\nabla|^s u\bigr\|_{L_t^{q_*} L_x^{r_*,2}(I\times\mathbb{R}^d)},\!
&&\frac{2}{q_*} + \frac{d}{r_*} = \frac{d}{2} - (1-s),\notag \\
\|u\|_{Y(I)} &:= \bigl\||\nabla|^s u\bigr\|_{L_t^{\widetilde{q}_*} L_x^{\widetilde{r}_*,2}(I\times\mathbb{R}^d)},\!
&&\frac{2}{\widetilde{q}_*} + \frac{d}{\widetilde{r}_*} = \frac{d}{2} + (1+s), \notag
\end{align}
where $X^0(I)$ remains tied to one full derivative, $X(I)$ captures the fractional derivative required by the short-time argument, and $Y(I)$ matches the Duhamel term at the critical scaling.

The next few lemmas provide several basic estimates for the nonlinearity, which will play a key role in the perturbative analysis.
\begin{lemma}[Exotic Strichartz estimate]\label{eq:exotic-Strichartz-estimate}
Let $I$ be a compact time interval containing $t_0$, then
\begin{equation}\label{eq:2.12}
\left\|\int_{t_0}^{t} e^{i(t-\tau)\Delta}f(\tau)\,d\tau\right\|_{X(I)}
\lesssim
\|f\|_{Y(I)}.
\end{equation}
\end{lemma}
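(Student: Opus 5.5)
Since $|\nabla|^s$ commutes with the Duhamel operator, it suffices to prove the non-endpoint inhomogeneous Strichartz estimate
\[
\Bigl\|\int_{t_0}^{t} e^{i(t-\tau)\Delta}F(\tau)\,d\tau\Bigr\|_{L_t^{q_*}L_x^{r_*,2}(I\times\R^d)}
\lesssim \|F\|_{L_t^{\widetilde q_*'}L_x^{\widetilde r_*',2}(I\times\R^d)}
\]
with $F=|\nabla|^s f$. Here $\widetilde q_*'$, $\widetilde r_*'$ denote the exponents appearing in $Y$, i.e.\ I rename the $Y$-exponents as dual exponents. The pair $(q_*,r_*)$ has $\frac2{q_*}+\frac d{r_*}=\frac d2-(1-s)$, so it is $\dot H^{1-s}$-admissible. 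The $Y$-pair satisfies $\frac2{\widetilde q_*}+\frac d{\widetilde r_*}=\frac d2+1+s$; writing it as the dual of $(a,b)$ gives $\frac2a+\frac db=\frac d2-(1-s)$, so it is dual to a $\dot H^{-(1-s)}$-type (i.e.\ "$\dot H^{1-s}$-admissible" in the Foschi/Kato sense) pair. This is exactly the exotic inhomogeneous Strichartz setting of Foschi, used by Tao--Visan. The scaling condition $\frac1{q_*}+\frac1{a}=\frac{d}{2}(1-\frac1{r_*}-\frac1b)$ is automatic from the two relations, and the time-exponent relation $\frac1{q_*}+\frac1{\widetilde q_*'}<1$ follows since $\frac1{q_*}+1-\frac1{\widetilde q_*}=1-\frac{\alpha s}{\alpha+2}<1$.

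\textbf{Step 1 (dispersive estimate in Lorentz spaces).} Interpolate the $L^1\to L^\infty$ bound $|t-\tau|^{-d/2}$ with $L^2$ unitarity to get $\|e^{i(t-\tau)\Delta}g\|_{L^{r,2}}\lesssim|t-\tau|^{-d(\frac12-\frac1r)}\|g\|_{L^{r',2}}$ for $2<r<\infty$. Real interpolation gives the Lorentz second index for free. For mismatched spatial exponents $r_*\ne b$, I would instead use the $TT^*$-free approach of Foschi. This means decomposing dyadically in $|t-\tau|\sim 2^k$, and bounding each piece both by dispersive decay $L^{b'}\to L^{r_*}$ (valid when the exponents are in the "acceptable" range, using Hardy--Littlewood--Sobolev in time) and by energy/Strichartz bounds. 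Then sum via bilinear real interpolation (Keel--Tao style), which naturally outputs Lorentz $L^{r,2}$ spaces.

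\textbf{Step 2 (verify acceptability conditions).} Check the Foschi conditions: $\frac1{q_*},\frac1a<d(\frac12-\frac1{r_*})$ and the analogous bounds for $b$, plus $r_*,b<\frac{2d}{d-2}$-type constraints. With $q_*=\frac{\alpha+2}{s}$ and $s<\frac{\alpha}{1+\alpha}$ small, $r_*$ is close to $\frac{2d}{d-2}\cdot$ (a perturbation), and these inequalities should reduce to $s<\frac{\alpha}{1+\alpha}$ and $d\ge3$. Where an endpoint is borderline, I will use HLS in time after the dispersive bound: $\||t|^{-\gamma}*\|F\|_{L^{b'}}\|_{L^{q_*}}\lesssim\|F\|_{L^{a'}_tL^{b'}_x}$ with $\gamma=d(\frac12-\frac1{r_*})$ when $r_*=b$. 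For the general case, I will apply the Christ--Kiselev lemma to remove the truncation $\tau<t$, which is legitimate since $\widetilde q_*'<q_*$.

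\textbf{Main obstacle.} The main obstacle is Step 2: the pairs are not $L^2$-admissible, so one must check that the exponents lie in Foschi's admissible region and handle the Lorentz refinement. I expect the cleanest route is to cite Foschi's inhomogeneous estimate (as in Tao--Visan) for Lebesgue spaces. I would then upgrade to $L^{r,2}$ by real interpolation between two nearby admissible exponent configurations sharing the same scaling, which is possible because the admissible region is open.
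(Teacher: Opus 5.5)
You have the right tools, but you never do the one computation that decides which route applies, and the route you commit to has a hole. Add the two defining relations. Since $\frac{2}{q_*}+\frac{2}{\widetilde q_*}=\frac{2s}{\alpha+2}+\frac{2(1+\alpha)s}{\alpha+2}=2s$, you get $\frac{d}{r_*}+\frac{d}{\widetilde r_*}=\bigl(\frac d2-1+s\bigr)+\bigl(\frac d2+1+s\bigr)-2s=d$, i.e.\ $\widetilde r_*=r_*'$. In your notation $b=r_*$, so the ``matched'' case you set aside as borderline is the only case.

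Your Step~1 then finishes the proof with HLS, exactly as in the paper:
\begin{itemize}
\item The $L^{r_*',2}\to L^{r_*,2}$ dispersive bound, combined with Minkowski's inequality in the normable space $L^{r_*,2}$, bounds the $L_x^{r_*,2}$ norm of the Duhamel term at time $t$ by $\int_I|t-\tau|^{-\theta}\|F(\tau)\|_{L_x^{\widetilde r_*,2}}\,d\tau$.
\item Here $\theta=d(\frac12-\frac1{r_*})=1-\frac{\alpha s}{\alpha+2}\in(0,1)$, so indeed $2<r_*<\frac{2d}{d-2}$.
\item One-dimensional Hardy--Littlewood--Sobolev from $L_t^{\widetilde q_*}$ to $L_t^{q_*}$ closes the estimate, because $\frac1{\widetilde q_*}-\frac1{q_*}=\frac{\alpha s}{\alpha+2}=1-\theta$.
\item The integrand is nonnegative, so truncating to $\tau$ between $t_0$ and $t$ is harmless.
\end{itemize}
No Christ--Kiselev, no Foschi, and no Keel--Tao interpolation are needed. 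There is also a sign slip: the dual pair satisfies $\frac2a+\frac db=\frac d2+(1-s)$, not $\frac d2-(1-s)$. With your formula the scaling condition would not be automatic.

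The route you call cleanest (Foschi in Lebesgue spaces, then real interpolation up to $L^{r,2}$) does not work as described, because real interpolation does not commute with the outer time norm. By Minkowski's inequality, $(L_t^p(A_0),L_t^p(A_1))_{\vartheta,2}\hookrightarrow L_t^p\bigl((A_0,A_1)_{\vartheta,2}\bigr)$ holds when $p\ge 2$, and the reverse embedding holds when $p\le 2$. In general you do not get both.
\begin{itemize}
\item On the output side, $p=q_*>2$ gives the favorable direction.
\item On the input side, you need $L_t^{\widetilde q_*}L_x^{\widetilde r_*,2}$ to embed into the interpolation space, which requires $\widetilde q_*\le 2$. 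But $\widetilde q_*=\frac{\alpha+2}{(1+\alpha)s}>\frac{\alpha+2}{\alpha}>2$ whenever $\alpha<2$, e.g.\ for every $d\ge 4$.
\item Interpolating with second index $\widetilde q_*$ instead only yields output in $L_t^{q_*}L_x^{r_*,\widetilde q_*}$. That is weaker than the $L_x^{r_*,2}$ control built into $X(I)$.
\end{itemize}
So as written, your main route does not deliver the Lorentz-space estimate. The fix is simply to verify $\widetilde r_*=r_*'$ and run Step~1 plus HLS.
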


\begin{proof}
Set $
g(\tau):=|\nabla|^s f(\tau).$,  it suffices to prove
\begin{equation}\label{eq:2.13}
\left\|\int_{t_0}^{t} e^{i(t-\tau)\Delta}g(\tau)\,d\tau\right\|_{L_t^{q_*}L_x^{r_*,2}(I\times\mathbb{R}^d)}
\lesssim
\|g\|_{L_t^{\widetilde q_*}L_x^{\widetilde r_*,2}(I\times\mathbb{R}^d)}.
\end{equation}
Applying the dispersive estimate (cf. Proposition~\ref{prop:dispersive-estimates}) and Minkowski inequality,
\[
\left\|\int_{t_0}^{t} e^{i(t-\tau)\Delta}g(\tau)\,d\tau\right\|_{L_x^{r_*,2}(\mathbb R^d)}
\lesssim
\int_{t_0}^{t} |t-\tau|^{-\theta}\|g(\tau)\|_{L_x^{\widetilde r_*,2}(\mathbb R^d)}\,d\tau,
\]
where \(\theta=d(\frac12-\frac1{r_*})=1-s+\frac{2}{q_*}\).

Finally, by the Hardy–Littlewood–Sobolev inequality, we obtain
\[
\left\|\int_{t_0}^{t} |t-\tau|^{-\theta}\|g(\tau)\|_{L_x^{\widetilde r_*,2}}\,d\tau\right\|_{L_t^{q_*}(I)}
\lesssim
\|g\|_{L_t^{\widetilde q_*}L_x^{\widetilde r_*,2}(I\times\mathbb{R}^d)}.
\]
Hence \eqref{eq:2.13} follows, and therefore \eqref{eq:2.12} is proved.
\end{proof}

\begin{lemma}\label{lem:interpolation}
Let $I$ be a compact time interval and $0\le s\le 1$. Then
\begin{gather}
\|u\|_{\dot N^1(I)}\lesssim \|u\|_{Y(I)},
\label{eq:interp-0}\\
\|u\|_{X^0(I)}\lesssim \|u\|_{X(I)}\lesssim \|u\|_{\dot S^1(I)},
\label{eq:interp-1}\\
\|u\|_{X(I)}
\lesssim
\|u\|_{S(I)}^{\theta_s}\|u\|_{\dot S^1(I)}^{1-\theta_s},
\label{eq:interp-2}\\
\|u\|_{S(I)}
\lesssim
\|u\|_{X(I)}^{c_s}\|u\|_{\dot S^1(I)}^{1-c_s}.
\label{eq:interp-3}
\end{gather}
\end{lemma}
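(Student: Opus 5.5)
The plan is to reduce \eqref{eq:interp-0} to a single fixed-time estimate by choosing one particular dual admissible pair. By Definition~\ref{strichartz-norm}, $\|u\|_{\dot N^1(I)}$ is an infimum over $(q,r)\in\Lambda$. It therefore suffices to exhibit one pair $(q,r)\in\Lambda$ with
\[
\|\nabla u\|_{L_t^{q'}L_x^{r',2}(I\times\R^d)}\lesssim \bigl\||\nabla|^s u\bigr\|_{L_t^{\widetilde q_*}L_x^{\widetilde r_*,2}(I\times\R^d)}.
\]

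\textbf{Choice of the pair.} The time exponents must agree, so I would take $q'=\widetilde q_*$, that is, $q=\frac{\alpha+2}{\alpha+2-(1+\alpha)s}$. Admissibility then forces $\frac{d}{r'}=\frac d2+2-\frac{2}{\widetilde q_*}$. Comparing with the defining relation for $\widetilde r_*$ gives
\[
\frac d{r'}=\frac d{\widetilde r_*}+(1-s).
\]
This is the expected scaling balance: the $s$ derivatives in $Y$ against the one derivative in $\dot N^1$. I would then check that $2\le q\le\infty$ and $2\le r\le\infty$ for $0<s<\frac{\alpha}{1+\alpha}$. Since $\widetilde q_*>2$ exactly when $s<\frac{\alpha+2}{2(1+\alpha)}$, this holds, and the endpoint case is permitted because $d\ge3$.

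\textbf{The fixed-time step.} Next I would write $\nabla u=R\,|\nabla|^{1-s}\bigl(|\nabla|^s u\bigr)$, where $R$ is a vector of Riesz transforms, bounded on $L^{p,2}$ for $1<p<\infty$. It would then remain to bound $|\nabla|^{1-s}$ from $L^{\widetilde r_*,2}$ into $L^{r',2}$. Finally, one integrates in time with Hölder.

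\textbf{Main obstacle.} This fixed-time step is where I expect the argument to break. The Sobolev--Lorentz embedding (the Lorentz version of Hardy--Littlewood--Sobolev, from Appendix~\ref{sec:appendix-lorentz}) goes the other way. It maps $|\nabla|^{-(1-s)}:L^{r',2}\to L^{\widetilde r_*,2}$, trading $1-s$ derivatives for integrability. So it yields only the reverse inequality, not a gain of derivatives. A bound of $\nabla u$ by $|\nabla|^s u$ at the same scaling is not available for arbitrary $u$: taking $u$ frequency-localized at $N\to\infty$ and comparing both sides by Bernstein shows that the ratio of the two norms is not uniformly controlled.

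I would therefore first test \eqref{eq:interp-0} on high-frequency bumps to decide its true form, and only then fix the argument. I expect \eqref{eq:interp-0} needs to be read either in the reverse direction, or as the embedding of the associated Duhamel/Strichartz outputs (combining Lemma~\ref{eq:exotic-Strichartz-estimate} with \eqref{eq:interp-1}). The proof above would then be adapted to that form; the pair selection and the scaling bookkeeping carry over unchanged.
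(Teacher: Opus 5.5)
\textbf{Verdict: you are right that \eqref{eq:interp-0} is false as stated, but the proposal proves none of \eqref{eq:interp-1}--\eqref{eq:interp-3}.}

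\medskip

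You are right not to force \eqref{eq:interp-0}. As stated it is false for every $0<s<1$. The paper's one-line justification (``follows from Sobolev embedding'') does not prove it either. Sobolev--Lorentz only gives the reverse bound $\|F\|_{Y(I)}\lesssim\|\nabla F\|_{L_t^{\widetilde q_*}L_x^{\rho,2}}$ with $\frac1\rho=\frac1{\widetilde r_*}+\frac{1-s}{d}$, and that right-hand side is not even one of the norms in the infimum defining $\dot N^1$.

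Two corrections to your own reasoning:
\begin{enumerate}[(i)]
\item Your pair is not admissible. From $q'=\widetilde q_*>2$ you get $q<2$; you checked the condition on $q'$ instead of on $q$. When $\alpha\le 2$ (e.g.\ for all $d\ge 4$) this happens for every $s$ allowed by \eqref{eq:2.8}, so the time exponent of $Y$ lies outside the dual range $[1,2]$ altogether.
\item ``Comparing by Bernstein'' is not a counterexample. At a fixed time, the two norms of a scaling-adapted bump $N^{\frac{d-2}{2}}\phi(Nx)$ are comparable; Bernstein, like Sobolev, only bounds the $Y$-side by the $\nabla$-side. Also, since $\dot N^1$ is an infimum, a counterexample must beat every pair in $\Lambda$ at once.
\end{enumerate}

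A clean example: on $I=[0,1]$ take the time-independent function $u(t,x)=e^{iNx_1}\chi(x)$ with $0\neq\chi\in C_c^\infty$. Since $\nabla u=iNe_1u+e^{iNx_1}\nabla\chi$, one has $\|\nabla u\|_{L_t^{q'}L_x^{r',2}(I\times\R^d)}\gtrsim N$ uniformly over $(q,r)\in\Lambda$. On the other hand, Lemma~\ref{lem:G-N-lorentz} gives $\|u\|_{Y(I)}\lesssim\|u\|_{L^{\widetilde r_*,2}}^{1-s}\|\nabla u\|_{L^{\widetilde r_*,2}}^{s}\lesssim N^s$. The ratio is therefore $\gtrsim N^{1-s}$. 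Alternatively, a fixed Schwartz function held constant on $[0,T]$ has $\dot N^1$-norm $\gtrsim T^{1/2}$ but $Y$-norm $\sim T^{1/\widetilde q_*}$.

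As far as I can see, the later arguments only use the Duhamel-level bound $\bigl\|\int_{t_0}^t e^{i(t-\tau)\Delta}e\,d\tau\bigr\|_{X(I)}\lesssim\|e\|_{\dot N^1(I)}$. That follows from \eqref{eq:interp-1} and Strichartz, in line with your proposed reading.

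\medskip

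\textbf{The gap.} The proposal stops at \eqref{eq:interp-0}. It contains no argument for \eqref{eq:interp-1}--\eqref{eq:interp-3}, which are true and are the parts actually used in Lemma~\ref{lem:nonlinear-estimateII}, Proposition~\ref{thm:short-pert-sec4} and Proposition~\ref{prop:stability}. They go as in the paper:
\begin{itemize}
\item \textbf{First inequality of \eqref{eq:interp-1}.} This is Sobolev--Lorentz, because $\frac1{r_0}=\frac1{r_*}-\frac sd$.
\item \textbf{Second inequality of \eqref{eq:interp-1}.} Take the pair $(q_*,r)$ with $\frac dr=\frac d2-\frac2{q_*}$; it is admissible because $q_*=\frac{\alpha+2}{s}>2$. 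Since $\frac1{r_*}=\frac1r-\frac{1-s}{d}$, Sobolev--Lorentz applied to $|\nabla|^{s}u=|\nabla|^{-(1-s)}|\nabla|u$ gives $\|u\|_{X(I)}\lesssim\|\nabla u\|_{L_t^{q_*}L_x^{r,2}}\le\|u\|_{\dot S^1(I)}$.
\item \textbf{Estimate \eqref{eq:interp-2}.} Apply Lemma~\ref{lem:G-N-lorentz} with $\sigma=1$ and the identity $\frac1{r_*}=\frac{1-s}{r_0}+\frac sr$, then H\"older in $t$. This gives $\|u\|_{X(I)}\lesssim\|u\|_{X^0(I)}^{1-s}\|u\|_{\dot S^1(I)}^{s}$. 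Next, $X^0$, $S$ and Sobolev-embedded Strichartz norms all lie on the line $\frac2q+\frac d\rho=\frac d2-1$. So interpolation in $x$ (real interpolation, to keep the second Lorentz index $2$) plus H\"older in $t$ bounds $X^0$ by $S$ together with $L_t^\infty L_x^{\frac{2d}{d-2},2}$ if $q_*>\frac{2(d+2)}{d-2}$. If $q_*<\frac{2(d+2)}{d-2}$, use instead a Strichartz norm with smaller time exponent; the free parameter $s$ can be chosen to avoid equality.
\item \textbf{Estimate \eqref{eq:interp-3}.} This is the same interpolation with $S$ placed between $X^0$ and a $\dot S^1$-controlled norm, combined with the first inequality of \eqref{eq:interp-1}.
\end{itemize}

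Your write-up should record \eqref{eq:interp-0} as a counterexample, or replace it by the Duhamel-level statement, and then supply these three arguments.
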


\begin{proof}
The estimate~\eqref{eq:interp-0} and~\eqref{eq:interp-1} follow from Sobolev embedding.
Choose an $L^2$-admissible pair $(q,r)$ so that
\[
\frac1{r_*}=\frac1r-\frac{1-s}{d}.
\]
By H\"older's inequality in time and Sobolev--Lorentz embedding in space,
\[
\bigl\||\nabla|^s u\bigr\|_{L_t^{q_*}L_x^{r_*,2}(\mathbb R^d)}
\lesssim
\|\nabla u\|_{L_t^qL_x^{r,2}(I\times\mathbb R^d)}
\lesssim
\|u\|_{\dot S^1(I)}.
\]
This proves \eqref{eq:interp-1}.

For \eqref{eq:interp-2}, interpolate between $X^0(I)$ and $\dot S^1(I)$ to obtain
\[
\|u\|_{X(I)}
\lesssim
\|u\|_{X^0(I)}^{1-\theta_0}
\|u\|_{\dot S^1(I)}^{\theta_0}
\]
for some $\theta_0\in(0,1)$. On the other hand, by H\"older's inequality in time and Sobolev--Lorentz embedding in space,
\[
\|u\|_{X^0(I)}
\lesssim
\|u\|_{S(I)}^{\theta_1}
\|u\|_{\dot S^1(I)}^{1-\theta_1}
\]
for some $\theta_1\in(0,1)$. Combining these two inequalities gives
\[
\|u\|_{X(I)}
\lesssim
\|u\|_{S(I)}^{(1-\theta_0)\theta_1}
\|u\|_{\dot S^1(I)}^{1-(1-\theta_0)\theta_1},
\]
which is \eqref{eq:interp-2} with
\(
\theta_s:=(1-\theta_0)\theta_1\in(0,1).
\)

By a similar argument, we obtain
\[
\|u\|_{S(I)}
\lesssim
\|u\|_{X^0(I)}^{c_s}
\|u\|_{\dot S^1(I)}^{1-c_s}
\]
for some $c_s\in(0,1)$. Using the first estimate in \eqref{eq:interp-1}, we conclude that
\[
\|u\|_{S(I)}
\lesssim
\|u\|_{X(I)}^{c_s}
\|u\|_{\dot S^1(I)}^{1-c_s},
\]
which is \eqref{eq:interp-3}.
\end{proof}

\begin{lemma}[\emph{Nonlinear estimate $\mathrm{I}$}]\label{lem:nonlinear-estimateI}
For any compact time interval $I$,
\begin{equation}\label{eq:2-16}
\|G(u)\|_{\dot N^1(I)}
\lesssim
\|u\|_{S(I)}^\alpha \|u\|_{\dot S^1(I)},
\end{equation}
and
\begin{equation}\label{eq:2-17}
\|G(u)\|_{Y(I)}
\lesssim
\|u\|_{X^0(I)}^\alpha \|u\|_{X(I)}.
\end{equation}
\end{lemma}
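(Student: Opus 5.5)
The plan is to estimate each piece of $G(u)=|x|^{-b}F(u)$ by H\"older's inequality in Lorentz spaces, placing the weight in $L^{\frac db,\infty}(\R^d)$ and absorbing any extra negative powers of $|x|$ into the solution through Hardy-type inequalities in Lorentz spaces. The Lorentz H\"older inequality I will use is
\[
\|fgh\|_{L^{p,2}}\lesssim \|f\|_{L^{p_1,\infty}}\|g\|_{L^{p_2,\infty}}\|h\|_{L^{p_3,2}},\qquad \tfrac1p=\tfrac1{p_1}+\tfrac1{p_2}+\tfrac1{p_3},
\]
which is assumed available from the appendix. In every product the factor carrying the second Lorentz index $2$ will be the one with a derivative. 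All other factors, including $u$ measured in $S(I)=L^{\frac{2(d+2)}{d-2}}_{t,x}$, only need to lie in $L^{p,\infty}$, and $L^{p}=L^{p,p}\subset L^{p,\infty}$ handles $u$.

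For \eqref{eq:2-16}, I write
\[
\nabla G(u)=|x|^{-b}\nabla F(u)+\nabla(|x|^{-b})F(u),\qquad \text{so that}\qquad |\nabla G(u)|\lesssim |x|^{-b}|u|^\alpha|\nabla u|+|x|^{-b-1}|u|^{\alpha+1}.
\]
I bound both terms in the dual space $L_t^{2}L_x^{\frac{2d}{d+2},2}$, which is admissible for $\dot N^0(I)$.

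For the first term I apply H\"older in time with $|u|^\alpha\in L_t^{\frac{2(d+2)}{\alpha(d-2)}}$ and $\nabla u\in L_t^q$. Since $\alpha(d-2)=4-2b$, this forces
\[
\frac1q=\frac12-\frac{2-b}{2(d+2)}=\frac{d+b}{2(d+2)}.
\]
In space I use $|x|^{-b}\in L^{\frac db,\infty}$, $u\in L^{\frac{2(d+2)}{d-2},\infty}$ and $\nabla u\in L^{r,2}$, with $r$ determined by the exponent count. Scaling invariance of every norm involved forces $(q,r)$ to be $L^2$-admissible, and I only need to check that $2<r<d$. This gives $\|u\|_{S}^\alpha\|\nabla u\|_{L^q_tL^{r,2}_x}\le \|u\|_S^\alpha\|u\|_{\dot S^1}$.

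For the second term I write $|x|^{-b-1}|u|^{\alpha+1}=|x|^{-b}|u|^{\alpha}\bigl(|x|^{-1}|u|\bigr)$. The Lorentz Hardy inequality $\||x|^{-1}u\|_{L^{r,2}}\lesssim\|\nabla u\|_{L^{r,2}}$, valid since $1<r<d$, then reduces it to the same exponent count.

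For \eqref{eq:2-17} I need $|\nabla|^s G(u)$ in $L_t^{\widetilde q_*}L_x^{\widetilde r_*,2}$. I apply a fractional Leibniz rule in Lorentz spaces to the product $|x|^{-b}\cdot F(u)$, which splits into two pieces.

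The first piece is the derivative falling on $F(u)$. Here the fractional chain rule for the $C^1$ map $F$ (valid for $0<s<1$ even when $\alpha<1$) gives the bound $|x|^{-b}|u|^\alpha |\nabla|^su$. H\"older with $u\in L_t^{q_*}L_x^{r_0,2}$, $|\nabla|^s u\in L_t^{q_*}L_x^{r_*,2}$ and $|x|^{-b}\in L^{\frac db,\infty}$ closes the estimate. The time exponents match exactly, since $(\alpha+1)/q_*=(1+\alpha)s/(\alpha+2)=1/\widetilde q_*$, and the space exponents match by scaling. This yields $\|u\|_{X^0}^\alpha\|u\|_X$.

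The second piece is the derivative falling on the weight, which behaves like $|x|^{-b-s}|u|^{\alpha+1}$. I treat it as before by writing it as $|x|^{-b}|u|^\alpha\bigl(|x|^{-s}|u|\bigr)$ and using the fractional Hardy inequality $\||x|^{-s}u\|_{L^{r_*,2}}\lesssim\||\nabla|^su\|_{L^{r_*,2}}$, valid for $sr_*<d$.

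The main obstacle is justifying this fractional product rule when one factor is the singular, non-smooth weight $|x|^{-b}$, since the standard Kato--Ponce estimates do not apply directly. I would first try to dodge it: use a Littlewood--Paley or dyadic spatial decomposition $|x|^{-b}=\sum_k |x|^{-b}\chi(2^{-k}x)$, on each piece of which $|\nabla|^s$ of the weight is bounded by $2^{-k(b+s)}$ uniformly. Alternatively, I would invoke the known Lorentz-space estimate
\[
\bigl\||\nabla|^s(|x|^{-b}f)\bigr\|_{L^{p,2}}\lesssim \||x|^{-b}|\nabla|^sf\|_{L^{p,2}}+\||x|^{-b-s}f\|_{L^{p,2}}
\]
from the Aloui--Tayachi/Park framework. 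A secondary check is that the chosen $s<\frac{\alpha}{1+\alpha}$ keeps every Lebesgue exponent in $(1,\infty)$ and satisfies the Hardy ranges $r<d$ and $sr_*<d$.
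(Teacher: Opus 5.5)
Your overall route is the paper's: Lorentz--H\"older with $|x|^{-b}\in L^{\frac db,\infty}$, Hardy for the piece where $\nabla$ falls on the weight, and, for $Y$, a Leibniz rule on the weight followed by the fractional chain rule (the paper packages the weight step as Lemma~\ref{lem:weighted-fractional-multiplier}). However, the Hardy step in \eqref{eq:2-16} fails in part of the parameter range, and you assert $1<r<d$ without checking it.

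The exponent count forces $(q,r)=(\gamma,\rho)$ with $\gamma=\frac{2(d+2)}{d-2+2b}$ and $\rho=\frac{2d(d+2)}{d^2+4-4b}$. Your $\frac1q$ is miscomputed: $\frac{\alpha(d-2)}{2(d+2)}=\frac{2-b}{d+2}$, not $\frac{2-b}{2(d+2)}$. Moreover, $\rho<d$ holds if and only if $b<\frac{d(d-2)}{4}$, which fails for $d=3$ and $\frac34\le b<\frac32$. In the model case $(d,b)=(3,1)$ one has $\rho=\frac{10}{3}>3$. Then $\||x|^{-1}u\|_{L^{10/3}}=\infty$ for every smooth compactly supported $u$ with $u(0)\neq0$, although $\|\nabla u\|_{L^{10/3}}<\infty$. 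Passing to another dual admissible pair does not help, since the endpoint choice $L_t^2L_x^{\frac{2d}{d+2},2}$ already maximizes $\frac1\rho$. The paper's one-line treatment of this term uses the same factorization $|x|^{-b}|u|^\alpha\cdot|x|^{-1}u$ and has the same defect.

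The repair is to apply Hardy to $F(u)$ as a whole rather than to the single factor $u$:
\[
\bigl\||x|^{-b-1}F(u)\bigr\|_{L^{\frac{2d}{d+2},2}}
\lesssim \bigl\||x|^{-b}\bigr\|_{L^{\frac db,\infty}}\bigl\||x|^{-1}F(u)\bigr\|_{L^{m,2}}
\lesssim \|\nabla F(u)\|_{L^{m,2}},
\qquad \frac1m=\frac{d+2-2b}{2d}.
\]
This is legitimate exactly when $m<d$, i.e.\ $b<\frac d2$, which is the standing hypothesis. Since $|\nabla F(u)|\lesssim|u|^\alpha|\nabla u|$ and $\frac1m=\frac{2-b}{d+2}+\frac1\rho$, the H\"older step of your first term then closes the estimate. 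This is in effect the case $s=1$ of Lemma~\ref{lem:weighted-fractional-multiplier}, whose hypothesis $\frac{2d}{d+2}<\frac{d}{b+1}$ is again $b<\frac d2$.

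Your treatment of \eqref{eq:2-17} is fine. The fractional Hardy condition $sr_*<d$ is equivalent to $\frac{2s}{\alpha+2}<\frac{d-2}{2}$, which holds for all $s<\frac{\alpha}{1+\alpha}$; for $d=3$ it follows from $4\alpha<(1+\alpha)(2+\alpha)$. One caveat: the Kato--Ponce rule produces the product of norms $\||\nabla|^s|x|^{-b}\|_{L^{\frac{d}{b+s},\infty}}\|F(u)\|_{L^{\ell,2}}$, with $\frac1\ell=\frac1{\widetilde r_*}-\frac{b+s}{d}$. It does not produce the pointwise product $|x|^{-b-s}F(u)$. So close that piece with Sobolev embedding (on $F(u)$, as the paper does, or on $u$ after H\"older) rather than a pointwise Hardy bound; this needs only the same condition $sr_*<d$.
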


\begin{proof}
We first prove \eqref{eq:2-16}. Since
\[
\nabla G(u)=\nabla(|x|^{-b}F(u))=|x|^{-b}\nabla(F(u))+|x|^{-b-1}F(u),
\]
it suffices to estimate these two terms separately.

For the first term, set
\[
\gamma:=\frac{2(d+2)}{d-2+2b},
\qquad
\rho:=\frac{2d(d+2)}{d^2+4-4b}.
\]
A direct computation shows that $(\gamma,\rho)$ is an $L^2$-admissible pair, and by H\"older inequality,
\begin{align*}
\bigl\||x|^{-b}\nabla(F(u))\bigr\|_{L_t^2L_x^{\frac{2d}{d+2},2}(I\times\mathbb R^d)}
&\lesssim
\|u\|_{S(I)}^\alpha
\|\nabla u\|_{L_t^{\gamma}L_x^{\rho,2}(I\times\mathbb R^d)}.
\end{align*}
For the second term, write
\[
|x|^{-b-1}F(u)=|x|^{-b}|u|^\alpha\,|x|^{-1}u.
\]
Similarly, using H\"older's inequality as above and  Hardy's inequality, we prove \eqref{eq:2-16}.

We now prove \eqref{eq:2-17}. By Lemma~\ref{lem:weighted-fractional-multiplier},
\[
\|G(u)\|_{Y(I)}
\lesssim
\bigl\||\nabla|^sF(u)\bigr\|_{L_t^{\widetilde q_*}L_x^{r,2}(I\times\mathbb R^d)},
\]
where
\(
\frac1r=\frac1{\widetilde r_*}-\frac{b}{d}.
\)
Moreover,
\begin{equation}\label{q*r*}
\frac1{\widetilde q_*}=\frac\alpha{q_*}+\frac1{q_*},
\qquad
\frac1r=\frac\alpha{r_0}+\frac1{r_*}.
\end{equation}
Thus, using Lemma~\ref{lem:fractional-chain} together with \eqref{q*r*}, we deduce
\[
\|G(u)\|_{Y(I)}
\lesssim
\|F'(u)\|_{L_t^{\frac{q_*}{\alpha}}L_x^{\frac{r_0}{\alpha},\infty}(I\times\mathbb R^d)}
\bigl\||\nabla|^s u\bigr\|_{L_t^{q_*}L_x^{r_*,2}(I\times\mathbb R^d)}
\lesssim
\|u\|_{X^0(I)}^\alpha\|u\|_{X(I)},
\]
 This proves \eqref{eq:2-17}.
\end{proof}

\begin{lemma}[\emph{Nonlinear estimate II}]
\label{lem:nonlinear-estimateII}
Let $\sigma := 1-s$. For any compact time interval $I$,
\begin{equation}
\label{eq:coeff-estimate-z-general}
\begin{aligned}
&\bigl\||x|^{-b}F_z(u+v)w\bigr\|_{Y(I)}+\bigl\||x|^{-b}F_{\bar z}(u+v)\bar w\bigr\|_{Y(I)}
\\
&\quad \lesssim
\Bigl(\|u\|_{X(I)}^{\alpha-\frac{s}{\sigma}}\|u\|_{\dot S^1(I)}^{\frac{s}{\sigma}}
+
\|v\|_{X(I)}^{\alpha-\frac{s}{\sigma}}\|v\|_{\dot S^1(I)}^{\frac{s}{\sigma}}\Bigr)
\|w\|_{X(I)}.
\end{aligned}
\end{equation}
\end{lemma}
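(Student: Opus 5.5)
The plan is to follow the Tao--Visan treatment of the Hölder-continuous coefficient $F_z(u+v)$, adapted to the weighted Lorentz setting exactly as in the proof of \eqref{eq:2-17}.

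\textbf{Step 1: strip off the weight.} By Lemma~\ref{lem:weighted-fractional-multiplier} (as used for \eqref{eq:2-17}), the $Y(I)$ norm of $|x|^{-b}F_z(u+v)w$ is bounded by
\[
\bigl\||\nabla|^s\bigl(F_z(u+v)w\bigr)\bigr\|_{L_t^{\widetilde q_*}L_x^{r,2}},
\qquad \frac1r=\frac1{\widetilde r_*}-\frac bd .
\]
The $F_{\bar z}$ term is handled identically, so I treat only $F_z$.

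\textbf{Step 2: fractional Leibniz rule in Lorentz spaces.} With the same exponent bookkeeping \eqref{q*r*}, the product splits into two pieces:
\[
\|F_z(u+v)\|_{L_t^{q_*/\alpha}L_x^{r_0/\alpha,\infty}}\,\bigl\||\nabla|^s w\bigr\|_{L_t^{q_*}L_x^{r_*,2}}
\;+\;
\bigl\||\nabla|^s F_z(u+v)\bigr\|_{L_t^{q_1}L_x^{r_1,\infty}}\,\|w\|_{L_t^{q_2}L_x^{r_2,2}}.
\]
For the second piece I take $w\in X^0$, i.e.\ $(q_2,r_2)=(q_*,r_0)$. Then $|\nabla|^sF_z(u+v)$ must be placed in the space that has the scaling of $\alpha$ copies of $X^0$ minus $s$ derivatives, namely $L_t^{q_*/\alpha}L_x^{r',\infty}$ with $\frac{1}{r'}=\frac{\alpha}{r_0}+\frac{s}{d}$.

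The first piece is easy: $|F_z(u+v)|\lesssim |u|^\alpha+|v|^\alpha$ gives the bound $(\|u\|_{X^0}^\alpha+\|v\|_{X^0}^\alpha)\|w\|_X$. Then \eqref{eq:interp-1}, together with $\|u\|_{X^0}\lesssim\|u\|_X\lesssim \|u\|_{\dot S^1}$, lets me trade $\frac s\sigma$ of the power for a $\dot S^1$ norm, giving $\|u\|_X^{\alpha-s/\sigma}\|u\|_{\dot S^1}^{s/\sigma}$. This trade is admissible provided $\alpha>\frac s\sigma$, which holds because $s<\frac{\alpha}{1+\alpha}$ is equivalent to $\frac{s}{1-s}<\alpha$.

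\textbf{Step 3 (main obstacle): fractional chain rule for the Hölder function $F_z$.} Since $F_z$ is only $C^{0,\alpha}$, I will use the Visan/Tao--Visan fractional chain rule for Hölder continuous functions, in Lorentz form (presumably among the appendix lemmas; otherwise I prove it by the same Littlewood--Paley square-function argument with Lorentz Hölder inequalities). For $0<s<\alpha$ and $\frac s\alpha<\sigma<1$ it gives
\[
\bigl\||\nabla|^sF_z(U)\bigr\|_{L^{p,\infty}}
\lesssim
\|U\|_{L^{p_1,\infty}}^{\alpha-\frac s\sigma}\,
\bigl\||\nabla|^\sigma U\bigr\|_{L^{\frac{s}{\sigma}p_2,\infty}}^{\frac s\sigma},
\qquad
\frac1p=\frac{\alpha-\frac s\sigma}{p_1}+\frac1{p_2}.
\]
Here $\sigma=1-s$, and the condition $\sigma>\frac{s}{\alpha}$ is again exactly $s<\frac{\alpha}{1+\alpha}$.

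I apply this with $U=u+v$ and take $p_1=r_0$, so that $\|U\|_{L_t^{q_*}L_x^{r_0}}\lesssim\|U\|_{X^0}\lesssim\|U\|_X$. The factor $\||\nabla|^{1-s}U\|$ is placed in a Lebesgue--Lorentz space whose scaling corresponds to $\dot S^1$. By Sobolev embedding from some $L^2$-admissible $\|\nabla U\|_{L^qL^{r,2}}$, this is bounded by $\|U\|_{\dot S^1}$.

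I must check that the resulting space-time exponents are admissible and consistent with $\frac{1}{r'}=\frac{\alpha}{r_0}+\frac sd$. The scaling identities guarantee consistency; the remaining constraint, that the $\dot S^1$-type pair lies in the admissible range, is where I expect to possibly need $s$ small. This is allowed, since $s$ is only required to lie in $\bigl(0,\frac{\alpha}{1+\alpha}\bigr)$.

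\textbf{Step 4: split $u$ and $v$.} Finally, using $\|u+v\|^{a}\lesssim\|u\|^{a}+\|v\|^{a}$ and Young-type mixing, cross terms such as $\|u\|_X^{\alpha-s/\sigma}\|v\|_{\dot S^1}^{s/\sigma}$ would appear. To avoid them, I will instead apply the chain rule pointwise in frequency with $U=u+v$ and absorb the cross terms: $\|u+v\|_X^{a}\|u+v\|_{\dot S^1}^{b}$ is bounded by a sum of products, and any mixed product is dominated, via Young's inequality $x^{a}y^{b}\lesssim x^{a+b}+y^{a+b}$ applied to each of $u$ and $v$, by the pure terms. This yields \eqref{eq:coeff-estimate-z-general}, multiplied by $\|w\|_X$ after using $\|w\|_{X^0}\lesssim\|w\|_X$.
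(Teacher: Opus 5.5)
\textbf{Verdict.} Steps 1--3 follow the paper's proof, but the proposal has a genuine gap in Step~4 and omits the case $\alpha\ge1$.

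\textbf{What is correct.} Your Steps 1--3 are the paper's argument: the weight is removed by Lemma~\ref{lem:weighted-fractional-multiplier}, you use the same Leibniz split with $\frac1{m_\sigma}=\frac{\alpha}{r_0}+\frac sd$, and you apply the H\"older chain rule with $\sigma=1-s$. Set $U=u+v$ and $\kappa:=\alpha-\frac s\sigma$. Your bound of the chain-rule output by $\|U\|_X^{\kappa}\|U\|_{\dot S^1}^{s/\sigma}$ is correct. No extra smallness of $s$ is needed for it: $\||\nabla|^\sigma U\|_{L_t^{q_*}L_x^{r_\sigma,2}}\lesssim\|\nabla U\|_{L_t^{q_*}L_x^{p,2}}$ with $\frac2{q_*}+\frac dp=\frac d2$, and this pair is admissible because $q_*>2$.

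\textbf{The gap in Step~4.} The claim that every mixed product is dominated by the pure terms is false. Take $\|u\|_X\sim\|u\|_{\dot S^1}\sim\delta$ together with $\|v\|_{\dot S^1}=1$ and $\|v\|_X\to0$. This happens, for example, when $v$ is a fixed free solution on a shrinking interval and $u$ is concentrated at the scale of that interval. Then the cross term is $\|u\|_X^{\kappa}\|v\|_{\dot S^1}^{s/\sigma}\sim\delta^{\kappa}$. The pure terms are only $\sim\delta^{\alpha}+o(1)$, and $\kappa<\alpha$. Young's inequality merely converts the cross term into $\|u\|_X^{\alpha}+\|v\|_{\dot S^1}^{\alpha}$. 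The second of these carries no power of $\|v\|_X$, which is exactly what the lemma is supposed to supply.

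\textbf{How the paper avoids cross terms.} It collapses to a single norm before splitting:
\begin{itemize}
\item Sobolev gives $\|U\|_{L_t^{q_*}L_x^{r_0,2}}\lesssim\||\nabla|^\sigma U\|_{L_t^{q_*}L_x^{r_\sigma,2}}$.
\item Hence the chain-rule output is $\lesssim\||\nabla|^\sigma U\|^{\alpha}\le\||\nabla|^\sigma u\|^{\alpha}+\||\nabla|^\sigma v\|^{\alpha}$, using $\alpha<1$.
\item Each piece is then interpolated between $X$ and $\dot S^1$ (Lemma~\ref{lem:G-N-lorentz}).
\end{itemize}
This produces only pure terms, each with a positive power of its own $X$-norm.

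\textbf{Matching the stated exponents.} To get exactly $\kappa$ and $\frac s\sigma$ for every admissible $s$, run the chain rule with an auxiliary parameter $\sigma'\in\bigl(\frac s\alpha,1\bigr)$ satisfying $\sigma'\le\frac{s(1+\alpha)}{\alpha}$, in place of $1-s$. The interpolation then gives the $X$-power $\frac{\alpha(1-\sigma')}{1-s}\ge\kappa$, and $\|u\|_X\lesssim\|u\|_{\dot S^1}$ finishes.

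\textbf{The missing case $\alpha\ge1$.} You treat only $0<\alpha<1$, but the lemma is used for all $d\ge3$, and $\alpha=\frac{4-2b}{d-2}\ge1$ does occur. It holds always for $d=3$, for $d=4$ with $b\le1$, and for $d=5$ with $b\le\frac12$. In that range $F_z$ is not $\alpha$-H\"older, so Lemma~\ref{lem:frac-chain-holder}, which requires $s<\alpha<1$, is unavailable. This is the paper's Case~1. There one uses the ordinary chain rule, Lemma~\ref{lem:fractional-chain}, with $|F_z'(z)|\lesssim|z|^{\alpha-1}$ and $\frac1{m_\sigma}=\frac{\alpha-1}{r_0}+\frac1{r_*}$. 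This gives $\||\nabla|^sF_z(U)\|_{L_t^{q_*/\alpha}L_x^{m_\sigma,\infty}}\lesssim\|U\|_{X^0}^{\alpha-1}\|U\|_X\lesssim\|u\|_X^{\alpha}+\|v\|_X^{\alpha}$. That splits with no cross terms and is then traded for the stated form exactly as in your first piece.
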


Note that the restriction $0 < s < \frac{\alpha}{1+\alpha}$ in \eqref{eq:2.8} is necessary here, as it ensures that
\[
\alpha - \frac{s}{\sigma} > 0,
\]
which guarantees the coefficient in the nonlinear estimate carries a positive power of $\|u\|_{X(I)}$.
This positive power is exactly what allows us to close the bootstrap argument within the short-time perturbation framework (cf.\ Proposition \ref{thm:short-pert-sec4}).

\begin{proof}[\textbf{Proof of Lemma \ref{lem:nonlinear-estimateII}}]
By virtue of Lemma~\ref{lem:weighted-fractional-multiplier}, we obtain
\begin{align}
\bigl\||x|^{-b}F_z(u+v)\,w\bigr\|_{Y(I)}
&=
\bigl\||\nabla|^s\bigl(|x|^{-b}(F_z(u+v)w)\bigr)\bigr\|_{L_t^{\widetilde q_*}L_x^{\widetilde r_*,2}(I\times\mathbb R^d)}\notag\\
&\lesssim
\bigl\||\nabla|^s(F_z(u+v)w)\bigr\|_{L_t^{\widetilde q_*}L_x^{r,2}(I\times\mathbb R^d)},\notag
\end{align}
where
$
\frac1{r}:=\frac1{\widetilde r_*}-\frac{b}{d}.
$
Applying Lemma~\ref{lem:fractional-leibniz} with \eqref{q*r*}, we obtain
\begin{align}
\bigl\||\nabla|^s(F_z(u+v)w)\bigr\|_{L_t^{\widetilde q_*}L_x^{r,2}(I\times\mathbb{R}^d)}
\lesssim{}&
\|F_z(u+v)\|_{L_t^{\frac{q_*}{\alpha}}L_x^{\frac{r_0}{\alpha},\infty}(I\times\mathbb{R}^d)}
\,
\bigl\| |\nabla|^s w \bigr\|_{L_t^{q_*} L^{r_*,2}(I\times\mathbb{R}^d)}
\notag\\
&+
\bigl\||\nabla|^sF_z(u+v)\bigr\|_{L_t^{\frac{q_*}{\alpha}}L_x^{m_\sigma,\infty}(I\times\mathbb{R}^d)}
\,
\|w\|_{L_t^{q_*}L_x^{r_0,2}(I\times\mathbb{R}^d)},\label{eq:m-sigma-general}
\end{align}
where
\begin{equation*}
\frac1{m_\sigma}
:=
\frac{\alpha-\frac{s}{\sigma}}{r_0}
+
\frac{\frac{s}{\sigma}}{r_\sigma},
\qquad
\frac{1}{r_\sigma}
:=
\frac{1}{2}-\frac{1-\sigma}{d}-\frac{2}{dq_*}.
\end{equation*}

We first estimate the zeroth-order term. Since $|F_z(z)|\lesssim |z|^\alpha$,  it follows from Lemma~\ref{lem:interpolation} that
\begin{equation}
\label{eq:first-term-general}
\|F_z(u+v)\|_{L_t^{\frac{q_*}{\alpha}}L_x^{\frac{r_0}{\alpha},\infty}(I\times\mathbb R^d)}\|w\|_{X(I)}
\lesssim
\|u+v\|_{X(I)}^{\alpha}
\|w\|_{X(I)}.
\end{equation}

For the second term, we distinguish two cases.

\textsf{Case 1: $\alpha\ge 1$.}
Since $F_z\in C^1(\CC)$, Lemma~\ref{lem:fractional-chain} and Lemma~\ref{lem:interpolation} yields
\begin{align}
\bigl\||\nabla|^s F_z(u+v)\bigr\|_{L_t^{\frac{q_*}{\alpha}}L_x^{m_\sigma,\infty}(I\times\mathbb{R}^d)}
&\lesssim
\bigl\|F_z'(u+v)\bigr\|_{L_t^{\frac{q_*}{\alpha-1}}L_x^{\frac{r_0}{\alpha-1},\infty}(I\times\mathbb{R}^d)}
\bigl\||\nabla|^s(u+v)\bigr\|_{L_t^{q_*}L_x^{r_*,2}(I\times\mathbb{R}^d)} \notag\\
&\lesssim
\|u+v\|_{X(I)}^{\alpha}.
\label{eq:second-term-general-alpha-ge-one}
\end{align}
where we used the identities
\(
\frac1{m_\sigma}=\frac{\alpha-1}{r_0}+\frac1{r_*}
\)
and
\(\frac1{r_*}=\frac{1-\frac{s}{\sigma}}{r_0}+\frac{\frac{s}{\sigma}}{r_\sigma}.
\)

\textsf{Case 2: $0<\alpha<1$.}
Since $F_z$ is H\"older continuous of order
$\alpha$, Lemma~\ref{lem:frac-chain-holder} and Sobolev embedding
give
\begin{align}
\bigl\||\nabla|^sF_z(u+v)\bigr\|_{L_t^{\frac{q_*}{\alpha}}L_x^{m_\sigma,\infty}(I\times\mathbb R^d)}
&\lesssim
\|u+v\|_{L_t^{q_*}L_x^{r_0,2}(I\times\mathbb R^d)}^{\alpha-\frac{s}{\sigma}}
\,
\bigl\||\nabla|^\sigma(u+v)\bigr\|_{L_t^{q_*}L_x^{r_\sigma,2}(I\times\mathbb R^d)}^{\frac{s}{\sigma}},\notag \\
&\lesssim
\bigl\||\nabla|^\sigma(u+v)\bigr\|_{L_t^{q_*}L_x^{r_\sigma,2}(I\times\mathbb R^d)}^{\alpha}.\label{eq:second-term-general-alpha-lt-one}
\end{align}
where we use $\frac{1}{r_{\sigma}}=\frac{1}{r_0}-\frac{\sigma}{d}$.

In Case~1, \eqref{eq:second-term-general-alpha-ge-one} together with Lemma~\ref{lem:interpolation} yields the desired bound. In Case~2, since \(s<\sigma = 1-s <1\), \eqref{eq:second-term-general-alpha-lt-one} and the Gagliardo--Nirenberg inequality give the same conclusion. Therefore, in both cases,
\begin{equation}
\label{eq:second-term-general}
\bigl\||\nabla|^sF_z(u+v)\bigr\|_{L_t^{\frac{q_*}{\alpha}}L_x^{m_\sigma,\infty}(I\times\mathbb R^d)}
\lesssim
(\|u\|_{X(I)}^{\alpha-\frac{s}{\sigma}}\|u\|_{\dot S^1(I)}^{\frac{s}{\sigma}}
+
\|v\|_{X(I)}^{\alpha-\frac{s}{\sigma}}\|v\|_{\dot S^1(I)}^{\frac{s}{\sigma}}).
\end{equation}
Substituting \eqref{eq:first-term-general} and \eqref{eq:second-term-general} into \eqref{eq:m-sigma-general} and using Lemma~\ref{lem:interpolation}, we obtain the desired estimate for \(|x|^{-b}F_z(u+v)\,w\). The estimate for \(|x|^{-b}F_{\bar z}(u+v)\,\bar w\) is identical, so \eqref{eq:coeff-estimate-z-general} follows.
\end{proof}

\begin{proposition}[Short-time perturbation]\label{thm:short-pert-sec4}
Let $I\subset\mathbb R$ be a compact time interval, and let $\widetilde u$ solve the approximate equation
\begin{equation}\label{eq:approx-eqn-sec4}
(i\partial_t+\Delta)\widetilde u=G(\widetilde u)+e
\end{equation}
on $I\times\mathbb R^d$ with initial data $\widetilde u(t_0) \in \dot H^1(\mathbb R^d) $. Assume
\begin{equation}\label{eq:approx-small2-sec4}
\|\widetilde u\|_{L_t^{\infty}\dot H_x^1(I\times\mathbb{R}^d)}\le E,
\end{equation}
\begin{equation}\label{eq:approx-small-sec4}
\|\widetilde u\|_{X(I)}\le \delta,
\end{equation}
where $\delta=\delta(E)>0$ is sufficiently small. Let $t_0\in I$ and let $u(t_0)\in \dot H^1(\mathbb R^d)$ satisfy
\begin{equation}\label{eq:initial-gap-sec4}
\|u(t_0)-\widetilde u(t_0)\|_{\dot H_x^1(\mathbb{R}^d)}\le E',
\end{equation}
and suppose
\begin{equation}\label{eq:smallness-gap2-sec4}
\|e^{i(t-t_0)\Delta}(u(t_0)-\widetilde u(t_0))\|_{X(I)}\le \varepsilon,
\end{equation}
\begin{equation}\label{eq:smallness-gap-sec4}
\|e\|_{\dot N^1(I)}\le \varepsilon
\end{equation}
for some $0<\varepsilon<\varepsilon_0(E,E')$ sufficiently small. Then there exists a unique solution $u:I\times\mathbb R^d\to\mathbb C$ of~\eqref{eq:INLS} with initial data $u(t_0)$ such that
\begin{equation}\label{eq:short-pert-Xs-sec4}
\|u-\widetilde u\|_{X(I)}\lesssim \varepsilon,
\end{equation}
\begin{equation}\label{eq:short-pert-S1-sec4}
\|u-\widetilde u\|_{\dot S^1(I)}\lesssim E',
\end{equation}
\begin{equation}\label{eq:short-pert-u-sec4}
\|u\|_{\dot S^1(I)}\lesssim E+E',
\end{equation}
and moreover
\begin{equation}\label{eq:short-pert-Ys-sec4}
\|G(u)-G(\widetilde u)\|_{Y(I)}\lesssim \varepsilon,
\end{equation}
\begin{equation}\label{eq:short-pert-N1-sec4}
\|G(u)-G(\widetilde u)\|_{\dot N^1(I)}\lesssim E'.
\end{equation}
\end{proposition}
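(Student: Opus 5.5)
We follow the Tao--Visan short-time argument. We work with the difference $w:=u-\widetilde u$, which (formally) solves
\[
(i\partial_t+\Delta)w=G(\widetilde u+w)-G(\widetilde u)-e,\qquad w(t_0)=u(t_0)-\widetilde u(t_0).
\]
Existence of $u$ on $I$ is obtained by a contraction (or continuity/bootstrap) argument for $w$ in the space defined by the norms $\|w\|_{X(I)}$ and $\|w\|_{\dot S^1(I)}$. Uniqueness follows from the uniqueness in the local theory. Since the contraction estimates are the same as the a priori ones, we describe the a priori bounds.

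\textbf{Step 1: control of $\widetilde u$.} We first bound $\widetilde u$ in $\dot S^1(I)$. By Strichartz, \eqref{eq:approx-small2-sec4}, \eqref{eq:smallness-gap-sec4} and Lemma~\ref{lem:nonlinear-estimateI} (with \eqref{eq:interp-3}),
\[
\|\widetilde u\|_{\dot S^1}\lesssim E+\|\widetilde u\|_{X}^{c_s\alpha}\|\widetilde u\|_{\dot S^1}^{1+(1-c_s)\alpha}+\varepsilon .
\]
Smallness of $\delta$ and a continuity argument in the right endpoint of $I$ then give $\|\widetilde u\|_{\dot S^1(I)}\lesssim E$.

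\textbf{Step 2: the $X$-bound for $w$.} Write $G(\widetilde u+w)-G(\widetilde u)=\int_0^1\bigl[|x|^{-b}F_z(\widetilde u+\theta w)w+|x|^{-b}F_{\bar z}(\widetilde u+\theta w)\bar w\bigr]d\theta$. Lemma~\ref{lem:nonlinear-estimateII} (applied with $u=\widetilde u$, $v=\theta w$) gives
\[
\|G(\widetilde u+w)-G(\widetilde u)\|_{Y}\lesssim\bigl(\delta^{\alpha-\frac s\sigma}E^{\frac s\sigma}+\|w\|_X^{\alpha-\frac s\sigma}\|w\|_{\dot S^1}^{\frac s\sigma}\bigr)\|w\|_X .
\]
For the error term we do not use Lemma~\ref{eq:exotic-Strichartz-estimate}, since $e$ is only in $\dot N^1$. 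Instead we use Strichartz in $\dot S^1$ and \eqref{eq:interp-1}: $\|\int e^{i(t-\tau)\Delta}e\|_X\lesssim\|e\|_{\dot N^1}\le\varepsilon$. The remaining Duhamel term is handled by Lemma~\ref{eq:exotic-Strichartz-estimate}. Together with \eqref{eq:smallness-gap2-sec4} this yields
\[
\|w\|_X\lesssim\varepsilon+\bigl(\delta^{\alpha-\frac s\sigma}E^{\frac s\sigma}+\|w\|_X^{\alpha-\frac s\sigma}\|w\|_{\dot S^1}^{\frac s\sigma}\bigr)\|w\|_X .
\]

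\textbf{Step 3: the $\dot S^1$-bound for $w$.} Strichartz with \eqref{eq:interp-0} gives
\[
\|w\|_{\dot S^1}\lesssim E'+\varepsilon+\|G(\widetilde u+w)-G(\widetilde u)\|_{\dot N^1}.
\]
We bound the last term either through the $Y$-estimate above or directly via \eqref{eq:2-16} applied to both pieces, i.e.\ $\|G(\widetilde u+w)\|_{\dot N^1}+\|G(\widetilde u)\|_{\dot N^1}$ using \eqref{eq:interp-3}. The direct route is wasteful, since it gives nonlinear powers of $\|w\|_{\dot S^1}$. The cleaner route is the $Y$-bound, which is linear in $\|w\|_X$. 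Either way we get $\|w\|_{\dot S^1}\lesssim E'+\varepsilon+C(E,E')\,(\text{small})$.

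\textbf{Step 4: bootstrap.} We run a continuity argument on subintervals $[t_0,t]$ with the hypotheses $\|w\|_X\le 2C\varepsilon$ and $\|w\|_{\dot S^1}\le 2CE'$. Since $\alpha-\frac s\sigma>0$ by \eqref{eq:2.8}, the coefficient $\delta^{\alpha-\frac s\sigma}E^{\frac s\sigma}+(C\varepsilon)^{\alpha-\frac s\sigma}(CE')^{\frac s\sigma}$ is small once $\delta=\delta(E)$ and $\varepsilon_0(E,E')$ are small. This closes \eqref{eq:short-pert-Xs-sec4} and \eqref{eq:short-pert-S1-sec4}. The bound \eqref{eq:short-pert-u-sec4} then follows from the triangle inequality with Step 1, and \eqref{eq:short-pert-Ys-sec4} and \eqref{eq:short-pert-N1-sec4} are the Step 2 and Step 3 nonlinear bounds evaluated at the final estimates.

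\textbf{Main obstacle.} The hardest step is the $\dot N^1$ estimate of the difference in Step 3 when $\alpha<1$. There the term $|x|^{-b}|\nabla\widetilde u||w|^\alpha$ from Lemma~\ref{lem:G-definition} is not linear in $w$. We would control it by Hölder in Lorentz spaces and Hardy as in the proof of \eqref{eq:2-16}, giving $\lesssim\|\widetilde u\|_{\dot S^1}\|w\|_{S}^\alpha$ and then $\|w\|_S\lesssim\|w\|_X^{c_s}\|w\|_{\dot S^1}^{1-c_s}$. Since the term is sublinear in $w$, it must be absorbed using the smallness of $\varepsilon$ relative to $E'$, possibly at the cost of a constant depending on $E$. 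If that absorption fails, we would fall back on \eqref{eq:interp-0} applied to the $Y$-estimate, which is linear in $\|w\|_X$ and thus gives the bound $\lesssim\varepsilon\lesssim E'$ directly.
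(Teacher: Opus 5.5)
Your Steps 1, 2 and 4 match the paper's Steps 1, 3 and 4, including your treatment of $e$ through Strichartz and \eqref{eq:interp-1}. The gap is the $\dot S^1$-bound for $w$ in your Step 3, as you commit to it. Your ``cleaner route'' needs $\|G(u)-G(\widetilde u)\|_{\dot N^1(I)}\lesssim\|G(u)-G(\widetilde u)\|_{Y(I)}$, i.e.\ \eqref{eq:interp-0}. But $Y$ carries only $s<1$ derivatives while $\dot N^1$ carries a full one, and Sobolev embedding never gains derivatives. Concretely, for $f(t,x)=e^{iNx_1}\psi(x)$ on $I$ with $\widehat\psi$ supported in the unit ball, one has $\|f\|_{Y(I)}\sim N^s$ but $\|f\|_{\dot N^1(I)}\gtrsim N$ as $N\to\infty$. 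So neither this route nor the fallback in your last paragraph is available; the paper's proof of this proposition does not use it.

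Your other alternative fails for a different reason. Applying \eqref{eq:2-16} to $G(u)$ and $G(\widetilde u)$ separately leaves a term $\|G(\widetilde u)\|_{\dot N^1}\lesssim\delta^{\alpha c_s}E^{1+\alpha(1-c_s)}$ that is independent of $E'$ and $\varepsilon$. Since $\delta$ depends only on $E$, this cannot give $\|w\|_{\dot S^1}\lesssim E'$ when $E'$ is small.

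The step you actually need is the one you flag as the ``main obstacle''. It does close, and it is exactly the paper's Step 2. Estimate the difference directly using the pointwise bound of Lemma~\ref{lem:G-definition}, Lorentz--H\"older with $|x|^{-b}\in L^{\frac db,\infty}$, and Hardy, with the exponents from the proof of \eqref{eq:2-16}. For $0<\alpha<1$ this gives
\[
\|G(u)-G(\widetilde u)\|_{\dot N^1(I)}\lesssim\bigl(\|u\|_{S(I)}^\alpha+\|\widetilde u\|_{S(I)}^\alpha\bigr)\|w\|_{\dot S^1(I)}+\bigl(\|u\|_{\dot S^1(I)}+\|\widetilde u\|_{\dot S^1(I)}\bigr)\|w\|_{S(I)}^\alpha .
\]
Under your bootstrap hypotheses, \eqref{eq:interp-3} gives $\|\widetilde u\|_{S}\lesssim\delta^{c_s}E^{1-c_s}$ and $\|w\|_{S}\lesssim\|w\|_X^{c_s}\|w\|_{\dot S^1}^{1-c_s}\lesssim\varepsilon^{c_s}(E')^{1-c_s}$. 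The first term is then absorbed into the left side once $\delta(E)$ and $\varepsilon_0$ are small. The sublinear second term is $\lesssim(E+E')\,\varepsilon^{\alpha c_s}(E')^{\alpha(1-c_s)}$, which is a small multiple of $E'$ as soon as $\varepsilon<\varepsilon_0(E,E')$. So the absorption you were unsure about works, and no fallback is needed.

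For $\alpha\ge1$ the extra term is $(\|u\|_S^{\alpha-1}+\|\widetilde u\|_S^{\alpha-1})\|\widetilde u\|_{\dot S^1}\|w\|_S\lesssim E\,\varepsilon^{c_s}(E')^{1-c_s}$, handled the same way. This yields \eqref{eq:short-pert-S1-sec4}. The same difference estimate, not a $Y$-bound, also gives \eqref{eq:short-pert-N1-sec4}, so your final sentence deriving it from the ``Step 3 nonlinear bound'' must be replaced accordingly.
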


\begin{proof}
Let $w:=u-\widetilde u$. Then
\begin{equation}\label{eq:w-eqn-sec4}
(i\partial_t+\Delta)w=G(u)-G(\widetilde u)-e,
\qquad
w(t_0)=u(t_0)-\widetilde u(t_0).
\end{equation}

\medskip
\noindent\textsf{Step 1 (Bounds for $\widetilde u$ and $u$).} By the Strichartz estimates for \eqref{eq:approx-eqn-sec4}, Lemma~\ref{lem:interpolation} and Lemma~\ref{lem:nonlinear-estimateI} combined with the smallness conditions
\begin{align}
\|\widetilde u\|_{\dot S^1(I)}
&\lesssim
\|\widetilde u\|_{L_t^{\infty}\dot H_x^1(I\times\mathbb R^d)}
+
\|G(\widetilde u)\|_{\dot N^1(I)}
+
\|e\|_{\dot N^1(I)}  \notag \\
&\lesssim
E+\varepsilon+\delta^{\alpha c_s}\|\widetilde u\|_{\dot S^1(I)}^{1+\alpha(1-c_s)}.
\end{align}
Choosing $\delta=\delta(E)$ sufficiently small, and using the standard continuity argument
\begin{equation}\label{eq:utilde-S1-final-sec4}
\|\widetilde u\|_{\dot S^1(I)}\lesssim E.
\end{equation}

Next, Duhamel's formula together with the exotic Strichartz estimate and \eqref{eq:2-17} yields
\begin{align*}
\|e^{i(t-t_0)\Delta}\widetilde u(t_0)\|_{X(I)}
&\lesssim
\|\widetilde u\|_{X(I)}+\|G(\widetilde u)\|_{Y(I)}+\left\|\int_{t_0}^t e^{i(t-\tau)\Delta}e(\tau)\,d\tau\right\|_{X(I)} \\
&\lesssim
\delta+\|\widetilde u\|_{X^0(I)}^\alpha\|\widetilde u\|_{X(I)}+\|e\|_{\dot N^1(I)} \\
&\lesssim
\delta,
\end{align*}
provided $\varepsilon\ll \delta$. Therefore, by the triangle inequality with \eqref{eq:smallness-gap2-sec4}, we have
\[
\|e^{i(t-t_0)\Delta}u(t_0)\|_{X(I)}\lesssim \delta.
\]
By the same argument, we obtain
\begin{equation}\label{eq:u-xs}
\|u\|_{X(I)}\lesssim \delta.
\end{equation}

\medskip
\noindent\textsf{Step 2 ($\dot S^1$-bound for $w$).}
Strichartz estimates yield
\begin{equation}\label{eq:w-S1-pre-sec4}
\|w\|_{\dot S^1(I)}
\lesssim
E'+\varepsilon+\|G(u)-G(\widetilde u)\|_{\dot N^1(I)}.
\end{equation}
If $\alpha\ge1$, it follows from \eqref{D-basic-nonlinear-estimates} and Lemma~\ref{lem:interpolation} with  H\"older and Hardy inequality,
\[
\|G(u)-G(\widetilde u)\|_{\dot N^1(I)}
\lesssim
\bigl(\|u\|_{S(I)}^\alpha+\|\widetilde u\|_{S(I)}^\alpha\bigr)\|w\|_{\dot S^1(I)}
+
\bigl(\|u\|_{S(I)}^{\alpha-1}+\|\widetilde u\|_{S(I)}^{\alpha-1}\bigr)\|\widetilde u\|_{\dot S^1(I)}\|w\|_{S(I)}.
\]
If $0<\alpha<1$, the same argument gives
\[
\|G(u)-G(\widetilde u)\|_{\dot N^1(I)}
\lesssim
\bigl(\|u\|_{S(I)}^\alpha+\|\widetilde u\|_{S(I)}^\alpha\bigr)\|w\|_{\dot S^1(I)}
+
\bigl(\|u\|_{\dot S^1(I)}+\|\widetilde u\|_{\dot S^1(I)}\bigr)\|w\|_{S(I)}^\alpha.
\]
Using \eqref{eq:u-xs}, \eqref{eq:utilde-S1-final-sec4}, and Lemma~\ref{lem:interpolation}, we have
\[
\|u\|_{S(I)}+\|\widetilde u\|_{S(I)}
\lesssim
\delta^{c_s}E^{1-c_s},
\qquad
\|w\|_{S(I)}
\lesssim
\|w\|_{X(I)}^{c_s}\|w\|_{\dot S^1(I)}^{1-c_s}.
\]
Substituting these estimates into \eqref{eq:w-S1-pre-sec4} with the smallness conditions, one has
\[
\|w\|_{\dot S^1(I)}
\lesssim
E'+\varepsilon
+
\delta^{\alpha c_s}E^{\alpha(1-c_s)}\|w\|_{\dot S^1(I)}
+
C(E){\|w\|^{\beta}_{X(I)}}\|w\|_{\dot S^1(I)}^{1-\beta},
\]
where $\beta:=\alpha c_s$. Using Young's inequality and choosing $\delta$ sufficiently small , we obtain
\begin{equation}\label{eq:w-S1-rough-unified-sec4}
\|w\|_{\dot S^1(I)}
\lesssim
E'+C(E,E')\|w\|_{X(I)}.
\end{equation}

\medskip
\noindent\textsf{Step 3 ($X(I)$-bound for $w$).}
By the exotic Strichartz estimate,
\begin{equation}\label{eq:w-X-pre-sec4}
\|w\|_{X(I)}
\lesssim
\varepsilon+\|G(u)-G(\widetilde u)\|_{Y(I)}.
\end{equation}
Writing $v_\theta:=\widetilde u+\theta w$ and using the fundamental theorem of calculus,
\[
G(u)-G(\widetilde u)
=
\int_0^1 |x|^{-b}\bigl(F_z(v_\theta)w+F_{\bar z}(v_\theta)\overline w\bigr)\,d\theta.
\]
Lemma~\ref{lem:nonlinear-estimateII} therefore yields
\begin{equation}\label{eq:w-Y-rough-sec4}
\|G(u)-G(\widetilde u)\|_{Y(I)}
\lesssim
\|w\|_{X(I)}
\Bigl(
\|\widetilde u\|_{X(I)}^{\alpha-\frac{s}{\sigma}}\|\widetilde u\|_{\dot S^1(I)}^{\frac{s}{\sigma}}
+
\|w\|_{X(I)}^{\alpha-\frac{s}{\sigma}}\|w\|_{\dot S^1(I)}^{\frac{s}{\sigma}}
\Bigr),
\end{equation}
where $\sigma=1-s$. Combining \eqref{eq:w-X-pre-sec4}, \eqref{eq:utilde-S1-final-sec4}, and \eqref{eq:w-S1-rough-unified-sec4}, we get
\[
\|w\|_{X(I)}
\lesssim
\varepsilon
+
\delta^{\alpha-\frac{s}{\sigma}}E^{\frac{s}{\sigma}}\|w\|_{X(I)}
+
\|w\|_{X(I)}^{1+\alpha-\frac{s}{\sigma}}\|w\|_{\dot S^1(I)}^{\frac{s}{\sigma}}.
\]
Since $\alpha-\frac{s}{\sigma}>0$, choosing $\delta$ small and then applying continuity gives
\begin{equation}\label{eq:w-X-bootstrap-sec4}
\|w\|_{X(I)}
\lesssim
\varepsilon
+
\|w\|_{X(I)}^{1+\alpha-\frac{s}{\sigma}}\|w\|_{\dot S^1(I)}^{\frac{s}{\sigma}}.
\end{equation}

\medskip
\noindent\textsf{Step 4 (Conclusion).}
Combining \eqref{eq:w-S1-rough-unified-sec4} with \eqref{eq:w-X-bootstrap-sec4}, we obtain
\[
\|w\|_{X(I)}
\lesssim
\varepsilon
+
\|w\|_{X(I)}^{1+\alpha-\frac{s}{\sigma}}\bigl(E'+C(E,E')\|w\|_{X(I)}\bigr)^{\frac{s}{\sigma}}.
\]
Since $\alpha-\frac{s}{\sigma}>0$, the standard continuity argument now gives
\[
\|w\|_{X(I)}\lesssim \varepsilon,
\]
which proves \eqref{eq:short-pert-Xs-sec4}. Returning to \eqref{eq:w-S1-rough-unified-sec4}, we infer that
\[
\|w\|_{\dot S^1(I)}\lesssim E',
\]
Hence \eqref{eq:short-pert-S1-sec4} and \eqref{eq:short-pert-u-sec4} follow from
\[
\|u\|_{\dot S^1(I)}
\le
\|\widetilde u\|_{\dot S^1(I)}+\|w\|_{\dot S^1(I)}
\lesssim
E+E'.
\]
Finally, \eqref{eq:short-pert-Ys-sec4} is immediate from \eqref{eq:w-Y-rough-sec4}, and \eqref{eq:short-pert-N1-sec4} follows from the estimate in Step~2.
\end{proof}

% At this stage, the roles of these two norms are complementary: the short-time theory is formulated in terms of the exotic norm $X(I)$, whereas the interval subdivision in the long-time theory is naturally carried out with respect to the scattering norm $S(I)$. The interpolation property (cf. Lemma \ref{lem:interpolation}) serves precisely to convert the $S$-smallness on each subinterval into the $X$-smallness required to invoke the short-time result.
%
%
%

\begin{proof}[\textbf{Proof of Proposition \ref{prop:stability}.}]
The proof follows by a standard iteration of the short-time perturbation result. We divide \(I\) into finitely many subintervals \(I=\bigcup_{j=0}^{J-1}I_j\), with \(J=J(E,L)\), so that
\[
\|\widetilde u\|_{S(I_j)}\le \eta(E)
\qquad (0\le j\le J-1).
\]
On each \(I_j\), Strichartz estimates for \eqref{eq:approx-eqn-ltp}, Lemma~\ref{lem:nonlinear-estimateI}, and Lemma~\ref{lem:interpolation} give
\[
\|\widetilde u\|_{\dot S^1(I_j)}\lesssim E,
\qquad
\|\widetilde u\|_{X(I_j)}\lesssim \eta^{\theta_s}E^{1-\theta_s}.
\]
Taking \(\eta\) sufficiently small, Proposition~\ref{thm:short-pert-sec4} applies on each \(I_j\).

Starting from \(I_0\), we apply Proposition~\ref{thm:short-pert-sec4} with initial discrepancy controlled by \eqref{eq:ltp-gap}--\eqref{eq:ltp-error}. The resulting endpoint bounds propagate from one subinterval to the next, so the same argument can be iterated on every \(I_j\). Hence
\[
\|u-\widetilde u\|_{X(I_j)}\lesssim \varepsilon,
\qquad
\|u-\widetilde u\|_{\dot S^1(I_j)}\lesssim E',
\qquad
\|u\|_{\dot S^1(I_j)}\lesssim C(E,E',L)
\]
for all \(j\). Finally, Lemma~\ref{lem:interpolation} yields
\[
\|u-\widetilde u\|_{S(I_j)}
\lesssim
\|u-\widetilde u\|_{X(I_j)}^{c_s}\|u-\widetilde u\|_{\dot S^1(I_j)}^{1-c_s}
\lesssim C(E,E',L)\varepsilon^{c_s}.
\]
Summing over \(j\), we obtain \eqref{eq:ltp-conc1}--\eqref{eq:ltp-conc3}.
\end{proof}

\begin{proof}[\textbf{Proof of Theorem~\ref{thm:dotH1-local}}]\label{rem:dotH1-local-from-stability}
The proof follows a standard approximation argument. Choose $u_{0,n}\in H^1\cap\dot H^1$ such that $u_{0,n}\to u_0$ in $\dot H^1$. By the $H^1$ local theory of Aloui--Tayachi \cite{Aloui-Tayachi-2021}, each $u_{0,n}$ generates a maximal-lifespan solution $u_n\in C_tH_x^1\cap S_{\mathrm{loc}}$. Proposition~\ref{prop:stability} then gives a common short-time interval on which $\{u_n\}$ is Cauchy in $C_t\dot H_x^1\cap S$. Passing to the limit yields a unique strong $\dot H^1$ solution. The blowup criterion follows from the same perturbative argument, while the small-data scattering statement is obtained under the sharper hypothesis $\|e^{it\Delta}u_0\|_{S(\R)}\ll1$; the simpler condition $\|u_0\|_{\dot H^1}\ll1$ is then an immediate consequence of Strichartz and Sobolev embedding.
\end{proof}

% ================================================================
\section{Reduction to almost periodic solutions}\label{sec3}
% ================================================================

In this section we begin the concentration--compactness strategy of Kenig--Merle. The global theorem will be
proved by contradiction. Assuming that Theorem~\ref{thm:main} fails, we first
introduce a critical kinetic threshold, then decompose any near-critical blowup
sequence by the standard linear profile decomposition, and finally use the
stability theory from Section~2 to isolate a single nonlinear profile carrying
all of the critical kinetic energy.

We now pass directly to the linear profile decomposition for bounded sequences in $\dot H^1(\R^d)$. This is the first step in extracting a minimal kinetic-energy blowup solution at the threshold $E_c$.

\subsection{The linear profile decomposition}

The nonlinear equation is not translation invariant, but the linear Schr\"odinger
flow still enjoys the usual scaling and translation symmetries. To keep the later
bookkeeping transparent, we use $g$ for the action on spatial data and $T$ for the
induced action on spacetime functions.

\begin{definition}[Scaling--translation actions]\label{def:scaling-translation-group}
For $x_0\in\R^d$ and $\lambda>0$ define the action on spatial data by
\begin{equation}\label{eq:group-action-space}
  [g_{x_0,\lambda}f](x)
  :=
  \lambda^{-\frac{d-2}{2}} f\!\left(\frac{x-x_0}{\lambda}\right).
\end{equation}
For a spacetime function $u$ define the induced action by
\begin{equation}\label{eq:group-action-time}
  [T_{x_0,\lambda}u](t,x)
  :=
  \lambda^{-\frac{d-2}{2}}u\!\left(\frac{t}{\lambda^2},\frac{x-x_0}{\lambda}\right).
\end{equation}
\end{definition}

For the profile parameters extracted below we abbreviate
\[
  g_n^j := g_{x_n^j,\lambda_n^j},
  \qquad
  T_n^j := T_{x_n^j,\lambda_n^j},
  \qquad
  g_n := g_{x_n,\lambda_n},
  \qquad
  T_n := T_{x_n,\lambda_n}.
\]

\begin{proposition}[Linear profile decomposition]\label{prop:linear-profile-decomposition}
Let $\{u_n\}_{n\ge 1}$ be a bounded sequence in $\dot H^1(\R^d)$. Then, after
passing to a subsequence, there exist an index set
$J^*\in\mathbb N\cup\{\infty\}$, profiles $\phi^j\in\dot H^1(\R^d)$, parameters
\(
  \lambda_n^j>0,
  x_n^j\in\R^d,
  t_n^j\in\R,
\)
and remainders $w_n^J\in\dot H^1(\R^d)$ such that for every finite $J$,
\begin{equation}\label{eq:linear-profile-decomp}
  u_n
  =
  \sum_{j=1}^J g_n^j\bigl[e^{it_n^j\Delta}\phi^j\bigr] + w_n^J.
\end{equation}
Moreover the following hold.
\begin{enumerate}[(i)]

\item \textsf{Asymptotic vanishing of remainder.}
\begin{equation}\label{eq:linear-remainder-small-S}
  \lim_{J\to\infty}\limsup_{n\to\infty}
  \bigl\|e^{it\Delta}w_n^J\bigr\|_{S(\R)} = 0.
\end{equation}

\item \textsf{Kinetic energy decoupling.}
For every finite $J$,
\begin{equation}\label{eq:kinetic-decoupling-linear}
  \|\nabla u_n\|_{L_x^2}^2
  -
  \sum_{j=1}^J \|\nabla \phi^j\|_{L_x^2}^2
  -
  \|\nabla w_n^J\|_{L_x^2}^2
  \longrightarrow 0.
\end{equation}

\item \textsf{Weak decoupling of the remainder in each profile frame.}
For every fixed $j$ and every finite $J\ge j$,
\begin{equation}\label{eq:strong-decoupling-remainder}
  e^{-it_n^j\Delta}(g_n^j)^{-1}w_n^J \rightharpoonup 0,
  \qquad\text{weakly in } \dot H^1(\R^d).
\end{equation}

\item \textsf{Asymptotic orthogonality of parameters.} For $j\neq k$,
\begin{equation}\label{eq:profile-parameter-orthogonality}
  \frac{\lambda_n^j}{\lambda_n^k}
  +
  \frac{\lambda_n^k}{\lambda_n^j}
  +
  \frac{|x_n^j-x_n^k|^2}{\lambda_n^j\lambda_n^k}
  +
  \frac{\bigl|t_n^j(\lambda_n^j)^2-t_n^k(\lambda_n^k)^2\bigr|}{\lambda_n^j\lambda_n^k}
  \longrightarrow \infty.
\end{equation}
\end{enumerate}
After refining the subsequence further, one may also assume that either $t_n^j\equiv 0$ or $t_n^j\to\pm\infty$,
and either $x_n^j\equiv 0$ or $\frac{|x_n^j|}{\lambda_n^j}\to\infty$.
\end{proposition}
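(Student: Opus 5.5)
This is the standard linear profile decomposition for $e^{it\Delta}$ on bounded sequences in $\dot H^1(\R^d)$, in the form of Keraani and Killip--Visan. The nonlinearity plays no role, so the weight $|x|^{-b}$ never enters. I would follow the inverse-Strichartz-plus-iteration scheme. The essential input is a refined Strichartz estimate
\[
\|e^{it\Delta}f\|_{S(\R)}\lesssim \|f\|_{\dot H^1}^{1-\theta}\sup_{N\in 2^{\Z}}\|e^{it\Delta}P_Nf\|_{S(\R)}^{\theta}
\]
for some $\theta\in(0,1)$. It can be proved by Littlewood--Paley square function arguments together with Strichartz and Bernstein, as in Killip--Visan's Clay notes.

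\textbf{Inverse Strichartz step.} Suppose $\|f_n\|_{\dot H^1}\le A$ and $\|e^{it\Delta}f_n\|_{S(\R)}\ge\varepsilon$. The refined estimate produces a dyadic frequency $N_n$ carrying a nontrivial share of the $S$-norm. Interpolating $S$ between $L^\infty_{t,x}$ and a Strichartz norm, and using Bernstein, yields $(t_n,x_n)$ with
\[
N_n^{-\frac{d-2}{2}}|e^{it_n\Delta}P_{N_n}f_n(x_n)|\gtrsim \varepsilon^{C}A^{-C'} .
\]
Set $\lambda_n=N_n^{-1}$ and write $t_n=-t_n^1(\lambda_n)^2$ in the paper's normalization. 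After a subsequence, $e^{-it_n^1\Delta}(g_{x_n,\lambda_n})^{-1}f_n\rightharpoonup\phi$ weakly in $\dot H^1$. Testing against the rescaled Littlewood--Paley kernel gives $\|\phi\|_{\dot H^1}\gtrsim \varepsilon^{C}A^{-C''}$. Weak convergence gives the Pythagorean splitting
\[
\|f_n\|_{\dot H^1}^2-\|f_n-g_n e^{it_n^1\Delta}\phi\|_{\dot H^1}^2\to\|\phi\|_{\dot H^1}^2 .
\]

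\textbf{Iteration.} Start from $w_n^0=u_n$. If $\limsup_n\|e^{it\Delta}w_n^{J}\|_S>0$, apply the step to extract $\phi^{J+1}$ and set $w_n^{J+1}=w_n^J-g_n^{J+1}e^{it_n^{J+1}\Delta}\phi^{J+1}$. The energy identity accumulates to (ii). Since $\sum_j\|\phi^j\|_{\dot H^1}^2\le\sup_n\|u_n\|_{\dot H^1}^2$ and each $\|\phi^j\|$ dominates a power of $\limsup_n\|e^{it\Delta}w_n^{j-1}\|_S$, these limsups tend to zero. This gives (i), with the decreasing monotonicity in $J$ of the limsups handled by standard bookkeeping.

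\textbf{Orthogonality and normalizations.} Property (iii) holds by construction for $J=j$. For $J>j$ it follows once one shows that later profiles, viewed in frame $j$, converge weakly to zero, which in turn follows from (iv). I expect (iv) to be the main obstacle, and I would prove it by contradiction. Suppose some pair $j<k$ has bounded parameter ratios. Pass to a subsequence in which the relative scale, space and time parameters converge. Then $e^{-it_n^k\Delta}(g_n^k)^{-1}g_n^j e^{it_n^j\Delta}$ converges strongly as an operator, since it is a combination of unitary maps with convergent parameters. Because $\phi^k$ is the weak limit of $e^{-it_n^k\Delta}(g_n^k)^{-1}w_n^{k-1}$, and $w_n^{k-1}$ is weakly null in frame $j$ (by induction), we get $\phi^k=0$, a contradiction.

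\textbf{Final refinement.} After a subsequence, either $t_n^j\to\pm\infty$ or $t_n^j\to t^j$ finite. In the finite case, replace $\phi^j$ by $e^{it^j\Delta}\phi^j$ and set $t_n^j\equiv0$. The error goes into $w_n^J$ with vanishing $\dot H^1$ norm, so (i)--(iv) are preserved. Similarly, either $|x_n^j|/\lambda_n^j\to\infty$, or it stays bounded; in the bounded case $x_n^j/\lambda_n^j\to y^j$, and we translate $\phi^j$ by $y^j$ and set $x_n^j\equiv0$, which is absorbed the same way. Here $e^{it\Delta}$ commutes with translations, and these replacements preserve the orthogonality in (iv).
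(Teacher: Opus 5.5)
Your proposal is correct: it is the standard Keraani / Killip--Visan argument (refined and inverse Strichartz, iteration with Pythagorean decoupling, orthogonality by contradiction via strong convergence of the relative symmetry operators, and absorption of bounded time and space parameters into the profile), and you are right that the weight $|x|^{-b}$ plays no role. The paper gives no proof of this proposition and simply invokes it as the standard linear profile decomposition for $e^{it\Delta}$ on bounded sequences in $\dot H^1(\R^d)$, so your route is the one it implicitly relies on.
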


\subsection{Nonlinear profiles and Palais--Smale condition}

\begin{proposition}[Profiles far from the origin scatter]\label{prop:far-away-profile}
Let $\phi\in\dot H^1(\R^d)$ and let sequences
$\lambda_n>0$, $ x_n\in\R^d$, $t_n\in\R$, satisfy
\begin{equation}\label{eq:far-away-assumption}
  \frac{|x_n|}{\lambda_n}\to\infty,
  \qquad
  t_n\equiv 0 \ \text{or}\ \ t_n\to\pm\infty.
\end{equation}
Set $g_n:=g_{x_n,\lambda_n}$ and define the initial data
\begin{equation}\label{eq:far-away-initial-data}
  \phi_n(x)
  :=
  g_n\bigl[e^{it_n\Delta}\phi\bigr](x).
\end{equation}
Then for all sufficiently large $n$ there exists a global solution $v_n$ to
\eqref{eq:INLS} with $v_n(0)=\phi_n$ and
\begin{equation}\label{eq:far-away-global-bounds}
  \|v_n\|_{S(\R)} + \|v_n\|_{\dot S^1(\R)} \lesssim_\phi 1.
\end{equation}
Moreover, for every $\varepsilon>0$ there exist $N(\varepsilon)$ and
$\psi\in C_c^{\infty}(\R\times\R^d)$ such that for all $n\ge N(\varepsilon)$,
\begin{equation}\label{eq:far-away-compact-approx}
  \Bigl\|\lambda_n^{\frac{d-2}{2}}v_n\bigl(\lambda_n^2(\,\cdot\,-t_n),\lambda_n\,\cdot\,+x_n\bigr)-\psi\Bigr\|_{S(\R)}
  < \varepsilon.
\end{equation}
\end{proposition}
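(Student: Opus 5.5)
We construct $v_n$ as a perturbation of an approximate solution built from the linear flow, exploiting that in the rescaled frame the coefficient becomes $\lambda_n^{-b}|\lambda_n y+x_n|^{-b}$, i.e.\ after the scaling normalization $|y+y_n|^{-b}$ with $y_n:=x_n/\lambda_n$, $|y_n|\to\infty$. By the scaling symmetry \eqref{eq:scaling} (which is exact for the INLS only when $x_n=0$, but combined with translation of the linear flow gives the frame change), it suffices to work with $\tilde v_n(t,y):=\lambda_n^{\frac{d-2}{2}}v_n(\lambda_n^2 t,\lambda_n y+x_n)$, which solves $(i\partial_t+\Delta)\tilde v_n=|y+y_n|^{-b}|\tilde v_n|^\alpha\tilde v_n$ with data $e^{it_n\Delta}\phi$. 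The plan is: fix $\varepsilon>0$, pick $\psi_\varepsilon\in C_c^\infty(\R^d)$ with $\|\phi-\psi_\varepsilon\|_{\dot H^1}\le\varepsilon$, and use as approximate solution $\widetilde v_n(t):=e^{i(t+t_n)\Delta}\psi_\varepsilon$ (shifted time so that $\widetilde v_n(0)=e^{it_n\Delta}\psi_\varepsilon$), cut off in time if needed: on $|t|\le T$ use the linear flow, and note the scattering norm of $e^{it\Delta}\psi_\varepsilon$ outside $[-T,T]$ is small.

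The key step is to show the error $e_n:=-|y+y_n|^{-b}|\widetilde v_n|^\alpha\widetilde v_n$ satisfies $\|e_n\|_{\dot N^1(\R)}\to0$ as $n\to\infty$ (for fixed $\varepsilon$), after which Proposition~\ref{prop:stability} (in the whole-line form of Remark~\ref{rem:stability-whole-line}) applies with $E\sim\|\phi\|_{\dot H^1}$, $L\sim\|e^{it\Delta}\phi\|_{S(\R)}\lesssim\|\phi\|_{\dot H^1}$, $E'\lesssim\varepsilon$, and the free-evolution gap $\|e^{it\Delta}e^{it_n\Delta}(\phi-\psi_\varepsilon)\|_{S(\R)}\lesssim\varepsilon$. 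To estimate $e_n$, split space into $|y|\le R$ and $|y|>R$ and time into $|t+t_n|\le T$ and its complement. On the compact region $\{|y|\le R,\ |t+t_n|\le T\}$, since $|y_n|\to\infty$ we have $|y+y_n|^{-b}\lesssim|y_n|^{-b}\to0$ and $|\nabla|y+y_n|^{-b}|\lesssim|y_n|^{-b-1}$, while $e^{it\Delta}\psi_\varepsilon$ is smooth and bounded there; placing $\nabla e_n$ in $L^1_tL^2_x$ gives a bound $\lesssim_{\psi_\varepsilon,R,T}|y_n|^{-b}\to0$. Outside this region we use the nonlinear estimate \eqref{eq:2-16} in its localized form (Hölder with $|x|^{-b}$ replaced by the translated weight, which has the same $L^{d/b,\infty}$ norm, and Hardy's inequality with respect to the point $-y_n$), giving $\lesssim\|\widetilde v_n\|_{S(\Omega)}^\alpha\|\widetilde v_n\|_{\dot S^1}$ where $\Omega$ is the complement region; the $S$ norm of $e^{it\Delta}\psi_\varepsilon$ over $\{|t|>T\}\cup\{|y|>R\}$ is small once $T,R$ are large, uniformly in $n$. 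When $t_n\to\pm\infty$ the compact time window $|t+t_n|\le T$ lies at times where we never need smoothness near $t=0$, so the same argument works; if $t_n\to\pm\infty$, the whole relevant half-line still decomposes this way.

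I expect the main obstacle to be the precise handling of the terms $|y+y_n|^{-b-1}F(\widetilde v_n)$ and the weighted fractional estimates near the moving singularity $y=-y_n$: Hardy's inequality must be applied centered at $-y_n$, which is fine by translation invariance of Hardy and Lorentz norms, but one must check the localized Hölder splitting is compatible with the Lorentz-space dual exponents of Lemma~\ref{lem:nonlinear-estimateI}. Once $\|e_n\|_{\dot N^1}\to0$, Proposition~\ref{prop:stability} yields for large $n$ a global $\tilde v_n$ with $\|\tilde v_n-\widetilde v_n\|_{S(\R)}\lesssim\varepsilon^c$ and the uniform $\dot S^1$ bound, giving \eqref{eq:far-away-global-bounds} after undoing the scaling (the norms are scale and translation invariant). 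Finally \eqref{eq:far-away-compact-approx} follows by approximating $e^{it\Delta}\psi_\varepsilon$ in $S(\R)$ by a $C_c^\infty(\R\times\R^d)$ function $\psi$ (density, after the time shift by $t_n$ is undone in the stated normalization), and combining with the stability bound via the triangle inequality, choosing $\varepsilon$ small relative to the target.
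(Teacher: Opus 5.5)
Your proposal is correct, but it takes a different and leaner route than the paper.

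\textbf{The paper's construction.} The paper builds its approximate solution as $g_n[\chi_nP_ne^{i(\lambda_n^{-2}t+t_n)\Delta}\phi]$ on a window $|s|\le T$, extended by the free flow outside it. The cutoff $\chi_n$ excises $|y+y_n|\le\frac14|y_n|$, so the weight is $O(|y_n|^{-b})$ on the whole support. The frequency window $P_n$ supplies the $L^\infty$ and Bernstein bounds that make the slab error decay, which is where the restriction on $\theta$ comes from. The price is a linear commutator error $\Delta\chi_n\Phi_n+2\nabla\chi_n\cdot\nabla\Phi_n$, an initial-data matching step, and a separate tail analysis. This is the template for a singular \emph{linear} potential, where excising the singularity is unavoidable.

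\textbf{Your construction.} You replace $\phi$ by $\psi_\varepsilon\in C_c^\infty$ and pay for this once, in the stability hypotheses. Fix $E'=1$ there, so that $\varepsilon_1(E,1,L)$ does not depend on $\varepsilon$. You then use the exact free solution, so the error is exactly $-|y+y_n|^{-b}|\widetilde v_n|^\alpha\widetilde v_n$ with no linear terms. The singularity is absorbed by superlinearity rather than removed:
\begin{itemize}
\item Off the box $K=\{|y|\le R,\ |t+t_n|\le T\}$, the factor $\|1_{K^c}\widetilde v_n\|_{S}^{\alpha}$ is small uniformly in $n$, since it is a time translate of a fixed function.
\item The weight enters only through its translation-invariant $L^{\frac db,\infty}$ norm and Hardy's inequality centered at $-y_n$.
\item The decay $|y+y_n|^{-b}\lesssim|y_n|^{-b}$ is used only on $K$, where $e^{is\Delta}\psi_\varepsilon$ and its gradient are bounded.
\end{itemize}

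\textbf{The localization step you flag is harmless.} Split $\nabla e_n$ pointwise, so no indicator is differentiated, and put $1_{K^c}$ only on the factor $|\widetilde v_n|^\alpha$. Both pieces fit in the single dual space $L_t^2L_x^{\frac{2d}{d+2},2}$; the box piece costs only $T^{1/2}R^{\frac{d+2}{2}}$. Whatever exponents one uses to prove \eqref{eq:2-16}, the small factor is a power of a finite-exponent spacetime norm of $\widetilde v_n$, and such norms have small tails off $K$.

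\textbf{What each approach buys.} Your route makes \eqref{eq:far-away-compact-approx} immediate, because in the stated normalization the approximate solution is $e^{it\Delta}\psi_\varepsilon$, independent of $n$. It also avoids the paper's frequency-window bookkeeping. For general $\phi\in\dot H^1$, nothing in $P_n$ controls the $L^{\frac{2d}{d+2},2}$ norm of $e^{is\Delta}P_n\phi$ that the paper's slab estimate relies on. In exchange, the paper's construction gives an approximate solution that never sees the singular point, which this superlinear weight does not require.
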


\begin{proof}
Fix a radial cutoff function $\chi\in C^{\infty}(\R^d)$ satisfying
\[
  0\le \chi\le 1,
  \qquad
  \chi(z)=0 \ \text{for } |z|\le \frac14,
  \qquad
  \chi(z)=1 \ \text{for } |z|\ge \frac12.
\]
For each $n$ define
\begin{equation}\label{eq:chi-direct-definition}
  \chi_n(y) := \chi\!\left(\frac{y+y_n}{|y_n|}\right),
\end{equation}
where $y_n := \frac{x_n}{\lambda_n}$, so that $|y_n|\to\infty$.
Then
\[
  \chi_n(y)=0 \quad \text{if } |y+y_n|\le \frac14|y_n|,
  \qquad
  \chi_n(y)=1 \quad \text{if } |y+y_n|\ge \frac12|y_n|,
\]
and, for every multi-index $\beta$,
\begin{equation}\label{eq:chi-symbol-bounds}
  |\partial^\beta \chi_n(y)| \lesssim_\beta |y_n|^{- |\beta|}.
\end{equation}
We then define the annular cutoff operator
\begin{equation}\label{eq:Pn-choice-far-away}
  P_n := P_{|y_n|^{-\theta}\le \cdot\le |y_n|^{\theta}},
\end{equation}
where $0<\theta<\min\!\left\{1,\frac{b}{2-b}\right\}$.
For $T>0$ let
\begin{equation}\label{eq:far-away-time-window}
  I_{n,T}:=[a^-_{n,T},a^+_{n,T}]
  :=[-\lambda_n^2 t_n-\lambda_n^2T,
      -\lambda_n^2 t_n+\lambda_n^2T].
\end{equation}
Define the approximate solution $\widetilde v_{n,T}$ by
\begin{equation}\label{eq:far-away-approx-central-physical}
  \widetilde v_{n,T}(t)
  :=
  g_n\bigl[\chi_n P_n e^{i(\lambda_n^{-2}t+t_n)\Delta}\phi\bigr],
  \qquad t\in I_{n,T},
\end{equation}
and extend it linearly outside $I_{n,T}$ by
\begin{equation}\label{eq:far-away-approx-outside-physical}
  \widetilde v_{n,T}(t)
  :=
  \begin{cases}
    e^{i(t-a^+_{n,T})\Delta}\widetilde v_{n,T}(a^+_{n,T}), & t>a^+_{n,T},\\[0.6ex]
    e^{i(t-a^-_{n,T})\Delta}\widetilde v_{n,T}(a^-_{n,T}), & t<a^-_{n,T}.
  \end{cases}
\end{equation}
We verify the hypotheses of Proposition~\ref{prop:stability}.

\medskip
\noindent
\textbf{(a) Uniform bounds.}
We use the change of variables
\begin{equation}\label{eq:far-away-change-of-variables}
  t=\lambda_n^2(s-t_n),
  \qquad
  x=\lambda_n y+x_n.
\end{equation}
Then on the central interval $|s|\le T$ we have
\begin{equation}\label{eq:far-away-central-normalized}
  \lambda_n^{\frac{d-2}{2}}\widetilde v_{n,T}(\lambda_n^2(s-t_n),\lambda_n y+x_n)
  =\chi_n(y)\Phi_n(s,y),
  \qquad
  \Phi_n(s,y):=e^{is\Delta}P_n\phi(y).
\end{equation}
Let $2\le r\le \frac{2d}{d-2}$ and $\nabla f\in  L_y^{r,2}(\R^d)$, we use H\"older's inequality and Sobolev embedding
\begin{align*}
  \|\nabla_y(\chi_n f)\|_{L_y^{r,2}}
  &\le \|\chi_n\nabla_y f\|_{L_y^{r,2}} + \|\nabla_y\chi_n\,f\|_{L_y^{r,2}} \\
  &\lesssim \|\nabla_y f\|_{L_y^{r,2}} + \|\nabla_y\chi_n\|_{L_y^d}\|f\|_{L_y^{\frac{dr}{d-r},2}} \\
  &\lesssim \|\nabla_y f\|_{L_y^{r,2}}.
\end{align*}
Taking $r=2$ gives the uniform multiplier bound on $\dot H^1$, and the same argument with $L^2$--admissible $r$ gives the corresponding bound on $\dot S^1$.
Applying these bounds to $f=\Phi_n(s)$ and using homogeneous Strichartz on $[-T,T]$, we obtain
\begin{align}
  \|\chi_n\Phi_n\|_{L_s^{\infty}\dot H_y^1([-T,T])}
  &\lesssim \|\Phi_n\|_{L_s^{\infty}\dot H_y^1([-T,T])}
  \lesssim \|\phi\|_{\dot H_y^1}, \label{eq:far-away-central-H1-bound}\\
  \|\chi_n\Phi_n\|_{\dot S^1([-T,T])}
  &\lesssim \|\Phi_n\|_{\dot S^1([-T,T])}
  \lesssim \|\phi\|_{\dot H_y^1}. \label{eq:far-away-central-S1-bound}
\end{align}
By Sobolev embedding in the spatial variable, \eqref{eq:far-away-central-S1-bound} also gives
\begin{equation}\label{eq:far-away-central-S-bound}
  \|\chi_n\Phi_n\|_{S([-T,T])}\lesssim \|\phi\|_{\dot H_y^1}.
\end{equation}
The outer pieces are free solutions with endpoint data
\[
  f_{n,T}^{\pm}:=\chi_n e^{\pm iT\Delta}P_n\phi.
\]
In the same way, using \eqref{eq:far-away-central-H1-bound} and homogeneous Strichartz on $(a^+_{n,T},\infty)$ and $(-\infty,a^-_{n,T})$ gives
\begin{equation}\label{eq:far-away-approx-bounds}
  \sup_{n,T}
  \Bigl(
    \|\widetilde v_{n,T}\|_{L_t^{\infty}\dot H_x^1(\R)}
    + \|\widetilde v_{n,T}\|_{S(\R)}
    + \|\widetilde v_{n,T}\|_{\dot S^1(\R)}
  \Bigr)
  \lesssim_\phi 1.
\end{equation}

\medskip
\noindent
\textbf{(b) Initial-data matching.}
We claim that
\begin{equation}\label{eq:far-away-initial-gap}
  \lim_{T\to\infty}\limsup_{n\to\infty}
  \|\widetilde v_{n,T}(0)-\phi_n\|_{\dot H_x^1(\R^d)}=0.
\end{equation}
If $t_n\equiv0$, then $0\in I_{n,T}$ and
\[
  \widetilde v_{n,T}(0,x)=g_n[\chi_nP_n\phi](x).
\]
Hence, by the definition of $g_n$,
\[
  \|\widetilde v_{n,T}(0)-\phi_n\|_{\dot H_x^1}
  =\|(\chi_nP_n-1)\phi\|_{\dot H_y^1}\to0,
\]
since $P_n\to I$ strongly on $\dot H_y^1$, $\chi_n\to1$ pointwise, and multiplication by $\chi_n$ is uniformly bounded on $\dot H_y^1$.

If $t_n\to+\infty$, then $0>a_{n,T}^+$ for $n$ large, and by \eqref{eq:far-away-approx-outside-physical},
\[
  \widetilde v_{n,T}(0,x)
  =e^{-ia_{n,T}^+\Delta_x}\widetilde v_{n,T}(a_{n,T}^+,x)
  =g_n e^{it_n\Delta_y}\!\left[e^{-iT\Delta_y}\chi_nP_n e^{iT\Delta_y}\phi\right](x).
\]
Therefore,
\[
  \|\widetilde v_{n,T}(0)-\phi_n\|_{\dot H_x^1}
  =\|(\chi_nP_n-1)e^{iT\Delta_y}\phi\|_{\dot H_y^1}\to0
\]
for each fixed $T$. The case $t_n\to-\infty$ is identical. This proves \eqref{eq:far-away-initial-gap}.

\medskip
\noindent
\textbf{(c) Small perturbation.}
Set
\[
  e_{n,T}:=i\partial_t\widetilde v_{n,T}+\Delta_x\widetilde v_{n,T}-G(\widetilde v_{n,T}).
\]
We claim that
\begin{equation}\label{eq:far-away-error-small}
  \lim_{T\to\infty}\limsup_{n\to\infty}\|e_{n,T}\|_{\dot N^1(\R)}=0.
\end{equation}
We treat $I_{n,T}$ and $\R\setminus I_{n,T}$ separately.

\smallskip
\noindent
\emph{Central interval.}
Using \eqref{eq:far-away-change-of-variables} and \eqref{eq:far-away-central-normalized}, we obtain
\begin{align}
  \lambda_n^{\frac{d+2}{2}}e_{n,T}\bigl(\lambda_n^2(s-t_n),\lambda_n y+x_n\bigr)
  &= \Delta_y\chi_n(y)\,\Phi_n(s,y)
     +2\nabla_y\chi_n(y)\cdot\nabla_y\Phi_n(s,y) \notag\\
  &\qquad
     -|y+y_n|^{-b}\chi_n(y)^{\alpha+1}|\Phi_n(s,y)|^\alpha\Phi_n(s,y).
     \label{eq:central-error-decomp-far-away}
\end{align}
Denote the first two terms by $e_{n,T}^{\mathrm{lin}}$ and the last one by $e_{n,T}^{\mathrm{nl}}$.

For $e_{n,T}^{\mathrm{lin}}$, one derivative in $y$ produces terms of the form
\[
  \partial_y^j\chi_n\,\partial_y^{3-j}\Phi_n,
  \qquad j=1,2,3.
\]
Since $P_n$ is supported in $|\xi|\lesssim |y_n|^\theta$,
\[
  \|\nabla_y^k P_n\phi\|_{L_y^2}\lesssim |y_n|^{(k-1)\theta}\|\phi\|_{\dot H_y^1},
  \qquad k=1,2,3.
\]
Hence, by \eqref{eq:chi-symbol-bounds}, Bernstein inequality, and the dual admissible norm
$L_s^1L_y^{2,2}$,
\begin{equation}\label{eq:linear-error-central-estimate}
  \|\nabla_y e_{n,T}^{\mathrm{lin}}\|_{L_s^1L_y^{2,2}([-T,T]\times\R^d)}
  \lesssim_{T,\phi}
  \sum_{j=1}^3 |y_n|^{-j}|y_n|^{|2-j|\theta}
  \longrightarrow0.
\end{equation}

For $e_{n,T}^{\mathrm{nl}}$, on $\supp\chi_n$ we have $|y+y_n|\ge \frac14|y_n|$, so
\begin{equation}\label{eq:far-away-weight-bounds}
  |y+y_n|^{-b}\lesssim |y_n|^{-b},
  \qquad
  \bigl|\nabla_y\bigl(|y+y_n|^{-b}\chi_n^{\alpha+1}\bigr)\bigr|
  \lesssim |y_n|^{-b-1}.
\end{equation}
Moreover,
\[
  \nabla_y e_{n,T}^{\mathrm{nl}}
  =-\nabla_y\bigl(|y+y_n|^{-b}\chi_n^{\alpha+1}\bigr)|\Phi_n|^\alpha\Phi_n
   -|y+y_n|^{-b}\chi_n^{\alpha+1}\nabla_y\bigl(|\Phi_n|^\alpha\Phi_n\bigr),
\]
and
\[
  |\nabla_y(|\Phi_n|^\alpha\Phi_n)|\lesssim |\Phi_n|^\alpha|\nabla_y\Phi_n|.
\]
The frequency cutoffs imply
\begin{equation}\label{eq:Phi-Linfty-bound-far-away}
\begin{aligned}
&\|\Phi_n\|_{L_s^\infty L_y^\infty([-T,T]\times\R^d)}
  \lesssim |y_n|^{\theta\frac{d-2}{2}}\|\phi\|_{\dot H_y^1},\\
&\|\Phi_n\|_{L_s^\infty L_y^{\frac{2d}{d-2},2}([-T,T]\times\R^d)}
 +\|\nabla_y\Phi_n\|_{L_s^\infty L_y^2([-T,T]\times\R^d)}
 \lesssim \|\phi\|_{\dot H_y^1},
\end{aligned}
\end{equation}
and
\begin{equation}\label{eq:Phi-low-frequency-bound-far-away}
  \|\Phi_n\|_{L_s^\infty L_y^{\frac{2d}{d+2},2}([-T,T]\times\R^d)}
  \lesssim \|\phi\|_{\dot H_y^1},
\end{equation}
while Bernstein inequality yields
\begin{equation}\label{eq:gradPhi-rho-bound-far-away}
  \|\nabla_y\Phi_n\|_{L_s^\infty L_y^{\rho,2}([-T,T]\times\R^d)}
  \lesssim |y_n|^{\theta(b-1)}\|\phi\|_{\dot H_y^1},
  \qquad
  \rho=\frac{2d}{d-2+2b}.
\end{equation}
Therefore, Hölder inequality together with
\eqref{eq:far-away-weight-bounds}--\eqref{eq:gradPhi-rho-bound-far-away} gives
\begin{align}
  \|\nabla_y e_{n,T}^{\mathrm{nl}}\|_{L_s^2L_y^{\frac{2d}{d+2},2}([-T,T]\times\R^d)}
  &\lesssim_T |y_n|^{-b-1}\|\Phi_n\|_{L_s^\infty L_y^\infty}^\alpha
           \|\Phi_n\|_{L_s^\infty L_y^{\frac{2d}{d+2},2}} \notag\\
  &\qquad + |y_n|^{-b}\|\Phi_n\|_{L_s^\infty L_y^{\frac{2d}{d-2},2}}^\alpha
           \|\nabla_y\Phi_n\|_{L_s^2L_y^{\rho,2}} \notag\\
  &\lesssim_{T,\phi}
    |y_n|^{-b-1+\theta(2-b)}+|y_n|^{-b+\theta(b-1)}
  \longrightarrow0.
  \label{eq:nonlinear-error-central-estimate}
\end{align}
Since $L_s^1L_y^{2,2}$ and $L_s^2L_y^{\frac{2d}{d+2},2}$ are admissible dual norms for $\dot N^1$, \eqref{eq:linear-error-central-estimate} and \eqref{eq:nonlinear-error-central-estimate} imply
\begin{equation}\label{eq:central-error-small-far-away}
  \limsup_{n\to\infty}\|e_{n,T}\|_{\dot N^1(I_{n,T})}=0
  \qquad\text{for each fixed }T>0.
\end{equation}

\smallskip
\noindent
\emph{Outer intervals.}
On $\R\setminus I_{n,T}$, the function $\widetilde v_{n,T}$ is an exact free solution, hence
\[
  e_{n,T}=-G(\widetilde v_{n,T}).
\]
We first show
\begin{equation}\label{eq:far-away-linear-tail-small}
  \lim_{T\to\infty}\limsup_{n\to\infty}
  \|\widetilde v_{n,T}\|_{S(\R\setminus I_{n,T})}=0.
\end{equation}
It suffices to consider the right-hand tail. For $s>T$,
\[
  \lambda_n^{\frac{d-2}{2}}\widetilde v_{n,T}\bigl(\lambda_n^2(s-t_n),\lambda_n y+x_n\bigr)
  =e^{i(s-T)\Delta_y}\!\left[\chi_n e^{iT\Delta_y}P_n\phi\right](y).
\]
Hence
\begin{align*}
  &\Bigl\|\lambda_n^{\frac{d-2}{2}}\widetilde v_{n,T}\bigl(\lambda_n^2(s-t_n),\lambda_n y+x_n\bigr)-e^{is\Delta_y}\phi\Bigr\|_{S((T,\infty))}\\
  &\qquad\le
  \|e^{i(s-T)\Delta_y}(\chi_n e^{iT\Delta_y}P_n\phi-e^{iT\Delta_y}\phi)\|_{S((T,\infty))}
  +\|e^{is\Delta_y}(P_n\phi-\phi)\|_{S((T,\infty))}.
\end{align*}
For each fixed $T$, both terms tend to $0$ as $n\to\infty$; the first by the same argument as in part~(b), and the second because $P_n\phi\to\phi$ in $\dot H_y^1$. Since $e^{is\Delta_y}\phi$ has vanishing $S$-tail as $T\to\infty$, \eqref{eq:far-away-linear-tail-small} follows.

By \eqref{eq:far-away-linear-tail-small}, \eqref{eq:far-away-approx-bounds}, and Lemma~\ref{lem:nonlinear-estimateI},
\begin{align}
  \|e_{n,T}\|_{\dot N^1(\R\setminus I_{n,T})}
  &=\|G(\widetilde v_{n,T})\|_{\dot N^1(\R\setminus I_{n,T})} \notag\\
  &\lesssim
  \|\widetilde v_{n,T}\|_{S(\R\setminus I_{n,T})}^{\alpha}
  \|\widetilde v_{n,T}\|_{\dot S^1(\R)}.
  \label{eq:far-away-tail-error-estimate}
\end{align}
Therefore,
\begin{equation}\label{eq:far-away-tail-error-small}
  \lim_{T\to\infty}\limsup_{n\to\infty}
  \|e_{n,T}\|_{\dot N^1(\R\setminus I_{n,T})}=0.
\end{equation}
Combining this with \eqref{eq:central-error-small-far-away}, we obtain \eqref{eq:far-away-error-small}.

\medskip
\noindent
\textbf{Conclusion by stability.}
Now Proposition~\ref{prop:stability} gives, for all large $n$, a global solution $v_n$ to \eqref{eq:INLS} with $v_n(0)=\phi_n$ and \eqref{eq:far-away-global-bounds}.

To prove \eqref{eq:far-away-compact-approx}, fix $\varepsilon>0$. Choose $T$ so large that
\[
  \limsup_{n\to\infty}\|v_n-\widetilde v_{n,T}\|_{S(\R)}<\frac{\varepsilon}{3},
\]
and then choose $\psi\in C_c^\infty(\R\times\R^d)$ such that
\[
  \|e^{it\Delta_y}\phi-\psi\|_{S(\R)}<\frac{\varepsilon}{3}.
\]
On the central slab,
\[
  \lambda_n^{\frac{d-2}{2}}\widetilde v_{n,T}\bigl(\lambda_n^2(s-t_n),\lambda_n y+x_n\bigr)
  =\chi_n(y)e^{it\Delta_y}P_n\phi(y),
\]
and the same argument as in part~(b) shows that
\[
  \|\chi_n e^{it\Delta_y}P_n\phi-e^{it\Delta_y}\phi\|_{S([-T,T])}\to0.
\]
On the tails $|s|>T$, the normalized approximation is a free solution whose endpoint data converge to $e^{\pm iT\Delta_y}\phi$ in $\dot H_y^1$; thus the same argument used for \eqref{eq:far-away-linear-tail-small} gives convergence to $e^{it\Delta_y}\phi$ in $S(|s|>T)$. Hence
\[
  \Bigl\|\lambda_n^{\frac{d-2}{2}}\widetilde v_{n,T}\bigl(\lambda_n^2(s-t_n),\lambda_n y+x_n\bigr)-e^{it\Delta_y}\phi\Bigr\|_{S(\R)}<\frac{\varepsilon}{3}
\]
for all large $n$, and therefore
\[
  \Bigl\|\lambda_n^{\frac{d-2}{2}}v_n\bigl(\lambda_n^2(s-t_n),\lambda_n y+x_n\bigr)-\psi(t,y)\Bigr\|_{S(\R)}<\varepsilon.
\]
This proves \eqref{eq:far-away-compact-approx}.
\end{proof}

We next combine the linear decomposition with the nonlinear theory from
Section~2. The profiles that remain centered at the origin are treated exactly as
in the classical energy-critical problem; the profiles escaping to infinity are
handled by Proposition~\ref{prop:far-away-profile}.

Given a linear profile
\[
  V_n^j(0)=g_n^j e^{it_n^j\Delta}\phi^j,
\]
we define its associated nonlinear profile as follows.
\begin{itemize}
\item If $x_n^j\equiv 0$, let $v^j$ be the nonlinear profile associated with $(\phi^j,\{t_n^j\})$. More precisely:
  \begin{itemize}
  \item if $t_n^j\equiv 0$, let $v^j$ be the maximal-lifespan solution to \eqref{eq:INLS} with $v^j(0)=\phi^j$;
  \item if $t_n^j\to\pm\infty$, let $v^j$ be the unique solution that scatters to $e^{it\Delta}\phi^j$ as $t\to\pm\infty$.
  \end{itemize}
  We then set
  \begin{equation}\label{eq:centered-nonlinear-profile}
    v_n^j(t,x)
    :=
    (\lambda_n^j)^{-\frac{d-2}{2}}
    v^j\!\left(\frac{t}{(\lambda_n^j)^2}+t_n^j,\frac{x}{\lambda_n^j}\right).
  \end{equation}
  Equivalently, in the notation of Definition~\ref{def:scaling-translation-group},
  \[
    v_n^j = T_{0,\lambda_n^j}\bigl[v^j(\cdot+t_n^j,\cdot)\bigr].
  \]
  Thus, in the centered case, only the scaling and the time shift are used.

\item If $\frac{|x_n^j|}{\lambda_n^j}\to\infty$, let $v_n^j$ be the global scattering solution provided by Proposition~\ref{prop:far-away-profile} with initial data
\begin{equation}\label{eq:far-away-nonlinear-profile-data}
  v_n^j(0)=g_n^j\bigl[e^{it_n^j\Delta}\phi^j\bigr].
\end{equation}
\end{itemize}
In particular, we do \emph{not} define $v_n^j$ by simply applying the spacetime action $T_n^j$ to an exact INLS solution, because the inhomogeneous equation is not translation invariant. In all cases above, the formula is written in physical variables. The third class is global on all of $\R$.  This is why the far-away profiles can be ignored in the bad-profile argument.

\begin{lemma}[\cite{Killip-Visan-2013}]\label{lem:localized-linear-remainder}
For every $\varphi\in \dot H^1l(\R^d)$ and every $T,R>0$,
\begin{equation}\label{eq:localized-linear-remainder}
  \|\nabla e^{it\Delta}\varphi\|_{L_{t,x}^2([-T,T]\times B(0,R))}^3
  \lesssim
  T^{\frac{2}{d+2}}R^{\frac{3d+2}{2(d+2)}}
  \|e^{it\Delta}\varphi\|_{S(\R)}
  \|\nabla\varphi\|_{L_x^2}^2.
\end{equation}
\end{lemma}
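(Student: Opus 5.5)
The plan is to follow Killip--Visan: interpolate the local $L^2_{t,x}$ norm of $\nabla e^{it\Delta}\varphi$ between a low-frequency piece, controlled by $\|e^{it\Delta}\varphi\|_{S}$, and a high-frequency piece, controlled by the local smoothing estimate. Write $u:=e^{it\Delta}\varphi$ and split $u=P_{\le N}u+P_{>N}u$ at a frequency $N>0$ to be optimized at the end. Both pieces are bounded separately, and a single choice of $N$ balances them.

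\emph{High frequencies.} We use the local smoothing estimate for the free Schr\"odinger flow,
\[
\bigl\||\nabla|^{1/2}e^{it\Delta}f\bigr\|_{L^2_{t,x}(\R\times B(0,R))}\lesssim R^{1/2}\|f\|_{L^2_x}.
\]
Applied to $f=\nabla P_{>N}\varphi$ together with Bernstein's inequality, it gives
\[
\|\nabla P_{>N}u\|_{L^2_{t,x}([-T,T]\times B_R)}\lesssim R^{1/2}N^{-1/2}\|\nabla\varphi\|_{L^2_x}.
\]

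\emph{Low frequencies.} By H\"older's inequality on $[-T,T]\times B_R$, passing from $L^2$ to the Strichartz pair $(\frac{2(d+2)}{d},\frac{2(d+2)}{d})$, we get
\[
\|\nabla P_{\le N}u\|_{L^2_{t,x}}\lesssim (TR^d)^{\frac1{d+2}}\|\nabla P_{\le N}u\|_{L^{2(d+2)/d}_{t,x}}.
\]
We then interpolate the last norm between $S$ and $\dot S^1$. One route is Bernstein, $\|\nabla P_{\le N}u\|_{L^{2(d+2)/d}}\lesssim N\|P_{\le N}u\|_{L^{2(d+2)/d}}$, followed by interpolation of the $L^{2(d+2)/d}_{t,x}$ norm between $L^{2(d+2)/(d-2)}_{t,x}=S$ and the $\dot S^0$-type bound. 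This does not close directly, since $\varphi$ is only in $\dot H^1$, so we instead combine H\"older with Bernstein in the form
\[
\|\nabla P_{\le N}u\|_{L^{2(d+2)/d}}\lesssim \|\nabla P_{\le N}u\|_{L^{2(d+2)/(d-2)}}^{1/2}\,\|\nabla P_{\le N}u\|_{L^{2(d+2)/(d+2)\cdot\ldots}}^{1/2},
\]
and then use $\|\nabla P_{\le N}u\|_{S}\lesssim N\|u\|_S$ together with Strichartz for the other factor. The outcome should be
\[
\|\nabla P_{\le N}u\|_{L^2_{t,x}}\lesssim T^{a}R^{c}N^{1/2}\|u\|_S^{1/2}\|\nabla\varphi\|_{L^2}^{1/2}.
\]
The exponents are fixed by scaling: the combined bound must be invariant under $\varphi\mapsto\lambda^{-\frac{d-2}{2}}\varphi(\cdot/\lambda)$ with $T\mapsto\lambda^2T$, $R\mapsto\lambda R$. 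Dimensional analysis gives the left side squared scaling like $\lambda^2$, which pins down the powers of $T$ and $R$ once the power of $N$ is chosen.

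\emph{Optimization.} With the two bounds $R^{1/2}N^{-1/2}\|\nabla\varphi\|_{L^2}$ and $(\cdots)N^{1/2}\|u\|_S^{1/2}\|\nabla\varphi\|_{L^2}^{1/2}$ in hand, we choose $N$ to equate them. Cubing the result gives a bound of the form $\|u\|_S\|\nabla\varphi\|^2_{L^2}$ times powers of $T$ and $R$. Scaling then forces those powers to be $T^{\frac2{d+2}}R^{\frac{3d+2}{2(d+2)}}$, consistent with the homogeneity of both sides ($\lambda^{3}$ on the left).

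The main obstacle is the low-frequency step: producing exactly one factor of $\|u\|_S$ without losing derivatives. This requires the correct Bernstein/H\"older interpolation and care with the non-admissible exponents. If the direct route fails, I would fall back on the Killip--Visan argument verbatim, since the statement is cited from \cite{Killip-Visan-2013}. That argument interpolates $\|\nabla P_{\le N}u\|_{L^2}$ via $\|P_{\le N}u\|_{L^\infty_tL^{2d/(d-2)}}$ and Bernstein, then uses $\|P_{\le N}u\|$ in $S$.
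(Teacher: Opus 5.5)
\textbf{There is a genuine gap in the low-frequency step.} Your high-frequency bound $\|\nabla P_{>N}u\|_{L^2_{t,x}([-T,T]\times B(0,R))}\lesssim R^{1/2}N^{-1/2}\|\nabla\varphi\|_{L^2}$ is correct; it comes from local smoothing applied to $|\nabla|^{-1/2}\nabla P_{>N}\varphi$, exactly as in the cited Killip--Visan proof. The low-frequency step, which you yourself flagged, does not give the stated inequality.

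Your interpolation goes by H\"older to $L^{\frac{2(d+2)}{d}}_{t,x}$, then takes the midpoint between $L^{\frac{2(d+2)}{d-2}}_{t,x}$ and $L^2_{t,x}$. The pair $(2,2)$ is not admissible, so that second factor is only $\lesssim T^{1/2}\|\nabla\varphi\|_{L^2}$, by H\"older in time rather than Strichartz. At best this gives
\[
\|\nabla P_{\le N}u\|_{L^2_{t,x}([-T,T]\times B(0,R))}\lesssim T^{\frac1{d+2}+\frac14}R^{\frac{d}{d+2}}N^{\frac12}\|u\|_S^{\frac12}\|\nabla\varphi\|_{L^2}^{\frac12},
\]
which is the shape you predicted. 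Balancing it against $R^{1/2}N^{-1/2}\|\nabla\varphi\|_{L^2}$ yields $\|u\|_S^{1/4}\|\nabla\varphi\|_{L^2}^{3/4}$. That is a bound on the \emph{fourth} power of the left side by $T^{\frac2{d+2}+\frac12}R^{\frac{3d+2}{d+2}}\|u\|_S\|\nabla\varphi\|_{L^2}^3$, not the cube. Your claim that cubing produces $\|u\|_S\|\nabla\varphi\|_{L^2}^2$ is an arithmetic slip. In fact this fourth-power bound is just the true cube bound multiplied by the trivial estimate $T^{1/2}\|\nabla\varphi\|_{L^2}$, which shows where your interpolation loses.

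Against $N^{-1/2}$, getting the exponent $\frac13$ on $\|u\|_S$ requires a low-frequency bound that is linear in $N$ and carries a full power of $\|u\|_S$. The missing step is to apply H\"older on $[-T,T]\times B(0,R)$ directly from $L^2_{t,x}$ to the $S$-exponent $\frac{2(d+2)}{d-2}$, at cost $(TR^d)^{\frac2{d+2}}$, and then Bernstein:
\[
\|\nabla P_{\le N}u\|_{L^2_{t,x}([-T,T]\times B(0,R))}\lesssim T^{\frac2{d+2}}R^{\frac{2d}{d+2}}\|\nabla P_{\le N}u\|_{S(\R)}\lesssim T^{\frac2{d+2}}R^{\frac{2d}{d+2}}N\|u\|_{S(\R)}.
\]
This is the Killip--Visan argument; it does not pass through $L^\infty_tL^{\frac{2d}{d-2}}_x$. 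Choosing $N^{3/2}=R^{\frac12-\frac{2d}{d+2}}T^{-\frac2{d+2}}\|\nabla\varphi\|_{L^2}\|u\|_S^{-1}$ and cubing gives $T^{\frac2{d+2}}R^{\frac{3d+2}{d+2}}\|u\|_S\|\nabla\varphi\|_{L^2}^2$.

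\textbf{Your scaling step is also wrong, and doing it correctly exposes a misprint in the statement.} Under $\varphi\mapsto\lambda^{-\frac{d-2}{2}}\varphi(\cdot/\lambda)$, $T\mapsto\lambda^2T$, $R\mapsto\lambda R$:
\begin{itemize}
\item the cubed left side gains $\lambda^3$;
\item $\|u\|_S$ and $\|\nabla\varphi\|_{L^2}$ are invariant;
\item $T^aR^c$ gains $\lambda^{2a+c}$.
\end{itemize}
Hence $2a+c=3$, and with $a=\frac2{d+2}$ this forces $c=\frac{3d+2}{d+2}$. That is the exponent in Killip--Visan and the one the corrected argument produces. With the printed $c=\frac{3d+2}{2(d+2)}$ one has $2a+c=\frac{3d+10}{2(d+2)}<3$. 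Letting $\lambda\to\infty$ would then force the left side to vanish for every $\varphi\in\dot H^1$, so the inequality as printed is false and no route can prove it. The corrected statement is all the paper needs, since the lemma is only applied with $T$ and $R$ fixed.
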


\begin{lemma}\label{lem:interaction-error}
Let $J\ge 1$ be fixed and suppose that $\{v_n^j\}_{1\le j\le J}$ are nonlinear
profiles with pairwise orthogonal parameters and such that
\begin{equation}\label{eq:interaction-error-bounds}
  \sup_n \Bigl(\|v_n^j\|_{S(\R)} + \|v_n^j\|_{\dot S^1(\R)}\Bigr) < \infty
  \qquad\text{for each }1\le j\le J.
\end{equation}
Set
$
  V_n^J := \sum_{j=1}^J v_n^j,
$
then
\begin{equation}\label{eq:interaction-error-finite-profiles}
  \Bigl\|\nabla\Bigl(G(V_n^J)-\sum_{j=1}^J G(v_n^j)\Bigr)\Bigr\|_{L_t^2L_x^{\frac{2d}{d+2},2}(\R\times\R^d)}
  \longrightarrow 0.
\end{equation}
If, in addition, $\|e^{it\Delta}w_n^J\|_{S(\R)}\to 0$ and
$\sup_n\|w_n^J\|_{\dot H^1}<\infty$, then
\begin{equation}\label{eq:interaction-error-with-remainder}
  \Bigl\|\nabla\Bigl(G(V_n^J+e^{it\Delta}w_n^J)-G(V_n^J)\Bigr)\Bigr\|_{L_t^2L_x^{\frac{2d}{d+2},2}(\R\times\R^d)}
  \longrightarrow 0.
\end{equation}
\end{lemma}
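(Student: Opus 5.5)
The plan is the standard decoupling argument from the classical energy-critical theory (Keraani, Killip--Visan), adapted to the weight $|x|^{-b}$ via H\"older in Lorentz spaces and Hardy's inequality. The weight is handled by the embedding $|x|^{-b}\in L^{\frac db,\infty}$ exactly as in the proof of Lemma~\ref{lem:nonlinear-estimateI}. The mechanism is that every term of $\nabla\bigl(G(V_n^J)-\sum_j G(v_n^j)\bigr)$ is pointwise bounded by a sum of cross terms. Each cross term contains at least two different profiles, or a profile together with the remainder.

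\textbf{Step 1: pointwise bounds for the finite-profile error.} Expanding $\nabla G(u)=|x|^{-b}\nabla F(u)+\nabla(|x|^{-b})F(u)$, I first establish
\[
\Bigl|\nabla\Bigl(G(V_n^J)-\sum_j G(v_n^j)\Bigr)\Bigr|\lesssim_J \sum_{j\ne k}\Bigl(|x|^{-b}|v_n^j|^{\alpha}|\nabla v_n^k|+|x|^{-b-1}|v_n^j|^{\alpha}|v_n^k|\Bigr).
\]
In the case $\alpha<1$ I would write the Hölder part as $|v_n^j|^{\alpha}$ multiplied by the product structure, or instead use $\min(|v^j|,|v^k|)^\alpha$-type bounds.

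\textbf{Step 2: reduction to compactly supported profiles.} Using the uniform bounds \eqref{eq:interaction-error-bounds} and density, together with Lemma~\ref{lem:nonlinear-estimateI}-type Hölder/Hardy estimates, I reduce to the case where each $v_n^j$ is replaced by a rescaled and translated $C_c^\infty$ function. For far-away profiles this is supplied by \eqref{eq:far-away-compact-approx}; for centered profiles it follows from density in $S\cap\dot S^1$ after removing large-time tails. The reduction costs $\varepsilon$ uniformly in $n$.

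\textbf{Step 3: cross terms vanish for compactly supported profiles.} For such profiles, orthogonality \eqref{eq:profile-parameter-orthogonality} forces the cross terms to vanish. In the scale-separated case, a change of variables shows that the product concentrates on a vanishing-measure set. In the space/time-separated case, the supports are eventually disjoint. The singular weight is controlled by Hardy's inequality, and by the fact that the $L^{\frac db,\infty}$ bound is scale-invariant, so it is stable under these changes of variables. This proves \eqref{eq:interaction-error-finite-profiles}.

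\textbf{Step 4: the remainder estimate \eqref{eq:interaction-error-with-remainder}.} Write $w=e^{it\Delta}w_n^J$ and expand with Lemma~\ref{lem:G-definition}. The terms $|x|^{-b}(|V|^\alpha+|w|^\alpha)|\nabla w|$ and $|x|^{-b-1}|V|^\alpha|w|$ carry a factor that is small in $S$ (for $|w|^\alpha$), or they contain $|x|^{-b}|V|^\alpha|\nabla w|$. For the term $|x|^{-b}|w|^{\alpha+1}$-like pieces, I bound by $\|w\|_S^\alpha\|w\|_{\dot S^1}\to0$ via Lemma~\ref{lem:nonlinear-estimateI}. The terms with $\nabla V$ and $|w|$ to a positive power are also fine, since $\|w\|_S\to0$.

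\textbf{Main obstacle.} The hard term is $|x|^{-b}|V_n^J|^\alpha|\nabla w|$ (and its Hardy counterpart $|x|^{-b-1}|V|^\alpha|w|$), because $\nabla w$ is not small in $\dot S^1$. For this I approximate $V_n^J$ by finitely many compactly supported rescaled bumps (Step 2). In each profile frame, I then apply Lemma~\ref{lem:localized-linear-remainder} to bound $\|\nabla w\|_{L^2_{t,x}}$ on the corresponding spacetime cylinder by $\|w\|_S^{1/3}\to0$. The weight is harmless there: the weight times $|\psi|^\alpha$ lies in $L^2_tL^{d,\infty}$-type spaces after using $|x|^{-b}\in L^{\frac db,\infty}$ locally. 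The Hardy counterpart term is handled similarly, with the local smoothing replaced by $L^2_{t,x}$ smallness of $|x|^{-1}w$ on compact sets via interpolation. This localization step is where I expect the most care is needed.
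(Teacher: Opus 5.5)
Your outline follows the paper's proof step for step. You approximate each nonlinear profile by a compactly supported function in its own frame, using \eqref{eq:far-away-compact-approx} for far-away profiles. You let orthogonality kill the profile--profile cross terms. You then reduce the remainder interaction to $|x|^{-b}|\psi_n^j|^{\alpha}|\nabla e^{it\Delta}w_n^J|$ and its Hardy counterpart, handled by Lemma~\ref{lem:localized-linear-remainder} in the frame of profile $j$.

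The step that fails as written is your H\"older pairing for this hard term when $1<b<\min\{2,\frac{d}{2}\}$. Set $\tilde w:=(T_n^j)^{-1}(e^{it\Delta}w_n^J)$. Lemma~\ref{lem:localized-linear-remainder} only makes $\nabla\tilde w$ small in $L^2_{t,x}(K)$. Since $\frac{d+2}{2d}=\frac12+\frac1d$, you would then need $|y|^{-b}|\psi^j|^{\alpha}\in L^\infty_tL^{d,\infty}_y$. For a centered profile the origin lies in the support of $\psi^j$, and $|y|^{-b}\mathbf{1}_{B}\notin L^{d,\infty}$ when $b>1$: its distribution function is $\sim\lambda^{-d/b}\gg\lambda^{-d}$ as $\lambda\to\infty$. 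Knowing $|y|^{-b}\in L^{\frac{d}{b},\infty}$ does not help, because $\frac{d}{b}<d$. The same objection applies to the Hardy counterpart if you measure $|y|^{-1}\tilde w$ in $L^2_{t,x}(K)$. Far-away profiles are unaffected, since $|y+y_n|^{-b}$ is bounded on compact sets.

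The repair is an interpolation. Combine the local $L^2_{t,x}(K)$ smallness with the uniform endpoint bound $\|\nabla e^{it\Delta}w_n^J\|_{L_t^2L_x^{\frac{2d}{d-2}}}\lesssim\|w_n^J\|_{\dot H^1}$. This makes $\nabla\tilde w$ small in $L_t^2L_y^p(K)$ for every $2\le p<\frac{2d}{d-2}$. You then only need $|y|^{-b}|\psi^j|^{\alpha}\in L_t^\infty L_y^q$ with $\frac1q=\frac{d+2}{2d}-\frac1p$. As $p\uparrow\frac{2d}{d-2}$ one has $\frac1q\downarrow\frac2d>\frac{b}{d}$, so this works precisely because $b<2$.

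For the Hardy term, write $|y|^{-b-1}|\psi^j|^{\alpha}|\tilde w|=|y|^{-b}|\psi^j|^{\alpha}\,|y|^{-1}|\chi\tilde w|$ with a spatial cutoff $\chi$. Apply the Hardy inequality in $L^p$ ($p<d$) to $\chi\tilde w$, and control $\tilde w\nabla\chi$ by the $S$-smallness of $\tilde w$ on compact sets. An exponent $p<\min\{d,\frac{2d}{d-2}\}$ with $\frac{d+2}{2d}-\frac1p>\frac{b}{d}$ exists exactly when $b<\min\{2,\frac d2\}$.

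Two smaller points.
\begin{enumerate}
\item \eqref{eq:far-away-compact-approx} approximates only in $S$, but your cross terms contain $\nabla v_n^k$. For far-away profiles you therefore need a gradient-level approximation, uniform in $n$, in a Strichartz norm with finite time exponent. Density in all of $\dot S^1$ is false because of the $L_t^\infty\dot H^1$ component, for centered profiles too. This approximation has to be extracted from the $\dot S^1$ bound \eqref{eq:ltp-conc2} inside the proof of Proposition~\ref{prop:far-away-profile}. The paper asserts the same $S\cap\dot S^1$ approximation without further comment.
\item Your Step~1 bound is correct for $\alpha\le1$, by $\alpha$-H\"older continuity of $F_z$ and $F_{\bar z}$. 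It is for $\alpha>1$ that it must be enlarged by terms such as $|x|^{-b}|v_n^k|^{\alpha-1}|v_n^j||\nabla v_n^k|$. These are still cross terms and vanish in the same way.
\end{enumerate}
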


\begin{proof}
Fix $\varepsilon>0$. For each $1\le j\le J$, choose $\psi^j\in C_c^{\infty}(\R\times\R^d)$ so that, after applying the same parameters used in the definition of $v_n^j$,
\begin{equation}\label{eq:compact-approx-nonlinear-profile}
  \sup_n\Bigl(
    \|v_n^j-\psi_n^j\|_{S(\R)}
    +
    \|v_n^j-\psi_n^j\|_{\dot S^1(\R)}
  \Bigr)
  <\varepsilon.
\end{equation}
For centered profiles this is just density in $S\cap\dot S^1$, while for far-away profiles it follows from Proposition~\ref{prop:far-away-profile}. Let $\Psi_n^J:=\sum_{j=1}^J\psi_n^j$. By \eqref{D-basic-nonlinear-estimates}, Lemma~\ref{lem:nonlinear-estimateI}, and \eqref{eq:interaction-error-bounds}, replacing $V_n^J$ by $\Psi_n^J$ changes the left-hand side of \eqref{eq:interaction-error-finite-profiles} by at most $C_J\varepsilon$.

It therefore suffices to prove the results for $\Psi_n^J$. Fix $j\neq k$. By parameter orthogonality, after pulling back by the frame of profile $j$, the transformed profile $k$ converges uniformly to $0$ on every compact subset of spacetime. Since each $\psi^j$ is compactly supported in that frame, we obtain
\begin{equation}\label{eq:compact-profile-cross-vanishing-1}
  \bigl\||x|^{-b}|\psi_n^k|^{\alpha}\nabla\psi_n^j\bigr\|_{L_t^2L_x^{\frac{2d}{d+2},2}}
  +
  \bigl\||x|^{-b-1}|\psi_n^j||\psi_n^k|^{\alpha}\bigr\|_{L_t^2L_x^{\frac{2d}{d+2},2}}
  \longrightarrow 0.
\end{equation}
Expanding \(\nabla\bigl(G(\Psi_n^J)-\sum_{j=1}^J G(\psi_n^j)\bigr)\) with \eqref{D-basic-nonlinear-estimates}, every term is a finite sum of expressions of the type appearing in \eqref{eq:compact-profile-cross-vanishing-1}. Hence
\[
  \Bigl\|\nabla\Bigl(G(\Psi_n^J)-\sum_{j=1}^J G(\psi_n^j)\Bigr)\Bigr\|_{L_t^2L_x^{\frac{2d}{d+2},2}}
  \longrightarrow 0,
\]
which proves \eqref{eq:interaction-error-finite-profiles} after letting $\varepsilon\downarrow0$.

For the term involving the linear remainder, the same reduction gives
\[
  \Bigl\|\nabla\Bigl(G(V_n^J+e^{it\Delta}w_n^J)-G(V_n^J)\Bigr)\Bigr\|_{L_t^2L_x^{\frac{2d}{d+2},2}}
  \le C_J\varepsilon+\Sigma_n^J,
\]
where $\Sigma_n^J$ is the corresponding quantity with $V_n^J$ replaced by $\Psi_n^J$. By \eqref{D-basic-nonlinear-estimates}, $\Sigma_n^J$ is controlled by finitely many terms of the form
\begin{equation}\label{eq:prototype-linear-remainder-interaction}
  \bigl\||x|^{-b}|\psi_n^j|^{\alpha}\nabla e^{it\Delta}w_n^J\bigr\|_{L_t^2L_x^{\frac{2d}{d+2},2}},
  \qquad
  \bigl\||x|^{-b-1}|\psi_n^j|^{\alpha}|e^{it\Delta}w_n^J|\bigr\|_{L_t^2L_x^{\frac{2d}{d+2},2}}.
\end{equation}
Pulling back by the frame of profile $j$ and using Lemma~\ref{lem:localized-linear-remainder}, we get
\begin{equation}\label{eq:localized-smallness-remainder}
  \|\nabla (T_n^j)^{-1}(e^{it\Delta}w_n^J)\|_{L_{t,x}^2(K)}\to0
  \qquad\text{for every compact }K\subset\R\times\R^d.
\end{equation}
Since $\psi^j$ is compactly supported in that frame, every term in \eqref{eq:prototype-linear-remainder-interaction} tends to $0$. Thus $\Sigma_n^J\to0$, and then letting $\varepsilon\downarrow0$ yields \eqref{eq:interaction-error-with-remainder}.
\end{proof}

\begin{lemma}[At least one bad profile]\label{lem:bad-profile}
Assume that Theorem~\ref{thm:main} fails and let $E_c$ be the critical kinetic
threshold defined by \eqref{eq:def-Ec}. Let $u_n:I_n\times\R^d\to\CC$ be a
sequence of maximal-lifespan solutions such that
\begin{equation}\label{eq:bad-profile-sequence-assumption}
  \sup_{t\in I_n}\|\nabla u_n(t)\|_{L_x^2}^2 \to E_c,
  \qquad
  \|u_n\|_{S([0,\sup I_n))}\to\infty,
  \qquad
  \|u_n\|_{S((\inf I_n,0])}\to\infty,
\end{equation}
and apply Proposition~\ref{prop:linear-profile-decomposition} to $u_n(0)$.
Then there exists at least one profile, say $j_0$, such that the
corresponding nonlinear profile obeys
\begin{equation}\label{eq:bad-profile-forward}
  \limsup_{n\to\infty}\|v_n^{j_0}\|_{S([0,\sup I_n^{j_0}))} = \infty.
\end{equation}
In particular, every far-away profile is automatically good.
\end{lemma}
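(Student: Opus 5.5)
We argue by contradiction: suppose every nonlinear profile is good, i.e.\ for every $j$ the nonlinear profile $v_n^j$ exists globally (at least forward in time for large $n$) with $\limsup_n\|v_n^j\|_{S(\R)}<\infty$. We then build an approximate solution from the profiles and use Proposition~\ref{prop:stability} (in the whole-line form of Remark~\ref{rem:stability-whole-line}) to bound $\|u_n\|_{S}$ uniformly. This contradicts \eqref{eq:bad-profile-sequence-assumption}.

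The last sentence of the lemma is immediate from Proposition~\ref{prop:far-away-profile}. When $|x_n^j|/\lambda_n^j\to\infty$, $v_n^j$ is global with $\|v_n^j\|_{S(\R)}+\|v_n^j\|_{\dot S^1(\R)}\lesssim_{\phi^j}1$, so such a profile can never satisfy \eqref{eq:bad-profile-forward}.

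\textbf{Step 1: uniform bounds on good profiles.} By kinetic decoupling \eqref{eq:kinetic-decoupling-linear}, $\sum_j\|\nabla\phi^j\|_{L^2}^2\le E_c$. Hence all but finitely many profiles have $\|\nabla\phi^j\|_{L^2}\le\eta_0$, with $\eta_0$ the small-data threshold of Theorem~\ref{thm:dotH1-local}(iii). For those profiles the small data theory (for centered profiles, including the wave-operator case $t_n^j\to\pm\infty$) gives
\[
\|v_n^j\|_{S(\R)}+\|v_n^j\|_{\dot S^1(\R)}\lesssim\|\nabla\phi^j\|_{L^2}.
\]
The finitely many remaining profiles are good by assumption, and their $\dot S^1$ norms are controlled through Lemma~\ref{lem:nonlinear-estimateI} by subdividing time. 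Using orthogonality \eqref{eq:profile-parameter-orthogonality}, with the usual cross-term vanishing as in Lemma~\ref{lem:interaction-error}, we get
\[
\limsup_n\Bigl\|\sum_{j\le J}v_n^j\Bigr\|_{S(\R)}^{\frac{2(d+2)}{d-2}}\lesssim\sum_j\|v_n^j\|_{S}^{\frac{2(d+2)}{d-2}}\lesssim_{E_c}1,
\]
and a similar bound holds in $\dot S^1$. All of these bounds are uniform in $J$.

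\textbf{Step 2: the approximate solution and error.} Set $\widetilde u_n^J:=\sum_{j=1}^J v_n^j+e^{it\Delta}w_n^J$. Then $\widetilde u_n^J(0)-u_n(0)=\sum_j\bigl(v_n^j(0)-g_n^je^{it_n^j\Delta}\phi^j\bigr)$, whose $\dot H^1$ norm tends to $0$ as $n\to\infty$ by the definition of the nonlinear profiles (trivially in the far-away case, by \eqref{eq:far-away-nonlinear-profile-data}). So \eqref{eq:ltp-gap} and \eqref{eq:ltp-free} hold with arbitrarily small constants. The error is
\[
e_n^J=\sum_j G(v_n^j)-G(\widetilde u_n^J),
\]
since each $v_n^j$ solves \eqref{eq:INLS} exactly. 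Lemma~\ref{lem:interaction-error} gives \eqref{eq:interaction-error-finite-profiles} and \eqref{eq:interaction-error-with-remainder}, both measured in $L_t^2L_x^{\frac{2d}{d+2},2}$, which is a $\dot N^1$ norm. The subtle point is that \eqref{eq:interaction-error-with-remainder} needs $e^{it\Delta}w_n^J$ small in $S$. We only know this in the iterated limit \eqref{eq:linear-remainder-small-S}. So I would first fix $J$ large, then take $n$ large, re-running the proof of \eqref{eq:interaction-error-with-remainder} with $\|e^{it\Delta}w_n^J\|_{S}\le\varepsilon$ in place of convergence to zero. That proof localizes $\nabla e^{it\Delta}w_n^J$ via Lemma~\ref{lem:localized-linear-remainder}, whose bound carries the factor $\|e^{it\Delta}w_n^J\|_S$, so we obtain
\[
\limsup_{J}\limsup_n\|e_n^J\|_{\dot N^1}=0.
\]

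\textbf{Step 3: conclusion, and the main obstacle.} Step 1 gives \eqref{eq:ltp-h1} and \eqref{eq:ltp-S} with $E,L$ depending only on $E_c$, uniformly in $J$. Step 2 makes $\varepsilon<\varepsilon_1(E,E',L)$ for $J$ and then $n$ large. Proposition~\ref{prop:stability} on $\R$ then yields $\|u_n\|_{S(\R)}\lesssim_{E_c}1$, contradicting $\|u_n\|_{S([0,\sup I_n))}\to\infty$. The delicate step is the uniformity in $J$ of the $\dot S^1$ bound on $\sum_{j\le J}v_n^j$. I would handle it via $\dot S^1$ almost-orthogonality (the cross terms $\nabla v_n^j\cdot\nabla v_n^k$ vanish in $L^{q/2}_tL^{r/2}_x$ for $q,r>2$ by orthogonality) combined with the small-profile bounds summed against $\sum_j\|\nabla\phi^j\|^2\le E_c$. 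A secondary obstacle is the error estimate just described in Step 2.
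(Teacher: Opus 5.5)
Your route is the same as the paper's: argue by contradiction, handle small profiles by small-data theory and far-away profiles by Proposition~\ref{prop:far-away-profile}, form $\sum_{j\le J}v_n^j+e^{it\Delta}w_n^J$, control the error with Lemma~\ref{lem:interaction-error}, and close with Proposition~\ref{prop:stability}. Two steps do not work as written.

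\textbf{The time interval.} The negation of \eqref{eq:bad-profile-forward} gives only $\sup_n\|v_n^j\|_{S([0,\sup I_n^j))}<\infty$ for every $j$. It gives nothing backward in time. For example, a centered profile with $t_n^j\to+\infty$ is automatically forward-good, yet $v^j$ may blow up at a finite negative time. Then the lifespan of $v_n^j$ is bounded below, and your $\widetilde u_n^J$ is not even defined on $(-\infty,0]$. By assuming $S(\R)$ bounds for all profiles, your argument only shows that some profile is bad in \emph{some} time direction, which is weaker than \eqref{eq:bad-profile-forward}. The fix is what the paper does: run Steps~1--3 on $[0,\infty)$ only (Remark~\ref{rem:stability-whole-line} permits this) and contradict $\|u_n\|_{S([0,\sup I_n))}\to\infty$.

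\textbf{The iterated limit in Step~2.} You are right that \eqref{eq:interaction-error-with-remainder} cannot be quoted directly, since $\|e^{it\Delta}w_n^J\|_S\to0$ holds only in the iterated sense \eqref{eq:linear-remainder-small-S}; the paper passes over this point. However, simply re-running that proof with $\varepsilon_J:=\limsup_n\|e^{it\Delta}w_n^J\|_S$ gives a bound of the form $C_J\varepsilon_J^{c}$. Here $C_J$ depends on all $J$ compactly supported approximants $\psi^j$ (their spacetime supports and sup norms) and on the number of terms. Nothing prevents $C_J$ from growing faster than $\varepsilon_J^{-c}$ decays, so $\lim_J\limsup_n\|e_n^J\|_{\dot N^1}=0$ does not follow.

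Every term in the expansion except $|x|^{-b}|V_n^J|^{\alpha}|\nabla e^{it\Delta}w_n^J|$ is bounded by a positive power of $\|e^{it\Delta}w_n^J\|_S$ times uniform-in-$J$ bounds, so only this term needs care. Split $V_n^J=\sum_{j<J'}v_n^j+\sum_{J'\le j\le J}v_n^j$. For $J'$ large, the tail satisfies $\limsup_n\|\sum_{J'\le j\le J}v_n^j\|_S^{\frac{2(d+2)}{d-2}}\lesssim\sum_{j\ge J'}\|\nabla\phi^j\|_{L^2}^{\frac{2(d+2)}{d-2}}$, which tends to $0$ as $J'\to\infty$ uniformly in $J$. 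Its contribution is therefore small by H\"older and the uniform $\dot S^1$ bound on $e^{it\Delta}w_n^J$. Then apply Lemma~\ref{lem:localized-linear-remainder} only to the finitely many profiles $j<J'$, whose approximants are chosen independently of $J$.
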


\begin{proof}
By the linear kinetic decoupling \eqref{eq:kinetic-decoupling-linear}, there exists $J_0$ such that
\[
  \|\nabla\phi^j\|_{L_x^2}\ll 1
  \qquad (j\ge J_0).
\]
The small-data theory then gives
\begin{equation}\label{eq:small-linear-profiles-good}
  \sup_n\Bigl(\|v_n^j\|_{S(\R)}+\|v_n^j\|_{\dot S^1(\R)}\Bigr)
  \lesssim \|\nabla\phi^j\|_{L_x^2}
  \qquad (j\ge J_0).
\end{equation}
Moreover, every far-away profile is global and scattering by Proposition~\ref{prop:far-away-profile}. Thus only finitely many centered profiles, namely those with $1\le j<J_0$, can possibly be bad.

Assume for contradiction that every centered profile is good forward in time. Then for each centered profile with $1\le j<J_0$,
\begin{equation}\label{eq:all-profiles-good-forward}
  \sup_n\|v_n^j\|_{S([0,\sup I_n^j))}<\infty.
\end{equation}
By the blow-up criterion and Lemma~\ref{lem:nonlinear-estimateI}, this also implies
\begin{equation}\label{eq:all-profiles-good-forward-S1}
  \sup_n\|v_n^j\|_{\dot S^1([0,\sup I_n^j))}<\infty
  \qquad (1\le j<J_0).
\end{equation}
Together with \eqref{eq:small-linear-profiles-good}, every nonlinear profile is therefore good forward in time.

For $J\ge 1$, define
\begin{equation}\label{eq:approx-good-profiles}
  u_n^J(t):=\sum_{j=1}^J v_n^j(t)+e^{it\Delta}w_n^J.
\end{equation}
We verify the hypotheses of Proposition~\ref{prop:stability} on $[0,\infty)$ in the same order as in the proof of Proposition~\ref{prop:far-away-profile}: uniform bounds, initial-data matching, and small perturbation.

The uniform bounds follow from parameter orthogonality, \eqref{eq:all-profiles-good-forward-S1}, and the smallness of the linear remainder in $S(\R)$:
\begin{equation}\label{eq:approx-good-profiles-S-bounded}
  \limsup_{J\to\infty}\limsup_{n\to\infty}
  \Bigl(\|u_n^J\|_{L_t^\infty\dot H_x^1([0,\infty))}
  +\|u_n^J\|_{S([0,\infty))}
  +\|u_n^J\|_{\dot S^1([0,\infty))}\Bigr)
  <\infty.
\end{equation}
Next, by the definition of the nonlinear profiles,
\begin{equation}\label{eq:initial-gap-good-profiles}
  \lim_{J\to\infty}\limsup_{n\to\infty}
  \|u_n(0)-u_n^J(0)\|_{\dot H^1}=0.
\end{equation}
Hence
\[
  \lim_{J\to\infty}\limsup_{n\to\infty}
  \Bigl\|e^{it\Delta}\bigl(u_n(0)-u_n^J(0)\bigr)\Bigr\|_{S([0,\infty))}=0.
\]
Finally, setting
\[
  e_n^J:=i\partial_tu_n^J+\Delta u_n^J-G(u_n^J),
\]
we have
\[
  e_n^J
  =G\!\left(\sum_{j=1}^J v_n^j\right)-\sum_{j=1}^J G(v_n^j)
   +\Bigl(G(u_n^J)-G\!\left(\sum_{j=1}^J v_n^j\right)\Bigr).
\]
Lemma~\ref{lem:interaction-error} therefore gives
\begin{equation}\label{eq:error-good-profiles-small}
  \lim_{J\to\infty}\limsup_{n\to\infty}
  \|e_n^J\|_{\dot N^1([0,\infty))}=0.
\end{equation}

Fix $J$ large. Then for all sufficiently large $n$, the bounds \eqref{eq:approx-good-profiles-S-bounded}, the initial-data approximation above, and \eqref{eq:error-good-profiles-small} allow us to apply Proposition~\ref{prop:stability} on $[0,\infty)$ to the approximate solution $u_n^J$. We conclude that
\[
  \sup_n\|u_n\|_{S([0,\infty))}<\infty,
\]
which contradicts \eqref{eq:bad-profile-sequence-assumption}. Therefore at least one profile is bad, and \eqref{eq:bad-profile-forward} follows.
\end{proof}

The previous lemma identifies at least one bad profile, but because kinetic energy
is not conserved we still need to show that the near-critical kinetic energy of the
original sequence decouples at the later times where the bad profile is measured.
This is exactly the point where the classical argument of Kenig--Merle must be
retained.

\begin{lemma}[Kinetic decoupling for  $u_n^J$]\label{lem:kinetic-decoupling-nonlinear}
Let $u_n$ be as in Lemma~\ref{lem:bad-profile}. Reorder the profiles so that for some integers $1\le J_1<J_0$,
\[
  \limsup_{n\to\infty}\|v_n^j\|_{S([0,\sup I_n^j))}=\infty
  \quad (1\le j\le J_1),
  \qquad
  \sup_n\|v_n^j\|_{S([0,\infty))}<\infty
  \quad (j>J_1).
\]
For each $m\ge 1$ choose an interval $K_n^m=[0,\tau_n^m]$ such that
\begin{equation}\label{eq:localising-intervals-Knm}
  \sup_{1\le j\le J_1}\|v_n^j\|_{S(K_n^m)} = m.
\end{equation}
Then for every fixed $J$ and $m$,
\begin{equation}\label{eq:kinetic-decoupling-nonlinear-statement}
  \sup_{t\in K_n^m}
  \Bigl|
  \|\nabla u_n^J(t)\|_{L_x^2}^2
  - \sum_{j=1}^J \|\nabla v_n^j(t)\|_{L_x^2}^2
  - \|\nabla w_n^J\|_{L_x^2}^2
  \Bigr|
  \longrightarrow 0.
\end{equation}
\end{lemma}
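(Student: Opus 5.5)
Expanding $\|\nabla u_n^J(t)\|_{L_x^2}^2$ with $u_n^J=\sum_{j\le J}v_n^j+e^{it\Delta}w_n^J$, the claim \eqref{eq:kinetic-decoupling-nonlinear-statement} reduces to two facts. First, $\|\nabla e^{it\Delta}w_n^J\|_{L_x^2}=\|\nabla w_n^J\|_{L_x^2}$ for every $t$, so the remainder term needs no work. What remains is to show that, uniformly in $t\in K_n^m$, the cross terms vanish:
\[
\langle\nabla v_n^j(t),\nabla v_n^k(t)\rangle\to0\quad(j\ne k),\qquad
\langle\nabla v_n^j(t),\nabla e^{it\Delta}w_n^J\rangle\to0 .
\]
Since $J$ is fixed, there are finitely many pairs. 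Arguing by contradiction, I will pick times $t_n\in K_n^m$ along which one such inner product stays bounded away from zero, and then derive a contradiction.

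The key input is that each $v_n^j$ restricted to $K_n^m$ is compact modulo its own frame. For $j\le J_1$ and centered profiles, \eqref{eq:localising-intervals-Knm} gives $\|v_n^j\|_{S(K_n^m)}\le m$. By the blow-up criterion and Lemma~\ref{lem:nonlinear-estimateI}, $v^j$ is then bounded in $\dot S^1$ on the rescaled interval $\{s=t_n/(\lambda_n^j)^2+t_n^j\}$, and it scatters or extends in a controlled way. For $j>J_1$, the uniform bounds on $[0,\infty)$ (or Proposition~\ref{prop:far-away-profile} for far-away profiles) give the same control.

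Pulling back to the frame of $j$, set $s_n^j:=t_n(\lambda_n^j)^{-2}+t_n^j$ and pass to a subsequence. If $s_n^j\to s^j$ stays inside the lifespan of $v^j$, then $(g_n^j)^{-1}v_n^j(t_n)\to v^j(s^j)$ strongly in $\dot H^1$ by continuity. If instead $s_n^j\to\pm\infty$, which can only happen on the side where $v^j$ scatters because the $S$-norm on $K_n^m$ is bounded, then $(g_n^j)^{-1}v_n^j(t_n)-e^{is_n^j\Delta}\psi^j_\pm\to0$ in $\dot H^1$ for the scattering state $\psi^j_\pm$. For far-away profiles, the analogous statement follows from the stability construction, since $v_n^j$ is close to the modified linear evolution $\widetilde v_{n,T}$ in $\dot S^1$ by \eqref{eq:ltp-conc2}, with $E'$ made small by \eqref{eq:far-away-initial-gap}.

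In each case, therefore, $v_n^j(t_n)$ equals $g_n^j$ applied to either a fixed profile or a free evolution $e^{is_n^j\Delta}\psi$, up to an $o(1)$ error in $\dot H^1$. The cross terms $j\ne k$ then vanish by the parameter orthogonality \eqref{eq:profile-parameter-orthogonality}: the composition $(g_n^j)^{-1}g_n^k$ combined with the time shifts converges weakly to zero, exactly as in the linear decoupling. For the remainder term I pull back: $\langle\nabla v_n^j(t_n),\nabla e^{it_n\Delta}w_n^J\rangle$ becomes an inner product of the strongly convergent (modulo free evolution) profile with $e^{is_n^j\Delta}e^{-it_n^j\Delta}(g_n^j)^{-1}w_n^J$. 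This tends to zero by the weak decoupling \eqref{eq:strong-decoupling-remainder}, using unitarity of $e^{it\Delta}$ to move the free evolution onto the profile side.

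I expect the main obstacle to be the case $s_n^j\to s^j$ with $s^j$ a finite blow-up endpoint of $v^j$, where strong convergence could fail. Here I will use \eqref{eq:localising-intervals-Knm}: boundedness of the $S$-norm on $[0,s_n^j]$ in rescaled time rules out approaching a finite-time blow-up endpoint, because the $S$-norm of $v^j$ blows up there. So $s^j$ lies in the interior of the lifespan or equals $\pm\infty$ with scattering on that side.

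A secondary technical point is the far-away profiles, where there is no exact limiting solution. For these I will use \eqref{eq:far-away-compact-approx} together with the $\dot S^1$ stability bound, which replaces $v_n^j$ by $g_n^j$ applied to a cutoff free evolution up to $o(1)$ in $L_t^\infty\dot H^1$. Once that replacement is made, the orthogonality argument above applies verbatim.
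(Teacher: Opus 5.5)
Your proposal is correct and follows essentially the same route as the paper's proof. You expand the square, use unitarity for the remainder's own term, and kill each cross term by pulling back to the frame of profile $j$: there $e^{-is_n^j\Delta}(g_n^j)^{-1}v_n^j(t_n)$ converges strongly in $\dot H^1$, while $e^{-it_n^j\Delta}(g_n^j)^{-1}w_n^J\rightharpoonup 0$ and the relative symmetries $e^{-it_n^j\Delta}(g_n^j)^{-1}g_n^ke^{it_n^k\Delta}$ are asymptotically orthogonal. Two of your steps supply justifications that the paper only asserts: using $\|v_n^j\|_{S(K_n^m)}\le m$ to rule out $s_n^j$ tending to a finite endpoint of the lifespan, and using the $\dot S^1$ stability bound (rather than the $S$-norm approximation \eqref{eq:far-away-compact-approx} alone) to get $\dot H^1$ compactness for far-away profiles.
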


\begin{proof}
Fix $J$ and $m$, and let $s_n\in K_n^m$ be arbitrary. It suffices to prove the desired convergence along this sequence. Set
\[
  \sigma_n^j:=\frac{s_n}{(\lambda_n^j)^2}+t_n^j.
\]
Since $u_n^J(s_n)=\sum_{j=1}^J v_n^j(s_n)+e^{is_n\Delta}w_n^J$, expanding the square of the $\dot H^1$ norm gives
\begin{align}
  &\|\nabla u_n^J(s_n)\|_{L_x^2}^2
   -\sum_{j=1}^J\|\nabla v_n^j(s_n)\|_{L_x^2}^2
   -\|\nabla w_n^J\|_{L_x^2}^2 \notag\\
  &\qquad=
   \sum_{j\ne k}\ip{\nabla v_n^j(s_n)}{\nabla v_n^k(s_n)}
   +2\sum_{j=1}^J \mathrm{Re}\,\ip{\nabla e^{is_n\Delta}w_n^J}{\nabla v_n^j(s_n)}.
   \label{eq:kinetic-decoupling-expanded}
\end{align}
Thus we only need to show that both the profile--profile and remainder--profile inner products vanish.

For the remainder--profile terms, fix $j$. If $j$ is centered, then $v_n^j(s_n)=g_n^j[v^j(\sigma_n^j)]$. Writing $r_n^j:=e^{-it_n^j\Delta}(g_n^j)^{-1}w_n^J$, the invariance of the $\dot H^1$ inner product gives
\[
  \ip{\nabla e^{is_n\Delta}w_n^J}{\nabla v_n^j(s_n)}
  =
  \ip{\nabla r_n^j}{\nabla e^{-i\sigma_n^j\Delta}v^j(\sigma_n^j)}.
\]
By \eqref{eq:strong-decoupling-remainder}, $r_n^j\rightharpoonup0$ weakly in $\dot H^1$. On the other hand, after passing to a subsequence, $e^{-i\sigma_n^j\Delta}v^j(\sigma_n^j)$ converges strongly in $\dot H^1$, either to $e^{-i\sigma^j\Delta}v^j(\sigma^j)$ when $\sigma_n^j\to\sigma^j\in I^j$ or to the corresponding scattering state when $\sigma_n^j\to\pm\infty$. Hence the inner product tends to $0$. For a far-away profile, Proposition~\ref{prop:far-away-profile} provides the same strong precompactness in its own parameter frame, so the same argument applies.

For the profile--profile terms, fix $j\neq k$. After undoing the two parameter frames, the states $e^{-i\sigma_n^j\Delta}v^j(\sigma_n^j)$ and $e^{-i\sigma_n^k\Delta}v^k(\sigma_n^k)$ are again strongly precompact in $\dot H^1$; for centered profiles this follows from continuity and scattering, and for far-away profiles from Proposition~\ref{prop:far-away-profile}. The relative symmetry
\[
  e^{-it_n^j\Delta}(g_n^j)^{-1}g_n^k e^{it_n^k\Delta}
\]
is asymptotically orthogonal by \eqref{eq:profile-parameter-orthogonality}, hence it sends every strongly convergent $\dot H^1$ sequence to one converging weakly to $0$. Therefore
\[
  \ip{\nabla v_n^j(s_n)}{\nabla v_n^k(s_n)}\to0.
\]
Substituting these two facts into \eqref{eq:kinetic-decoupling-expanded}, we obtain the required convergence for the sequence $\{s_n\}$. Since $s_n\in K_n^m$ was arbitrary, \eqref{eq:kinetic-decoupling-nonlinear-statement} follows.
\end{proof}

We now reach the key compactness statement.

\begin{proposition}[Palais--Smale condition]\label{prop:palais-smale-scaling}
Assume that Theorem~\ref{thm:main} fails and let $E_c$ be the critical kinetic
threshold. Let $u_n:I_n\times\R^d\to\CC$ be a sequence of maximal-lifespan
solutions to \eqref{eq:INLS} such that
\begin{equation}\label{eq:ps-assumption-kinetic}
  \sup_{t\in I_n}\|\nabla u_n(t)\|_{L_x^2}^2 \to E_c,
\end{equation}
and suppose there exist times $t_n\in I_n$ with
\begin{equation}\label{eq:ps-assumption-blowup-both-sides}
  \|u_n\|_{S([t_n,\sup I_n))}\to\infty,
  \qquad
  \|u_n\|_{S((\inf I_n,t_n])}\to\infty.
\end{equation}
Then the sequence $u_n(t_n)$ has a subsequence that converges in $\dot H^1(\R^d)$
modulo scaling. More precisely, there exist $\lambda_n>0$ and
$u_0\in\dot H^1(\R^d)$ such that
\begin{equation}\label{eq:ps-conclusion-scaling}
  \lambda_n^{\frac{d-2}{2}}u_n\bigl(t_n,\lambda_n x\bigr)
  \longrightarrow u_0
  \qquad\text{strongly in } \dot H^1(\R^d).
\end{equation}
\end{proposition}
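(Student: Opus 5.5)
By time-translation invariance I would first reduce to $t_n\equiv0$: replace $u_n$ by $u_n(\cdot+t_n)$, which is again a maximal-lifespan solution (the coefficient $|x|^{-b}$ does not depend on $t$). Then apply Proposition~\ref{prop:linear-profile-decomposition} to the bounded sequence $u_n(0)$. Lemma~\ref{lem:bad-profile} gives at least one bad profile, and it must be centered, $x_n^{j}\equiv0$. Applying the same lemma to the time-reversed solutions $\bar u_n(-t)$, which still solve \eqref{eq:INLS}, yields a profile that is bad backward in time. The goal is to prove that there is exactly one profile $j_0$, that it is centered, that it carries all of the kinetic energy $E_c$, and that $w_n^{1}\to0$ in $\dot H^1$. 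The conclusion \eqref{eq:ps-conclusion-scaling} then follows once we show $t_n^{1}\equiv0$ after passing to a subsequence, with $\lambda_n=\lambda_n^1$ and $u_0=\phi^1$.

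\textbf{Step 1: only one profile, carrying all the energy.} Reorder so that profiles $1,\dots,J_1$ are the forward-bad ones, and define $K_n^m$ as in Lemma~\ref{lem:kinetic-decoupling-nonlinear}.
\begin{itemize}
\item On $K_n^m$ every nonlinear profile has $S$-norm at most $m$. The stability argument in the proof of Lemma~\ref{lem:bad-profile} therefore applies on $K_n^m$ and gives $\lim_J\limsup_n\|u_n-u_n^J\|_{L_t^\infty\dot H^1(K_n^m)}=0$.
\item Combined with Lemma~\ref{lem:kinetic-decoupling-nonlinear}, this yields
\[
\sup_{t\in K_n^m}\sum_{j=1}^J\|\nabla v_n^j(t)\|_{L^2}^2+\|\nabla w_n^J\|_{L^2}^2\le E_c+o(1).
\]
\item Since $\|v_n^1\|_{S(K_n^m)}=m$ for every $m$, the definition \eqref{eq:def-Ec} of $E_c$ forces $\limsup_n\sup_{t\in K_n^m}\|\nabla v_n^1(t)\|_{L^2}^2\ge E_c$. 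Otherwise $v_n^1$ would be a solution with kinetic energy uniformly below $E_c$ and arbitrarily large $S$-norm.
\item Evaluating the decoupling inequality at the time where $v_n^1$ nearly attains $E_c$ forces every other $v_n^j$ and $w_n^J$ to be $o(1)$ in $\dot H^1$ at that time.
\end{itemize}
I expect this step to be the main obstacle. Kinetic energy is not conserved, so smallness at one time does not transfer back to $t=0$ directly. I would try to transfer it back as follows:
\begin{itemize}
\item $w_n^J$ is time-independent in norm, since $\|\nabla e^{it\Delta}w_n^J\|_{L^2}=\|\nabla w_n^J\|_{L^2}$, so $\|\nabla w_n^J\|_{L^2}\to0$ already at $t=0$.
\item For any other profile, small $\dot H^1$ norm at one time implies, by the small-data theory, small global $S$- and $\dot S^1$-norms, hence small $\dot H^1$ norm at every time, including $t=0$.
\item Since $\|\nabla v_n^j(0)\|_{L^2}\approx\|\nabla\phi^j\|_{L^2}$, this gives $\phi^j=0$ for all $j\ne1$.
\end{itemize}
Therefore $u_n(0)=g_n^1e^{it_n^1\Delta}\phi^1+w_n$ with $\|w_n\|_{\dot H^1}\to0$, and profile $1$ is centered, so $g_n^1$ is a pure scaling.

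\textbf{Step 2: exclude $t_n^1\to\pm\infty$.} Suppose $t_n^1\to+\infty$. Then $\|e^{it\Delta}u_n(0)\|_{S([0,\infty))}=\|e^{it\Delta}\phi^1\|_{S([t_n^1,\infty))}+o(1)\to0$. The small-data part of Theorem~\ref{thm:dotH1-local}, or Proposition~\ref{prop:stability} with the free solution as approximate solution, then bounds $\|u_n\|_{S([0,\infty))}$. This contradicts \eqref{eq:ps-assumption-blowup-both-sides}. The case $t_n^1\to-\infty$ is symmetric and uses the backward blowup. Hence $t_n^1\equiv0$, and rescaling gives \eqref{eq:ps-conclusion-scaling} with $u_0=\phi^1$.
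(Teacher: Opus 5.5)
Your proposal is correct and follows the paper's proof essentially step for step. You reduce to $t_n\equiv0$, take the profile decomposition with a bad centered profile from Lemma~\ref{lem:bad-profile}, and localize to $K_n^m$. Stability plus Lemma~\ref{lem:kinetic-decoupling-nonlinear} then show the bad profile exhausts $E_c$, and $t_n^1\to\pm\infty$ is excluded via smallness of the free evolution and the two-sided blowup hypothesis. Your use of the small-data theory to carry the smallness of the other profiles from the near-extremal time back to $t=0$ is sound and fills in what the paper treats only briefly, and so is your remark that the surviving profile is the bad, already centered one, which makes the paper's separate exclusion of spatial drift via Proposition~\ref{prop:far-away-profile} unnecessary.
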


\begin{proof}
By time translation we may assume $t_n\equiv 0$. Apply Proposition~\ref{prop:linear-profile-decomposition} to the bounded sequence $u_n(0)$ and construct the associated nonlinear profiles $v_n^j$.  By Lemma~\ref{lem:bad-profile}, after reordering there is at least one bad centered profile.  We denote the bad profiles by $1\le j\le J_1$.

Choose the intervals $K_n^m$ as in \eqref{eq:localising-intervals-Knm}.  By the pigeonhole principle, after reordering once more there exists a fixed bad profile, say $j=1$, such that for infinitely many $m$,
\[
  \sup_{t\in K_n^m}\|v_n^1\|_{S(K_n^m)}=m
\]
along an infinite subsequence in $n$.  By the definition of $E_c$, this implies
\begin{equation}\label{eq:bad-profile-attains-threshold}
  \limsup_{m\to\infty}\limsup_{n\to\infty}
  \sup_{t\in K_n^m}\|\nabla v_n^1(t)\|_{L_x^2}^2
  \ge E_c.
\end{equation}

On the other hand, all nonlinear profiles have finite scattering size on each fixed interval $K_n^m$.  Repeating the perturbative argument from Lemma~\ref{lem:bad-profile}, now on $K_n^m$, we obtain
\begin{equation}\label{eq:approximation-on-Knm}
  \lim_{J\to\infty}\limsup_{n\to\infty}
  \|u_n-u_n^J\|_{L_t^{\infty}\dot H_x^1(K_n^m\times\R^d)} = 0
  \qquad\text{for each fixed }m.
\end{equation}
Combining \eqref{eq:ps-assumption-kinetic}, Lemma~\ref{lem:kinetic-decoupling-nonlinear}, and \eqref{eq:approximation-on-Knm}, we find for each fixed $m$,
\begin{align*}
  E_c
  &\ge
  \limsup_{n\to\infty}
  \sup_{t\in K_n^m}\|\nabla u_n(t)\|_{L_x^2}^2 \\
  &=
  \lim_{J\to\infty}\limsup_{n\to\infty}
  \Biggl\{
    \|\nabla w_n^J\|_{L_x^2}^2
    +
    \sup_{t\in K_n^m}\sum_{j=1}^J \|\nabla v_n^j(t)\|_{L_x^2}^2
  \Biggr\}.
\end{align*}
Letting $m\to\infty$ and using \eqref{eq:bad-profile-attains-threshold}, we infer that no kinetic energy remains for any second profile or for the remainder.  Therefore,
\begin{equation}\label{eq:single-profile-reduction}
  u_n(0)=g_n e^{it_n\Delta}\phi + w_n,
  \qquad
  w_n\to 0 \text{ strongly in } \dot H^1(\R^d),
\end{equation}
for some single profile $\phi\in\dot H^1$ and some parameters $(x_n,\lambda_n,t_n)$ satisfying the normalization alternatives.

It remains to remove the residual time and spatial drift.  If $t_n\to+\infty$, then for every fixed $T>0$,
\[
  \|e^{it\Delta}g_n e^{it_n\Delta}\phi\|_{S([0,T])}
  =
  \|e^{is\Delta}\phi\|_{S([t_n,t_n+\frac{T}{(\lambda_n)^2}])}
  \le \|e^{is\Delta}\phi\|_{S([t_n,\infty))}
  \to0,
\]
and the same holds in $\dot S^1([0,T])$.  Since $e^{it\Delta}w_n$ is also small on $[0,T]$, Proposition~\ref{prop:stability} yields a uniform forward scattering bound for $u_n$, contradicting \eqref{eq:ps-assumption-blowup-both-sides}.  The case $t_n\to-\infty$ is identical.

If $\frac{|x_n|}{\lambda_n}\to\infty$, then Proposition~\ref{prop:far-away-profile} furnishes a global scattering solution $z_n$ with initial data $g_n e^{it_n\Delta}\phi$ and uniform $S(\R)$, $\dot S^1(\R)$ bounds.  Because $w_n\to0$ strongly in $\dot H^1$, the free evolution of the initial discrepancy is small on every compact interval, and Proposition~\ref{prop:stability} applies again.  This gives a uniform scattering bound for $u_n$, again contradicting \eqref{eq:ps-assumption-blowup-both-sides}.  Hence the spatial drift is impossible.

We are left with $x_n\equiv 0$ and $t_n\equiv 0$, so \eqref{eq:single-profile-reduction} becomes
\[
  u_n(0)=g_{0,\lambda_n}\phi + o_{\dot H^1}(1),
\]
which is exactly \eqref{eq:ps-conclusion-scaling}.
\end{proof}

\subsection{Construction of the minimal kinetic-energy blowup solution}

\begin{proof}[\textbf{Proof of Theorem~\ref{prop:critical-element}}]
By the definition of \(E_c\), there exists a sequence of maximal-lifespan solutions \(u_n\) such that
\[
\sup_{t\in I_n}\|\nabla u_n(t)\|_{L_x^2}^2\downarrow E_c,
\qquad
\|u_n\|_{S(I_n)}\to\infty.
\]
Choosing times \(t_n\in I_n\) so that the forward and backward scattering sizes are equal, both sizes must tend to infinity; by time translation we may assume \(t_n=0\). Proposition~\ref{prop:palais-smale-scaling} then yields, after passing to a subsequence, a nontrivial profile \(u_{c,0}\in \dot H^1(\R^d)\) such that \(u_n(0)\) converges to \(u_{c,0}\) in \(\dot H^1\) modulo scaling. Let \(u_c\) be the maximal-lifespan solution with initial data \(u_{c,0}\). By local well-posedness and stability, \(u_c\) satisfies
\[
\sup_{t\in I_c}\|\nabla u_c(t)\|_{L_x^2}^2\le E_c.
\]
Moreover, if \(u_c\) scattered in either time direction, then the same would hold for \(u_n\) for all large \(n\), contradicting the choice of \(u_n\). Hence \(u_c\) blows up both forward and backward in the sense of \eqref{eq:critical-element-blowup-both-sides}. Finally, reapplying Proposition~\ref{prop:palais-smale-scaling} to the sequence \(u_c(\tau_n)\) for arbitrary \(\tau_n\in I_c\), we obtain the precompactness of the orbit modulo scaling, that is, \(u_c\) is almost periodic modulo scaling. The equality in \eqref{eq:critical-element-kinetic} then follows immediately from the definition of \(E_c\).
\end{proof}

% ================================================================
\section{Rigidity: preclusion of the minimal kinetic-energy blowup solution}
% ================================================================

In this section we rule out the minimal kinetic-energy blowup solution produced in
Theorem~\ref{prop:critical-element}. The concentration-compactness argument in Section~3 has already
produced a minimal kinetic-energy blowup solution that is almost periodic \emph{modulo scaling only}:
no spatial center survives. This is the main simplification coming from the
broken translation symmetry of INLS. Once the spatial drift has been eliminated,
the rigidity step may follow the qualitative Kenig--Merle route, using a reduced
Duhamel formula to preclude finite-time blowup and a localized virial identity to
preclude global compact solutions.

\subsection{Local constancy and the rigidity scenarios}

We begin with the standard local constancy property for the frequency scale
function of the almost periodic solution.

\begin{lemma}[Local constancy]\label{lem:local-constancy-sec4}
Let $u:I\times\R^d\to\CC$ be a maximal-lifespan solution to
\eqref{eq:INLS} that is almost periodic modulo scaling in the sense
of Definition~\ref{def:almost-periodic-scaling}. Then there exists
$\delta=\delta(u)>0$ such that for every $t_0\in I$ one has
\begin{equation}\label{eq:local-constancy-interval}
  [t_0-\delta N(t_0)^{-2},\, t_0+\delta N(t_0)^{-2}]\subset I,
\end{equation}
and
\begin{equation}\label{eq:local-constancy-comparable}
  N(t)\sim_u N(t_0)
  \qquad \text{whenever } |t-t_0|\le \delta N(t_0)^{-2}.
\end{equation}
\end{lemma}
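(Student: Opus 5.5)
The plan is the standard compactness-plus-local-theory argument, as in Killip--Visan's treatment of almost periodic solutions, adapted to scaling only. We argue by contradiction using precompactness of the normalized orbit \eqref{eq:apms-orbit} together with the local theory of Theorem~\ref{thm:dotH1-local} and the stability result Proposition~\ref{prop:stability}. The key observation is that the rescaled data
\[
  w_{t_0}(x) := N(t_0)^{-\frac{d-2}{2}}u\bigl(t_0,N(t_0)^{-1}x\bigr)
\]
range over a precompact set $K\subset\dot H^1$. The equation is invariant under the scaling \eqref{eq:scaling}, with no translations involved, so the solution with data $w_{t_0}$ is exactly $u$ rescaled:
\[
  N(t_0)^{-\frac{d-2}{2}}u\bigl(t_0+N(t_0)^{-2}t,\,N(t_0)^{-1}x\bigr).
\]

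\textbf{Step 1: uniform lifespan.} We claim there is $\delta>0$ such that every solution with data in $\overline K$ exists on $[-\delta,\delta]$ with $S([-\delta,\delta])$-norm at most $1$. For a single datum $f$ this holds because $\|e^{it\Delta}f\|_{S([-\delta,\delta])}\to0$ as $\delta\to0$. For uniformity over $\overline K$ we argue by compactness. Suppose $f_n\in\overline K$ with $\delta_n\to0$ violate the claim. Passing to a subsequence, $f_n\to f$ in $\dot H^1$. Proposition~\ref{prop:stability} applied with $\widetilde u$ the solution from $f$ on a short interval then gives a contradiction. The free-evolution gap condition \eqref{eq:ltp-free} follows from Strichartz and Sobolev, since $\|e^{it\Delta}(f_n-f)\|_{S}\lesssim\|f_n-f\|_{\dot H^1}$. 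Undoing the scaling yields \eqref{eq:local-constancy-interval}.

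\textbf{Step 2: comparability of $N$.} Suppose \eqref{eq:local-constancy-comparable} fails. Then there are $t_n$ and $t_n'$ with $|t_n'-t_n|\le\delta N(t_n)^{-2}$ and $N(t_n')/N(t_n)\to0$ or $\infty$. Rescale by $N(t_n)$ and set $s_n:=N(t_n)^2(t_n'-t_n)\in[-\delta,\delta]$. Passing to subsequences, $w_{t_n}\to f$ in $\dot H^1$ and $s_n\to s$. By the stability theory (continuous dependence on data, uniform on $[-\delta,\delta]$ by Step 1), the rescaled solution at time $s_n$ converges in $\dot H^1$ to $v(s)$, where $v$ solves the equation with $v(0)=f$. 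At the same time, that rescaled state equals the profile $w_{t_n'}$ dilated by the ratio $\mu_n:=N(t_n')/N(t_n)\to0$ or $\infty$. Since $w_{t_n'}$ lies in the precompact set $K$, a further subsequence gives $w_{t_n'}\to g$. Therefore $g_{0,\mu_n^{-1}}g\to v(s)$ strongly, with the scaling parameter degenerating. This forces $v(s)=0$ and $g=0$, because a dilation with degenerate parameter converges weakly to $0$.

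\textbf{Step 3: excluding the zero solution.} Then $v\equiv0$ by uniqueness, so $f=0$. This contradicts the fact that $u$ is a nonzero blowup solution, since precompactness together with $u\not\equiv0$ gives $\inf_t\|\nabla u(t)\|_{L^2}>0$. Indeed, if $\|\nabla u(t_n)\|_{L^2}\to0$, then small-data theory (Theorem~\ref{thm:dotH1-local}(iii)) would give global scattering, contradicting blowup.

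The main obstacle is Step 2. Limits of a sequence under degenerating scaling must be identified carefully, and the non-vanishing of the limit profile has to be justified. This is where the lower bound on $\|\nabla u(t)\|_{L^2}$ from the critical element property (Theorem~\ref{prop:critical-element}) enters. Translation issues, which complicate the classical argument, are absent because only scaling survives here.
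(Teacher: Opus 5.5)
Your proof is correct. Step~1 is the same compactness-plus-local-theory argument the paper uses; you supply the stability-based contradiction details that the paper compresses into ``by the local theory and compactness''. Step~2 takes a different route from the paper.

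\emph{The paper's route.} Precompactness of $K$ gives constants $0<c_0<C_0$ with $\|P_{\le c_0}f\|_{\dot H^1}+\|P_{\ge C_0}f\|_{\dot H^1}\ll1$ uniformly in $f\in K$. This is applied to the rescaled states $v_{t_0}(s)$, $|s|\le\delta$, so the $\dot H^1$ mass of $u(t)$ lies at frequencies $\sim N(t_0)$. That is then matched with its concentration at frequencies $\sim N(t)$.

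\emph{Your route.} You argue by contradiction along sequences. Continuous dependence from Proposition~\ref{prop:stability} gives $v_n(s_n)\to v(s)$, and dilations with degenerating parameter converge weakly to zero.

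\emph{What each buys.} The paper's argument is shorter and more direct, but as written it tacitly needs two facts that your version makes explicit.
\begin{enumerate}[(i)]
\item The set $\{v_{t_0}(s):t_0\in I,\ |s|\le\delta\}$ must be precompact. It is not $K$ itself, and its precompactness is exactly the continuity of the flow on $\overline K\times[-\delta,\delta]$.
\item One needs $\inf_t\|\nabla u(t)\|_{L^2}>0$. Otherwise frequency concentration at scale $N(t_0)$ says nothing about $N(t)$.
\end{enumerate}

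\emph{A remark on your Step~3.} The lemma does not assume blowup, so you should not rely on it. You only need $u\not\equiv0$, which must be assumed anyway: for $u\equiv0$ every function $N$ is admissible and the comparability fails. Given $u\not\equiv0$, small-data theory applied at time $t_n$ gives $\|u\|_{\dot S^1(\R)}\lesssim\|\nabla u(t_n)\|_{L^2}\to0$, hence $u\equiv0$, a contradiction. Alternatively, in the defocusing case, energy conservation together with $\int|x|^{-b}|u|^{\alpha+2}\lesssim\|\nabla u\|_{L^2}^{\alpha+2}$ bounds $\|\nabla u(t)\|_{L^2}$ below in terms of $E(u)>0$.
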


\begin{proof}
Set
\[
  K:=\Bigl\{N(t)^{-\frac{d-2}{2}}u\bigl(t,N(t)^{-1}x\bigr):t\in I\Bigr\}
  \subset \dot H^1(\R^d).
\]
Since \(u\) is almost periodic modulo scaling, the set \(K\) is precompact in \(\dot H^1\). Hence, by the local theory and compactness, there exists \(A=A(u)>0\) such that every \(f\in K\) generates a solution on \([-A,A]\) with uniformly bounded \(\dot S^1([-A,A])\) and \(S([-A,A])\) norms.

Fix \(t_0\in I\), and consider the renormalized solution
\[
  v_{t_0}(s,y)
  :=N(t_0)^{-\frac{d-2}{2}}
    u\bigl(t_0+sN(t_0)^{-2},\, N(t_0)^{-1}y\bigr).
\]
Then \(v_{t_0}(0)\in K\), so the previous paragraph implies that \(v_{t_0}\) is defined on \([-A,A]\). Scaling back, we obtain
\[
  [\,t_0-\delta N(t_0)^{-2},\, t_0+\delta N(t_0)^{-2}\,]\subset I,
\]
with \(\delta:=A\), which proves \eqref{eq:local-constancy-interval}.

To prove \eqref{eq:local-constancy-comparable}, note that the compactness of \(K\) implies a uniform frequency localization: there exist \(0<c_0<C_0<\infty\), depending only on \(u\), such that for every \(f\in K\),
\[
  \|P_{\le c_0}f\|_{\dot H^1}+\|P_{\ge C_0}f\|_{\dot H^1}
  \ll 1.
\]
Applying this to \(f=v_{t_0}(s)\) for \(|s|\le A\) and undoing the scaling, we see that for
\[
  |t-t_0|\le \delta N(t_0)^{-2},
\]
the \(\dot H^1\)-energy of \(u(t)\) remains concentrated in frequencies
\[
  |\xi|\sim_u N(t_0).
\]
Since \(N(t)\) is defined only up to bounded factors, this yields \(N(t)\sim_u N(t_0)\), as claimed.
\end{proof}

\begin{corollary}[Frequency blowup at a finite endpoint]\label{cor:Nt-at-blowup-sec4}
Let $u:I\times\R^d\to\CC$ be a maximal-lifespan almost periodic solution modulo
scaling. If $T$ is a finite endpoint of $I$, then
\begin{equation}\label{eq:Nt-blowup-lower-bound-sec4}
  N(t)\gtrsim_u |T-t|^{-\frac12}
  \qquad \text{as } t\to T.
\end{equation}
In particular,
\begin{equation}\label{eq:Nt-to-infty-sec4}
  N(t)\to\infty
  \qquad \text{as } t\to T.
\end{equation}
\end{corollary}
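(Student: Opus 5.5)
The bound \eqref{eq:Nt-blowup-lower-bound-sec4} follows directly from Lemma~\ref{lem:local-constancy-sec4}, specifically from the interval inclusion \eqref{eq:local-constancy-interval}. Suppose $T=\sup I<\infty$; the case of a finite left endpoint is symmetric. For every $t\in I$, the lemma gives
\[
[t-\delta N(t)^{-2},\,t+\delta N(t)^{-2}]\subset I,
\]
where $\delta=\delta(u)>0$ is independent of $t$. In particular $t+\delta N(t)^{-2}<T$ whenever $T$ is not attained. Since $I$ is an open maximal-lifespan interval, $T\notin I$, so this holds for all $t\in I$. Rearranging gives
\[
\delta N(t)^{-2}<T-t, \qquad\text{that is,}\qquad N(t)>\delta^{1/2}(T-t)^{-1/2}.
\]
This is exactly \eqref{eq:Nt-blowup-lower-bound-sec4} with implicit constant $\delta(u)^{1/2}$. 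Letting $t\to T$ then gives \eqref{eq:Nt-to-infty-sec4}.

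The only points needing care are the following. First, $I$ is open at the finite endpoint. This holds because the maximal-lifespan solution from Theorem~\ref{thm:dotH1-local} is defined on the open interval $(-T_{\min},T_{\max})$. Second, $\delta$ is uniform in $t_0$. This is precisely what the lemma asserts: it comes from precompactness of the normalized orbit $K$ together with the stability theory of Proposition~\ref{prop:stability}, which gives a uniform local existence time $A$ for all data in $\overline{K}$.

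I do not expect a real obstacle, since the corollary is a direct rescaling of the uniform lifespan bound. If one wanted the proof independent of the lemma, the delicate step would be the uniform lifespan over the compact closure $\overline{K}$. I would prove it by contradiction. Take $f_n\in K$ whose lifespans shrink to zero, and extract a convergent subsequence $f_n\to f$ in $\dot H^1$. Apply Proposition~\ref{prop:stability} with $\widetilde u$ the solution emanating from $f$ on a short interval where $\|\widetilde u\|_{S}$ is finite. This shows the $f_n$ solutions exist on that same interval, a contradiction.
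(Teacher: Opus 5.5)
Your proposal is correct and follows the paper's proof: both apply the interval inclusion \eqref{eq:local-constancy-interval} at each $t\in I$, note that its right endpoint cannot exceed $T$, and rearrange to get $N(t)\ge\delta^{1/2}(T-t)^{-1/2}$ with the uniform constant $\delta=\delta(u)$. Your extra remarks on the openness of $I$ and on the uniform lifespan over $\overline{K}$ only spell out what the paper leaves inside Lemma~\ref{lem:local-constancy-sec4}; they do not change the argument.
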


\begin{proof}
Suppose $T=\sup I$; the case $T=\inf I$ is identical. Fix $t\in I$.
By Lemma~\ref{lem:local-constancy-sec4},
\[
  [t-\delta N(t)^{-2},\, t+\delta N(t)^{-2}]\subset I.
\]
Since the right endpoint of this interval cannot exceed $T$, we obtain
$t+\delta N(t)^{-2}\le T$, that is,
\[
  N(t)\ge \delta^{\frac{1}{2}}(T-t)^{-\frac12}.
\]
This proves \eqref{eq:Nt-blowup-lower-bound-sec4} and hence
\eqref{eq:Nt-to-infty-sec4}.
\end{proof}

\begin{proof}[\textbf{Proof of Theorem~\ref{prop:normalized-rigidity-scenarios}}]
This is the standard normalization of the ``enemies'' in the energy-critical
Kenig--Merle argument. The proof uses only
Lemma~\ref{lem:local-constancy-sec4}, the compactness extraction from
Proposition~\ref{prop:palais-smale-scaling}, and the usual rescaling-at-almost-minimal-frequency
argument from the energy-critical literature. Since the only noncompact symmetry
remaining after Section~3 is scaling, the proof is in fact simpler than in the
classical nonradial NLS. We omit the routine details and proceed directly to the
rigidity argument for the two scenarios above.
\end{proof}

\subsection{No finite-time blowup}

We first prove a one-sided reduced Duhamel formula.

\begin{lemma}\label{lem:weak-vanishing-finite-endpoint}
Let $u:I\times\R^d\to\CC$ be a maximal-lifespan solution that is almost periodic
modulo scaling, and suppose that $T:=\sup I<\infty$. Then
\begin{equation}\label{eq:weak-vanishing-finite-endpoint}
  e^{-it\Delta}u(t)\rightharpoonup 0
  \qquad \text{weakly in } \dot H^1(\R^d)
  \quad \text{as } t\uparrow T.
\end{equation}
\end{lemma}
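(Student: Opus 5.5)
By density and the uniform bound $\sup_{t\in I}\|e^{-it\Delta}u(t)\|_{\dot H^1}=\sup_t\|\nabla u(t)\|_{L^2}\lesssim E_c^{1/2}$, I will only test against a fixed $\psi\in C_c^\infty(\R^d)$ whose Fourier transform is supported in an annulus $\{a\le|\xi|\le A\}$. The claim then reduces to
\[
\ip{\nabla e^{-it\Delta}u(t)}{\nabla\psi}=\ip{\nabla u(t)}{\nabla e^{it\Delta}\psi}\to0 \qquad (t\uparrow T).
\]
Everything is driven by Corollary~\ref{cor:Nt-at-blowup-sec4}, which gives $N(t)\to\infty$ as $t\uparrow T$. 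Almost periodicity modulo scaling then forces the $\dot H^1$ mass of $u(t)$ to leave every fixed frequency band.

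Fix $\eta>0$ and take $C(\eta)$ from Definition~\ref{def:almost-periodic-scaling}. I use its equivalent compact-orbit form: precompactness of the rescaled orbit in $\dot H^1$ gives, for $\eta>0$, a constant $c(\eta)>0$ with
\[
\int_{|\xi|\le c(\eta)N(t)}|\xi|^2|\widehat u(t,\xi)|^2\,d\xi\le\eta \qquad\text{for all } t\in I.
\]
This low-frequency tightness is standard: a precompact set $K\subset\dot H^1$ satisfies $\sup_{f\in K}\|P_{\le \epsilon}f\|_{\dot H^1}\to0$ as $\epsilon\to0$, and rescaling converts this into the display.

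Since $N(t)\to\infty$, for $t$ close to $T$ we have $c(\eta)N(t)\ge A$. Hence the frequency support of $e^{it\Delta}\psi$, which is that of $\psi$ and so lies in $|\xi|\le A$, sits inside the region where $u(t)$ carries at most $\eta$ of its $\dot H^1$ norm. By Plancherel and Cauchy--Schwarz,
\[
|\ip{\nabla u(t)}{\nabla e^{it\Delta}\psi}|\le \Bigl(\int_{|\xi|\le A}|\xi|^2|\widehat u(t,\xi)|^2d\xi\Bigr)^{1/2}\|\psi\|_{\dot H^1}\le \eta^{1/2}\|\psi\|_{\dot H^1}.
\]
Because $\eta$ is arbitrary the limit is $0$, and the density reduction then upgrades this to weak convergence.

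The only step needing care is the low-frequency tightness derived from precompactness. The definition as written records only the high-frequency and spatial tails. I would therefore derive the low-frequency bound directly from the orbit formulation \eqref{eq:apms-orbit}: approximate $K$ by a finite $\epsilon$-net, and use that $\|P_{\le\epsilon}f\|_{\dot H^1}\to0$ for each fixed $f$. No Duhamel formula or nonlinear estimate is needed, since finite-time blowup together with almost periodicity gives the weak vanishing purely kinematically.
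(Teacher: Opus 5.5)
Your proof is correct, but it gets there by a different mechanism than the paper.

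\textbf{The paper's route.} The paper argues sequentially. For $t_n\uparrow T$ it writes $u(t_n)=g_{0,\lambda_n}f_n$ with $f_n$ in the precompact set $K$ and $\lambda_n=N(t_n)^{-1}\to0$. It extracts a subsequence with $f_n\to f$ strongly and shows $g_{0,\lambda_n}f\rightharpoonup0$ by a Fourier-side computation. Only at the end does it move the propagator across, using strong continuity of $e^{-it\Delta}$ at the finite time $T$.

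\textbf{Your route.} You convert precompactness into the uniform low-frequency bound $\int_{|\xi|\le c(\eta)N(t)}|\xi|^2|\widehat u(t,\xi)|^2\,d\xi\le\eta$. You then test against functions of bounded frequency support, using that $e^{it\Delta}$ is a unimodular Fourier multiplier and so does not move that support.

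\textbf{What each buys.} Your version needs no subsequence extraction and no time-continuity of the free flow, and it gives an explicit bound once $c(\eta)N(t)\ge A$. It therefore shows $e^{-it\Delta}u(t)\rightharpoonup0$ along any family of times with $N(t)\to\infty$, whether or not those times stay bounded. Your Cauchy--Schwarz over a frequency ball is also the clean way to get weak vanishing under degenerating scaling. In the paper's computation the rescaled test function actually carries the factor $\lambda_n^{-(d+2)/2}$, so that limit is best justified by exactly this kind of splitting rather than by dominated convergence alone. The price is that you must derive low-frequency tightness separately, because \eqref{eq:apms-physical} records only spatial and high-frequency tails. Your $\epsilon$-net argument from \eqref{eq:apms-orbit} does this correctly; it also follows from the spatial tail via Bernstein. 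The paper's route needs no such auxiliary bound and follows the usual profile-decomposition template.

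\textbf{Two small corrections.} First, a nonzero $\psi\in C_c^\infty(\R^d)$ cannot have compactly supported Fourier transform. Your dense class should be Schwartz functions with $\widehat\psi\in C_c^\infty(\R^d\setminus\{0\})$, or simply with $\widehat\psi$ compactly supported. These are dense in $\dot H^1$, and your argument only uses $\supp\widehat\psi\subset\{|\xi|\le A\}$. Second, the uniform $\dot H^1$ bound comes from the definition of almost periodicity (uniformly bounded kinetic energy), not from $E_c$. The lemma is stated for general almost periodic solutions, not just the critical element.
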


\begin{proof}
By Corollary~\ref{cor:Nt-at-blowup-sec4}, $N(t)\to\infty$ as $t\uparrow T$.
Let
\[
  K:=\Bigl\{N(t)^{-\frac{d-2}{2}}u\bigl(t,N(t)^{-1}x\bigr): t\in I\Bigr\},
\]
which is precompact in $\dot H^1$. Thus for every sequence $t_n\uparrow T$ there
exist $f_n\in K$ such that
\[
  u(t_n)=g_{0,\lambda_n}f_n,
  \qquad \lambda_n:=N(t_n)^{-1}\to 0.
\]
Passing to a subsequence, $f_n\to f$ strongly in $\dot H^1$. Hence it suffices to
show that $g_{0,\lambda_n}f\rightharpoonup 0$ in $\dot H^1$ as $\lambda_n\to0$.
But this is immediate from the Fourier representation:
\[
  \widehat{g_{0,\lambda_n}f}(\xi)
  = \lambda_n^{\frac{d+2}{2}}\widehat f(\lambda_n\xi),
\]
so that for every $\varphi\in C_c^{\infty}(\R^d)$,
\begin{align*}
  \ip{g_{0,\lambda_n}f}{\varphi}_{\dot H^1}
  &= \int_{\R^d} |\xi|^2
     \lambda_n^{\frac{d+2}{2}}\widehat f(\lambda_n\xi)
     \overline{\widehat\varphi(\xi)}\,d\xi \\
  &= \int_{\R^d} |\eta|^2 \widehat f(\eta)
     \overline{\lambda_n^{-\frac{d-2}{2}}\widehat\varphi(\frac{\eta}{\lambda_n})}\,d\eta
     \longrightarrow 0,
\end{align*}
by dominated convergence. Therefore $u(t_n)\rightharpoonup0$ in $\dot H^1$ along
any sequence $t_n\uparrow T$. Since $e^{-it\Delta}$ depends continuously on $t$ in
the strong operator topology on $\dot H^1$, \eqref{eq:weak-vanishing-finite-endpoint}
follows.
\end{proof}

\begin{proposition}[Reduced Duhamel formula]\label{prop:reduced-duhamel-finite-endpoint}
Let $u:I\times\R^d\to\CC$ be a maximal-lifespan almost periodic solution modulo
scaling, and suppose that $T:=\sup I<\infty$. Then for every $t\in I$,
\begin{equation}\label{eq:reduced-duhamel-finite-endpoint}
  u(t)
  = i\,\!\lim_{\tau\uparrow T}
    \int_t^{\tau} e^{i(t-s)\Delta}
    \bigl(|x|^{-b}|u(s)|^{\alpha}u(s)\bigr)\,ds
\end{equation}
as a weak limit in $\dot H^1(\R^d)$.
\end{proposition}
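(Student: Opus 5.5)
The plan is to derive the identity from the ordinary Duhamel formula between $t$ and a later time $\tau<T$ and then let $\tau\uparrow T$, using Lemma~\ref{lem:weak-vanishing-finite-endpoint} to kill the linear part.

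Fix $t\in I$ and $\tau\in(t,T)$. Since $u$ is a strong solution on the compact interval $[t,\tau]$, Definition~\ref{def:intro-strong-solution} gives
\[
  u(\tau)=e^{i(\tau-t)\Delta}u(t)-i\int_t^\tau e^{i(\tau-s)\Delta}\bigl(|x|^{-b}|u|^\alpha u\bigr)(s)\,ds .
\]
Apply the unitary operator $e^{i(t-\tau)\Delta}$ on $\dot H^1$ to rewrite this as
\[
  e^{i(t-\tau)\Delta}u(\tau)=u(t)-i\int_t^\tau e^{i(t-s)\Delta}\bigl(|x|^{-b}|u|^\alpha u\bigr)(s)\,ds .
\]
The Duhamel integral lies in $\dot H^1$ for each fixed $\tau<T$. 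Indeed, $u\in\dot S^1([t,\tau])\cap S([t,\tau])$ by Theorem~\ref{thm:dotH1-local}(i), so Lemma~\ref{lem:nonlinear-estimateI} puts $G(u)\in\dot N^1([t,\tau])$, and the dual (retarded) Strichartz estimate then bounds the integral in $\dot H^1$. So the identity holds in $\dot H^1$.

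It therefore suffices to show that $e^{i(t-\tau)\Delta}u(\tau)\rightharpoonup0$ weakly in $\dot H^1$ as $\tau\uparrow T$. Write
\[
  e^{i(t-\tau)\Delta}u(\tau)=e^{it\Delta}\bigl(e^{-i\tau\Delta}u(\tau)\bigr).
\]
Lemma~\ref{lem:weak-vanishing-finite-endpoint} gives $e^{-i\tau\Delta}u(\tau)\rightharpoonup0$. The fixed unitary $e^{it\Delta}$ is weakly continuous, since for any $\varphi$ its pairing equals the pairing of $e^{-i\tau\Delta}u(\tau)$ against $e^{-it\Delta}\varphi$. So the left side tends weakly to zero.

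Consequently the weak limit of the Duhamel integral exists and equals $-iu(t)$. Multiplying by $i$ gives \eqref{eq:reduced-duhamel-finite-endpoint}.

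The only delicate point is the weak vanishing, and that has already been established in Lemma~\ref{lem:weak-vanishing-finite-endpoint} using $N(t)\to\infty$ from Corollary~\ref{cor:Nt-at-blowup-sec4}. The rest is bookkeeping. Two things need checking: that the Duhamel integral is a genuine $\dot H^1$-valued object for each fixed $\tau$, and that the weak limit is taken in the same topology. Uniform boundedness in $\tau$ is not needed, because the identity exhibits the integral as the difference of $u(t)$ and a weakly convergent term.
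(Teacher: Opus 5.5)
Your proposal is correct and follows the paper's own argument: write the Duhamel formula between $t$ and $\tau<T$, and use Lemma~\ref{lem:weak-vanishing-finite-endpoint} to show that $e^{i(t-\tau)\Delta}u(\tau)\rightharpoonup 0$ in $\dot H^1$ as $\tau\uparrow T$. Your extra checks, that the Duhamel integral is $\dot H^1$-valued for each fixed $\tau$ and that the fixed unitary $e^{it\Delta}$ preserves weak convergence, make explicit details the paper leaves implicit.
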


\begin{proof}
For $t<\tau<T$, the Duhamel formula gives
\[
  u(t)=e^{i(t-\tau)\Delta}u(\tau)
  + i\int_t^{\tau} e^{i(t-s)\Delta}
    \bigl(|x|^{-b}|u(s)|^{\alpha}u(s)\bigr)\,ds.
\]
By Lemma~\ref{lem:weak-vanishing-finite-endpoint},
$e^{i(t-\tau)\Delta}u(\tau)\rightharpoonup 0$ weakly in $\dot H^1$ as
$\tau\uparrow T$. Passing to the limit yields
\eqref{eq:reduced-duhamel-finite-endpoint}.
\end{proof}

We can now rule out finite-time blowup.

\begin{proposition}[\emph{No finite-time blowup}]\label{prop:no-finite-time-blowup-sec4}
There is no minimal kinetic-energy blowup solution of the form described in
Theorem~\ref{prop:normalized-rigidity-scenarios}(i).
\end{proposition}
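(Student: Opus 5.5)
We argue by contradiction, following the Kenig--Merle treatment of finite-time blowup. Suppose $u=u_c$ satisfies Theorem~\ref{prop:normalized-rigidity-scenarios}(i), say $T:=\sup I<\infty$; the backward case is symmetric. By Corollary~\ref{cor:Nt-at-blowup-sec4}, $N(t)\to\infty$ as $t\uparrow T$. The plan is to show that the mass localized near the origin tends to zero at the blowup time, and then to transfer this to all earlier times. This forces $u\equiv0$, which contradicts \eqref{eq:critical-element-blowup-both-sides}.

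\emph{Step 1: vanishing of the localized mass at $T$.} Fix $R>0$ and a smooth cutoff $\phi_R$ equal to $1$ on $B(0,R)$ and supported in $B(0,2R)$. Set $M_R(t):=\int\phi_R|u(t)|^2\,dx$. Hölder and Sobolev give
\[
\int_{|x|\le 2R}|u|^2\lesssim R^2\|u\|_{L^{2^*}(|x|\le 2R)}^2 .
\]
Almost periodicity \eqref{eq:apms-physical} with $N(t)\to\infty$ says that the $\dot H^1$ mass concentrates in $|x|\le C(\eta)/N(t)\to0$. For any $\epsilon>0$ one splits $u(t)=\chi_{|x|\le\epsilon}u+\chi_{|x|>\epsilon}u$. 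The first piece contributes at most $\epsilon^2\|u\|_{L^{2^*}}^2$. The second is small in $L^{2^*}$, by the Hardy-type bound $\|u\|_{L^{2^*}(|x|>\epsilon)}\lesssim\|\nabla u\|_{L^2(|x|\gtrsim\epsilon)}+\epsilon^{-1}\|u\|_{L^{2^*}}$ restricted to an annulus. The cleanest way to avoid this bookkeeping is to use the reduced Duhamel formula Proposition~\ref{prop:reduced-duhamel-finite-endpoint} together with compactness to get $u(t)\to0$ in $L^2_{loc}$. Either way one obtains $\limsup_{t\uparrow T}M_R(t)=0$ for every fixed $R$.

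\emph{Step 2: control of $\partial_tM_R$.} The equation gives
\[
\partial_tM_R=2\,\mathrm{Im}\int\nabla\phi_R\cdot\nabla u\,\bar u\,dx ,
\]
and the nonlinear term drops out because $|x|^{-b}|u|^{\alpha+2}$ is real. Cauchy--Schwarz and Hardy's inequality then give
\[
|\partial_tM_R|\lesssim \|\nabla u\|_{L^2}\,\|u/|x|\|_{L^2}\lesssim E_c ,
\]
uniformly in $R$, using $|\nabla\phi_R|\lesssim R^{-1}\lesssim|x|^{-1}$ on its support. Integrating from $t$ to $T$ and using Step~1 yields
\[
M_R(t)\lesssim E_c\,(T-t)\quad\text{for all }t\in I,\ R>0 .
\]
Letting $R\to\infty$ gives $u(t)\in L^2$ with $\|u(t)\|_{L^2}^2\lesssim E_c(T-t)$. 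Since the equation conserves mass, $M(u(t))$ is then constant on $I$, and it tends to $0$ as $t\uparrow T$. Hence $u\equiv0$, which contradicts the blowup property \eqref{eq:critical-element-blowup-both-sides}.

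\emph{Main obstacle.} The delicate points are the justification of Step~1 and of mass conservation, because $u$ is a priori only in $\dot H^1$. I would handle mass conservation through the truncated identity itself. Step~2 gives $|M_R(t_1)-M_R(t_2)|\lesssim\|\nabla u\|_{L^\infty L^2}\sup_t\|u(t)\|_{L^2(R\le|x|\le2R)}R^{-1}\cdot|t_1-t_2|$, and this tends to $0$ as $R\to\infty$ since $u(t)\in L^2$ uniformly. For Step~1 I would argue directly: $\int_{|x|\le2R}|u(t)|^2\le\int_{|x|\le\epsilon}+\int_{\epsilon<|x|\le2R}$. The first term is $\lesssim\epsilon^2E_c$. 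The second is $\lesssim R^2\|u(t)\|_{L^{2^*}(|x|>\epsilon)}^2$. Using compactness of $K$, for large $N(t)$ the rescaled profile has $L^{2^*}$ norm outside radius $\epsilon N(t)\to\infty$ tending to $0$ uniformly over $K$, by precompactness in $L^{2^*}$ via Sobolev. This settles Step~1.
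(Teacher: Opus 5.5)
Your proof is correct, but it takes a different route from the paper: it is the original Kenig--Merle localized-mass argument.

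\textbf{The paper's route.} The paper never localizes in space. It applies $P_{\le M}$ to the reduced Duhamel formula (Proposition~\ref{prop:reduced-duhamel-finite-endpoint}). It then estimates the nonlinearity in $L_x^{\frac{2d}{d+2},2}$ using $|x|^{-b}\in L^{\frac db,\infty}$, and uses Bernstein to get $\|P_{\le M}u(t)\|_{L_x^2}\lesssim_u M(T-t)$. The high frequencies are bounded by $M^{-1}$, and taking $M=(T-t)^{-1/2}$ gives $\|u(t)\|_{L_x^2}\lesssim (T-t)^{1/2}$. Finally it appeals to mass conservation from the $H^1$ theory, since $u(t_0)\in H^1$.

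\textbf{Your route.} You get vanishing of the local mass at $T$ directly from precompactness of the rescaled orbit in $L^{2^*}$ together with $N(t)\to\infty$. This is exactly where the absence of a spatial center is used. The bound $|\partial_t M_R|\lesssim \|\nabla u\|_{L_x^2}\||x|^{-1}u\|_{L_x^2}\lesssim E_c$, uniform in $R$, then carries this back in time with the same rate $\|u(t)\|_{L_x^2}^2\lesssim E_c(T-t)$.

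\textbf{What each buys.} Your argument never needs to estimate the nonlinearity, since it drops out of the mass identity ($|x|^{-b}|u|^{\alpha+2}$ is real). It also avoids the weak-limit Duhamel formula, and your truncated-identity proof of mass conservation is self-contained. Its costs are two:
\begin{itemize}
\item The identity for $\partial_t M_R$ must be justified for merely $\dot H^1$ strong solutions, for instance by approximating with $H^1$ solutions via Proposition~\ref{prop:stability}.
\item It depends on tightness around a fixed point. The paper's Duhamel/Bernstein argument would survive a moving spatial center unchanged.
\end{itemize}

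\textbf{One cleanup.} In Step~1, discard your first sketch. The displayed bound $\|u\|_{L^{2^*}(|x|>\epsilon)}\lesssim\|\nabla u\|_{L^2(|x|\gtrsim\epsilon)}+\epsilon^{-1}\|u\|_{L^{2^*}}$ is not scale-consistent, and the appeal to the reduced Duhamel formula for $L^2_{loc}$ decay is not substantiated. Keep only the compactness argument from your final paragraph, which is complete: split at radius $\epsilon$, bound the inner ball by $\epsilon^2\|u\|_{L^{2^*}}^2$, and note that $\epsilon N(t)\to\infty$ makes the outer $L^{2^*}$ tail vanish uniformly.
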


\begin{proof}
By time reversal, it suffices to consider the case $I_c=(0,T_{\max})$ with $0<T_{\max}<\infty$.
Assume for contradiction that $u:(0,T_{\max})\times\R^d\to\CC$ is a minimal kinetic-energy blowup solution. Fix $t\in(0,T_{\max})$ and let $M>0$.
Applying $P_{\le M}$ to the reduced Duhamel formula
\eqref{eq:reduced-duhamel-finite-endpoint}, using unitarity of the free flow on
$L_x^2$, and then Bernstein inequality, we obtain
\begin{align}
  \|P_{\le M}u(t)\|_{L_x^2}
  &\le \int_t^{T_{\max}}
      \|P_{\le M}(|x|^{-b}|u(s)|^{\alpha}u(s))\|_{L_x^2}\,ds \notag\\
  &\lesssim M
      \int_t^{T_{\max}}
      \bigl\||x|^{-b}|u(s)|^{\alpha}u(s)\bigl \|_{L_x^{\frac{2d}{d+2},2}}\,ds.
      \label{eq:finite-time-lowfreq-1}
\end{align}
We next estimate the nonlinearity in the space on the right-hand side.
Since $|x|^{-b}\in L^{\frac{d}{b},\infty}(\R^d)$ and
$\dot H^1(\R^d)\hookrightarrow L^{\frac{2d}{d-2},2}(\R^d)$,  H\"older inequality yields
\begin{align*}
  \bigl\||x|^{-b}|u|^{\alpha}u\bigl\|_{L_x^{\frac{2d}{d+2},2}}
  &\lesssim
  \bigl\||x|^{-b}\bigl\|_{L^{\frac{d}{b},\infty}}
  \|u\|_{L_x^{\frac{2d}{d-2},2}}^{\alpha+1} \\
  &\lesssim \|\nabla u\|_{L_x^2}^{\alpha+1}.
\end{align*}
The $\dot H^1$-norm of the minimal kinetic-energy blowup solution is uniformly bounded, so
\eqref{eq:finite-time-lowfreq-1} becomes
\begin{equation}\label{eq:finite-time-lowfreq-2}
  \|P_{\le M}u(t)\|_{L_x^2}
  \lesssim_u M(T_{\max}-t).
\end{equation}
For the high frequencies we use Bernstein inequality:
\begin{equation}\label{eq:finite-time-highfreq}
  \|P_{>M}u(t)\|_{L_x^2}
  \lesssim M^{-1}\|\nabla u(t)\|_{L_x^2}
  \lesssim_u M^{-1}.
\end{equation}
Combining \eqref{eq:finite-time-lowfreq-2} and
\eqref{eq:finite-time-highfreq}, we find
\begin{equation}\label{eq:finite-time-L2-estimate}
  \|u(t)\|_{L_x^2}
  \lesssim_u M(T_{\max}-t)+M^{-1}
  \qquad \text{for all } M>0.
\end{equation}
Choosing $M=(T_{\max}-t)^{-\frac12}$ gives
\begin{equation}\label{eq:finite-time-mass-goes-zero}
  \|u(t)\|_{L_x^2}\lesssim_u (T_{\max}-t)^{\frac{1}{2}}
  \longrightarrow 0
  \qquad \text{as } t\uparrow T_{\max}.
\end{equation}
Fix $t_0\in(0,T_{\max})$. Then \eqref{eq:finite-time-L2-estimate} shows that
$u(t_0)\in L_x^2(\R^d)$, and therefore $u(t_0)\in H_x^1(\R^d)$. Consequently,
the standard mass conservation law is valid for $u$ on $(0,T_{\max})$, and in
particular
\[
  M(u(t))=M(u(t_0))
  \qquad \text{for all } t\in[t_0,T_{\max}).
\]
Letting $t\uparrow T_{\max}$ and using \eqref{eq:finite-time-mass-goes-zero}, we obtain
\[
  M(u(t_0))=\lim_{t\uparrow T_{\max}} M(u(t))=0.
\]
Hence $u(t_0)\equiv 0$, and by uniqueness $u\equiv 0$, contradicting the fact that a critical element blows up both forward and backward in time. This contradiction proves the proposition.
\end{proof}

\subsection{The global compact scenario}

We next consider the forward-time half of Theorem~\ref{prop:normalized-rigidity-scenarios}(ii). The argument uses a localized virial identity. Since the solution is almost periodic modulo \emph{scaling only} and satisfies \eqref{eq:normalized-global-lower-bound}, its orbit is uniformly tight in space.

\begin{lemma}\label{lem:critical-lebesgue-tail-sec4}
Let $u:[0,\infty)\times\R^d\to\CC$ be almost periodic modulo scaling and suppose
that $\inf_{t\ge0}N(t)\ge 1$. Then for every $\eta>0$ there exists
$R=R(\eta)<\infty$ such that
\begin{equation}\label{eq:critical-lebesgue-tail-sec4}
  \sup_{t\ge0}
  \int_{|x|>R}
  \Bigl(|\nabla u(t,x)|^2 + |u(t,x)|^{\frac{2d}{d-2}}\Bigr)
  \,dx
  \le \eta.
\end{equation}
\end{lemma}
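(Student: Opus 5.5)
The plan is to use the physical-space part of the almost periodicity condition \eqref{eq:apms-physical} for the gradient tail, and to derive the tail of $|u|^{\frac{2d}{d-2}}$ from it via a cutoff and Sobolev embedding.

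\emph{Gradient tail.} Fix $\eta>0$ and a small parameter $\eta'$ to be chosen. Definition~\ref{def:almost-periodic-scaling} gives, for every $t\ge0$,
\[
\int_{|x|\ge C(\eta')/N(t)}|\nabla u(t,x)|^2\,dx\le\eta'.
\]
Since $N(t)\ge1$, we have $C(\eta')/N(t)\le C(\eta')$. Hence the gradient tail over $\{|x|>R\}$ is at most $\eta'$ whenever $R\ge C(\eta')$, uniformly in $t\ge0$.

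\emph{Potential-energy tail.} Let $\chi_R$ be a smooth cutoff with $\chi_R=1$ on $|x|\ge R$, $\chi_R=0$ on $|x|\le R/2$, and $|\nabla\chi_R|\lesssim R^{-1}$. By Sobolev embedding,
\[
\|u\|_{L^{\frac{2d}{d-2}}(|x|>R)}\le\|\chi_R u\|_{L^{\frac{2d}{d-2}}}\lesssim\|\chi_R\nabla u\|_{L^2}+\|(\nabla\chi_R)u\|_{L^2}.
\]
The first term is bounded by the gradient tail on $|x|>R/2$, which is at most $\eta'$ once $R/2\ge C(\eta')$.

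The second term is the one real obstacle, because $u$ is not assumed to lie in $L^2$. Since $\nabla\chi_R$ is supported in the annulus $A_R=\{R/2\le|x|\le R\}$ with $|\nabla\chi_R|\lesssim R^{-1}$, H\"older's inequality on $A_R$, whose measure is about $R^d$, gives
\[
\|(\nabla\chi_R)u\|_{L^2}\lesssim R^{-1}\,R^{d(\frac12-\frac{d-2}{2d})}\,\|u\|_{L^{\frac{2d}{d-2}}(A_R)}=\|u\|_{L^{\frac{2d}{d-2}}(A_R)}.
\]
This has the right scaling but is not small by itself. Two ways around it:
\begin{enumerate}[(a)]
\item Apply Hardy's inequality to the function $\tilde\chi u$, where $\tilde\chi$ vanishes on $|x|\le R/4$ and equals $1$ on $|x|\ge R/2$. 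This controls $\||x|^{-1}u\|_{L^2(A_R)}$ by the gradient tail of $u$ on $|x|\ge R/4$, plus a commutator term of the same type as before, now living on $\{R/4\le|x|\le R/2\}$.
\item Use a logarithmic cutoff. Take $\chi$ equal to $0$ on $|x|\le R$, equal to $1$ on $|x|\ge R^{K}$, and interpolating as $\log(|x|/R)/\log R^{K-1}$ in between. Then $|\nabla\chi|\lesssim (K\log R)^{-1}|x|^{-1}$, so by Hardy's inequality on the exterior region $|x|\ge R$,
\[
\|(\nabla\chi)u\|_{L^2}\lesssim (K\log R)^{-1}\Bigl(\|\nabla u\|_{L^2(|x|\ge R)}+R^{-1}\|u\|_{L^2(|x|\sim R)}\Bigr).
\]
This is where the same boundary term reappears.
\end{enumerate}
Route (b) is the one to try first. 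The boundary term $R^{-1}\|u\|_{L^2(|x|\sim R)}$ is bounded uniformly by Sobolev embedding, $R^{-1}\|u\|_{L^2(|x|\sim R)}\lesssim\|u\|_{L^{\frac{2d}{d-2}}}\lesssim E_c^{1/2}$, and the prefactor $(K\log R)^{-1}$ then makes the whole second term $\lesssim (K\log R)^{-1}E_c^{1/2}$. Taking $R\ge C(\eta')$ and $K$ large makes it arbitrarily small. This bounds the $L^{\frac{2d}{d-2}}$ tail on $|x|>R^K$.

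\emph{Alternative via compactness.} If the cutoff route gets messy, argue directly on the orbit. By Definition~\ref{def:almost-periodic-scaling}, the set $\{N(t)^{-\frac{d-2}{2}}u(t,N(t)^{-1}\cdot)\}$ is precompact in $\dot H^1$, so by Sobolev embedding it is precompact in $L^{\frac{2d}{d-2}}$. A precompact set in $L^p$ is uniformly tight, so there is $C'(\eta)$ with $\int_{|y|>C'(\eta)}|f|^{\frac{2d}{d-2}}\le\eta$ for all $f$ in the orbit. The $L^{\frac{2d}{d-2}}$ norm is scaling invariant, so undoing the scaling gives $\int_{|x|>C'(\eta)/N(t)}|u(t)|^{\frac{2d}{d-2}}\le\eta$. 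Since $N(t)\ge1$, the region $|x|>C'(\eta)$ is contained in $|x|>C'(\eta)/N(t)$, so this tail is at most $\eta$ on $|x|>C'(\eta)$.

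This alternative is cleaner and is what I would write. Taking $R(\eta)=\max\{C(\eta/2),C'(\eta/2)\}$ then bounds both tails by $\eta/2$, which gives \eqref{eq:critical-lebesgue-tail-sec4}.
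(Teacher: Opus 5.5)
Your final argument is correct and is essentially the standard argument the paper has in mind when it says the lemma follows from almost periodicity modulo scaling plus Sobolev embedding. You read the gradient tail off \eqref{eq:apms-physical} using $C(\eta)/N(t)\le C(\eta)$, and you get the $L^{\frac{2d}{d-2}}$ tail from precompactness of the rescaled orbit in $\dot H^1$, the Sobolev embedding into $L^{\frac{2d}{d-2}}$, uniform tightness of precompact sets, scale invariance of that norm, and $N(t)\ge 1$; the exploratory cutoff routes (a) and (b) can simply be dropped.
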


\begin{proof}
The proof is standard. It follows directly from the definition of almost periodicity modulo scaling, combined with the Sobolev embedding. We omit the details.
\end{proof}

\begin{lemma}[Tightness]\label{lem:tightness-weighted-sec4}
Let $u:[0,\infty)\times\R^d\to\CC$ be almost periodic modulo scaling and suppose
that $\inf_{t\ge0}N(t)\ge1$. Then for every $\eta>0$ there exists
$R=R(\eta)<\infty$ such that
\begin{equation}\label{eq:tightness-weighted-sec4}
  \sup_{t\ge0}
  \int_{|x|>R}
  \Bigl(|\nabla u(t,x)|^2
  + |x|^{-b}|u(t,x)|^{\alpha+2}
  + |x|^{-2}|u(t,x)|^2\Bigr)
  \,dx
  \le C\eta,
\end{equation}
where $C$ depends only on $d$ and $b$.
\end{lemma}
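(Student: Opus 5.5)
The plan is to reduce everything to Lemma~\ref{lem:critical-lebesgue-tail-sec4}, which controls $\int_{|x|>R}(|\nabla u|^2+|u|^{\frac{2d}{d-2}})\,dx$ uniformly in $t\ge0$. The three terms in \eqref{eq:tightness-weighted-sec4} are handled separately. The gradient term is given directly by that lemma. For the potential term, on $\{|x|>R\}$ with $R\ge1$ we have $|x|^{-b}\le R^{-b}\le1$. We then use H\"older's inequality on the exterior region with exponents matched to $\frac{2d}{d-2}$: since $\alpha+2=\frac{2d-2b}{d-2}<\frac{2d}{d-2}$, we write
\[
\int_{|x|>R}|x|^{-b}|u|^{\alpha+2}\,dx\le \bigl\||x|^{-b}\bigr\|_{L^{\frac{d}{b}}(|x|>R)}\,\|u\|_{L^{\frac{2d}{d-2}}(|x|>R)}^{\alpha+2}.
\]
A direct check shows $\frac bd+\frac{(\alpha+2)(d-2)}{2d}=1$. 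However, $|x|^{-b}\notin L^{d/b}$ near infinity, because the integral is logarithmically divergent. So we instead use the weak norm $\||x|^{-b}\|_{L^{\frac db,\infty}}<\infty$ together with the Lorentz H\"older inequality. This gives the bound $\lesssim \|u\|_{L^{\frac{2d}{d-2},\alpha+2}(|x|>R)}^{\alpha+2}$.

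This Lorentz refinement is the main obstacle. The tail lemma only controls the strong $L^{\frac{2d}{d-2}}$ norm, not the $L^{\frac{2d}{d-2},\alpha+2}$ norm with second index $\alpha+2<\frac{2d}{d-2}$. To get around this, I would apply a cutoff and Sobolev--Lorentz embedding. Let $\chi_R$ be a smooth function equal to $1$ on $|x|>R$ and supported in $|x|>R/2$, with $|\nabla\chi_R|\lesssim R^{-1}$. Then
\[
\|u\|_{L^{\frac{2d}{d-2},2}(|x|>R)}\le\|\chi_Ru\|_{L^{\frac{2d}{d-2},2}}\lesssim\|\nabla(\chi_Ru)\|_{L^2}.
\]
Since $2\le\alpha+2$, the $L^{p,2}$ norm dominates the $L^{p,\alpha+2}$ norm, and so the bound above suffices. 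For the right-hand side,
\[
\|\nabla(\chi_Ru)\|_{L^2}\le\|\nabla u\|_{L^2(|x|>R/2)}+R^{-1}\|u\|_{L^2(R/2<|x|<R)}.
\]
The second term is at most $\|u\|_{L^{\frac{2d}{d-2}}(R/2<|x|<R)}$ by H\"older on the annulus, whose volume is about $R^d$. Both terms are therefore small by Lemma~\ref{lem:critical-lebesgue-tail-sec4} applied with radius $R/2$.

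The Hardy term $\int_{|x|>R}|x|^{-2}|u|^2\,dx$ is handled in the same way. Apply Hardy's inequality to $\chi_Ru$:
\[
\int_{|x|>R}\frac{|u|^2}{|x|^2}\le\int\frac{|\chi_Ru|^2}{|x|^2}\lesssim\|\nabla(\chi_Ru)\|_{L^2}^2,
\]
which is bounded by the same gradient and annulus quantities as before. Choosing $R=2R(\eta)$ from Lemma~\ref{lem:critical-lebesgue-tail-sec4} and collecting the three bounds gives \eqref{eq:tightness-weighted-sec4} with a constant depending only on $d$ and $b$. Here we also use that $\eta^{(\alpha+2)/2}\le\eta$ for $\eta\le1$, and that the case $\eta>1$ is trivial after enlarging $C$ using the uniform kinetic bound.
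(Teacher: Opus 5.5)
Your proposal is correct and is essentially the paper's argument: truncate with a smooth exterior cutoff (the paper uses $w_R=(1-\chi_R)u$ with transition on $R<|x|<2R$, you use $R/2<|x|<R$), bound $\|\nabla(\chi_R u)\|_{L^2}$ by the exterior gradient plus the annulus $L^{\frac{2d}{d-2}}$ norm via H\"older, and conclude with the Lorentz H\"older inequality for $|x|^{-b}\in L^{\frac db,\infty}$, the embedding $\dot H^1\hookrightarrow L^{\frac{2d}{d-2},2}$, and Hardy's inequality. One bookkeeping correction: the annulus term only gives $\|\nabla(\chi_Ru)\|_{L^2}\lesssim\eta^{\frac{d-2}{2d}}$, so the potential and Hardy tails are $O(\eta^{\frac{d-b}{d}})$ and $O(\eta^{\frac{d-2}{d}})$, not $O(\eta^{\frac{\alpha+2}{2}})$ (the paper's own proof has the same exponents); this is harmless, since you can run the argument with $\eta^{\frac{d}{d-2}}$ in place of $\eta$.
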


\begin{proof}
Choose a radial cutoff $\chi\in C_c^{\infty}(\R^d)$ such that $0\le\chi\le1$,
$\chi(x)=1$ for $|x|\le1$, and $\chi(x)=0$ for $|x|\ge2$. For $R\ge1$ define
$\chi_R(x):=\chi(x/R)$ and
\[
  w_R(t,x):=(1-\chi_R(x))u(t,x).
\]
By Lemma~\ref{lem:critical-lebesgue-tail-sec4}, after choosing $R$ sufficiently large we may assume that
\begin{equation}\label{eq:tightness-pre-tail-sec4}
  \sup_{t\ge0}
  \int_{|x|>R}
  \Bigl(|\nabla u(t,x)|^2 + |u(t,x)|^{\frac{2d}{d-2}}\Bigr)\,dx
  \le \eta.
\end{equation}
We first estimate $\nabla w_R$. Since $\nabla\chi_R$ is supported on the annulus
$A_R:=\{R<|x|<2R\}$ and satisfies $|\nabla\chi_R|\lesssim R^{-1}$, we have
\[
  \|\nabla w_R(t)\|_{L_x^2}
  \le
  \|\nabla u(t)\|_{L_x^2(|x|>R)}
  +
  R^{-1}\|u(t)\|_{L_x^2(A_R)}.
\]
By H\"older inequality on $A_R$ and \eqref{eq:tightness-pre-tail-sec4},
\[
  R^{-1}\|u(t)\|_{L_x^2(A_R)}
  \lesssim
  R^{-1}|A_R|^{\frac1d}
  \|u(t)\|_{L_x^{\frac{2d}{d-2}}(A_R)}
  \lesssim
  \|u(t)\|_{L_x^{\frac{2d}{d-2}}(A_R)}
  \lesssim \eta^{\frac{d-2}{2d}}.
\]
Hence
\begin{equation}\label{eq:tightness-grad-wR-sec4}
  \sup_{t\ge0}\|\nabla w_R(t)\|_{L_x^2}
  \lesssim
  \eta^{\frac12}+\eta^{\frac{d-2}{2d}}
  \lesssim
  \eta^{\frac{d-2}{2d}}.
\end{equation}
By Lemma~\ref{lem:sobolev-lorentz},
\begin{equation}\label{eq:tightness-wR-lorentz-sec4}
  \sup_{t\ge0}
  \|w_R(t)\|_{L_x^{\frac{2d}{d-2},2}}
  \lesssim
  \sup_{t\ge0}\|\nabla w_R(t)\|_{L_x^2}
  \lesssim
  \eta^{\frac{d-2}{2d}}.
\end{equation}
Thus, H\"older's inequality yields
\begin{align*}
  \int_{|x|>2R}|x|^{-b}|u(t,x)|^{\alpha+2}\,dx
  &\le
  \int_{\R^d}|x|^{-b}|w_R(t,x)|^{\alpha+2}\,dx \\
  &\lesssim
  \||x|^{-b}\|_{L^{\frac{d}{b},\infty}}
  \|w_R(t)\|_{L_x^{\frac{2d}{d-2},2}}^{\alpha+2} \\
  &\lesssim
  \eta^{\frac{(\alpha+2)(d-2)}{2d}}.
\end{align*}
For the nonlinear tail we use Hardy inequality together with
\eqref{eq:tightness-grad-wR-sec4}:
\begin{align*}
  \int_{|x|>2R}|x|^{-2}|u(t,x)|^2\,dx
  &\le
  \int_{\R^d}|x|^{-2}|w_R(t,x)|^2\,dx \\
  &\lesssim
  \|\nabla w_R(t)\|_{L_x^2}^2
  \lesssim
  \eta^{\frac{d-2}{d}}.
\end{align*}
Together with \eqref{eq:tightness-pre-tail-sec4}, this yields the desired estimate on the region
$|x|>2R$. Replacing $2R$ by $R$ finishes the proof.
\end{proof}

We next record the localized virial identity. For a real-valued smooth weight
$a:\R^d\to\R$, define
\begin{equation}\label{eq:localized-virial-action-sec4}
  M_a(t):=2\,\mathrm{Im}\int_{\R^d} \partial_j a(x)\,\partial_j u(t,x)\,\overline{u(t,x)}\,dx.
\end{equation}

\begin{lemma}[Localized virial identity]\label{lem:localized-virial-identity-sec4}
Let $u$ be a sufficiently smooth solution to \eqref{eq:INLS}. Then
for every real-valued $a\in C^{\infty}(\R^d)$ one has
\begin{align}
  \frac{d}{dt}M_a(t)
  &= \int_{\R^d}
     4\,\mathrm{Re}\bigl(a_{jk}(x)\,\partial_j u\,\overline{\partial_k u}\bigr)
     - a_{jjkk}(x)|u|^2 \notag\\
  &\qquad\qquad
     + \Bigl(2-\frac{4}{\alpha+2}\Bigr)a_{jj}(x)|x|^{-b}|u|^{\alpha+2}
     + \frac{4b}{\alpha+2}|x|^{-b-2}x_j a_j(x)|u|^{\alpha+2}
     \,dx.
     \label{eq:localized-virial-identity-sec4}
\end{align}
Here, as usual, repeated indices are summed.
\end{lemma}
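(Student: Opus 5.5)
The plan is a direct computation: differentiate $M_a(t)$ in time, use the equation to replace $\partial_t u$, and integrate by parts so that all derivatives sit on $a$ and on the weight $|x|^{-b}$. Write the equation as $\partial_t u = i\Delta u - i|x|^{-b}|u|^\alpha u$. Differentiating under the integral sign gives
\[
\frac{d}{dt}M_a(t)=2\,\mathrm{Im}\int \partial_j a\bigl(\partial_j\partial_t u\,\bar u+\partial_j u\,\overline{\partial_t u}\bigr)\,dx .
\]
Integrating the first term by parts in $x_j$ converts this to $2\,\mathrm{Im}\int\bigl(-a_{jj}\,\partial_t u\,\bar u-2\partial_j a\,\partial_t u\,\overline{\partial_j u}\bigr)dx$ after taking imaginary parts. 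This is the standard form $-\int a_{jj}\,2\mathrm{Im}(\partial_t u\bar u)-4\mathrm{Im}\int a_j\partial_t u\overline{\partial_j u}$.

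\textbf{Linear part.} Insert $\partial_t u=i\Delta u$. This is the classical Morawetz/virial computation for the free Schr\"odinger flow. Integrating by parts twice yields $\int 4\,\mathrm{Re}(a_{jk}\partial_j u\overline{\partial_k u})-a_{jjkk}|u|^2\,dx$. I will simply reproduce this bookkeeping, noting that $\mathrm{Re}\int a_{jj}\Delta u\bar u$ and $\mathrm{Re}\int a_j\Delta u\,\overline{\partial_j u}$ each produce the Hessian term and $\Delta^2 a$ terms.

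\textbf{Nonlinear part.} Insert $\partial_t u=-i|x|^{-b}|u|^\alpha u$. The term $\mathrm{Im}(\partial_t u\,\bar u)=-|x|^{-b}|u|^{\alpha+2}$ contributes $2a_{jj}|x|^{-b}|u|^{\alpha+2}$. For the other term, use $\mathrm{Re}(|u|^\alpha u\,\overline{\partial_j u})=\frac{1}{\alpha+2}\partial_j|u|^{\alpha+2}$, so it becomes $-\frac{4}{\alpha+2}\int a_j|x|^{-b}\partial_j|u|^{\alpha+2}$. Integrating by parts then gives
\[
\frac{4}{\alpha+2}\int\partial_j(a_j|x|^{-b})|u|^{\alpha+2}=\frac{4}{\alpha+2}\int\bigl(a_{jj}|x|^{-b}-b|x|^{-b-2}x_ja_j\bigr)|u|^{\alpha+2}.
\]
Combined with the previous contribution, this should produce the coefficients $(2-\frac{4}{\alpha+2})$ and $\frac{4b}{\alpha+2}$ stated in \eqref{eq:localized-virial-identity-sec4}. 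The signs have to be tracked carefully: I will fix the sign convention via the $a_{jj}$ term and check consistency against the known NLS case $b=0$.

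\textbf{Main obstacle.} The genuinely delicate point is justifying the integration by parts near the singularity $x=0$, where $\partial_j|x|^{-b}=-b x_j|x|^{-b-2}$ is only locally integrable when $b+1<d$. This holds since $b<2\le d-1$. To make it rigorous, I would first excise a ball $B(0,\epsilon)$. The boundary term there is bounded by $\epsilon^{d-1-b}\sup|a_j||u|^{\alpha+2}$, which vanishes for smooth bounded $u$ as $\epsilon\to0$. Alternatively, I would regularize the weight to $(|x|^2+\epsilon^2)^{-b/2}$ and pass to the limit by dominated convergence. Since the lemma assumes $u$ sufficiently smooth (and decaying), the boundary terms at infinity are harmless, and I need only note that $|x|^{-b-1}|u|^{\alpha+2}\in L^1$.
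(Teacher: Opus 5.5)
Your approach is the same direct computation the paper sketches, and most of it is fine: the reduction
\[
\frac{d}{dt}M_a(t)=-2\int a_{jj}\,\mathrm{Im}\bigl(\partial_t u\,\bar u\bigr)\,dx-4\int a_j\,\mathrm{Im}\bigl(\partial_t u\,\overline{\partial_j u}\bigr)\,dx,
\]
the nonlinear contribution $2\int a_{jj}|x|^{-b}|u|^{\alpha+2}\,dx$ from the first term, and the treatment of the origin are all correct. For the origin, the boundary terms are of size $\epsilon^{d-1-b}$, and $|x|^{-b-1}$ is locally integrable because $b+1<d$.

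However, the step that produces the new $b$-term has a sign error. As written, your computation does not give the stated coefficients. Since $\mathrm{Im}(-iw)=-\mathrm{Re}\,w$, inserting $\partial_t u=-i|x|^{-b}|u|^{\alpha}u$ gives
\[
\mathrm{Im}\bigl(\partial_t u\,\overline{\partial_j u}\bigr)=-|x|^{-b}\,\mathrm{Re}\bigl(|u|^{\alpha}u\,\overline{\partial_j u}\bigr)=-\frac{|x|^{-b}}{\alpha+2}\,\partial_j|u|^{\alpha+2}.
\]
So the second term equals $+\frac{4}{\alpha+2}\int a_j|x|^{-b}\partial_j|u|^{\alpha+2}\,dx$, not $-\frac{4}{\alpha+2}\int a_j|x|^{-b}\partial_j|u|^{\alpha+2}\,dx$. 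Integrating by parts turns it into
\[
-\frac{4}{\alpha+2}\int\bigl(a_{jj}|x|^{-b}-b|x|^{-b-2}x_ja_j\bigr)|u|^{\alpha+2}\,dx .
\]
Adding $2\int a_{jj}|x|^{-b}|u|^{\alpha+2}\,dx$ gives exactly the coefficients $\bigl(2-\frac{4}{\alpha+2}\bigr)$ and $+\frac{4b}{\alpha+2}$.

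With your sign you would get $\bigl(2+\frac{4}{\alpha+2}\bigr)a_{jj}$ and $-\frac{4b}{\alpha+2}$. This already fails your own $b=0$ check, since the defocusing NLS identity carries $\frac{2\alpha}{\alpha+2}\Delta a\,|u|^{\alpha+2}$. Also, the sign is not a convention you may fix by matching the $a_{jj}$ term to the target; it is forced by the equation. The sign of the $b$-term matters later: with $a_R=|x|^2$ near the origin, it combines with the $a_{jj}$ term to give the coefficient $8$ in the paper's virial lower bound.

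A smaller inaccuracy concerns the linear part. The $a_{jj}$ term gives $2\Delta a\,|\nabla u|^2-\Delta^2 a\,|u|^2$. The $a_j$ term gives $4\,\mathrm{Re}\bigl(a_{jk}\partial_j u\,\overline{\partial_k u}\bigr)-2\Delta a\,|\nabla u|^2$. So the Hessian term comes only from the latter, and the $\Delta a\,|\nabla u|^2$ pieces cancel; it is not the case that each term produces the Hessian term. Once the sign is corrected, your argument is complete and coincides with the paper's proof.
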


\begin{proof}
This is the standard virial computation, adapted to the inhomogeneous potential
$|x|^{-b}$. One differentiates \eqref{eq:localized-virial-action-sec4} in time,
substitutes the equation $iu_t=-\Delta u+|x|^{-b}|u|^{\alpha}u$, and integrates by
parts. The terms coming from the linear Schr\"odinger flow yield the familiar
quadratic form
$4\,\mathrm{Re}(a_{jk}\partial_j u\overline{\partial_k u})-a_{jjkk}|u|^2$,
while the nonlinear part produces the two weighted terms in
\eqref{eq:localized-virial-identity-sec4}; the second of these is precisely the
correction coming from the spatial derivative of $|x|^{-b}$.
\end{proof}

We now choose the standard localized quadratic weight. Fix $R>1$ and let
$a_R:\R^d\to\R$ be radial such that
\begin{equation}\label{eq:weight-aR-sec4}
  a_R(x)=|x|^2 \quad \text{for } |x|\le R,
  \qquad
  a_R(x)=C_0R^2 \quad \text{for } |x|\ge 2R,
\end{equation}
for some fixed $C_0>1$, with the additional derivative bounds
\begin{equation}\label{eq:weight-aR-derivative-bounds}
  |\partial^{\gamma}a_R(x)|\lesssim R^{2-|\gamma|}
  \qquad \text{for } R<|x|<2R,
  \quad |\gamma|\le 4.
\end{equation}

\begin{proposition}[No global compact scenario]\label{prop:no-global-compact-solution-sec4}
There is no minimal kinetic-energy blowup solution of the form described in
the forward-time half of Theorem~\ref{prop:normalized-rigidity-scenarios}(ii).
\end{proposition}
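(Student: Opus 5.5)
We argue by contradiction and run a truncated virial argument on the time-dependent action $M_{a_R}(t)$ built from the weight \eqref{eq:weight-aR-sec4}. Suppose $u=u_c$ is a minimal kinetic-energy blowup solution with $I_c\supset[0,\infty)$ and $\inf_{t\ge0}N(t)\ge1$. Since $\|u\|_{S([0,\infty))}=\infty$ while $\sup_t\|\nabla u(t)\|_{L^2}^2=E_c$, the solution is nontrivial. The plan has three steps: an upper bound on $M_{a_R}$, a coercive lower bound on its time derivative, and integration in time. Throughout we use \eqref{eq:localized-virial-identity-sec4}. To justify it for $\dot H^1$ solutions, we regularize the data, or use that $a_R$ has compactly supported gradient so every term is controlled by $\dot H^1$ norms via Hardy, and then pass to the limit with Proposition~\ref{prop:stability}.

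\emph{Upper bound.} Since $\nabla a_R$ is supported in $|x|\le2R$ and $|\nabla a_R|\lesssim|x|$ there, Cauchy--Schwarz and Hardy give
\[
|M_{a_R}(t)|\lesssim R^2\,\bigl\||x|^{-1}u(t)\bigr\|_{L^2}\|\nabla u(t)\|_{L^2}\lesssim R^2E_c \quad\text{for all } t\ge0.
\]

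\emph{Lower bound on the derivative.} On $|x|\le R$ we have $a_{jk}=2\delta_{jk}$, $a_{jjkk}=0$ and $x\cdot\nabla a=2|x|^2$. There the integrand of \eqref{eq:localized-virial-identity-sec4} equals
\[
8|\nabla u|^2+\Bigl(4d\bigl(1-\tfrac{2}{\alpha+2}\bigr)+\tfrac{8b}{\alpha+2}\Bigr)|x|^{-b}|u|^{\alpha+2}.
\]
Both coefficients are positive: defocusing makes the potential term nonnegative, which is exactly where $\mu=1$ enters. On $|x|>R$, the bounds \eqref{eq:weight-aR-derivative-bounds} show the integrand is bounded by
\[
C\bigl(|\nabla u|^2+|x|^{-2}|u|^2+|x|^{-b}|u|^{\alpha+2}\bigr).
\]
Given $\eta>0$, Lemma~\ref{lem:tightness-weighted-sec4} makes this error at most $C\eta$ uniformly in $t\ge0$ once $R=R(\eta)$. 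Consequently
\[
\frac{d}{dt}M_{a_R}(t)\ge 8\int_{|x|\le R}|\nabla u(t)|^2\,dx-C\eta\ge 8\|\nabla u(t)\|_{L^2}^2-C'\eta .
\]

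\emph{Main obstacle: a uniform lower bound on $\|\nabla u(t)\|_{L^2}$.} This is the step I expect to be hardest. I would argue by compactness. If $\|\nabla u(t_n)\|_{L^2}\to0$ along some $t_n\ge0$, then the rescaled orbit elements $N(t_n)^{-\frac{d-2}{2}}u(t_n,N(t_n)^{-1}\cdot)$ converge to $0$ in $\dot H^1$. Small-data theory (Theorem~\ref{thm:dotH1-local}(iii)) then gives $\|u\|_{S(\R)}\lesssim\|\nabla u(t_n)\|_{L^2}<\infty$, contradicting \eqref{eq:critical-element-blowup-both-sides}. Hence $\inf_{t\ge0}\|\nabla u(t)\|_{L^2}^2=:\kappa>0$. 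Choosing $\eta$ with $C'\eta\le4\kappa$ yields $\frac{d}{dt}M_{a_R}\ge4\kappa$ on $[0,\infty)$.

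\emph{Conclusion.} Integrating over $[0,T]$ gives
\[
4\kappa T\le |M_{a_R}(T)|+|M_{a_R}(0)|\lesssim R^2E_c .
\]
Letting $T\to\infty$ with $R=R(\eta)$ fixed gives a contradiction. The role of $\inf N\ge1$ is only to make the tightness of Lemma~\ref{lem:tightness-weighted-sec4} uniform on $[0,\infty)$, so the same fixed $R$ works for all times.
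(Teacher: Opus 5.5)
Your proof is correct and follows the paper's argument: the same truncated weight $a_R$, the bound $|M_{a_R}(t)|\lesssim_u R^2$, the lower bound on $\frac{d}{dt}M_{a_R}$ with the annular error controlled by Lemma~\ref{lem:tightness-weighted-sec4} (which is where $\inf_{t\ge0}N(t)\ge1$ enters), and integration in time. The only variation is where the uniform positivity comes from: the paper keeps the potential term and uses $\int(|\nabla u|^2+|x|^{-b}|u|^{\alpha+2})\,dx\ge 2E(u)>0$ together with energy conservation, whereas you drop that term and bound $\|\nabla u(t)\|_{L^2}$ from below by the small-data threshold, which is equally valid and needs no compactness, since a single time with kinetic energy below that threshold already forces scattering.
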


\begin{proof}
Assume for contradiction that $u:[0,\infty)\times\R^d\to\CC$ is a minimal kinetic-energy blowup solution
with $\inf_{t\ge0}N(t)\ge1$. Fix $\eta>0$ and choose $R=R(\eta)$ as in
Lemma~\ref{lem:tightness-weighted-sec4}. Define the localized virial action
$M_R(t):=M_{a_R}(t)$.

\medskip
\noindent
\textsf{Step 1 (bound the virial action).}
Since $\nabla a_R$ is supported in $|x|\le 2R$ and
$|\nabla a_R(x)|\lesssim R$, we have
\begin{align*}
  |M_R(t)|
  &\lesssim R \int_{|x|\le 2R}|u(t,x)|\,|\nabla u(t,x)|\,dx \\
  &\le R\|u(t)\|_{L_x^2(|x|\le 2R)}\|\nabla u(t)\|_{L_x^2}.
\end{align*}
By H\"older inequality and Sobolev embedding,
\[
  \|u(t)\|_{L_x^2(|x|\le 2R)}
  \lesssim R\|u(t)\|_{L_x^{\frac{2d}{d-2}}}
  \lesssim R\|\nabla u(t)\|_{L_x^2}.
\]
Hence
\begin{equation}\label{eq:virial-action-bound-sec4}
  \sup_{t\ge0}|M_R(t)|\lesssim_u R^2.
\end{equation}

\medskip
\noindent
\textsf{  Step 2 (lower bound for the virial derivative).}
Applying Lemma~\ref{lem:localized-virial-identity-sec4} with $a=a_R$, and using
that $a_R(x)=|x|^2$ on $|x|\le R$, we obtain
\begin{equation}\label{eq:virial-derivative-main-simplified-sec4}
  \frac{d}{dt}M_R(t)
  = 8\int_{|x|\le R}
    \Bigl(|\nabla u(t,x)|^2 + |x|^{-b}|u(t,x)|^{\alpha+2}\Bigr)
    \,dx
    + \mathrm{Err}_R(t).
\end{equation}
The derivative bounds \eqref{eq:weight-aR-derivative-bounds} imply that the error
term is supported in the annulus $R<|x|<2R$ and satisfies
\begin{equation}\label{eq:virial-error-bound-sec4}
  |\mathrm{Err}_R(t)|
  \lesssim
  \int_{|x|>R}
  \Bigl(|\nabla u(t,x)|^2
  + |x|^{-b}|u(t,x)|^{\alpha+2}
  + |x|^{-2}|u(t,x)|^2\Bigr)
  \,dx.
\end{equation}
By Lemma~\ref{lem:tightness-weighted-sec4},
\begin{equation}\label{eq:virial-error-small-sec4}
  \sup_{t\ge0}|\mathrm{Err}_R(t)|\le C\eta.
\end{equation}
On the other hand,
\begin{align*}
  \int_{|x|\le R}
  \Bigl(|\nabla u|^2 + |x|^{-b}|u|^{\alpha+2}\Bigr)
  &= \int_{\R^d}
     \Bigl(|\nabla u|^2 + |x|^{-b}|u|^{\alpha+2}\Bigr)
     \,dx + O(\eta) \\
  &\ge 2E(u) + O(\eta),
\end{align*}
Choosing $\eta$ sufficiently small,
we conclude from \eqref{eq:virial-derivative-main-simplified-sec4} that there
exists $c_0=c_0(u)>0$ such that
\begin{equation}\label{eq:virial-derivative-positive-sec4}
  \frac{d}{dt}M_R(t)\ge c_0
  \qquad \text{for all } t\ge0.
\end{equation}

\medskip
\noindent
\textsf{ Step 3 (Contradiction).}
Integrating \eqref{eq:virial-derivative-positive-sec4} on $[0,T]$ and using
\eqref{eq:virial-action-bound-sec4}, we obtain
\[
  c_0T
  \le M_R(T)-M_R(0)
  \le 2\sup_{t\ge0}|M_R(t)|
  \lesssim_u R^2.
\]
Since $R$ is fixed, this is impossible for sufficiently large $T$. The
contradiction proves that the global compact scenario cannot occur.
\end{proof}

\subsection{Completion of the proof of Theorem~\ref{thm:main}}

Assume for contradiction that Theorem~\ref{thm:main} fails. By
Theorem~\ref{prop:critical-element}, there exists a minimal kinetic-energy blowup solution. By
Theorem~\ref{prop:normalized-rigidity-scenarios}, after replacing it by another
minimal kinetic-energy blowup solution if necessary, we are in either the finite-time scenario or the
global compact scenario.

The finite-time scenario is impossible by
Proposition~\ref{prop:no-finite-time-blowup-sec4}. The global compact scenario
is impossible by Proposition~\ref{prop:no-global-compact-solution-sec4}. Hence no
minimal kinetic-energy blowup solution exists, contradicting Theorem~\ref{prop:critical-element}.
Therefore Theorem~\ref{thm:main} follows.

\appendix

\section{Some standard facts on Sobolev-Lorentz spaces}\label{sec:appendix-lorentz}

\begin{lemma}[Properties of Lorentz spaces \cite{Neil-1963}]
\label{lem:pro-lorentz}
\mbox{}
\begin{itemize}
\item For $1<p<\infty$,  $L^{p,p}(\mathbb{R}^d)$ coincides with the standard Lebesgue space $L^{p}(\mathbb{R}^d)$, while $L^{p,\infty}(\mathbb{R}^d)$ coincides with the weak $L^{p}(\mathbb{R}^d)$ space.
\item For $1<p<\infty$ and $0<q_1<q_2\le\infty$, $L^{p,q_1}(\mathbb{R}^d)\subset L^{p,q_2}(\mathbb{R}^d)$.
\item For $1<p<\infty$, $0<q\le\infty$, and $\theta>0$, $\bigl\||u|^\theta\bigl\|_{L^{p,q}}=\|u\|_{L^{\theta p,\theta q}}^\theta$.
\item For $b>0$, $|x|^{-b}\in L^{\frac{d}{b},\infty}(\mathbb{R}^d)$ and
$
\bigl\||x|^{-b}\bigr\|_{L^{\frac{d}{b},\infty}}=|B(0,1)|^{\frac{d}{b}},
$
where $B(0,1)$ is the unit ball of $\mathbb{R}^d$.
\end{itemize}
\end{lemma}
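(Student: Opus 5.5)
These facts are classical (O'Neil, Hunt) and I would prove them directly from the definitions of $d_u$ and $u^*$, taking the four bullets in order.

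\emph{First bullet.} For $q=p$ the defining integral is $\int_0^\infty u^*(s)^p\,ds$. Since $u^*$ is equimeasurable with $|u|$, this equals $\|u\|_{L^p}^p$. For $q=\infty$, the quantity $\sup_s s^{1/p}u^*(s)$ equals $\sup_\lambda \lambda\, d_u(\lambda)^{1/p}$. This holds because $u^*(s)>\lambda$ if and only if $s<d_u(\lambda)$, and a supremum taken over level sets can be rewritten in terms of the distribution function. The right-hand side is the weak-$L^p$ quasinorm.

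\emph{Second bullet.} Here I would not use a scale of norms but the standard two-step argument.
\begin{itemize}
\item Since $u^*$ is nonincreasing, $s^{1/p}u^*(s)\lesssim \bigl(\int_0^s (\tau^{1/p}u^*(\tau))^{q_1}\frac{d\tau}{\tau}\bigr)^{1/q_1}$. This gives $\|u\|_{L^{p,\infty}}\lesssim \|u\|_{L^{p,q_1}}$.
\item For finite $q_2>q_1$, split the integrand as $(s^{1/p}u^*)^{q_2}=(s^{1/p}u^*)^{q_2-q_1}(s^{1/p}u^*)^{q_1}$. Bound the first factor by the $L^{p,\infty}$ norm, which is controlled by the previous step.
\end{itemize}
The two steps together give $\|u\|_{L^{p,q_2}}\lesssim\|u\|_{L^{p,q_1}}$.

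\emph{Third bullet.} This reduces to the identity $(|u|^\theta)^*=(u^*)^\theta$, which follows from $d_{|u|^\theta}(\lambda)=d_u(\lambda^{1/\theta})$. Substituting this into the definition gives
\[
\frac{q}{p}\int_0^\infty \bigl(s^{\frac1p}u^*(s)^\theta\bigr)^q\frac{ds}{s}
=\frac{\theta q}{\theta p}\int_0^\infty\bigl(s^{\frac{1}{\theta p}}u^*(s)\bigr)^{\theta q}\frac{ds}{s}.
\]
Taking the $1/q$ power yields $\|u\|_{L^{\theta p,\theta q}}^\theta$, and the normalizing constants match exactly. The case $q=\infty$ follows in the same way.

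\emph{Fourth bullet.} I would compute directly. We have $d(\lambda)=|\{|x|<\lambda^{-1/b}\}|=|B(0,1)|\lambda^{-d/b}$. Inverting gives $u^*(s)=(s/|B(0,1)|)^{-b/d}$, so $s^{b/d}u^*(s)\equiv|B(0,1)|^{b/d}$ for every $s$. The norm therefore equals $|B(0,1)|^{b/d}$. This exponent differs from the one stated in the paper, so the stated constant should be corrected to $|B(0,1)|^{b/d}$; only finiteness is used in the text.

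The only genuinely delicate step is the second bullet. Its proof depends on the monotonicity of $u^*$, and one must keep track of the normalizing factor $q/p$ in the definition.
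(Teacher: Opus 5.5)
Your proposal is correct. The paper gives no proof of this lemma and only cites O'Neil, so the comparison is with the classical theory, and your verification from the definitions of $d_u$ and $u^*$ is the standard one:
\begin{itemize}
\item layer-cake for $q=p$;
\item the equivalence $u^*(s)>\lambda \iff d_u(\lambda)>s$ for $q=\infty$, which rests on right-continuity of $d_u$;
\item monotonicity of $u^*$ for the nesting;
\item $(|u|^\theta)^*=(u^*)^\theta$ together with $\frac{\theta q}{\theta p}=\frac{q}{p}$ for the power rule.
\end{itemize}
With the paper's normalization $\frac{q}{p}$, your two steps for the second bullet give explicit constants: $\|u\|_{L^{p,\infty}}\le\|u\|_{L^{p,q_1}}$ and $\|u\|_{L^{p,q_2}}\le (q_2/q_1)^{1/q_2}\|u\|_{L^{p,q_1}}$. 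This is all the inclusion requires.

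Your computation in the fourth bullet is also right. Since $u^*(s)=|B(0,1)|^{b/d}s^{-b/d}$, you get $\bigl\||x|^{-b}\bigr\|_{L^{\frac{d}{b},\infty}}=|B(0,1)|^{\frac{b}{d}}$, so the exponent $\frac{d}{b}$ in the printed statement is a typo. The paper only uses this quantity as a finite constant inside $\lesssim$ estimates (the weighted fractional multiplier, the tightness lemma, and the finite-time blowup argument), so the correction has no effect downstream.
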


\begin{lemma}[H\"older inequality in Lorentz spaces {\cite{Neil-1963}}]
\label{lem:lorentz-holder}
Let \(1<p_i<\infty\) and \(1\le q_i\le\infty\). Assume
\[
\frac1{p_0}=\frac1{p_1}+\frac1{p_2},
\qquad
\frac1{q_0}\le \frac1{q_1}+\frac1{q_2}.
\]
Then
\[
\|uv\|_{L^{p_0,q_0}(\R^d)}
\lesssim
\|u\|_{L^{p_1,q_1}(\R^d)}
\|v\|_{L^{p_2,q_2}(\R^d)}.
\]
\end{lemma}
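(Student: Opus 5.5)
This is the Hölder inequality in Lorentz spaces (O'Neil). I would argue through decreasing rearrangements, reducing to a one-dimensional weighted Hölder inequality for $u^*$ and $v^*$.

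The first ingredient is the pointwise bound for the rearrangement of a product,
\[
(uv)^*(s_1+s_2)\le u^*(s_1)\,v^*(s_2).
\]
This holds because $\{|uv|>u^*(s_1)v^*(s_2)\}\subset\{|u|>u^*(s_1)\}\cup\{|v|>v^*(s_2)\}$, and the two sets on the right have measures at most $s_1$ and $s_2$. Taking $s_1=s_2=s/2$ gives $(uv)^*(s)\le u^*(s/2)v^*(s/2)$.

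Next I would insert this into the definition of the $L^{p_0,q_0}$ quasi-norm. Write $s^{1/p_0}=s^{1/p_1}s^{1/p_2}$. After the change of variables $s\mapsto 2s$, which only costs a constant, we get
\[
\|uv\|_{L^{p_0,q_0}}\lesssim \Bigl\|\bigl(s^{1/p_1}u^*(s)\bigr)\bigl(s^{1/p_2}v^*(s)\bigr)\Bigr\|_{L^{q_0}(ds/s)}.
\]

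I would then split into cases according to $q_0$.

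If $\frac1{q_0}=\frac1{q_1}+\frac1{q_2}$, the ordinary Hölder inequality on $(0,\infty)$ with measure $ds/s$ closes the estimate directly; when $q_0=\infty$ one simply takes the supremum.

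If $\frac1{q_0}<\frac1{q_1}+\frac1{q_2}$, define $\tilde q_0$ by $\frac1{\tilde q_0}=\frac1{q_1}+\frac1{q_2}$, so that $\tilde q_0<q_0$. The equality case gives the bound in $L^{p_0,\tilde q_0}$, and the nesting $L^{p_0,\tilde q_0}\subset L^{p_0,q_0}$ from Lemma~\ref{lem:pro-lorentz} finishes. The degenerate case where $\frac1{q_1}+\frac1{q_2}\ge1$ forces $\tilde q_0\le1$ is harmless, since the quasi-norm definition still makes sense and the nesting holds for $0<q$.

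The main point needing care is that $\|\cdot\|_{L^{p,q}}$ as defined is only a quasi-norm when $q>p$. This is no obstacle: the rearrangement argument above never uses the triangle inequality, so all constants depend only on the exponents $p_i$ and $q_i$.
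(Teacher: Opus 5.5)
Your proof is correct. The paper gives no argument for this lemma; it only cites O'Neil. So the only comparison is with the classical route behind that citation.

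O'Neil's $L(p,q)$ theory is built on the maximal rearrangement $f^{**}(t)=\frac1t\int_0^t f^*(s)\,ds$. The product estimate there goes through the bound $(uv)^{**}(t)\le \frac1t\int_0^t u^*(s)v^*(s)\,ds$, which comes from the Hardy--Littlewood rearrangement inequality. One then applies Hardy's inequality and H\"older in $L^{q}(ds/s)$. This is where $p_0>1$ is genuinely used. It gives the bound for the true norm defined via $(uv)^{**}$, which is equivalent to the paper's quasi-norm when $p_0>1$.

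Your route replaces all of this with the pointwise bound $(uv)^*(s)\le u^*(s/2)v^*(s/2)$ and a dilation. It needs neither Hardy's inequality nor any triangle inequality. In fact it never uses $p_i>1$ or $q_i\ge 1$, so it proves the estimate throughout the quasi-normed range. That is why applying the equality case with $\tilde q_0<1$ and then the nesting $L^{p_0,\tilde q_0}\subset L^{p_0,q_0}$ causes no trouble.

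You assert one step without comment: $|\{|u|>u^*(s_1)\}|\le s_1$. It follows from right-continuity of the distribution function $d_u$, plus continuity from below of the measure when $u^*(s_1)=0$, and is standard.
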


\begin{lemma}[Bernstein inequality in Lorentz space \cite{Liu-Miao-Zheng2025}]
\label{lem:bernstein-lorentz}
Let $N\in 2^{\Z}$, $s\ge 0$, $1<p_1\le p_2<\infty$, and $1\le q_1\le q_2\le \infty$.
Then the following estimates hold:
\begin{align}
  \|P_N u\|_{L^{p_2,q_2}(\R^d)}
  &\lesssim N^{d\left(\frac1{p_1}-\frac1{p_2}\right)}
  \|u\|_{L^{p_1,q_1}(\R^d)},
  \label{eq:bernstein-dyadic-lorentz}\\
  \bigl\||\nabla|^{s}P_{\le N}u\bigr\|_{L^{p_2,q_2}(\R^d)}
  &\lesssim N^{s+d\left(\frac1{p_1}-\frac1{p_2}\right)}
  \|u\|_{L^{p_1,q_1}(\R^d)},
  \label{eq:bernstein-low-lorentz}\\
  \|P_{>N}u\|_{L^{p_2,q_2}(\R^d)}
  &\lesssim N^{-s+d\left(\frac1{p_1}-\frac1{p_2}\right)}
  \bigl\||\nabla|^{s}u\bigl\|_{L^{p_1,q_1}(\R^d)}.
  \label{eq:bernstein-high-lorentz}
\end{align}
\end{lemma}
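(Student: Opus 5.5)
The plan is to reduce all three estimates to the dyadic pieces and the Lorentz-space Young/convolution inequality, in the same way the classical Bernstein inequalities are proved from the kernel representation.

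\textbf{Kernel representation.} Write $P_N u=\check\psi_N* u$, where $\psi_N(\xi)=\psi(\xi/N)$ with $\psi=\varphi-\varphi(2\cdot)$. Then $\check\psi_N(x)=N^d\check\psi(Nx)$ is a Schwartz function at scale $N^{-1}$.

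\textbf{First estimate.} Choose $r$ with $1+\frac1{p_2}=\frac1r+\frac1{p_1}$; this is possible because $p_1\le p_2$. The Lorentz-space Young inequality (O'Neil), in the form $\|f*g\|_{L^{p_2,q_2}}\lesssim\|f\|_{L^{r,\infty}\cap L^r}\|g\|_{L^{p_1,q_1}}$, requires care at $r=1$. In that case $p_1=p_2$, and we simply use Minkowski: convolution with an $L^1$ kernel is bounded on every $L^{p,q}$, and $q_1\le q_2$ gives $L^{p_1,q_1}\subset L^{p_1,q_2}$. For $r>1$, O'Neil's inequality with the kernel in $L^{r,1}$ (or $L^r$) applies. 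Scaling gives $\|\check\psi_N\|_{L^{r,s}}=N^{d(1-\frac1r)}\|\check\psi\|_{L^{r,s}}=N^{d(\frac1{p_1}-\frac1{p_2})}\|\check\psi\|_{L^{r,s}}$, which yields \eqref{eq:bernstein-dyadic-lorentz}.

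\textbf{Second estimate.} The same argument applies to the kernel $K_N=\mathcal F^{-1}(|\xi|^s\varphi(\xi/N))=N^{s+d}K(N\cdot)$. Here $K$ is not Schwartz when $s$ is not an even integer, but it is smooth with $|K(x)|\lesssim\langle x\rangle^{-d-s}$, since $|\xi|^s\varphi$ has a singularity only at the origin, of homogeneity $s$. Therefore $K\in L^1\cap L^\infty$ for $s>0$. For $s=0$ the kernel is Schwartz. This gives \eqref{eq:bernstein-low-lorentz}.

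\textbf{Third estimate.} Decompose $P_{>N}u=\sum_{M>N}P_Mu$ and write $P_Mu=M^{-s}\tilde P_M|\nabla|^su$, where $\tilde P_M$ has symbol $(|\xi|/M)^{-s}\psi(\xi/M)$. That symbol is a smooth, compactly supported, annular bump rescaled, so the dyadic argument above gives
\[
\|P_Mu\|_{L^{p_2,q_2}}\lesssim M^{-s+d(\frac1{p_1}-\frac1{p_2})}\||\nabla|^su\|_{L^{p_1,q_1}}.
\]
Summing the geometric series over $M>N$ by the triangle inequality is legitimate since $q_2\ge1$, so $L^{p_2,q_2}$ is normable. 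The series converges provided the exponent $-s+d(\frac1{p_1}-\frac1{p_2})$ is negative.

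\textbf{Main obstacle.} The only real obstacle is this sign condition in the third estimate. When $s=d(\frac1{p_1}-\frac1{p_2})$ the geometric sum diverges, and the stated bound must be understood under the implicit assumption $s>d(\frac1{p_1}-\frac1{p_2})$. That is the regime in which it is used in the paper, for example with $s=1$ and $p_1=p_2=2$ in \eqref{eq:finite-time-highfreq}.

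In the borderline endpoint $p_1=p_2$ with $s>0$, one could alternatively write $P_{>N}=(I-P_{\le N})$ and treat $m(\xi)=(1-\varphi(\xi/N))|\xi|^{-s}$ as a Mikhlin multiplier of size $N^{-s}$. Such multipliers are bounded on $L^{p,q}$ by real interpolation of the $L^p$ bounds, and this gives the result directly without summation.
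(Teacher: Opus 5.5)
The paper gives no proof of this lemma; it is quoted from \cite{Liu-Miao-Zheng2025}, so there is no in-paper argument to compare with. Your route (kernel representation plus O'Neil's convolution inequality) is the standard proof and is correct for \eqref{eq:bernstein-dyadic-lorentz} and \eqref{eq:bernstein-low-lorentz} in the full stated range:
\begin{itemize}
\item when $p_1=p_2$, Minkowski with an $L^1$ kernel in a normed version of $L^{p,q}$, $p>1$, does the job;
\item when $p_1<p_2$, O'Neil with the kernel in $L^{r,1}$, $r>1$, applies;
\item for the symbol $|\xi|^s\varphi$, the decay $|K(x)|\lesssim\langle x\rangle^{-d-s}$ puts $K$ in $L^1\cap L^\infty\subset L^{r,1}$.
\end{itemize}
Your dyadic summation then proves \eqref{eq:bernstein-high-lorentz} whenever $s>d(\frac1{p_1}-\frac1{p_2})$. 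That covers the paper's use in \eqref{eq:finite-time-highfreq}.

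You are right that \eqref{eq:bernstein-high-lorentz} cannot hold for all $s\ge0$; the lemma as printed is false on part of its range. However, the divergence of your own upper bound proves nothing about the inequality itself, and you have the endpoint wrong.

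\emph{Below the threshold the estimate genuinely fails.} Take $0\le s<d(\frac1{p_1}-\frac1{p_2})$, $N=1$, and $u_\lambda(x)=f(\lambda x)$ with $\widehat f\in C_c^\infty$ supported in $\{1\le|\xi|\le2\}$. For $\lambda\ge2$ one has $P_{>1}u_\lambda=u_\lambda$. By scaling, the ratio of the left side to the right side equals $c_f\,\lambda^{d(\frac1{p_1}-\frac1{p_2})-s}\to\infty$.

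\emph{At equality the estimate is true.} If $s=d(\frac1{p_1}-\frac1{p_2})$, then $p_1<p_2$, $s>0$, and the power of $N$ is zero. Combine Lemma~\ref{lem:sobolev-lorentz}, the embedding $L^{p_2,q_1}\subset L^{p_2,q_2}$, and the uniform boundedness of $P_{>N}=I-P_{\le N}$ on $L^{p_1,q_1}$:
\[
\|P_{>N}u\|_{L^{p_2,q_2}}\lesssim\bigl\||\nabla|^sP_{>N}u\bigr\|_{L^{p_1,q_1}}\lesssim\bigl\||\nabla|^su\bigr\|_{L^{p_1,q_1}}.
\]
So the correct hypothesis is $s\ge d(\frac1{p_1}-\frac1{p_2})$, not the strict inequality. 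The equality case needs this separate Sobolev argument rather than summation.

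Your closing Mikhlin remark addresses a non-issue. For $p_1=p_2$ and $s>0$ the geometric series already converges. The only $p_1=p_2$ borderline is $s=0$, where the bound is just the boundedness of $I-P_{\le N}$.
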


\begin{lemma}[Sobolev embedding in Lorentz spaces]\label{lem:sobolev-lorentz}
Let $0<s<d$, $1<p<\frac ds$, and define $r$ by
\[
\frac1r=\frac1p-\frac{s}{d}.
\]
Then for any $1\le q\le \infty$
\[
\|u\|_{L^{r,q}(\mathbb{R}^d)}\lesssim \bigl\||\nabla|^s u \bigl\|_{L^{p,q}(\mathbb{R}^d)}.
\]
In particular,
\[
\dot H^1(\mathbb{R}^d)=\dot W^{1;2,2}(\mathbb{R}^d)\hookrightarrow L^{\frac{2d}{d-2},2}(\mathbb{R}^d).
\]
\end{lemma}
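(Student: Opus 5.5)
The statement is the Sobolev embedding in Lorentz spaces (Lemma~\ref{lem:sobolev-lorentz}). I would obtain it from a single representation and a single interpolation result. First, for $0<s<d$ write $u = c_{d,s}\,|x|^{-(d-s)} * |\nabla|^s u$, i.e.\ $u=I_s(|\nabla|^s u)$ with $I_s$ the Riesz potential. This holds for Schwartz functions whose Fourier transform vanishes near the origin, and the inequality then extends by density. Second, the Riesz kernel satisfies $|x|^{-(d-s)}\in L^{\frac{d}{d-s},\infty}(\R^d)$ by the last item of Lemma~\ref{lem:pro-lorentz}. The estimate therefore reduces to a Young/O'Neil convolution inequality in Lorentz spaces:
\[
\|f*g\|_{L^{r,q}}\lesssim \|f\|_{L^{p,q}}\|g\|_{L^{\frac{d}{d-s},\infty}},\qquad \frac1r+1=\frac1p+\frac{d-s}{d},
\]
which is exactly the relation $\frac1r=\frac1p-\frac sd$. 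The hypothesis $1<p<\frac ds$ guarantees $1<p<r<\infty$.

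To prove the convolution inequality without quoting O'Neil directly, I would use real interpolation. The Hardy--Littlewood--Sobolev theorem shows that $I_s$ maps $L^{p_0}\to L^{r_0,\infty}$ and $L^{p_1}\to L^{r_1,\infty}$ for two exponents $1<p_0<p<p_1<d/s$, since weak-type bounds follow from splitting the kernel at a radius optimized in the level $\lambda$. The Marcinkiewicz (real) interpolation theorem in its Lorentz form, $(L^{p_0},L^{p_1})_{\theta,q}=L^{p,q}$ and $(L^{r_0,\infty},L^{r_1,\infty})_{\theta,q}=L^{r,q}$, then gives $I_s:L^{p,q}\to L^{r,q}$ for every $1\le q\le\infty$. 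This is the step requiring care, because it needs $p_0\ne p_1$ and $r_0\ne r_1$; both hold automatically since $r$ is a strictly increasing function of $p$. The endpoint $q=\infty$ is covered by the same interpolation functor with $q=\infty$.

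Finally, the particular case follows by taking $s=1$, $p=q=2$, $d\ge3$. Then $r=\frac{2d}{d-2}$ and $\dot W^{1;2,2}=\dot H^1$, because $L^{2,2}=L^2$ and $\||\nabla|u\|_{L^2}=\|\nabla u\|_{L^2}$ by Plancherel.

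The main obstacle is justifying the representation $u=I_s|\nabla|^s u$ for general $u$ in the homogeneous space, which is only defined modulo polynomials. I would handle it by proving the estimate on the dense class above and then defining the embedding by continuity, noting that the limit is identified uniquely as the element of $L^{r,q}$ (the class $\mathcal S'$ modulo polynomials has a unique representative there).
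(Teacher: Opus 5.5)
Your proposal is correct and is essentially the paper's argument: the paper also uses the Riesz-potential representation, the fact $|x|^{s-d}\in L^{\frac{d}{d-s},\infty}$, and Young's (O'Neil's) convolution inequality in Lorentz spaces, and your real-interpolation derivation from Hardy--Littlewood--Sobolev only supplies a proof of the convolution step that the paper quotes. One small slip: for $q=\infty$ the density extension you describe is unavailable, since Schwartz functions are not dense in $L^{p,\infty}$; there you should instead apply your bound for $I_s$, which already holds on all of $L^{p,\infty}$, directly to $|\nabla|^s u$, and identify $u$ with $I_s|\nabla|^s u$ modulo polynomials.
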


\begin{proof}
This follows from the Riesz-potential representation $|\nabla|^{-s}u=c_{d,s}|x|^{s-d}*u$, together with Young's inequality and the fact that $|x|^{s-d}\in L^{\frac{d}{d-s},\infty}(\mathbb{R}^d)$.
\end{proof}

\begin{lemma}[Gagliardo--Nirenberg inequality in Lorentz space \cite{Wei-wang-2023}]\label{lem:G-N-lorentz}
Let $1<q_i, r_i\le \infty$ for $i=0, 1$ and $0<s<\sigma<\infty$. Assume
\[
\frac{1}{p_i}
=
\left(1-\frac{s}{\sigma}\right)\frac{1}{q_i}
+
\frac{s}{\sigma}\frac1{r_i}
\]
for $i=0,1$. Then
\[
\bigl\||\nabla|^s u\bigr\|_{L^{p_0,p_1}(\mathbb{R}^d)}
\lesssim
\|u\|_{L^{q_0,q_1}(\mathbb{R}^d)}^{1-\frac{s}{\sigma}}
\||\nabla|^\sigma u\|_{L^{r_0,r_1}(\mathbb{R}^d)}^{\frac{s}{\sigma}}.
\]
\end{lemma}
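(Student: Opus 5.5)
The plan is to interpolate between two estimates for the Lorentz norms: the trivial bound at $s=0$ and a bound at $s=\sigma$ given by the $\dot W^{\sigma}$-type norm. First I would reduce to the scalar inequality
\[
\bigl\||\nabla|^s u\bigr\|_{L^{p_0,p_1}}\lesssim \|u\|_{L^{q_0,q_1}}^{1-\theta}\,\bigl\||\nabla|^\sigma u\bigr\|_{L^{r_0,r_1}}^{\theta},\qquad \theta:=\frac{s}{\sigma}.
\]
The natural route is complex interpolation applied to the analytic family $z\mapsto |\nabla|^{\sigma z}$. I would introduce the Sobolev--Lorentz spaces $\dot W^{0;q_0,q_1}=L^{q_0,q_1}$ and $\dot W^{\sigma;r_0,r_1}$, which can be viewed as retracts of weighted Lorentz-valued sequence spaces $\ell^{2}_{\sigma}(L^{p,q})$ via Littlewood--Paley square functions. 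The square-function characterization $\|f\|_{L^{p,q}}\sim\|(\sum_N|P_Nf|^2)^{1/2}\|_{L^{p,q}}$ for $1<p<\infty$ follows from the Lebesgue case by real interpolation, since the square function is bounded on $L^p$ for all $1<p<\infty$.

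Next I would identify the complex interpolation space
\[
[L^{q_0,q_1},\dot W^{\sigma;r_0,r_1}]_\theta=\dot W^{s;p_0,p_1}.
\]
For the Lorentz component, this uses the Calder\'on-type result that $[L^{q_0,q_1}(\ell^2),L^{r_0,r_1}(\ell^2_\sigma)]_\theta=L^{p_0,p_1}(\ell^2_{s})$ whenever $1/p_i=(1-\theta)/q_i+\theta/r_i$, which is precisely the hypothesis. The smoothness index interpolates linearly: $(1-\theta)\cdot0+\theta\sigma=s$. Once this identification is in place, the standard multiplicative bound $\|f\|_{[A_0,A_1]_\theta}\le\|f\|_{A_0}^{1-\theta}\|f\|_{A_1}^{\theta}$ gives the claimed inequality directly.

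A more hands-on alternative avoids abstract interpolation altogether. I would split $u=P_{\le N}u+P_{>N}u$ and bound each piece using the Bernstein estimates of Lemma~\ref{lem:bernstein-lorentz}:
\[
\||\nabla|^sP_{\le N}u\|\lesssim N^{s}\|u\|,\qquad \||\nabla|^sP_{>N}u\|\lesssim N^{s-\sigma}\||\nabla|^\sigma u\|.
\]
Optimizing in $N$ then gives the result, but only when the Lorentz exponents coincide. The general exponent case requires a pointwise version: the bounds $|P_{\le N}|\nabla|^su|\lesssim N^s Mu$ and $|P_{>N}|\nabla|^s u|\lesssim N^{s-\sigma}M(|\nabla|^\sigma u)$, with $N=N(x)$ chosen pointwise. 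This yields
\[
||\nabla|^su|\lesssim (Mu)^{1-\theta}\bigl(M|\nabla|^\sigma u\bigr)^{\theta}
\]
pointwise. The conclusion then follows from H\"older's inequality in Lorentz spaces (Lemma~\ref{lem:lorentz-holder}, with $\||f|^\theta\|_{L^{p,q}}=\|f\|^\theta_{L^{\theta p,\theta q}}$) together with the boundedness of the maximal function on $L^{p,q}$.

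The main obstacle is making the pointwise bound rigorous. The operator $|\nabla|^s P_{\le N}$ has a kernel dominated by $N^s$ times an integrable radially decreasing function, which is fine. The operator $|\nabla|^{s-\sigma}P_{>N}$, however, has only a smooth symbol with $|\xi|^{s-\sigma}$ decay at high frequencies, so its kernel is integrable with $L^1$ norm $\lesssim N^{s-\sigma}$ but must be checked to be majorized by a radially decreasing function. I would verify this by summing the dyadic pieces $N'^{s-\sigma}\check\psi_{N'}$ over $N'>N$. The endpoint cases $q_i$ or $r_i=\infty$ are handled by the same argument, since $M$ is bounded on $L^{p,\infty}$ for $p>1$.
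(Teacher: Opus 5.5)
Your pointwise maximal-function argument is correct and amounts to a complete proof. The paper gives no argument for this lemma; it simply imports it from \cite{Wei-wang-2023}. The core of your route is the bound
\[
\bigl||\nabla|^s u(x)\bigr|\lesssim\inf_{N\in 2^{\Z}}\Bigl(N^{s}Mu(x)+N^{s-\sigma}M(|\nabla|^\sigma u)(x)\Bigr)\lesssim \bigl(Mu(x)\bigr)^{1-\theta}\bigl(M(|\nabla|^\sigma u)(x)\bigr)^{\theta},\qquad \theta=\frac{s}{\sigma}.
\]
After it, three ingredients finish the proof:
Lemma~\ref{lem:lorentz-holder} with exponent pairs $(\frac{q_0}{1-\theta},\frac{q_1}{1-\theta})$ and $(\frac{r_0}{\theta},\frac{r_1}{\theta})$;
the power identity in Lemma~\ref{lem:pro-lorentz};
and boundedness of $M$ on $L^{p,q}$ ($1<p<\infty$, $1\le q\le\infty$) and on $L^\infty$.
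Together these produce exactly the indices $(p_0,p_1)$ of the statement.

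You locate the hypotheses correctly. The condition $s>0$ gives decay of order $|x|^{-d-s}$ for the kernel of $|\nabla|^sP_{\le 1}$. The condition $s<\sigma$ makes the dyadic sum $\sum_{N'>N}N'^{s-\sigma}\check\psi_{N'}$ admit an integrable radially decreasing majorant. Near the origin it behaves like $|x|^{\sigma-s-d}$, a logarithm, or a constant according as $\sigma-s<d$, $=d$, $>d$, and it decays rapidly at infinity.

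Two small points deserve a sentence each. First, take $N(x)$ dyadic. The identity $|\nabla|^su=P_{\le N}|\nabla|^su+P_{>N}|\nabla|^{s-\sigma}(|\nabla|^\sigma u)$ holds a.e.\ for each fixed $N$, so this way it is used for only countably many $N$, at the cost of a factor $2^\sigma$. Second, note that $Mu(x)=0$ or $M(|\nabla|^\sigma u)(x)=0$ forces $|\nabla|^su\equiv0$.

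I would drop the complex-interpolation route rather than present it as the natural one. The Littlewood--Paley characterization you rely on is unavailable when $q_0=\infty$ or $r_0=\infty$, which the statement allows. Moreover, the asserted identity $[L^{q_0,q_1}(\ell^2),L^{r_0,r_1}(\ell^2_\sigma)]_\theta=L^{p_0,p_1}(\ell^2_s)$ is false when $q_1=r_1=\infty$ and $q_0\ne r_0$. The complex method yields only the closure of the intersection inside $L^{p_0,\infty}$. Already in the scalar case, $|x|^{-d/p_0}$ lies at positive $L^{p_0,\infty}$-distance from $L^{q_0,\infty}\cap L^{r_0,\infty}$, so it is missed. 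Only the inclusion into the Calder\'on product is actually needed, and that inclusion does hold, but the pointwise proof avoids these issues entirely.

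Compared with the paper's bare citation, your argument is longer but elementary. It treats all mixed Lorentz indices and endpoint cases uniformly, and it uses only tools already collected in the appendix plus the maximal theorem.
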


\begin{lemma}[Hardy inequality in Lorentz space]\label{lem:hardy}
Let $1<p<d$, we have
\[
\bigl\||x|^{-1}u\bigr\|_{L^{p,2}(\mathbb{R}^d)}
\lesssim
\|\nabla u\|_{L^{p,2}(\mathbb{R}^d)}.
\]
More generally, for any $0<s<\frac dp$
\[
\bigl\||x|^{-s}u\bigr\|_{L^{p,2}(\mathbb{R}^d)}
\lesssim
\||\nabla|^s u\|_{L^{p,2}(\mathbb{R}^d)}.
\]
\end{lemma}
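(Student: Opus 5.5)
The plan is to deduce the weighted Hardy inequality from the Sobolev embedding and Hölder's inequality in Lorentz spaces (Lemmas~\ref{lem:sobolev-lorentz} and \ref{lem:lorentz-holder}), using the weak-type structure of the weight. This is the same mechanism the paper uses throughout for $|x|^{-b}$.

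We prove the general statement with $0<s<\frac dp$; the first inequality is the case $s=1$, $1<p<d$. Define $r$ by $\frac1r=\frac1p-\frac sd$. The hypothesis $s<\frac dp$ gives $p<r<\infty$, and Lemma~\ref{lem:sobolev-lorentz} with second index $q=2$ yields
\[
\|u\|_{L^{r,2}(\R^d)}\lesssim \bigl\||\nabla|^s u\bigr\|_{L^{p,2}(\R^d)}.
\]
By Lemma~\ref{lem:pro-lorentz}, $|x|^{-s}\in L^{\frac ds,\infty}(\R^d)$. The exponents match, since $\frac1p=\frac sd+\frac1r$ and $\frac12\le \frac1\infty+\frac12$. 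Hence Lemma~\ref{lem:lorentz-holder} gives
\[
\bigl\||x|^{-s}u\bigr\|_{L^{p,2}}\lesssim \bigl\||x|^{-s}\bigr\|_{L^{\frac ds,\infty}}\|u\|_{L^{r,2}}\lesssim \bigl\||\nabla|^s u\bigr\|_{L^{p,2}}.
\]
For $s=1$, Riesz transform bounds give $\||\nabla|u\|_{L^{p,2}}\sim\|\nabla u\|_{L^{p,2}}$. This follows by real interpolation of the $L^p$ boundedness of the Riesz transforms, which we may use since $1<p<\infty$.

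The only points needing care are the side conditions of the cited lemmas: Hölder requires $1<p_i<\infty$, and here $p_1=\frac ds>p>1$ and $p_2=r<\infty$; Sobolev requires $1<p<\frac ds$, which is exactly the hypothesis. We also need the inequality for $u$ in the natural homogeneous class, namely those $u$ with $|\nabla|^s u\in L^{p,2}$ and $u=|\nabla|^{-s}|\nabla|^s u$. This is handled by proving it for Schwartz $u$ and extending by density, as in Lemma~\ref{lem:sobolev-lorentz}. We do not expect a genuine obstacle; the key point is keeping the second Lorentz index at $2$ in the Sobolev step so that Hölder with the $L^{\frac ds,\infty}$ weight lands back in $L^{p,2}$.
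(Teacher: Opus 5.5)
Your proof is correct, but it takes a different route from the paper. The paper's proof is a one-line appeal to the standard Hardy inequality in Lebesgue spaces, upgraded to $L^{p,2}$ by interpolation and combined with Sobolev embedding.

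You never use a Lebesgue-space Hardy inequality. Instead you get the Lorentz estimate in one step from two ingredients: O'Neil's H\"older inequality with the weight $|x|^{-s}\in L^{\frac ds,\infty}$, and the Lorentz-refined Sobolev embedding $\dot W^{s;p,2}\hookrightarrow L^{r,2}$ of Lemma~\ref{lem:sobolev-lorentz}. Your side-condition checks are all correct: $p<r<\infty$, $\frac ds>p>1$, and $\frac12\le\frac1\infty+\frac12$ for the second indices. The Riesz-transform step, which replaces $|\nabla|$ by $\nabla$ when $s=1$ via real interpolation of their $L^p$ bounds, is also fine.

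Your route has three advantages:
\begin{itemize}
\item It is self-contained from the appendix lemmas.
\item It works verbatim for every second Lorentz index $q\in[1,\infty]$. With $q=p$ it reproves the classical Hardy inequality, since $L^{p,p}=L^p$.
\item It uses the same weak-$L^{\frac ds}$ mechanism the paper applies to $|x|^{-b}$ throughout.
\end{itemize}

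The paper's interpolation route treats the Lebesgue-space inequality for the linear operator $f\mapsto |x|^{-s}|\nabla|^{-s}f$ as a black box. It needs that bound at two exponents $p_0<p<p_1<\frac ds$ before real interpolation gives $L^{p,2}$. This is harmless because the admissible range $1<p<\frac ds$ is open, but it relies on an external result that your argument avoids.
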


\begin{proof}
This is the standard Hardy inequality together with interpolation and Sobolev embedding.
\end{proof}

\begin{lemma}[Fractional Leibniz rule \cite{Cruz-Naibo2016}]\label{lem:fractional-leibniz}
Let $1<p_i<\infty$ and $1\le q_i\le\infty$ for $i=0,1,2,3,4$. Assume
\[
\frac1{p_0}=\frac1{p_1}+\frac1{p_2}=\frac1{p_3}+\frac1{p_4},
\qquad
\frac1{q_0}\le \frac1{q_1}+\frac1{q_2},
\qquad
\frac1{q_0}\le \frac1{q_3}+\frac1{q_4}.
\]
Then
\[
\bigl\||\nabla|^{s}(uv)\bigr\|_{L^{p_0,q_0}(\mathbb{R}^d)}
\lesssim
\bigl\||\nabla|^{s}u\bigr\|_{L^{p_1,q_1}(\mathbb{R}^d)}\|v\|_{L^{p_2,q_2}(\mathbb{R}^d)}
+
\|u\|_{L^{p_3,q_3}(\mathbb{R}^d)} \bigl\| |\nabla|^{s}v \bigr\|_{L^{p_4,q_4}(\mathbb{R}^d)}.
\]
\end{lemma}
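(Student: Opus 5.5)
The statement is the fractional Leibniz rule in Lorentz spaces. I would prove it by first establishing the Lebesgue-space version with some flexibility in the exponents, and then upgrading to Lorentz spaces by real interpolation.

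\textbf{Step 1: Bony paraproduct decomposition.} Write
\[
uv=\sum_{N}P_Nu\,P_{\le N/8}v+\sum_N P_{\le N/8}u\,P_Nv+\sum_{N\sim M}P_Nu\,P_Mv .
\]
The first sum (high--low) has frequency support in annuli $|\xi|\sim N$. So $|\nabla|^s$ acting on the $N$-th term behaves like $N^s$, and it can be moved onto $P_Nu$ through a Coifman--Meyer bilinear multiplier. This term is then bounded by the square function of $N^sP_Nu$ times the maximal function of $v$. The second sum is symmetric and is bounded by $Mu\cdot S(|\nabla|^sv)$.

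The high--high sum is the delicate one, since its output frequency can be much lower than $N$. For $s>0$, I would estimate $P_K$ of it, with $K\lesssim N$, and sum over $K\le N$ using the factor $(K/N)^s$. This again yields a Coifman--Meyer operator whose symbol decays at high relative frequency. It is controlled by either pairing, $\||\nabla|^su\|\,\|v\|$ or $\|u\|\,\||\nabla|^sv\|$.

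\textbf{Step 2: Boundedness of the bilinear operators.} I would invoke the Coifman--Meyer theorem for bilinear multipliers in the form $T:L^{p_1}\times L^{p_2}\to L^{p_0}$, valid for all $1<p_1,p_2<\infty$ with $\frac1{p_0}=\frac1{p_1}+\frac1{p_2}$. This range includes $p_0<1$, but in our applications $p_0>1$. This gives the Lebesgue estimate with the full open range of exponents.

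\textbf{Step 3: Upgrade to Lorentz spaces.} I expect this to be the main obstacle. Real interpolation of bilinear operators handles it. Fix the operator $f\mapsto T(f,g)$ with $g$ fixed is not enough, because the $q$-indices must combine additively, $\frac1{q_0}\le\frac1{q_1}+\frac1{q_2}$. I would instead use the bilinear real interpolation theorem (Lions--Peetre / O'Neil form). If a bilinear operator is bounded $L^{p_1}\times L^{p_2}\to L^{p_0}$ on an open neighborhood of the exponent triple, with the Hölder relation preserved, then it is bounded $L^{p_1,q_1}\times L^{p_2,q_2}\to L^{p_0,q_0}$ whenever $\frac1{q_0}\le\frac1{q_1}+\frac1{q_2}$.

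The openness of the Coifman--Meyer range in Step 2 supplies exactly this neighborhood, after perturbing $p_1,p_2$ in two independent directions. Applying the theorem to each paraproduct piece, with the representation $|\nabla|^sP_Nu=N^s\tilde P_N\,(\cdots)$ absorbed into the multiplier so that the inputs are $|\nabla|^su$ and $v$, gives the two terms on the right-hand side.

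Alternatively, one can cite Cruz-Uribe--Naibo directly, whose proof runs through weighted extrapolation: Kato--Ponce holds in $L^p(w)$ for $A_p$ weights, and weighted extrapolation yields Lorentz bounds. I would present the interpolation route as the main argument and mention extrapolation as the check that the constraint $\frac1{q_0}\le\frac1{q_i}+\frac1{q_j}$ suffices.
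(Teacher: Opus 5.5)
Your outline is sound, but it is not what the paper does. The paper gives no argument for this lemma; it imports the estimate from Cruz-Uribe--Naibo, whose method is weighted ($A_p$) Kato--Ponce bounds combined with bilinear extrapolation. Your paraproduct-plus-interpolation proof is a genuinely different, self-contained route. Two of its justifications need repair, though neither breaks the argument.

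\emph{The high--high piece is not a Coifman--Meyer operator.} After extracting $|\xi|^s$, the symbol still contains $|\xi+\eta|^s$, which is not smooth on $\{\xi+\eta=0\}$. If you split by output frequency $K=2^{-j}N$, each piece has Coifman--Meyer seminorms of size a power of $2^j$: every derivative costs $K^{-1}$ instead of $N^{-1}$. The factor $2^{-js}$ cannot absorb that growth. What does work for $p_0>1$ is to show that each $j$-piece $\sum_K \tilde P_K\bigl(\tilde P_{2^jK}|\nabla|^s u\cdot \tilde P_{2^jK}v\bigr)$ is bounded uniformly in $j$. Pair it with $h\in L^{p_0'}$, apply Cauchy--Schwarz in $K$, and use the Littlewood--Paley square function of $h$ together with
$\bigl(\sum_K|\tilde P_{2^jK}|\nabla|^su|^2|\tilde P_{2^jK}v|^2\bigr)^{1/2}\le\bigl(\sum_K|\tilde P_{2^jK}|\nabla|^su|^2\bigr)^{1/2}\mathcal M v$.
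Then sum $\sum_j 2^{-js}<\infty$. This is where $s>0$ enters; the statement leaves it implicit.

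\emph{The sharp condition on the second indices does not come from Lions--Peetre.} The Lions--Peetre bilinear theorem only gives $\frac1{q_0}=\frac1{q_1}+\frac1{q_2}-1$. That would not even cover the paper's uses with a factor in $L^{\frac db,\infty}$ or $L^{\frac{r_0}{\alpha},\infty}$. The theorem you actually formulate is Janson's multilinear real interpolation theorem: boundedness at three affinely independent points of the $(\frac1{p_1},\frac1{p_2})$-plane implies the Lorentz bound with $\frac1{q_0}\le\frac1{q_1}+\frac1{q_2}$. With that attribution, Step~3 is correct.

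\emph{What each route buys.} The citation costs the paper nothing and comes with weighted and variable-exponent versions, but it hides the Lorentz bookkeeping. Your route shows why the second indices combine as in H\"older's inequality. You can in fact drop multilinear interpolation altogether. Every ingredient of the paraproduct proof except the final product step is linear or sublinear: Littlewood--Paley square functions and their dual form, and the Hardy--Littlewood maximal function. These transfer to $L^{p,q}$, $1<p<\infty$, $1\le q\le\infty$, by ordinary real interpolation and Lorentz duality. The only bilinear step, bounding a square function times a maximal function, is then exactly O'Neil's H\"older inequality (Lemma~\ref{lem:lorentz-holder}). That inequality yields the condition $\frac1{q_0}\le\frac1{q_1}+\frac1{q_2}$ directly.
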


\begin{lemma}[Fractional chain rule \cite{Aloui-Tayachi-2021}]\label{lem:fractional-chain}
Let $0\le s \le1$, $F\in C^1(\mathbb{C},\mathbb{C})$ and $1<p_i<\infty$, $1\le q_i<\infty$ for $i=0,1,2$. Assume
\[
  \frac1{p_0}=\frac1{p_1}+\frac1{p_2},
  \qquad
  \frac1{q_0}= \frac1{q_1}+\frac1{q_2}.
\]
Then
\[
\bigl\||\nabla|^{s}F(u)\bigr\|_{L^{p_0,q_0}(\mathbb{R}^d)}
\lesssim
\|F'(u)\|_{L^{p_1,q_1}(\mathbb{R}^d)} \bigl\||\nabla|^{s}u\bigr\|_{L^{p_2,q_2}(\mathbb{R}^d)}.
\]
\end{lemma}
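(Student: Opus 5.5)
The plan is to reduce the Lorentz-space statement to a pointwise square-function estimate. All the harmonic-analysis tools it needs remain valid on $L^{p,q}$ for $1<p<\infty$, $1\le q\le\infty$, by real interpolation of (vector-valued) linear or linearizable operators. Throughout we use the normalization $F(0)=0$, which is implicit in the intended applications ($F(u)=|u|^\alpha u$ and its derivatives).

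\textbf{Endpoint cases.} For $s=0$ we write $F(u)=\int_0^1 F'(\theta u)\,u\,d\theta$. The extremal cases $F(u)=F'(u)\,u$ are handled by Lemma~\ref{lem:lorentz-holder}. For general $F$ this bound needs monotonicity of $|F'|$ along rays, which holds for the power-type nonlinearities in the paper. For $s=1$, boundedness of the Riesz transforms on $L^{p,q}$ (interpolated from $L^p$) gives $\||\nabla|F(u)\|_{L^{p_0,q_0}}\sim\|\nabla F(u)\|_{L^{p_0,q_0}}$. The classical chain rule $|\nabla F(u)|\le|F'(u)||\nabla u|$ and the Lorentz H\"older inequality (Lemma~\ref{lem:lorentz-holder}) then finish this case.

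\textbf{Case $0<s<1$.} First I would establish the Littlewood--Paley characterization
\[
\bigl\||\nabla|^s f\bigr\|_{L^{p,q}}\sim\Bigl\|\Bigl(\sum_N N^{2s}|P_Nf|^2\Bigr)^{1/2}\Bigr\|_{L^{p,q}},
\]
by real interpolation of the corresponding $\ell^2$-valued linear operators on $L^{p}$. Second, following Christ--Weinstein, for each dyadic $N$ I split $u=P_{\le N}u+P_{>N}u$ and use $\int\check\psi_N=0$ to write $P_N F(u)(x)=\int\check\psi_N(y)\bigl(F(u(x-y))-F(u(x))\bigr)\,dy$. The mean value theorem and the kernel decay of $\check\psi_N$ should give a pointwise bound of the form
\[
N^s|P_NF(u)(x)|\lesssim \mathcal M\bigl(F'(u)\bigr)(x)\sum_{M}\min\Bigl(\frac MN,1\Bigr)^{1-s}\min\Bigl(\frac NM,1\Bigr)^{s}\,\mathcal M\bigl(M^sP_Mu\bigr)(x),
\]
possibly with $\mathcal M$ replaced by $\mathcal M_r f=(\mathcal M|f|^r)^{1/r}$ for some $r>1$ small enough that $p_i/r>1$.

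\textbf{Square sum and closing.} The convolution kernel in $M/N$ is summable because $0<s<1$, so Schur's test gives
\[
\Bigl(\sum_N N^{2s}|P_NF(u)|^2\Bigr)^{1/2}\lesssim \mathcal M_r(F'(u))\Bigl(\sum_M|\mathcal M_r(M^sP_Mu)|^2\Bigr)^{1/2}.
\]
I then apply Lemma~\ref{lem:lorentz-holder} with exponents $(p_1,q_1)$ and $(p_2,q_2)$. Boundedness of $\mathcal M_r$ on $L^{p_1,q_1}$ follows by interpolating the sublinear maximal operator. The Fefferman--Stein vector-valued maximal inequality on $L^{p_2,q_2}(\ell^2)$ follows by Marcinkiewicz-type interpolation of the linearized operator between two Lebesgue exponents. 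Finally the square-function characterization converts $\|(\sum_M|M^sP_Mu|^2)^{1/2}\|_{L^{p_2,q_2}}$ back into $\||\nabla|^su\|_{L^{p_2,q_2}}$.

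\textbf{Main obstacle.} I expect the hardest step to be the pointwise bound in the second step. The difficulty is the term in which $F'$ is evaluated at $u(x-y)$ rather than at $u(x)$. Pulling $F'(u)$ out costs a maximal function, so one must use $\mathcal M_r$ with $r>1$ together with H\"older inside the $y$-integral, and check that the $r$-losses stay compatible with $p_1,p_2>1$. A secondary technical point is justifying the vector-valued interpolation in Lorentz spaces with second indices $q_i<\infty$. I would handle this by interpolating between two nearby Lebesgue exponents for each fixed operator.
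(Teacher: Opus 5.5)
The paper does not prove this lemma; it quotes it from Aloui--Tayachi. Your argument therefore has to stand on its own, and it breaks at the pointwise bound in your second step. The mean value theorem gives
\[
|F(u(x-y))-F(u(x))|\le |u(x-y)-u(x)|\,\sup_{0\le\theta\le1}\bigl|F'\bigl(\theta u(x-y)+(1-\theta)u(x)\bigr)\bigr|,
\]
so $F'$ is evaluated along the whole segment joining $u(x)$ and $u(x-y)$, not at its endpoints. For a general $C^1$ function nothing controls this supremum by $|F'(u(x))|+|F'(u(x-y))|$, so no maximal-function bookkeeping can produce $\mathcal M(F'(u))$.

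In fact the lemma is false as stated, even with $F(0)=0$. Take $F(z)=f(\mathrm{Re}\,z)$ with $f(t)=3t^2-2t^3$, so $f(0)=0$, $f(1)=1$, $f'(0)=f'(1)=0$, and take $u=1_{B(0,1)}$. Then $F(u)=u$ and $F'(u)\equiv0$, while $|\nabla|^s1_B\in L^{p,q}$ for every $1<p<1/s$. Hence for $0<s<1/p_2$ the left side equals $\||\nabla|^s1_B\|_{L^{p_0,q_0}}>0$ and the right side is $0$; your displayed pointwise bound fails for the same pair. You spotted this issue at $s=0$ (``monotonicity of $|F'|$ along rays''), but it persists for $0<s<1$, where it concerns arbitrary segments rather than rays.

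The missing ingredient is a structural hypothesis such as $|F'(\theta a+(1-\theta)b)|\lesssim|F'(a)|+|F'(b)|$. A more flexible form is $|F'(\theta a+(1-\theta)b)|\le\mu(\theta)\bigl(G(a)+G(b)\bigr)$ with $\mu\in L^1(0,1)$ and $\|G(u)\|_{L^{p_1,q_1}}$ on the right, as in the usual statements of this chain rule. Either holds for the power-type nonlinearities to which the paper applies the lemma, and with it the Christ--Weinstein/Littlewood--Paley scheme you describe is the right one.

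Even granting that hypothesis, your plan for what you call the main obstacle does not reach the full range of exponents. For $M>N$, consider the term $\int|\check\psi_N(y)|\,|F'(u(x-y))|\,|P_Mu(x-y)|\,dy$. Splitting it by H\"older as $\mathcal M_r(F'(u))\,\mathcal M_{r'}(P_Mu)$ requires the $\ell^2$-valued maximal inequality for $\mathcal M_{r'}$, hence $r'<\min\{2,p_2\}$, so $r>2$ and $p_1>2$. A common exponent on both factors would need $r\ge 2$, which that inequality excludes. Instead, keep the product inside the maximal function: bound the term by $\mathcal M(F'(u)\,P_Mu)(x)$, sum in $N$ with the weight $(N/M)^s$, and apply Fefferman--Stein on $L^{p_0,q_0}(\ell^2)$ before H\"older:
\[
\Bigl\|\Bigl(\sum_M\bigl|\mathcal M\bigl(F'(u)\,M^sP_Mu\bigr)\bigr|^2\Bigr)^{1/2}\Bigr\|_{L^{p_0,q_0}}\lesssim\|F'(u)\|_{L^{p_1,q_1}}\Bigl\|\Bigl(\sum_M|M^sP_Mu|^2\Bigr)^{1/2}\Bigr\|_{L^{p_2,q_2}}.
\]
This needs only $p_0>1$. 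The remaining pieces do factor as you intended: $|F'(u(x))|\,\mathcal M(P_Mu)(x)$, $\mathcal M(F'(u))(x)\,|P_Mu(x)|$, and the pieces with $M\le N$, handled with a Peetre-type maximal function.
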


\begin{lemma}[Fractional chain rule for H\"older continuous functions \cite{Killip-Visan-2013}] \label{lem:frac-chain-holder}
Let $0< s <\alpha<1$, $F\in C^{0,\alpha}(\mathbb{C},\mathbb{C})$ and $1<p_i<\infty$, $1\le q_i<\infty$ for $i=0,1,2$. Assume
\[
\frac{1}{p_0}=\frac{\alpha-\frac{s}{\sigma}}{p_1}+\frac{\frac{s}{\sigma}}{p_2},
 \qquad
\frac{1}{q_0}\le \frac{\alpha-\frac{s}{\sigma}}{q_1}+\frac{\frac{s}{\sigma}}{q_2}
\]
Then for any $\frac{s}{\alpha}<\sigma<1$, we have
\begin{equation}
\bigl\||\nabla|^s G(u)\bigr\|_{L^{p_0,q_0}}
\lesssim
\|u\|^{\alpha-\frac{s}{\sigma}}_{L^{p_1,q_1}}
\bigl\||\nabla|^\sigma u\bigl\|^{\frac{s}{\sigma}}_{L^{p_2,q_2}}.
\end{equation}
\end{lemma}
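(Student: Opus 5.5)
The last statement before the cutoff is Lemma~\ref{lem:frac-chain-holder}, the fractional chain rule for H\"older continuous nonlinearities in Lorentz spaces. The plan is to follow the Killip--Visan proof for Lebesgue spaces and replace each tool by its Lorentz analogue. Throughout, $G$ in the conclusion is the H\"older function $F$ of the hypothesis.

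\textbf{Step 1 (pointwise square-function bound).} We start from the Littlewood--Paley square-function characterization of $|\nabla|^s$. Since the maximal and square-function estimates are bounded on $L^p$ for $1<p<\infty$, Marcinkiewicz interpolation shows they are also bounded on $L^{p,q}$. This gives
\[
\bigl\||\nabla|^sG(u)\bigr\|_{L^{p_0,q_0}}\sim\Bigl\|\Bigl(\sum_N N^{2s}|P_NG(u)|^2\Bigr)^{1/2}\Bigr\|_{L^{p_0,q_0}}.
\]
For each dyadic $N$, $P_N$ annihilates constants, so $P_NG(u)(x)=\int\check\psi_N(y)\,[G(u(x-y))-G(u(x))]\,dy$. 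The H\"older bound $|G(z)-G(w)|\lesssim|z-w|^\alpha$ then gives
\[
|P_NG(u)(x)|\lesssim\int|\check\psi_N(y)|\,|u(x-y)-u(x)|^\alpha\,dy.
\]
We split $u=P_{\le M}u+P_{>M}u$ at a frequency $M$ to be chosen as a function of $x$ and $N$.

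\textbf{Step 2 (bounding the two pieces).} For the low-frequency piece, the mean value theorem gives $|P_{\le M}u(x-y)-P_{\le M}u(x)|\lesssim |y|\,M^{1-\sigma}\,\mathcal M(|\nabla|^\sigma u)(x-\cdot)$. For the high-frequency piece we use $|P_{>M}u|\lesssim M^{-\sigma}\mathcal M(|\nabla|^\sigma u)$. Each difference is also bounded by $\mathcal M u$. Optimizing $M$ in terms of the ratio $\mathcal M u/\mathcal M|\nabla|^\sigma u$ should yield, pointwise,
\[
\Bigl(\sum_N N^{2s}|P_NG(u)|^2\Bigr)^{1/2}\lesssim (\mathcal M u)^{\alpha-\frac s\sigma}\,\bigl(\mathcal M\mathcal M|\nabla|^\sigma u\bigr)^{\frac s\sigma}.
\]
Here the condition $\sigma>\frac s\alpha$ is exactly what makes the dyadic sums converge: one sum is geometric in $N^{s-\alpha\sigma}$ and the other in $N^{s}$ at the low end.

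\textbf{Step 3 (Lorentz H\"older and maximal bounds).} Next we use the power rule $\||f|^\theta\|_{L^{p,q}}=\|f\|^\theta_{L^{\theta p,\theta q}}$ and the Lorentz H\"older inequality (Lemma~\ref{lem:lorentz-holder}) with exponents $p_1/(\alpha-\frac s\sigma)$ and $p_2\sigma/s$. We then apply maximal-function boundedness on $L^{p_i,q_i}$ to obtain the claimed bound. The assumed relations between $p_0,p_1,p_2$ and between $q_0,q_1,q_2$ are precisely the scaling needed for this H\"older step.

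\textbf{Main obstacle.} The hardest part is Step 2: the pointwise optimization in $M$ while summing over $N$, and in particular controlling the vector-valued (Fefferman--Stein) maximal function inside the square sum. In Lorentz spaces the Fefferman--Stein inequality must itself be obtained by interpolating the $L^p(\ell^2)$ bounds. To avoid that, we would first try to keep every bound scalar by dominating $\sup_N$ of the relevant averages by a single maximal function, as Killip--Visan do.
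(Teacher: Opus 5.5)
Your proposal is correct and is essentially the Killip--Visan argument, which the paper cites without giving a proof, transplanted to Lorentz spaces: Marcinkiewicz interpolation plus duality with $L^{p_0',q_0'}$ for the square-function equivalence, the power rule, O'Neil's H\"older inequality, and boundedness of $\mathcal M$ on $L^{p,q}$. The one point to tidy up is Step~2, in three parts: split $u$ at $M\sim N$ (equivalently, use $|u(x-y)-u(x)|\lesssim |y|^{\sigma}[\mathcal M(|\nabla|^\sigma u)(x-y)+\mathcal M(|\nabla|^\sigma u)(x)]$) to get $|P_NG(u)|\lesssim N^{-\sigma\alpha}(\mathcal M\mathcal M|\nabla|^\sigma u)^{\alpha}$; pair it with the trivial bound $|P_NG(u)|\lesssim \mathcal M(|u|^\alpha)\le(\mathcal Mu)^\alpha$ (Jensen, since $\alpha<1$); and let the ratio fix the dyadic cutoff $N_0$ in the square sum rather than $M$. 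Both bounds are scalar and pointwise in each $N$, so no Fefferman--Stein inequality is needed.
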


\begin{lemma}[Weighted fractional multiplier]
\label{lem:weighted-fractional-multiplier}
Let \(0< s\le 1\), \(0<b<2\), and
$
1<p<\frac{d}{b+s}, 1\le q<\infty.
$
Set
\[
\frac1r=\frac1p-\frac{b}{d}.
\]
Then, for every \(F \in \dot W^{s;r,q}\),
\begin{equation}\label{eq:weighted-fractional-multiplier}
\bigl\||\nabla|^s(|x|^{-b}F)\bigr\|_{L^{p,q}(\R^d)}
\lesssim
\bigl\||\nabla|^sF\bigr\|_{L^{r,q}(\R^d)}.
\end{equation}
\end{lemma}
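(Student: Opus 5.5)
We must prove Lemma \ref{lem:weighted-fractional-multiplier}: for $0<s\le1$, $0<b<2$, $1<p<\frac{d}{b+s}$, $\frac1r=\frac1p-\frac bd$,
\[
\bigl\||\nabla|^s(|x|^{-b}F)\bigr\|_{L^{p,q}}\lesssim \bigl\||\nabla|^sF\bigr\|_{L^{r,q}}.
\]
The plan is to combine the fractional Leibniz rule of Lemma~\ref{lem:fractional-leibniz} with a Hardy-type bound and the weak-space membership $|x|^{-b}\in L^{\frac db,\infty}$. The weight is not in any Sobolev space in the strict sense, so it must be handled dyadically.

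\textbf{Step 1 (localization).} Decompose $|x|^{-b}=\sum_{k\in\Z}\psi_k(x)$ with $\psi_k(x)=|x|^{-b}\chi(2^{-k}x)$, where $\chi$ is a smooth annular bump. Each $\psi_k$ is smooth with $\||\nabla|^s\psi_k\|_{L^{m}}\sim 2^{k(-b-s+\frac dm)}$. This suggests a direct Leibniz argument may not sum. Instead, I would work globally.

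\textbf{Step 2 (Leibniz in Lorentz form).} Apply Lemma~\ref{lem:fractional-leibniz} formally with $u=|x|^{-b}$ and $v=F$:
\[
\||\nabla|^s(|x|^{-b}F)\|_{L^{p,q}}\lesssim \||x|^{-b}\|_{L^{\frac db,\infty}}\||\nabla|^sF\|_{L^{r,q}}+\||\nabla|^s|x|^{-b}\|_{L^{\frac d{b+s},\infty}}\|F\|_{L^{\rho,q}},
\]
where $\frac1\rho=\frac1p-\frac{b+s}{d}=\frac1r-\frac sd$. Here $|\nabla|^s|x|^{-b}=c|x|^{-b-s}$, which is legitimate since $b+s<d$, as $b<\frac d2$ and $s\le1$, and $d\ge3$. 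It lies in $L^{\frac d{b+s},\infty}$. The requirement $p<\frac d{b+s}$ ensures $\rho<\infty$, so Lemma~\ref{lem:sobolev-lorentz} gives $\|F\|_{L^{\rho,q}}\lesssim\||\nabla|^sF\|_{L^{r,q}}$, which closes the estimate. Alternatively, the second term is controlled by Hardy, Lemma~\ref{lem:hardy}, after writing $|x|^{-b-s}F=|x|^{-b}\cdot|x|^{-s}F$.

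\textbf{Main obstacle.} The key difficulty is justifying the Leibniz rule when one factor is the non-integrable homogeneous function $|x|^{-b}$, with an $L^{\cdot,\infty}$ norm on it. The Cruz-Uribe--Naibo estimate is usually stated for Schwartz functions.

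First, I would prove it for $F\in C_c^\infty(\R^d\setminus\{0\})$ with truncated weights $|x|^{-b}\chi_\epsilon$. These are bounded uniformly in $L^{\frac db,\infty}$, and their $s$-derivatives are bounded uniformly by $|x|^{-b-s}$ in weak norm, which can be checked via the scaling of the kernel representation
\[
|\nabla|^s g(x)=c\int\frac{g(x)-g(y)}{|x-y|^{d+s}}\,dy .
\]
Then I would pass to the limit and extend by density in $\dot W^{s;r,q}$, using $q<\infty$.

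If the Leibniz route fails at the endpoint exponents, the fallback is a direct commutator estimate. Write
\[
|\nabla|^s(|x|^{-b}F)-|x|^{-b}|\nabla|^sF=\int\frac{(|x|^{-b}-|y|^{-b})F(y)}{|x-y|^{d+s}}\,dy,
\]
bound this kernel pointwise by $|x|^{-b-s}|F|$-type terms plus a Hardy--Littlewood--Sobolev piece, and then apply Lorentz H\"older (Lemma~\ref{lem:lorentz-holder}) and Hardy.
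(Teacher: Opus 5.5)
Your Step 2 is the paper's argument: the Lorentz-space Kato--Ponce rule (Lemma~\ref{lem:fractional-leibniz}) with $|x|^{-b}\in L^{\frac db,\infty}$ and $|\nabla|^s|x|^{-b}=c\,|x|^{-b-s}\in L^{\frac d{b+s},\infty}$, followed by the Sobolev--Lorentz embedding $\|F\|_{L^{\rho,q}}\lesssim\||\nabla|^sF\|_{L^{r,q}}$, so the proposal is correct. The truncation-and-density justification you add for applying the Leibniz rule to the non-Schwartz weight is a step the paper skips without comment; Step~1 and the Hardy/commutator alternatives are not needed.
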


\begin{proof}
Set
\[
  \frac1m:=\frac1p-\frac{b+s}{d}=\frac1r-\frac{s}{d}.
\]
Since $1<p<\frac{d}{b+s}$, we have $1<r<\frac{d}{s}$ and $1<m<\infty$.
Applying Lemma~\ref{lem:fractional-leibniz}, we obtain
\begin{align*}
  \bigl\||\nabla|^s(|x|^{-b}F)\bigr\|_{L^{p,q}}
  &\lesssim
  \bigl\||\nabla|^s(|x|^{-b})\bigr\|_{L^{\frac{d}{b+s},\infty}}
  \|F\|_{L^{m,q}}
  +
  \bigl\||x|^{-b}\bigl\|_{L^{\frac{d}{b},\infty}}
  \bigl\||\nabla|^sF\bigr\|_{L^{r,q}}.
\end{align*}
Now
\[
  |x|^{-b}\in L^{\frac{d}{b},\infty}(\R^d),
  \qquad
  |\nabla|^s(|x|^{-b})\lesssim |x|^{-b-s}\in L^{\frac{d}{b+s},\infty}(\R^d).
\]
Moreover, since $\frac1m=\frac1r-\frac{s}{d}$, Lemma~\ref{lem:sobolev-lorentz} implies
\[
  \|F\|_{L^{m,q}(\R^d)}
  \lesssim
  \bigl\||\nabla|^sF\bigr\|_{L^{r,q}(\R^d)}.
\]
Substituting this into the previous estimate yields
\eqref{eq:weighted-fractional-multiplier}.
\end{proof}

\begin{lemma}[Dispersive estimates]\label{prop:dispersive-estimates}
	Let $2<p<\infty$ and $1\le q \le\infty$. Then for any $u\in L_x^{p',q}(\mathbb{R}^d)$
	\[
	\|e^{it\Delta}u_0\|_{L_x^{p,q}(\mathbb{R}^d)}
	\lesssim
	|t|^{-d\left(\frac12-\frac1p\right)}
	\|u_0\|_{L_x^{p',q}(\mathbb{R}^d)}
	\qquad \text{for } t\ne 0.
	\]
\end{lemma}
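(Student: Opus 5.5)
The statement to prove is the dispersive estimate in Lorentz spaces, Lemma~\ref{prop:dispersive-estimates}. I will prove it by real interpolation between the two classical endpoint bounds for the free propagator. The first is the unitarity bound
\[
\|e^{it\Delta}f\|_{L^2}=\|f\|_{L^2}.
\]
The second is the pointwise bound
\[
\|e^{it\Delta}f\|_{L^\infty}\lesssim |t|^{-\frac d2}\|f\|_{L^1},
\]
which follows from the explicit kernel $(4\pi i t)^{-d/2}e^{i|x-y|^2/(4t)}$. Fix $t\neq0$. The operator $T_t:=e^{it\Delta}$ is linear, and it is bounded $L^2\to L^2$ with norm $1$ and $L^1\to L^\infty$ with norm $\lesssim|t|^{-d/2}$.

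Given $2<p<\infty$, choose $\theta\in(0,1)$ with $\frac1{p'}=\frac{1-\theta}{2}+\frac{\theta}{1}$. Then automatically $\frac1p=\frac{1-\theta}{2}+\frac{\theta}{\infty}$, and $\theta=1-\frac2p=2\bigl(\frac12-\frac1p\bigr)$. I will apply the real interpolation functor $(\cdot,\cdot)_{\theta,q}$ for any $1\le q\le\infty$. The standard identifications (e.g.\ Bergh--Löfström, Theorem~5.3.1) are
\[
(L^2,L^1)_{\theta,q}=L^{p',q},\qquad (L^2,L^\infty)_{\theta,q}=L^{p,q},
\]
with equivalent quasi-norms. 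The $L^\infty$ endpoint causes no trouble, since the identification holds for $0<\theta<1$ with $p<\infty$.

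The interpolation theorem then gives
\[
\|T_t\|_{L^{p',q}\to L^{p,q}}\lesssim \|T_t\|_{L^2\to L^2}^{1-\theta}\|T_t\|_{L^1\to L^\infty}^{\theta}\lesssim |t|^{-\frac d2\theta}=|t|^{-d(\frac12-\frac1p)}.
\]
The implicit constant depends only on $d,p,q$ through the norm-equivalence constants, and not on $t$. This is exactly the claimed estimate.

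There is no real obstacle. The one point needing care is that the Lorentz quasi-norm defined in the paper agrees, up to constants, with the real-interpolation norm. This is the standard $K$-functional computation $K(s,f;L^1,L^\infty)=\int_0^s f^*$, so I will quote it rather than reprove it. Alternatively, the time-independent constant can be verified by scaling: conjugating by dilations reduces the claim to $|t|=1$.
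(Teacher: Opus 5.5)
Your proof is correct. The paper states this lemma in the appendix without proof, as a standard fact, and real interpolation of the $L^2\to L^2$ and $L^1\to L^\infty$ bounds with $\theta=1-\frac2p$ is exactly the standard argument behind it. One small imprecision: the formula $K(s,f;L^1,L^\infty)=\int_0^s f^*$ by itself only identifies $(L^1,L^\infty)_{\theta,q}$. To get $(L^2,L^1)_{\theta,q}=L^{p',q}$ and $(L^2,L^\infty)_{\theta,q}=L^{p,q}$ you also need the reiteration theorem (or Holmstedt's formula), but this is already contained in the Bergh--L\"ofstr\"om theorem you cite.
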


\begin{lemma}[Strichartz estimates]
	\mbox{}
	\begin{itemize}
		\item Let $(q,r)\in \Lambda$ with $r<\infty$. Then for any $u\in L^2(\mathbb{R}^d)$
		\[
		\left\|e^{it\Delta}u\right\|_{L_t^qL_x^{r,2}(\mathbb{R}\times\mathbb{R}^d)}
		\lesssim
		\|u\|_{L_x^2(\mathbb{R}^d)}.
		\]
		
		\item Let $(q_1,r_1),(q_2,r_2)\in \Lambda$ with $r_1,r_2<\infty$, $t_0\in\mathbb{R}$ and $I\subset\mathbb{R}$ be an interval containing $t_0$. Then for any $f\in L_t^{q_2'}L_x^{r_2',2}(I\times\mathbb{R}^d)$
		\[
		\left\|\int_{t_0}^{t} e^{i(t-\tau)\Delta}f(\tau)\,d\tau\right\|_{L_t^{q_1}L_x^{r_1,2}(I\times\mathbb{R}^d)}
		\lesssim
		\|f\|_{L_t^{q_2'}L_x^{r_2',2}(I\times\mathbb{R}^d)}.
		\]
	\end{itemize}
\end{lemma}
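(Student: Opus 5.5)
The statement immediately above is the Strichartz estimate in Lorentz spaces, and I would derive it from the dispersive estimate of Lemma~\ref{prop:dispersive-estimates} by the Keel--Tao abstract machinery adapted to Lorentz norms. The simplest route goes through the $TT^*$ argument with real interpolation.

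\textbf{Step 1: the homogeneous estimate.} Start from the energy bound $\|e^{it\Delta}f\|_{L^2}=\|f\|_{L^2}$ and the dispersive bound $\|e^{it\Delta}\|_{L^{1}\to L^{\infty}}\lesssim|t|^{-d/2}$. Lemma~\ref{prop:dispersive-estimates} already interpolates these to give $L^{r',2}\to L^{r,2}$ decay $|t|^{-d(\frac12-\frac1r)}=|t|^{-2/q}$. By duality, the homogeneous estimate is equivalent to
\[
\Bigl\|\int e^{-is\Delta}F(s)\,ds\Bigr\|_{L^2}\lesssim\|F\|_{L_t^{q'}L_x^{r',2}}.
\]
Expanding the square of the left side gives
\[
\iint\ip{e^{i(t-s)\Delta}F(s)}{F(t)}\,ds\,dt,
\]
which is bounded by
\[
\iint|t-s|^{-2/q}\|F(s)\|_{L^{r',2}}\|F(t)\|_{L^{r',2}}\,ds\,dt.
\]
Here Lorentz H\"older (Lemma~\ref{lem:lorentz-holder}) pairs $L^{r,2}$ with $L^{r',2}$. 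For $q>2$, the one-dimensional Hardy--Littlewood--Sobolev inequality bounds this by $\|F\|_{L_t^{q'}L_x^{r',2}}^2$.

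\textbf{Step 2: the inhomogeneous estimate.} For $(q_1,r_1)=(q_2,r_2)$ nonendpoint, the same HLS argument, applied to the retarded kernel, gives the diagonal bound directly. This is exactly the computation in the proof of Lemma~\ref{eq:exotic-Strichartz-estimate}. For mixed pairs, I would combine the homogeneous estimate and its dual, getting a $TT^*$ composition from $L_t^{q_2'}L^{r_2',2}$ through $L^2$ into $L_t^{q_1}L^{r_1,2}$. The truncation to $\tau<t$ would then be handled by the Christ--Kiselev lemma, which applies because $q_2'<q_1$ whenever not both pairs are the endpoint.

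\textbf{Step 3: the endpoint $q=2$, $r=\frac{2d}{d-2}$ (main obstacle).} Here HLS fails, so I would invoke Keel--Tao. Their argument decomposes the bilinear form dyadically in $|t-s|\sim2^j$ and proves bilinear estimates for the pieces $T_j$ in a range of exponents $(1/a,1/b)$ near the endpoint. It then concludes using the real interpolation identity
\[
(\ell^{\beta_0}_\infty,\ell^{\beta_1}_\infty)_{\theta,1}=\ell^\beta_1
\]
together with $(L^{a_0},L^{a_1})_{\theta,2}=L^{a,2}$. The latter identity shows that Keel--Tao's proof naturally yields the stronger Lorentz bound $L_t^2L_x^{\frac{2d}{d-2},2}$, not merely $L_t^2L_x^{\frac{2d}{d-2}}$. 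The key thing to check is that the Lorentz second index $2$ on the output, and on the dual input $L^{r',2}$, is exactly what the bilinear real interpolation produces. I expect this bookkeeping to be the main delicate point; otherwise I would cite Keel--Tao's Theorem~10.1 remark on Lorentz refinements.

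The non-endpoint cases then follow by interpolating between the energy estimate and the endpoint, using real interpolation to preserve the second Lorentz index $2$.
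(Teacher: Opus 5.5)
The paper gives no proof of this lemma. It is listed in the appendix among standard facts; it is the Lorentz-space refinement of Keel--Tao's theorem. Your outline is, in substance, the argument the paper implicitly relies on, and it is correct.

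Step 1 and the diagonal part of Step 2 use only three ingredients: the dispersive bound $L^{r',2}\to L^{r,2}$, the $L^{r,2}$--$L^{r',2}$ duality, and one-dimensional Hardy--Littlewood--Sobolev. So they already deliver the second index $2$. Christ--Kiselev covers every mixed case with $q_2'<q_1$. You also place the Lorentz gain at the endpoint correctly: it comes from the bilinear real interpolation step, where $(L^2_tL^{a_0'}_x,L^2_tL^{a_1'}_x)_{\theta,2}=L^2_tL^{a',2}_x$.

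Two small corrections.

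First, say explicitly how the doubly endpoint inhomogeneous case $q_1=q_2=2$ is handled, since Christ--Kiselev cannot reach it. It is exactly Keel--Tao's bilinear estimate for the form restricted to $\{s<t\}$. Your Step 3 therefore covers it, with the same Lorentz refinement.

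Second, your closing sentence is inaccurate. The natural real-interpolation identity for mixed norms is $(L^{p_0}_t(A_0),L^{p_1}_t(A_1))_{\theta,q}=L^q_t((A_0,A_1)_{\theta,q})$ with $\frac1q=\frac{1-\theta}{p_0}+\frac{\theta}{p_1}$. It ties the second Lorentz index to the time exponent. Interpolating the energy estimate with the endpoint $L^2_tL^{r_e,2}_x$, $r_e=\frac{2d}{d-2}$, therefore gives only $L^q_tL^{r,q}_x$. Since $q>2$, this is weaker than $L^q_tL^{r,2}_x$; keeping the index $2$ this way would require the complex method. This is not a gap, because Step 1 already proves all non-endpoint estimates with index $2$, so simply drop that sentence.
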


\section*{Acknowledgements}

This work was supported by the National Key Research and Development Program of China (No. 2023YFC2206100), and the National Natural Science Foundation of China (No. 12231008).


\begin{thebibliography}{99}


\bibitem{Aloui-Tayachi-2024}
L. Aloui and S. Tayachi,
\emph{Global existence and scattering for the inhomogeneous nonlinear Schr\"odinger equation},
J. Evol. Equ. \textbf{24} (2024), 61.
% unsuc

\bibitem{Aloui-Tayachi-2021}
L. Aloui and S. Tayachi,
\emph{Local well-posedness for the inhomogeneous nonlinear Schr\"odinger equation},
Discrete Contin. Dyn. Syst. \textbf{41} (2021), 5409--5437.

\bibitem{Bourgain-1999}
J. Bourgain,
\emph{Global well-posedness of defocusing critical nonlinear Schr\"odinger equation in the radial case},
J. Amer. Math. Soc. \textbf{12} (1999), 145--171.

% Online status checked: preprint only
\bibitem{Campos-Farah-Murphy-2026}
L. Campos, L. G. Farah, and J. Murphy,
\emph{Threshold solutions for the $3d$ cubic INLS: the energy-critical case},
arXiv:2601.05349.

\bibitem{Campos-Correia-Farah-2025}
L. Campos, S. Correia, and L. G. Farah,
\emph{Sharp well-posedness and ill-posedness results for the inhomogeneous NLS equation},
Nonlinear Anal. Real World Appl. \textbf{85} (2025), 104336.

\bibitem{Cardoso-Campos-2022}
M. Cardoso, L. G. Farah, C. M. Guzm\'an, and J. Murphy,
\emph{Scattering below the ground state for the intercritical non-radial inhomogeneous NLS},
Nonlinear Anal. Real World Appl. \textbf{68} (2022), 103687.

\bibitem{Cazenave-Weissler-1988}
T. Cazenave and F. B. Weissler,
\emph{The Cauchy problem for the nonlinear Schr\"odinger equation in $H^1$},
Manuscripta Math. \textbf{61} (1988), 477--494.

\bibitem{Cazenave-2003}
T. Cazenave,
\emph{Semilinear Schr\"odinger Equations},
Courant Lecture Notes in Mathematics, Vol.~10, New York Univ., Courant Inst. Math. Sci., New York; Amer. Math. Soc., Providence, RI, 2003.

\bibitem{Cho-Lee-2021}
Y. Cho and K. Lee,
\emph{On the focusing energy-critical inhomogeneous NLS: weighted space approach},
Nonlinear Anal. \textbf{205} (2021), 112261.

\bibitem{Cho-Hong-Lee-2020}
Y. Cho, S. Hong, and K. Lee,
\emph{On the global well-posedness of focusing energy-critical inhomogeneous NLS},
J. Evol. Equ. \textbf{20} (2020), 1349--1380.

\bibitem{CKSTT-2008}
J. Colliander, M. Keel, G. Staffilani, H. Takaoka, and T. Tao,
\emph{Global well-posedness and scattering for the energy-critical nonlinear Schr\"odinger equation in $\R^3$},
Ann. of Math. (2) \textbf{167} (2008), 767--865.

\bibitem{Cruz-Naibo2016}
D. Cruz-Uribe and V. Naibo,
\emph{Kato--Ponce inequalities on weighted and variable Lebesgue spaces},
Differential Integral Equations \textbf{29} (2016), 801--836.

\bibitem{Dao-2018}
N. A. Dao, J. I. D\'{\i}az, and Q. H. Nguyen,
\emph{Generalized Gagliardo--Nirenberg inequalities using Lorentz spaces, BMO, H\"older spaces and fractional Sobolev spaces},
Nonlinear Anal. \textbf{173} (2018), 146--153.

\bibitem{Dinh-2019}
V. D. Dinh,
\emph{Energy scattering for a class of the defocusing inhomogeneous nonlinear Schr\"odinger equation},
J. Evol. Equ. \textbf{19} (2019), 411--434.

\bibitem{Dodson-2019}
B. Dodson,
\emph{Global well-posedness and scattering for the focusing, cubic Schr\"odinger equation in dimension $d=4$},
Ann. Sci. \'Ec. Norm. Sup\'er. (4) \textbf{52} (2019), 139--180.

\bibitem{Farah-Guzman-2017}
L. G. Farah and C. M. Guzm\'an,
\emph{Scattering for the radial $3D$ cubic focusing inhomogeneous nonlinear Schr\"odinger equation},
J. Differential Equations \textbf{262} (2017), 4175--4231.

\bibitem{Farah-2016}
L. G. Farah,
\emph{Global well-posedness and blow-up on the energy space for the inhomogeneous nonlinear Schr\"odinger equation},
J. Evol. Equ. \textbf{16} (2016), 193--208.

\bibitem{Genoud-Stuart-2008}
F. Genoud and C. A. Stuart,
\emph{Schr\"odinger equations with a spatially decaying nonlinearity: existence and stability of standing waves},
Discrete Contin. Dyn. Syst. \textbf{21} (2008), 137--186.

\bibitem{Gill-2000}
T. S. Gill,
\emph{Optical guiding of laser beam in nonuniform plasma},
Pramana J. Phys. \textbf{55} (2000), 835--842.
% unsuc

\bibitem{Grillakis-2000}
M. G. Grillakis,
\emph{On nonlinear Schr\"odinger equations},
Comm. Partial Differential Equations \textbf{25} (2000), 1827--1844.

\bibitem{Guzman-Xu-2025}
C. M. Guzm\'{a}n and C. Xu,
\emph{Dynamics of the non-radial energy-critical inhomogeneous {NLS}},
Potential Anal. \textbf{63} (2025), 601--630.

\bibitem{Guzman-Murphy-2021}
C. M. Guzm\'an and J. Murphy,
\emph{Scattering for the non-radial energy-critical inhomogeneous NLS},
J. Differential Equations \textbf{295} (2021), 187--210.

\bibitem{Guzman-2017}
C. M. Guzm\'an,
\emph{On well posedness for the inhomogeneous nonlinear Schr\"odinger equation},
Nonlinear Anal. Real World Appl. \textbf{37} (2017), 249--286.

\bibitem{Guzman-Keraani-Xu-2025}
C. M. Guzm\'an, S. Keraani, and C. Xu,
\emph{Global dynamics of the non-radial energy-critical inhomogeneous biharmonic NLS},
arXiv:2508.02796.

\bibitem{Hasegawa-Tappert-1973}
A. Hasegawa and F. Tappert,
\emph{Transmission of stationary nonlinear optical pulses in dispersive dielectric fibers. I. Anomalous dispersion},
Appl. Phys. Lett. \textbf{23} (1973), 142--144.

\bibitem{Kenig-Merle-2006}
C. E. Kenig and F. Merle,
\emph{Global well-posedness, scattering and blow-up for the energy-critical, focusing, non-linear Schr\"odinger equation in the radial case},
Invent. Math. \textbf{166} (2006), 645--675.

\bibitem{Keraani-2001}
S. Keraani,
\emph{On the defect of compactness for the Strichartz estimates of the Schr\"odinger equations},
J. Differential Equations \textbf{175} (2001), 353--392.

\bibitem{Killip-Visan-2013}
R. Killip and M. Visan,
\emph{Nonlinear Schr\"odinger equations at critical regularity},
in \emph{Evolution Equations}, Clay Math. Proc., Vol.~17, Amer. Math. Soc., Providence, RI, 2013, pp.~325--437.

\bibitem{Killip-Visan-2010}
R. Killip and M. Visan,
\emph{The focusing energy-critical nonlinear Schr\"odinger equation in dimensions five and higher},
Amer. J. Math. \textbf{132} (2010), 361--424.

\bibitem{Kim-Lee-Seo-2021}
J. Kim, Y. Lee, and I. Seo,
\emph{On well-posedness for the inhomogeneous nonlinear Schr\"odinger equation in the critical case},
J. Differential Equations \textbf{280} (2021), 179--202.

\bibitem{Lee-2025-strong}
Y. Lee,
\emph{The $3D$ energy-critical inhomogeneous nonlinear Schr\"odinger equation with strong singularity},
arXiv:2501.02697.

\bibitem{Liu-Tripathi-1994}
C. S. Liu and V. K. Tripathi,
\emph{Laser guiding in an axially nonuniform plasma channel},
Phys. Plasmas \textbf{1} (1994), 3100--3103.
% unsuc

\bibitem{Liu-Xu-2025-super}
X. Liu and C. Xu,
\emph{The defocusing energy-supercritical inhomogeneous NLS in four space dimension},
J. Math. Anal. Appl. \textbf{554} (2026), 129968.

\bibitem{Liu-Miao-Zheng2025}
X. Liu, C. Miao, and J. Zheng,
\emph{Global well-posedness and scattering for mass-critical inhomogeneous {NLS} when $d\ge 3$},
Math. Z. \textbf{311} (2025), 54.

\bibitem{Liu-Yang-Zhang-2024}
X. Liu, K. Yang, and T. Zhang,
\emph{Dynamics of threshold solutions for the energy-critical inhomogeneous NLS},
arXiv:2409.00073.

\bibitem{Miao-Murphy-Zheng-2021}
C. Miao, J. Murphy, and J. Zheng,
\emph{Scattering for the non-radial inhomogeneous {NLS}},
Math. Res. Lett. \textbf{28} (2021), 1481--1504.

\bibitem{Neil-1963}
R. O'Neil,
\emph{Convolution operators and $L(p,q)$ spaces},
Duke Math. J. \textbf{30} (1963), 129--142.

\bibitem{Park-Defocusing-2024}
D. Park,
\emph{Global well-posedness and scattering of the defocusing energy-critical inhomogeneous nonlinear Schr\"odinger equation with radial data},
J. Math. Anal. Appl. \textbf{536} (2024), 128202.

\bibitem{Park-Focusing-2024}
D. Park,
\emph{Global well-posedness and scattering for the focusing energy-critical inhomogeneous nonlinear Schr\"odinger equation with non-radial data},
arXiv:2410.12321.

\bibitem{Ryckman-Visan-2007}
E. Ryckman and M. Visan,
\emph{Global well-posedness and scattering for the defocusing energy-critical nonlinear Schr\"odinger equation in $\R^{1+4}$},
Amer. J. Math. \textbf{129} (2007), 1--60.

\bibitem{Tao-Visan-2005}
T. Tao and M. Visan,
\emph{Stability of energy-critical nonlinear Schr\"odinger equations in high dimensions},
Electron. J. Differential Equations \textbf{2005} (2005), No. 118, 1--28.

\bibitem{Tao-2005}
T. Tao,
\emph{Global well-posedness and scattering for the higher-dimensional energy-critical nonlinear Schr\"odinger equation for radial data},
New York J. Math. \textbf{11} (2005), 57--80.

\bibitem{Visan-2007}
M. Visan,
\emph{The defocusing energy-critical nonlinear Schr\"odinger equation in higher dimensions},
Duke Math. J. \textbf{138} (2007), 281--374.

\bibitem{Wei-wang-2023}
W. Wei, Y. Wang, and Y. Ye,
\emph{Gagliardo--Nirenberg inequalities in Lorentz type spaces},
J. Fourier Anal. Appl. \textbf{29} (2023), 35.

\bibitem{Zakharov-Shabat-1972}
V. E. Zakharov and A. B. Shabat,
\emph{Exact theory of two-dimensional self-focusing and one-dimensional self-modulation of waves in nonlinear media},
Sov. Phys. JETP \textbf{34} (1972), 62--69.

\end{thebibliography}
\end{document}